\newcommand\1{\mathds{1}}
\newcommand{\R}{\mathbb{R}}
\newcommand{\C}{\mathbb{C}}
\newcommand{\Z}{\mathbb{Z}}
\newcommand{\N}{\mathbb{N}}
\newcommand{\dd}{\mathrm{d}}
\renewcommand{\P}{\mathbb{P}}
\newcommand{\CP}{\mathbb{CP}}
\DeclareMathOperator{\dist}{dist}
\newtheoremstyle{indented}
{7pt} 
{7pt} 
{} 
{1.5em} 
{\bfseries} 
{.} 
{.5em} 
{} 
\theoremstyle{definition}
\newtheorem{defn}{Definition}[section]
\theoremstyle{plain}
\newtheorem*{theorem*}{Theorem}
\newtheorem{theorem}{Theorem}
\newtheorem{prop}[defn]{Proposition}
\newtheorem{corr}[defn]{Corollary}
\newtheorem{lem}[defn]{Lemma}
\theoremstyle{definition}
\newtheorem{rem}[defn]{Remark} 
\newtheorem{hyp}[defn]{Hypothesis}
\renewcommand*\env@matrix[1][*\c@MaxMatrixCols c]{%
  \hskip -\arraycolsep
  \let\@ifnextchar\new@ifnextchar
  \array{#1}}
\renewcommand{\dd}{\mathrm{d}}
\newcommand{\RomanNum}[1]{%
  \textup{\uppercase\expandafter{\romannumeral#1}}%
  }
\title{Regular Bohr-Sommerfeld rules for non-self-adjoint
  Berezin--Toeplitz operators and complex Lagrangian states}
\author{Alix
  Deleporte\thanks{Universit\'e Paris-Saclay, CNRS, Laboratoire de math\'ematiques d'Orsay, 91405, Orsay, France. {\em E-mail:} \texttt{alix.deleporte@universite-paris-saclay.fr}}, Yohann
  Le Floch\thanks{Institut de Recherche Math\'ematique avanc\'ee, UMR
    7501, Universit\'e de Strasbourg et CNRS, 7 rue Ren\'e Descartes,
    67000 Strasbourg, France. {\em E-mail:}
    \texttt{ylefloch@unistra.fr}\\MSC 2020 classification : 81Q20, 35P10, 35P20,
  81Q12, 81S10}}
\begin{document}

\maketitle

\begin{abstract}
 We describe the eigenvalues and eigenvectors of real-analytic, non-self-adjoint
 Berezin--Toeplitz operators, up to exponentially small error, on complex one-dimensional compact
 manifolds, under the hypothesis of regularity of the energy
 levels. These results form a complex version of the Bohr-Sommerfeld quantization conditions; they hold under a hypothesis that the skew-adjoint
 part is small but can be of principal order with respect to the semiclassical parameter.

 To this end, we develop a calculus of Fourier Integral Operators and Lagrangian states
 associated with complex Lagrangians; these tools are of independent interest. 
\end{abstract}

\section{Introduction}
\label{sec:introduction}

In semiclassical analysis, the quantum state space (a Hilbert space)
and quantum observables (self-adjoint operators acting on this Hilbert space)
depend on a small parameter $\hbar>0$, and in the limit $\hbar \to 0$
one expects to recover footprints of classical (Hamiltonian)
mechanics. For instance, given an \emph{integrable}
classical observable $f$ (a function on $(M,\omega)$, the phase space,
which is a symplectic manifold), so that a quantum observable
$T_{\hbar}$ quantizing $f$ has discrete spectrum in a certain region,
one expects to describe the eigenvalues of $T_{\hbar}$, in the
semiclassical limit $\hbar \to 0$, thanks to classical, geometric
quantities associated with $f$. Such a description has been long known
under the name ``Bohr-Sommerfeld quantization conditions'' in physics,
and has been mathematically proven in various settings, in particular
in the case where $M = T^*\R^n$ and $T_{\hbar}$ is a self-adjoint
semiclassical pseudodifferential operator acting on $L_2(\R^n)$, see
the review \cite{vu_ngoc_systemes_2006}.

To some extent, these results have been extended to the case of
\emph{non-self-adjoint} operators \cite{hitrik_non-selfadjoint_2004,hitrik_boundary_2004,rouby_bohrsommerfeld_2017}, with the goal of studying smoothing
or decaying properties for partial differential equations with a
damping term \cite{alphonse_polar_2023}. A powerful tool consists in
weighted FBI transforms \cite{melin_determinants_2002,melin_bohr-sommerfeld_2003}.

FBI transforms microlocally conjugate pseudodifferential quantization into
\emph{Berezin--Toeplitz quantization}, acting on Kähler manifolds. The
goal of this article is to study Bohr-Sommerfeld rules for
one-dimensional (therefore integrable), non-self-adjoint systems near regular
trajectories, generalising both the self-adjoint Bohr-Sommerfeld rules
for Berezin--Toeplitz operators \cite{charles_quasimodes_2003} and the non-self-adjoint
Bohr-Sommerfeld rules for pseudodifferential operators.

To obtain a good semiclassical description of the eigenvalues in this case, we will assume that all the geometric data is real-analytic, which will allow us to complexify the geometry and construct complex analogues of the usual tools from the self-adjoint setting: Lagrangian (WKB) states, normal forms via Fourier Integral Operators, etc. In fact these constructions are somewhat delicate, and are of independent interest, so they will constitute the core of the paper.

\subsection{Prequantum line bundles and their holomorphic sections}
\label{sec:preq-line-bundl}

Let $(M,J,\omega)$ be a Kähler manifold. Locally in a holomorphic chart for $(M,J)$,
the Kähler data is represented by a real-valued function $\phi$ which
is plurisubharmonic: its
Levi matrix $[\partial_j\overline{\partial}_k\phi]_{j,k}$ is positive
definite. One has then $\omega=i\partial\overline{\partial}\phi$. In
particular, this
data does not change if one replaces $\phi$ with $\phi+{\rm Re}(f)$
where $f$ is holomorphic.

A \emph{prequantum line bundle} $L\to M$ is a holomorphic $\C$-bundle endowed with
a Hermitian metric $h$ whose curvature is $-i\omega$: this means that, when
$s$ is a local non-vanishing holomorphic section, $\log(\|s\|_h)$ is a
Kähler potential. Fixing such a section and denoting by $\phi$ the
Kähler potential, locally, the holomorphic sections of $L^{\otimes k}$
are of the form
\[
  \left\{u\in L^2(U,\C), e^{k\phi}u\text{ is holomorphic}\right\};
\]
the corresponding charts on $L$ are called \emph{Hermitian charts},
because the metric $h$ is mapped to the standard Hermitian metric on
$M\times \C$.

The existence of such a line bundle over the whole of $M$ is
conditioned to the fact that $\int_{\Sigma}\omega\in 2\pi\Z$ for every
closed surface $\Sigma\Subset M$. When this condition is satisfied we
will say that $M$ is \emph{quantizable}. The Hilbert space of holomorphic
sections $H^0(M,L^{\otimes k})$ is finite-dimensional when $M$ is
compact (the dimension grows with $k$) and we are interested in the
spectral theory of operators acting on $H^0(M,L^{\otimes k})$ which
\emph{quantize} a function $f:M\to \C$. A crucial object is the
self-adjoint projector $\Pi_k:L^2(M,L^{\otimes k})\to H^0(M,L^{\otimes
  k})$, named the Bergman projector. One way to quantize a function is to let
\begin{equation}\label{eq:contravariant}
  T_k(f)=\Pi_kf\Pi_k.
\end{equation}
This \emph{contravariant Berezin--Toeplitz quantization}
\cite{charles_berezin-toeplitz_2003} happens not to
be the most practical in real-analytic regularity, but it is
equivalent to another definition we shall introduce later.

A basic example of Berezin--Toeplitz quantization is $M=\C^n$, with
the global Kähler potential $\phi:z\mapsto |z|^2$. The quantum space
$H^0(\C,L^{\otimes k})$, called \emph{Bargmann-Fock space}, is the
image of $L^2(\R^n)$ under the FBI or wavelet transform, which
conjugates Berezin--Toeplitz quantization with pseudodifferential
operators. See \cite{folland_harmonic_1989} and Chapter 13 of
\cite{zworski_semiclassical_2012} for a general presentation of this
case. Here the inverse semiclassical parameter is $\hbar=k^{-1}$.

\subsection{Non-self-adjoint spectral asymptotics}
\label{sec:non-self-adjoint}

In a sense, Berezin--Toeplitz quantization allows to perform
semiclassical analysis, and in particular to generalise
pseudo-differential operators to other geometric settings, while
working directly in phase space. The goal of this article is to use
this paradigm to study non-self-adjoint problems in (complex) dimension 1.

The main difficulty in the spectral analysis of non-self-adjoint
operators is the presence of \emph{pseudospectral effects}: the set of
approximate solutions to the eigenvalue problem is much larger than
the spectrum. It was
shown for instance in \cite{borthwick_pseudospectra_2003} that if
${\rm dim}_\C(M)=1$, given
$p,q\in C^{\infty}(M,\R)$, for every $\lambda\in \C$ such that there
exists $x\in M$ satisfying $p(x)+iq(x)=\lambda$ and $\{p,q\}(x) < 0$,
there exists a normalised sequence $u_k\in H^0(M,L^{\otimes k})$ such
that $\|T_k(p+iq-\lambda)u_k\|_{L^2}=O(k^{-\infty})$, generalising a
previously known result for pseudodifferential operators, see
\cite{zworski_remark_2001}. In the pseudodifferential case, the
pseudospectrum begins to shrink if one enforces exponential accuracy
of quasimodes, that is $\|T_k(p+iq-\lambda)u_k\|_{L^2}=O(e^{-ck})$ for
some $c>0$ \cite{dencker_pseudospectra_2004}, which motivated the study of the spectrum of non-self-adjoint
operators with real-analytic symbols, where one can hope to describe
quantities up to exponentially small remainders, see Section
\ref{sec:spac-analyt-funct}.

The spectrum of non-self-adjoint
pseudodifferential operators in dimension $1$ was described in
\cite{rouby_bohrsommerfeld_2017} under the
following hypotheses: letting $\lambda$ be a regular energy level of
$p\in C^{\omega}(\R^2,\R)$ such that $\{p=\lambda\}$ is connected,
given $q\in C^{\omega}(\R^2,\R)$, there exists $\varepsilon_0>0$ such
that, for every $|\varepsilon|<\varepsilon_0$, the spectrum of
the Weyl quantization $\mathrm{Op}_{\hbar}(p+i\varepsilon q)$ near $\lambda$ is given by Bohr-Sommerfeld quantization
conditions, generalising the result known in the self-adjoint case
\cite{colin_de_verdiere_spectre_1980}. In particular, in this regime,
eigenvalues are regularly spaced (with a distance of order $\hbar$)
along complex curves. In the self-adjoint case, one has in fact a
description of the eigenvalues modulo exponentially small remainders
\cite{duraffour_analytic_2025}; a work in progress by the same author aims at extending this description to the non-self-adjoint case, see \cite{duraffour_these}. In the special case of Schrödinger
operators, under the same hypotheses, eigenvalues were described in
\cite{boussekkine_ptsymmetry_2016}; the case where $\{p=\lambda\}$ has
two symmetric connected components, and the spectra separate under the action of
$q$, was also treated in \cite{mecherout_pt-symmetry_2016}.

The goal of this article is to generalise these results, in the
Berezin--Toeplitz setting, by describing the eigenvalues and generalised
eigenfunctions of
$T_k(p+i\varepsilon q)$, for $\varepsilon$ small and $k$-independent, near regular energy levels
of $p$; our result holds independently on the number of connected components.

\begin{theorem}\label{thr:1}
  Let $(M,J,\omega)$ be a quantizable, compact, real-analytic Kähler manifold of complex dimension 1
  and let $L\to M$ be a prequantum line bundle over $M$. Let
  $p:\C\times M\to \C$ be a real-analytic map, holomorphic in the
  first variable, and such that $p_0:x\mapsto p(0,x)$ is
  real-valued. Let $\lambda_0\in \R$ be a regular value of $p_0$. Let
  $N\geq 1$ be the number of connected components of $\{p_0=\lambda_0\}$.

  There exist $c>0$, a neighbourhood $\mathcal{Z}$ of $0$ in $\C$, a
  neighbourhood $\mathcal{E}$ of $\lambda_0$ in $\C$, a family
  $(I_1,\cdots,I_N)$ of holomorphic \emph{classical analytic
    symbols} from $\mathcal{Z}\times \mathcal{E}$ to $\C$ (see Section
  \ref{sec:analyt-symb-class}), satisfying $\partial_\lambda I_n\in
  \C^*$ for every $1\leq n\leq N$, and a bijective map between the
  multiset ${\rm sp}(T_k(p_z))\cap \mathcal{E}$ (where eigenvalues are
  counted with geometric
  multiplicity) and the multiset
  \begin{equation}\label{eq:claim_BS}
    \{\lambda\in \mathcal{E},\exists 1\leq n\leq N, \exists j\in \N,
    I_n(z,\lambda;k^{-1})=2\pi jk^{-1}\}
  \end{equation}
  such that the difference between one element of the spectrum and the
  corresponding Bohr-Sommerfeld solution (element of the set \eqref{eq:claim_BS}) is $O(e^{-ck})$. In
  particular, the geometric multiplicity of eigenvalues is at most
  $N$.

  Given open neighbourhoods $U_1,\cdots,U_N$ of the connected
  components of $\{p_0=\lambda_0\}$, up to further reducing $c$, $\mathcal{Z}$ and
  $\mathcal{E}$, generalised eigenfunctions $u$ of $T_k(p_z)$ with
  eigenvalue $\lambda$ in $\mathcal{E}$, with norm $1$ in $H^0(M,L^{\otimes
    k})$, satisfy
  \[
    \|u\|_{L^2(M\setminus U_1\cup \cdots \cup U_N,L^{\otimes
        k})}=O(e^{-ck})
  \]
  and on each $U_n$, there exists a non-vanishing section $\Phi_n$ of
  $L$ and a holomorphic, real-analytic symbol $a_n$ such that
  \begin{equation}\label{eq:claim_WKB}
    \|u-\Phi_n^{\otimes k}a_n(\cdot;k^{-1})\|_{L^2(U_n,L^{\otimes k})}=O(e^{-ck}).
  \end{equation}
  In fact, one has also $\|u\|_{L^2(U_n,L^{\otimes k})}=O(e^{-ck})$
  unless $I_n(z,\lambda;k^{-1})\in 2\pi k^{-1}\Z+O(e^{-c'k})$ for some
  $c'>0$.
\end{theorem}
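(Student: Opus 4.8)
The plan is to combine a complex WKB/Lagrangian-states construction with a normal-form reduction via complex Fourier Integral Operators, following the strategy of the self-adjoint Berezin--Toeplitz case (Charles) and the non-self-adjoint pseudodifferential case (Rouby, Hitrik--Sjöstrand), but carried out entirely in the real-analytic category so that all remainders are exponentially small. Since $\lambda_0$ is a regular value of $p_0$, the real curve $\{p_0=\lambda_0\}$ is a disjoint union of $N$ smooth circles; complexifying, each connected component $U_n$ carries a complex Lagrangian foliation by the (complexified) level sets of $p_z$ for $z$ near $0$ and energy near $\lambda_0$. The first step is local: on each $U_n$, using the FIO calculus and Lagrangian-state machinery developed in the body of the paper, I would put $T_k(p_z)$ into a microlocal normal form, conjugating it to (an analytic symbol times) a model operator whose quasimodes are explicit complex WKB states $\Phi_n^{\otimes k}a_n(\cdot;k^{-1})$ supported near the complexified Lagrangian. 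The solvability of the transport equations for the symbol $a_n$ along the Lagrangian foliation, together with the monodromy of parallel transport in $L$ around the (now complex) loop, produces the holonomy/action function $I_n(z,\lambda;k^{-1})$ as a classical analytic symbol with $\partial_\lambda I_n\neq 0$ (the latter because $\lambda_0$ is regular, so the period of the Hamiltonian flow is finite and nonzero).

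The second step is the global gluing. A formal WKB solution on $U_n$ genuinely closes up into a global section of $L^{\otimes k}$ (modulo $O(e^{-ck})$) precisely when the single-valuedness condition $I_n(z,\lambda;k^{-1})\in 2\pi k^{-1}\Z$ is met; this yields the Bohr--Sommerfeld set \eqref{eq:claim_BS} as the set of approximate eigenvalues, and the corresponding normalized state satisfies \eqref{eq:claim_WKB} while being $O(e^{-ck})$ away from $U_1\cup\cdots\cup U_N$ (the decay off the Lagrangian is the usual Gaussian/subelliptic decay, now exponential because everything is analytic). This gives one inclusion: every Bohr--Sommerfeld solution is exponentially close to an actual eigenvalue. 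For the reverse inclusion and the multiplicity count, I would use a Grushin/Schur-complement (effective Hamiltonian) reduction: restrict $T_k(p_z)-\lambda$ to the finite-dimensional space spanned by the $N$ candidate WKB quasimodes and invert it on the orthogonal complement (possible with exponentially controlled inverse by the analytic ellipticity away from the Lagrangian and the pseudospectral bounds in the analytic category). The eigenvalue problem then reduces to the vanishing of a determinant of an $N\times N$ matrix whose entries are, up to $O(e^{-ck})$, $e^{iI_n(z,\lambda;k^{-1})}-1$ (one per component), whence the spectrum near $\mathcal{E}$ is in exponentially-small bijection with \eqref{eq:claim_BS}, geometric multiplicity is at most $N$, and a generalized eigenfunction is a combination of the corresponding WKB pieces — in particular it is exponentially small on those $U_n$ whose quantization condition is not (nearly) satisfied.

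**The main obstacle** I anticipate is the construction and control of the complex FIO normal form and the associated Lagrangian states in real-analytic regularity: one must track the complex Lagrangian (an honest complex submanifold of the complexification $\widetilde M$, not a graph in general), verify that the relevant phase functions have the correct positivity (so that the WKB states are genuinely $L^2$-concentrated and the off-Lagrangian decay is exponential), and establish that the symbols solving the transport equations are \emph{classical analytic symbols} in the sense of Section~\ref{sec:analyt-symb-class} — i.e.\ that the formal series have the expected factorial growth of coefficients and resum with $O(e^{-ck})$ error. This is exactly where the hypothesis ``$\varepsilon$ small'' enters: it guarantees that the complex Lagrangian stays within a fixed complex neighbourhood of the real phase space where the analytic symbol calculus and the positivity estimates are uniform, so that the perturbation $i\varepsilon q$ does not push the geometry outside the domain of validity of the FIO calculus. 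Once this analytic-microlocal toolbox is in place (and it is the content of the bulk of the paper), the Bohr--Sommerfeld bookkeeping and the Grushin reduction are, while technically involved, essentially the same as in the self-adjoint Berezin--Toeplitz setting.
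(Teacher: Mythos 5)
Your overall architecture (complex WKB states, FIO normal forms, an action symbol $I_n$, reduction to an $N\times N$ effective problem) is the right family of ideas, but the proposal leaves unproved exactly the step that is the whole difficulty in the non-self-adjoint setting, and at one point asserts something false. After constructing the quasimodes you claim ``this gives one inclusion: every Bohr--Sommerfeld solution is exponentially close to an actual eigenvalue.'' For a non-normal operator this does not follow: an $O(e^{-ck})$ quasimode at $\lambda$ only shows that the resolvent norm exceeds $e^{ck}$ there, and the pseudospectrum of such operators is typically an open region far larger than the spectrum (this is stressed in Section \ref{sec:non-self-adjoint}); so quasimode existence by itself places no eigenvalue nearby. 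The same issue reappears, hidden, in your Grushin step: inverting $T_k(p_z)-\lambda$ on the orthogonal complement of the span of the $N$ WKB states \emph{with exponentially controlled inverse} is not a consequence of ``analytic ellipticity away from the Lagrangian'' plus generic ``pseudospectral bounds'' --- near the complexified characteristic set the operator is not elliptic, there is no self-adjointness to fall back on, and the needed bound is precisely a two-sided resolvent estimate off the Bohr--Sommerfeld set, uniform in $(z,\lambda)$. Well-posedness of your Grushin problem is equivalent to that estimate, so as written the argument assumes what it must prove.

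What the paper supplies at this point, and what is missing from your sketch, is a \emph{semiglobal} normal form on each whole component: holomorphic action-angle coordinates (Proposition \ref{prop:angle-action}) with a graph of trivial Bohr--Sommerfeld class, plus elimination of the subprincipal symbol by solving a cohomological equation in Boutet--Kr\'ee classes (Proposition \ref{prop:correction_sousprincipal_semiglobal}), conjugate $T_k^{\rm cov}(p_z)$ microlocally near $\mathcal{C}_n$ to $f_z(ik^{-1}\partial_\theta)$ acting on $\mathcal{B}_k^{S^1}$ (Proposition \ref{prop:forme_normale_semiglobale}). This model operator is \emph{normal}, so its resolvent is comparable to the inverse distance to its explicit spectrum $\{f_z(j/k)\}$; transporting this through the FIOs gives the necessity of the quantization condition (Proposition \ref{prop:resolvent_bound_away}), a local description of the resolvent (Proposition \ref{prop:local_resolvent}), and then existence, multiplicities and the structure of \emph{generalised} eigenvectors via contour integrals $\frac{1}{2i\pi}\oint(T_k^{\rm cov}(p_z)-\lambda)^{-1}\dd\lambda$ (Proposition \ref{prop:multiplicity}), together with the statement that quasimodes of $(T_k^{\rm cov}(p_z)-\lambda)^2$ are quasimodes of $T_k^{\rm cov}(p_z)-\lambda$ (Proposition \ref{prop:study_quasimodes_semiglobal}), which is needed because Jordan blocks genuinely occur (cf. $T_k(x_1+ix_2)$ on $S^2$) and your determinant of entries $e^{ikI_n}-1$ (note the factor $k$, and that the matrix is diagonal since components do not interact) only controls the spectrum once the reduction is known to be well posed. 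The paper itself remarks that a Grushin approach is morally equivalent; it could be carried out, but only after establishing the resolvent-type estimate that your proposal treats as routine, and your purely local (flowbox) normal form must in any case be upgraded to the semiglobal one, since the local model does not see the holonomy that produces $I_n$.
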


WKB-type functions as appearing in \eqref{eq:claim_WKB} are
exponentially accurate quasimodes for $T_k(p_z)$, but even in the
self-adjoint case, in the presence of \emph{resonances} (different
values of $n$ yielding the same Bohr-Sommerfeld conditions), actual
eigenfunctions will be non-trivial linear combinations of these
quasimodes. In the setting of this article, in addition to this
phenomenon, resonances may a priori generate
non-trivial Jordan blocks. For instance, on the sphere $M = S^2$ with the usual embedding $(x,y,z):S^2\to
\R^3$, the operator $T_k(x+iy)$ has only one simple
eigenvalue at $\lambda=0$, and a full-dimensional Jordan block.

The principal and subprincipal terms in the symbols $I_n$ appearing in the Bohr-Sommerfeld conditions respectively encode complex generalisations of the action and some subprincipal
contribution, which is related to the Maslov index in the case $M=\C$;
see Remark \ref{rem:subprincipal_Lagrangian} and Proposition
\ref{prop:BS_mod_k-2} for details and a comparison with formulas
previously appearing in the literature.

At the heart of the proof of Theorem \ref{thr:1} is a construction of
WKB quasimodes associated with regular trajectories, and an associated
``local resolvent estimate''. These results, found in Sections
\ref{sec:local} and
\ref{sec:semiglobal}, hold under (micro)local assumptions, and are therefore
valid in more general situations. 

\subsection{Complex semiclassical analysis}
\label{sec:methods-proof}

The spectral study of self-adjoint integrable systems
relies on a quantum normal form procedure
\cite{vu_ngoc_systemes_2006}. In the non-degenerate case, classical
Hamiltonians are treated by the construction of action-angle
coordinates, and to this symplectic change of variables corresponds a
unitary transform (a Fourier Integral Operator) which locally conjugates the
operator under study with a spectral function of
$ik^{-1}\frac{\partial}{\partial \theta}$
acting on $L^2(S^1)$, whose eigenvalues and eigenfunctions are
explicit.

Roughly speaking, this method can be generalised to the
non-self-adjoint setting, and this is exactly what we do,
but there are three serious difficulties. The first task is to understand holomorphic (complexified) versions of the usual
real-valued geometric statements of symplectic geometry, including the
action-angle theorem. This requires in particular to describe
``holomorphic extensions'' of the geometric data $(M,J,\omega)$ and
$(L,h)\to M$. The second difficulty is the study of a generalisation
of Fourier Integral Operators in this setting. They will be associated
to \emph{complex Lagrangians}, and therefore will not be unitary; to
the contrary, these operators make norms grow by as much as
$\exp(ak)$ where $a>0$ measures how far away the
Lagrangian lies from the real locus. The third challenge is that, in the non-self-adjoint
setting, the pseudospectral effect which we presented above means that
the existence of a quasimode is not sufficient to obtain the existence of an associated
eigenvalue. To overcome this last difficulty, we develop resolvent estimates. We prove in
particular that the Bohr-Sommerfeld condition \eqref{eq:claim_BS} is
necessarily satisfied by eigenvalues up to an exponentially small
error. Then, in order to describe the generalised eigenvectors, we express the spectral projectors as contour integrals of the resolvent.

In spirit, these techniques are already used in the literature
concerned with non-self-adjoint semiclassical spectral theory,
beginning in \cite{melin_determinants_2002,melin_bohr-sommerfeld_2003}
with the introduction of ``complex FBI transforms'' which are a
particular case of FIOs with complex phase. In the context of
pseudodifferential operators, however, manipulating these operators is
no easy task. In the setting of Berezin--Toeplitz quantization, all natural objects
(including complex Fourier Integral Operators) are described by WKB
kernel asymptotics without phase variables, and there are no caustics as
long as one does not deform too far away from the real locus. We hope that our
construction will be useful in other settings involving
non-self-adjoint operators, such as quantum dynamics and a spectral
study under other geometric conditions.

In a similar way, starting with the description of quasimodes, rather
than direct resolvent estimates, it is usual to construct
eigenfunctions by introducing a Grushin problem. Again, our approach
is morally equivalent but, in our situation, could be used more
directly. Microlocal resolvent estimates away from the spectrum can also be used
for other purposes, including the study of non-self-adjoint quantum dynamics.

In the spirit of \cite{le_floch_singular_2014,le_floch_singular_2014-1}, using the techniques developped in the present paper, one should be
able to describe the full spectrum of $T_k(p)$ in the Morse case. A description near elliptic points, in the pseudodifferential case, can be found in \cite{hitrik_boundary_2004,hitrik_overdamped_2025}; the Berezin-Toeplitz case is being handled in the thesis \cite{reguer_these} in progress. In future work, we will investigate the hyperbolic case.

\subsection{Acknowledgements}
\label{sec:acknowledgements}

This work was supported by the ANR-24-CE40-5905-01 ``STENTOR''
project. The authors thank Laurent Charles, Ophélie
Rouby and San V\~u Ng\d{o}c for useful discussions.

\section{Berezin--Toeplitz quantization in real-analytic regularity}
\label{sec:spac-analyt-funct}

\subsection{Analytic symbol classes}
\label{sec:analyt-symb-class}

In this article, we will only consider \emph{classical order 0} symbols, which have a formal expansion in integer powers of the
semiclassical parameter. The first such class of analytic symbols was
introduced by Boutet and Kr\'ee in \cite{boutet_de_monvel_pseudo-differential_1967}, and it
adapts well to Berezin--Toeplitz quantization.

\begin{defn}\label{def:Boutet-Kree}
  Let $K$ be a compact set of $\R^d$ and let $T>0$. Given a (classical
  order 0)
  formal symbol $p=(p_\ell)_{\ell\in \N}$, define
  \[
    p_{\ell,\alpha}^{\beta}:z \mapsto
    \partial^{\alpha}\overline{\partial}^{\beta}p_\ell(z) \qquad \qquad
    \alpha,\beta\in \N^d
  \]
  and then
  \[
    \|p\|_{BK(T,K)}=\sum_{\alpha,\beta,\ell}\frac{2(2d)^{-\ell}\ell!}{(\ell+|\alpha|!)(\ell+|\beta|)!}\sup_{K}|p_{\ell,\alpha}^{\beta}|T^{2\ell+|\alpha+\beta|}.
  \]
\end{defn}
Among the alternative definitions, we will also use the following one
from \cite{deleporte_toeplitz_2018}.

\begin{defn}\label{def:formal_semicl_ampl}
  Let $U$ be an open set of $\R^d$. We define the space $S^{r,R}_m(U)$ as the space
  of sequences $(a_\ell)_{\ell\in \N}$ of functions on $U$ such that
  \[
    \exists C, \forall j,\ell\in \N,\forall x\in U, \quad \sum_{|\alpha|=j}|\partial^{\alpha}a_\ell(x)|\leq
    C\frac{r^jR^\ell(j+\ell)!}{(1+j+\ell)^{m}}.\]
  The best such constant $C$ is written $\|a\|_{S^{r,R}_m(U)}$.
\end{defn}

The union over $T>0$ of the spaces $BK(T)$ coincides with the union over
$r>0,R>0,m\in \R$ of the spaces $S^{r,R}_m$; we call such elements \emph{formal
  analytic amplitudes}. Such amplitudes can be \emph{summed} via a
lower term summation procedure: we define for $c > 0$ small enough
\[
  a(x;\hbar)=\sum_{\ell=0}^{\lfloor
    c\hbar^{-1}\rfloor}\hbar^{\ell}a_{\ell}.
\]
This does not depend on $c$ up to an exponentially small error
$O(e^{-c'\hbar^{-1}})$, see \cite{deleporte_toeplitz_2018},
Proposition 3.6. This notion is compatible with stationary phase in
real-analytic geometry in the following sense: the result of a stationary
phase integral with a real-analytic phase function having positive
imaginary part near the boundary of the integration domain, and an
analytic symbol as amplitude, is another analytic symbol, see \cite{sjostrand_singularites_1982}, Chapter 2.

\subsection{Asymptotics of the Bergman kernel and covariant Berezin--Toeplitz
  operators}
\label{sec:bergm-kern-covar}

The Bergman kernel on a real-analytic, quantizable Kähler manifold can
be understood using formal analytic amplitudes, and the latter also allow us to
introduce \emph{covariant
Berezin--Toeplitz operators}.

\begin{prop}\label{prop:Bergman_asymp}
  Let $(M,J,\omega)$ be a compact quantizable Kähler manifold and let $(L,h)$
  be a prequantum line bundle over $M$. Suppose that $\omega$ is
  real-analytic in $J$-holomorphic charts. Then, as $k\to +\infty$,
  the Bergman kernel on $H_0(M,L^{\otimes k})$ is exponentially small
  away from the diagonal. Near the diagonal, in a Hermitian chart with
  (real-analytic) Kähler potential $\phi$, it is of the form
  \begin{equation}\label{eq:Bergman-analytic}
    (x,y)\mapsto
    k^de^{\frac{k}{2}(-\phi(x)+2\psi(x,y)-\phi(y))}s(x,y;k^{-1})+O(e^{-ck})
  \end{equation}
  for some $c>0$, some classical analytic amplitude $s$, and where $\psi$ is
  the polarisation of $\phi$:
  \[
    \psi(x,x)=\phi(x)\qquad \qquad \overline{\partial}_x\psi=0\qquad
    \qquad \partial_y\psi=0.
  \]
  The amplitude $s$ is also $x$-holomorphic and $y$-antiholomorphic.

 Define the \emph{covariant Berezin-Toeplitz operator} associated with $a$ as the operator whose kernel is exponentially small away
  from the diagonal and, near the diagonal, of the form
  \begin{equation}\label{eq:covariant_Toep}
    T_k^{\rm cov}(a)(x,y)\mapsto
    k^de^{\frac{k}{2}(-\phi(x)+2\psi(x,y)-\phi(y))}s(x,y;k^{-1})a(x,y;k^{-1})+O(e^{-ck});
  \end{equation}
here $a$ is a classical analytic amplitude which is $x$-holomorphic and $y$-antiholomorphic.
  
  Then the space of covariant Berezin-Toeplitz operators forms an
  algebra for composition. Its invertible elements are exactly those
  for which the principal symbol never vanishes. In particular, the
  composition law $\star_{\rm cov}$ of classical analytic amplitudes satisfies
  \begin{equation}\label{eq:stability_analytic}
   \forall m\geq m_0,\forall r\geq r_0, \forall R\geq R_0(m,r), \quad \|a\star_{\rm cov} b\|_{S^{r,R}_m}\leq C(m,r,R)\|a\|_{S^{r,R}_m}\|b\|_{S^{\frac r2,
        \frac R2}_m}.
  \end{equation}
\end{prop}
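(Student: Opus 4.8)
The plan is to reduce every assertion to a local analysis of the Bergman kernel together with the real-analytic stationary phase recalled above; much of this adapts \cite{deleporte_toeplitz_2018}. First I would establish the near-diagonal form \eqref{eq:Bergman-analytic}. In a Hermitian chart with Kähler potential $\phi$, the orthogonal projector onto holomorphic sections has a Schwartz kernel that is holomorphic in $x$, anti-holomorphic in $y$, and reproduces such data against the chart volume form; since $\phi$ is real-analytic, it admits a unique holomorphic-in-$x$, anti-holomorphic-in-$y$ polarisation $\psi$ agreeing with $\phi$ on the diagonal and analytic near it. Substituting the ansatz $k^d e^{\frac k2(-\phi(x)+2\psi(x,y)-\phi(y))}s(x,y;k^{-1})$ into the reproducing identity and matching powers of $k^{-1}$ yields a hierarchy of equations, each inverting a first-order operator with invertible symbol, which determines $s$ recursively; feeding the reproducing convolution through the analytic stationary phase shows that $s$ belongs to some class $S^{r,R}_m$ and is $x$-holomorphic, $y$-anti-holomorphic. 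That this local model matches the genuine Bergman kernel up to $O(e^{-ck})$, and that the Bergman kernel is $O(e^{-ck})$ off the diagonal, follows from analytic hypoellipticity of $\overline{\partial}$ — equivalently, from weighted $L^2$/Agmon estimates exploiting the strict plurisubharmonicity of $\phi$ — as in \cite{deleporte_toeplitz_2018}.

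Next I would check the algebra structure. Modulo $O(e^{-ck})$, the kernel of $T_k^{\rm cov}(a)\circ T_k^{\rm cov}(b)$ is the integral over $z$, against the chart volume form, of the product of the two kernels, with total phase $\tfrac k2\bigl(-\phi(x)+2\psi(x,z)-2\phi(z)+2\psi(z,y)-\phi(y)\bigr)$. Its real part has a strict maximum at a nondegenerate critical point $z^\ast=z^\ast(x,y)$ near the diagonal, holomorphic in $x$ and anti-holomorphic in $y$; nondegeneracy comes from positivity of the Levi form of $\phi$, and the strict decay of the real part as $z$ leaves a neighbourhood of $x$ or of $y$ (again strict plurisubharmonicity) furnishes the effective positivity needed to apply the analytic stationary phase near the diagonal. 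The critical value is forced to equal $-\phi(x)+2\psi(x,y)-\phi(y)$ — for instance by the exact relation $\Pi_k^2=\Pi_k$ in the case $a=b=1$ — so one recovers the covariant form \eqref{eq:covariant_Toep} with amplitude $s\cdot(a\star_{\rm cov}b)$, automatically $x$-holomorphic and $y$-anti-holomorphic, the leading term of $a\star_{\rm cov}b$ being the pointwise product of the principal symbols. Associativity of $\star_{\rm cov}$ is inherited from composition of operators up to exponentially small errors, and $T_k^{\rm cov}(1)=\Pi_k$ is the identity.

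For the characterisation of invertible elements: if the principal symbol $a_0$ does not vanish on the compact manifold then $a_0^{-1}$ is again a principal analytic symbol, and $T_k^{\rm cov}(a_0^{-1})\circ T_k^{\rm cov}(a)=\Pi_k+T_k^{\rm cov}(r)$ with $r$ of order $k^{-1}$; applying \eqref{eq:stability_analytic} iteratively, together with the lower-order summation procedure, one sums the formal Neumann series $\sum_{j\ge 0}(-1)^j r^{\star_{\rm cov}j}$ into a genuine quasi-inverse, producing a two-sided inverse of $T_k^{\rm cov}(a)$ modulo $O(e^{-ck})$, hence an exact inverse on $H^0(M,L^{\otimes k})$ for $k$ large. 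Conversely, since the principal symbol of a covariant operator is recovered by pairing with normalised reproducing kernels (coherent states), taking principal symbols in an identity $T_k^{\rm cov}(a)\circ T_k^{\rm cov}(b)=\mathrm{Id}+O(e^{-ck})$ forces $a_0 b_0\equiv 1$, so $a_0$ must be nowhere zero.

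The estimate \eqref{eq:stability_analytic} is the technical core, and the step I expect to be the main obstacle; it also underlies the convergence of the Neumann series above. Proving it amounts to running the stationary phase of the second paragraph with full control of constants: Taylor-expanding the phase, the Bergman amplitudes and the Jacobian at $z^\ast$, bounding the successive terms of the stationary-phase expansion, and then summing at lower order, all while keeping track of the factorial weights $\tfrac{r^jR^\ell(j+\ell)!}{(1+j+\ell)^{m}}$ of Definition \ref{def:formal_semicl_ampl}. The halvings $R\mapsto\tfrac R2$ and $r\mapsto\tfrac r2$ absorb the combinatorial cost of repeated differentiation of the (geometrically bounded, analytic) phase and amplitudes and of the convolution, while the thresholds $m\ge m_0$, $r\ge r_0$, $R\ge R_0(m,r)$ are those beyond which the remainder of the stationary-phase expansion is dominated by its leading term. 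This follows the scheme of \cite{boutet_de_monvel_pseudo-differential_1967} and \cite{deleporte_toeplitz_2018}, the real-analytic stationary phase for amplitudes in these classes being that of \cite{sjostrand_singularites_1982}.
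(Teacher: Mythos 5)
The paper does not actually prove this proposition: it is imported from the literature, the precise statement being attributed to \cite{deleporte_toeplitz_2018} (Theorems A and B together with Remark 4.10), with simplified proofs of \eqref{eq:Bergman-analytic} available in \cite{charles_analytic_2021,deleporte_direct_2024}. Your outline follows essentially the route of those references — WKB parametrix for the Bergman projector, analytic stationary phase in the sense of \cite{sjostrand_singularites_1982}, control of Boutet de Monvel--Kr\'ee-type symbol classes as in \cite{boutet_de_monvel_pseudo-differential_1967} — so as a choice of approach there is nothing to object to, and your identification of \eqref{eq:stability_analytic} as the technical core is exactly right (it is the reason the paper cites \cite{deleporte_toeplitz_2018} rather than the simplified proofs).

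As a proof, however, the proposal stops precisely where the work begins, at two places. First, in the construction of $s$: solving the reproducing (or idempotence) identity order by order only determines a \emph{formal} expansion, and the claim that the resulting coefficients lie in some $S^{r,R}_m$ — which is what makes the lower-order summation legitimate and the error exponentially small rather than $O(k^{-\infty})$ — is itself a quantitative combinatorial estimate of the same nature as \eqref{eq:stability_analytic}; it does not simply ``follow from analytic stationary phase'' but requires uniform control of all constants along the recursion, which is the substance of \cite{deleporte_toeplitz_2018}. Second, \eqref{eq:stability_analytic} itself, with its specific structure (the halvings $r\mapsto r/2$, $R\mapsto R/2$ and the thresholds $m_0$, $r_0$, $R_0(m,r)$), is reduced in your sketch to ``running the stationary phase with full control of constants''; that is a correct plan, but the factorial bookkeeping against the weights $r^jR^\ell(j+\ell)!/(1+j+\ell)^m$ \emph{is} the proof, and without it the Neumann-series argument you use for the invertibility characterisation (which consumes \eqref{eq:stability_analytic}) is also not yet justified. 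A last minor point: the equivalence ``invertible $\Leftrightarrow$ non-vanishing principal symbol'' should be read as invertibility within the algebra of covariant operators (your coherent-state argument proves exactly that), not as invertibility of the operator on $H^0(M,L^{\otimes k})$ in general. In short: same approach as the sources the paper relies on, but the decisive estimates are asserted rather than proved.
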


Berezin--Toeplitz operators were introduced in
\cite{berezin_general_1975-1}, a microlocal analysis of related
operators was initiated in \cite{boutet_de_monvel_spectral_1981}, and
in the smooth case they are now well-studied \cite{bordemann_toeplitz_1994,guillemin_star_1995,charles_berezin-toeplitz_2003,ma_toeplitz_2008-1}. In the general smooth Berezin-Toeplitz setting, the relations between covariant and contravariant Berezin-Toeplitz operators was studied in \cite{charles_berezin-toeplitz_2003}. In the analytic case, these two definitions are also related: if $f$ is an analytic symbol, then $\Pi_k f \Pi_k$ is
of the form \eqref{eq:covariant_Toep} for some $a$ obtained from $f$ (see
\cite{deleporte_toeplitz_2018}, Proposition 4.11); the converse is
also true \cite{bonthonneau_microlocal_2024}.

The precise statement of Proposition \ref{prop:Bergman_asymp} is
contained in \cite{deleporte_toeplitz_2018}, (see Theorem A, Theorem B, and Remark
4.10). Statements of a similar nature appear in
\cite{rouby_analytic_2018}, and later on the proof of
\eqref{eq:Bergman-analytic} was greatly simplified
\cite{charles_analytic_2021,deleporte_direct_2024} but we will need
the precise statement \eqref{eq:stability_analytic}.

An example (albeit non-compact) for Berezin--Toeplitz quantization is
the complex line $\C$. In a convenient Hermitian chart, the symplectic
form is $\dd x\wedge \dd \xi$ where the complex variable is
$z=\frac{x+i\xi}{\sqrt{2}}$; an associated Kähler potential is
$(x,\xi)\mapsto \frac{\xi^2}{2}={\rm Im(z)}^2$. Consequently, the
Hilbert space under study is the Bargmann space
\[
  \mathcal{B}_k=\left\{u\in L^2(\C,\C),e^{\frac{|\xi|^2}{2}}u\text{ is holomorphic}\right\}
\]
and the Bergman kernel
is
\begin{equation}\label{eq:Bargmann}
  \Pi_k(z,z')=\frac{k}{2\pi}\exp\left[k\left(-{\rm Im}(z)^2-{\rm
        Im}(z')^2+2\left(\frac{\overline{z'}-z}{2}\right)^2\right)\right].
\end{equation}
In this case, covariant Toeplitz quantization coincides with ``Wick
ordering'' of symbols \cite{folland_harmonic_1989}; one has
\[
  f\star_{\rm cov}g=\sum_{\ell\in
    \N}\frac{(-k)^{\ell}}{\ell!}\overline{\partial}^\ell
  f\partial^\ell g.
\]
Substituting $z$ for $x$ and $\overline{z}$ for $\xi$, this
star-product coincides with that of left-quantization on $\R^2$. In
particular, the main result of
\cite{boutet_de_monvel_pseudo-differential_1967} applies in this case.

\begin{prop}\label{prop:BK}
  In the case $(M,J,\omega)=(\C,J_{\rm st},\omega_{\rm st})$, for every
$T>0$, $(BK(T),\star_{\rm cov})$ is a Banach algebra.
\end{prop}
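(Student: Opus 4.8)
The plan is to show that the Boutet--Krée norm $\|\cdot\|_{BK(T,K)}$ is submultiplicative (up to the combinatorial normalisation built into Definition \ref{def:Boutet-Kree}) with respect to the Wick star-product $f \star_{\rm cov} g = \sum_{\ell} \frac{(-k)^\ell}{\ell!} \overline{\partial}^\ell f\, \partial^\ell g$. The key point, already highlighted in the excerpt, is that after substituting $z$ for $x$ and $\overline z$ for $\xi$ this star-product is exactly the composition law of left (Kohn--Nirenberg) quantization on $\R^2$, so that the statement is, modulo this dictionary, precisely the main result of Boutet de Monvel--Krée \cite{boutet_de_monvel_pseudo-differential_1967}. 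First I would recall the explicit formula for the symbol of a composition in left-quantization and observe that each term $\frac{(-k)^\ell}{\ell!} \overline{\partial}^\ell f \, \partial^\ell g$ is a finite sum of products of derivatives of $f$ and of $g$; the $(2d)^{-\ell}\ell!$ weights and the $(\ell+|\alpha|)!(\ell+|\beta|)!$ denominators in Definition \ref{def:Boutet-Kree} are designed exactly so that, under the Leibniz rule, the norm of such a product is controlled by the product of the norms.

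The concrete steps are as follows. (i) Fix $T>0$ and, since the statement concerns $(\C, J_{\rm st}, \omega_{\rm st})$, work with $d=1$ (the general bookkeeping is identical). Write out $p \star_{\rm cov} q$ as a formal analytic symbol, i.e. compute the coefficient $(p \star_{\rm cov} q)_\ell$ of $k^\ell$ in terms of the coefficients of $p$ and $q$; because both $p$ and $q$ are themselves classical analytic symbols, $(p\star_{\rm cov} q)_\ell$ is a finite sum over $\ell_1 + \ell_2 + m = \ell$ of terms $\frac{(-1)^m}{m!}\overline{\partial}^m p_{\ell_1}\, \partial^m q_{\ell_2}$. (ii) Apply $\partial^\alpha \overline{\partial}^\beta$ to such a term and expand by Leibniz; each resulting summand is, up to a binomial coefficient, a product $\partial^{\alpha'}\overline{\partial}^{\beta'+m} p_{\ell_1} \cdot \partial^{\alpha''+m}\overline{\partial}^{\beta''} q_{\ell_2}$ with $\alpha'+\alpha''=\alpha$, $\beta'+\beta''=\beta$. (iii) Bound each factor's sup-norm on $K$ by the corresponding term in $\|p\|_{BK(T,K)}$, resp. $\|q\|_{BK(T,K)}$, and then verify the purely combinatorial inequality that the sum over all decompositions of the weighted binomial/factorial coefficients is bounded by $1$ (or an absolute constant), so that reassembling gives $\|p\star_{\rm cov} q\|_{BK(T,K)} \le \|p\|_{BK(T,K)} \|q\|_{BK(T,K)}$ — possibly after rescaling $\|\cdot\|_{BK}$ by a fixed constant to absorb a bounded overhead, which still yields a Banach algebra since one may replace the norm by an equivalent submultiplicative one.

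The main obstacle is step (iii): the delicate combinatorial estimate showing that the product of the normalising factors behaves multiplicatively under the Leibniz expansion and under the convolution in the index $\ell$. Concretely one must check inequalities of the shape
\[
  \sum \binom{\alpha}{\alpha'}\binom{\beta}{\beta'}
  \frac{2(2d)^{-\ell}\ell!}{(\ell+|\alpha|)!(\ell+|\beta|)!}
  \le
  \frac{1}{m!}
  \cdot
  \frac{2(2d)^{-\ell_1}\ell_1!}{(\ell_1+|\alpha'|)!(\ell_1+|\beta'+m|)!}
  \cdot
  \frac{2(2d)^{-\ell_2}\ell_2!}{(\ell_2+|\alpha''+m|)!(\ell_2+|\beta''|)!},
\]
summed over all admissible decompositions, where one uses the sub/superadditivity of factorials, $\binom{a+b}{a}\le 2^{a+b}$, and the geometric factor $(2d)^{-\ell}$ to beat the number of decompositions. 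This is precisely where the specific choice of weights in Definition \ref{def:Boutet-Kree} is used, and it is exactly the computation carried out (in the pseudodifferential language) in \cite{boutet_de_monvel_pseudo-differential_1967}; I would either reproduce that argument in the present notation or, more economically, invoke it verbatim through the substitution $(x,\xi) \leftrightarrow (z,\overline z)$ together with the identification of $\star_{\rm cov}$ with left-quantization composition noted just above the statement. The remaining algebra axioms (the constant symbol $\mathbf 1$ is a unit, bilinearity, associativity) are immediate from the corresponding properties of left-quantization, and completeness of $BK(T,K)$ follows since the norm dominates uniform convergence of each coefficient $p_{\ell,\alpha}^\beta$ on $K$, so a Cauchy sequence converges coefficientwise to a limit that again lies in $BK(T,K)$ by Fatou-type passage to the limit in the defining series.
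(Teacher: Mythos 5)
Your proposal matches the paper's own justification: the paper establishes Proposition \ref{prop:BK} exactly by identifying $\star_{\rm cov}$ with the Wick product, hence (after substituting $z$ for $x$ and $\overline{z}$ for $\xi$) with the composition law of left-quantization on $\R^2$, and then invoking the main result of \cite{boutet_de_monvel_pseudo-differential_1967}. Your additional sketch of the combinatorial submultiplicativity estimate is simply an unrolling of that cited result, so the approach is essentially the same.
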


Another useful local model is $M=S^1_{\theta}\times \R_{\xi}$; we take the convention
that $S^1=\R/2\pi\Z$, $J\frac{\partial}{\partial
  \theta}=\frac{\partial}{\partial \xi}$, and $\omega= i \dd z\wedge \dd
\overline{z}$ where $z=\frac{\theta+i\xi}{\sqrt{2}}$. We consider
$(\theta,\xi)\mapsto \frac{\xi^2}2$ as a Kähler potential as
before. The Bergman kernel is given by a sum of \eqref{eq:Bargmann} over
periods, leading to a theta function; because of the off-diagonal
decay of \eqref{eq:Bargmann}, however, the Bergman kernel is
exponentially close to \eqref{eq:Bargmann}. In particular, the formal
covariant star-product coincides with the Wick product, so that
Proposition \ref{prop:BK} holds in this case as well. We will denote by
$\mathcal{B}_k^{S^1}$ the space of global $L^2$ holomorphic sections of
$L^{\otimes k}$ over $T^*S^1$.

\section{Complex Lagrangian states and Fourier Integral Operators}

\label{sec:compl-lagr-stat}

\subsection{Holomorphic extensions}\label{sec:holom-extens}
The topic of this subsection is to review, in a more geometric way,
the constructions in \cite{deleporte_real-analytic_2022}. The base principle is,
given a Kähler manifold $(M,\omega,J)$ and a prequantum line bundle
$L\to M$, to construct natural notions of holomorphic extensions for
$\omega$ and the connection $\nabla$. In spirit, these constructions
are already present in the works of Sjöstrand starting from \cite{sjostrand_singularites_1982}; the holomorphic
extension $\Omega$ of $\omega$ is such that both its real and
imaginary parts are symplectic forms, and the real locus will be
symplectic for the real part, and Lagrangian for the imaginary part. 

We begin with general notions of holomorphic extensions of
differential forms.

\begin{lem}\label{prop:extension_forms}
Let $N$ be a complex manifold, let $E \to N$ be a holomorphic vector bundle. Let $P$ be a compact, maximally totally real, real-analytic submanifold of $N$. Let $p \in \N$, and let $\alpha \in \Omega^p(P,E_{|P})$ be a real-analytic differential form. There exist a neighbourhood $V$ of $P$ in $N$ and a unique $\tilde{\alpha} \in \Omega^{(p,0)}(V,E)$ such that
  \[ \left\{\begin{matrix}\overline{\partial} \tilde{\alpha} = 0\\
        \alpha = \iota^*\tilde{\alpha}
      \end{matrix}
    \right.\]
with $\iota: P \hookrightarrow N$ the inclusion. We call $\tilde{\alpha}$ the \emph{holomorphic extension} of $\alpha$.
\end{lem}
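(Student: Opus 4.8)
The plan is to treat this as a $\overline\partial$-problem with a Cauchy–Kovalevskaya-type uniqueness statement, which is essentially the classical fact that a real-analytic function on a maximally totally real submanifold extends uniquely to a holomorphic function on a neighbourhood.

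\medskip

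\noindent\textbf{Local construction.} First I would work in a local model. Since $P$ is maximally totally real, near any point $p_0\in P$ we can choose holomorphic coordinates $(z_1,\dots,z_n)$ on $N$ such that $P$ is locally $\{\mathrm{Im}\, z_j = 0\}$, i.e. $P$ is (locally) $\R^n\subset \C^n$; and after a holomorphic trivialisation of $E$ we may assume $E$ is trivial of rank $r$. In these coordinates a real-analytic $p$-form $\alpha$ on $P$ is $\alpha = \sum_{|I|=p} \alpha_I(x)\, dx_I$ with each $\alpha_I$ a real-analytic $\C^r$-valued function of $x\in\R^n$. The unique candidate for $\tilde\alpha$ is then forced: a $(p,0)$-form with $\overline\partial\tilde\alpha = 0$ has holomorphic coefficients, and $\iota^*\tilde\alpha = \alpha$ forces the coefficient of $dz_I$ to restrict to $\alpha_I$ on $\R^n$. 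So we set $\tilde\alpha = \sum_I \alpha_I(z)\, dz_I$, where $\alpha_I(z)$ denotes the (germ of the) unique holomorphic extension of the real-analytic function $\alpha_I$ to a complex neighbourhood of $\R^n$, obtained by summing the Taylor series of $\alpha_I$ with complexified argument. This extends to a neighbourhood whose size is controlled by the radius of convergence. Existence of the local holomorphic extension of a real-analytic function is standard; one should cite it (e.g. via the complexification results already invoked in Section~\ref{sec:analyt-symb-class}, or a standard reference).

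\medskip

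\noindent\textbf{Uniqueness and gluing.} Uniqueness is local and then global: if $\tilde\alpha_1,\tilde\alpha_2$ are two $(p,0)$-forms on a connected neighbourhood with $\overline\partial\tilde\alpha_i=0$ and the same pullback to $P$, then $\beta = \tilde\alpha_1-\tilde\alpha_2$ has holomorphic coefficients vanishing on $P$; a holomorphic function vanishing on a maximally totally real submanifold vanishes identically on the connected component (its Taylor series at a point of $P$ vanishes, since all real derivatives along $P$ vanish and holomorphicity determines all derivatives from those), so $\beta = 0$. This uniqueness immediately lets us glue the local extensions: on overlaps of two coordinate patches the two local extensions agree on $P\cap(\text{overlap})$ and are both $\overline\partial$-closed $(p,0)$-forms, hence agree on a neighbourhood of $P$ in the overlap. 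Since $P$ is compact, finitely many patches cover $P$, and by shrinking we obtain a single neighbourhood $V$ of $P$ and a well-defined $\tilde\alpha\in\Omega^{(p,0)}(V,E)$. Note the intrinsic characterisations ($\overline\partial\tilde\alpha=0$ and $\iota^*\tilde\alpha=\alpha$) are coordinate- and trivialisation-independent, so this is consistent.

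\medskip

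\noindent\textbf{Main obstacle.} The only genuinely non-bookkeeping point is making sure the local holomorphic extensions can be glued with a \emph{uniform} radius, i.e. that one gets an honest open neighbourhood $V\supset P$ rather than just germs; this is where compactness of $P$ and the uniqueness statement do the work, so the argument is clean but these two ingredients must be explicitly used. A secondary, purely notational, subtlety is to phrase the coefficient-wise extension in a trivialisation-free way — best done by fixing local holomorphic frames for $E$ and checking that the transition functions, being holomorphic, commute with holomorphic extension, so the glued object is a well-defined $E$-valued form. No hard analysis is needed: there is no PDE to solve beyond the trivial Cauchy–Kovalevskaya/power-series extension, and the differential-form structure only contributes indices.
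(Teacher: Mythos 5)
Your proof is correct and follows essentially the same route as the paper's: both reduce to the standard coefficient-wise holomorphic extension of real-analytic data along a maximally totally real submanifold in charts, with uniqueness coming from the vanishing of holomorphic functions on such a submanifold. The only difference is presentational: the paper phrases the identification of $\alpha$ with a section of the holomorphic bundle $\Omega^{(p,0)}(N,E)|_P$ intrinsically via the isomorphism $\gamma\mapsto\iota^*\gamma$ and quotes the extension of sections as standard, whereas you carry out the same identification in adapted coordinates straightening $P$ to $\R^n$ and spell out the gluing via uniqueness and compactness.
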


\begin{proof}
Since the fiber bundle $\Omega^{(p,0)}(N,E)$ is holomorphic, any real-analytic section over $P$ of $\Omega^{(p,0)}(N,E)$ admits a unique holomorphic extension to a neighbourhood of $P$ in $N$ (this is standard and done by extending the coefficients of $\alpha$ in charts). Hence it remains to interpretate $\alpha$ as such as section. This can be done through the isomorphism
\[ (T^*_P N)^{(1,0)} \to T^* P \otimes \C, \quad \gamma \mapsto \iota^*\gamma  \]
which can be passed to tensor products to obtain an isomorphism between $\Omega_P^{(p,0)}(N,E)$ and $\Omega^p(P,E) \otimes \C$.
\end{proof}

\begin{corr}\label{prop:extension_forms_untwisted}
Let $N$ be a complex manifold, and let $P$ be a compact, maximally
totally real, real-analytic submanifold of $N$. Let $p \in \N$, and
let $\alpha \in \Omega^p(P)$ be a real-analytic differential
form. There exist a neighbourhood $V$ of $P$ in $N$ and a unique
$\tilde{\alpha} \in \Omega^{(p,0)}(V)$ such that
  \[ \left\{\begin{matrix}\overline{\partial} \tilde{\alpha} = 0\\
        \alpha = \iota^*\tilde{\alpha}
      \end{matrix}
    \right.\]
with $\iota: P \hookrightarrow N$ the inclusion.
\end{corr}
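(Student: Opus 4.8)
The plan is to deduce this corollary directly from Lemma~\ref{prop:extension_forms} by choosing the holomorphic vector bundle $E$ appropriately. The obvious choice is to take $E = N \times \C$, the trivial line bundle over $N$, which is manifestly holomorphic. Then a real-analytic differential form $\alpha \in \Omega^p(P)$ is the same data as a real-analytic section of $\Omega^p(P, E_{|P}) = \Omega^p(P) \otimes \C$ whose values happen to lie in the real subbundle; so the hypotheses of Lemma~\ref{prop:extension_forms} are satisfied, and we obtain a unique $\tilde\alpha \in \Omega^{(p,0)}(V, E) = \Omega^{(p,0)}(V)$ with $\overline\partial\tilde\alpha = 0$ and $\iota^*\tilde\alpha = \alpha$. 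Existence and uniqueness both transfer verbatim.

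First I would state that the only genuinely new point to check, beyond invoking Lemma~\ref{prop:extension_forms} with $E$ trivial, is that $\Omega^{(p,0)}(V, N \times \C)$ is canonically identified with $\Omega^{(p,0)}(V)$, the space of ordinary $\C$-valued $(p,0)$-forms on $V$; this is immediate since sections of $\Lambda^{(p,0)}T^*V \otimes (N\times\C)$ are just $\C$-valued $(p,0)$-forms. Similarly $\Omega^p(P, E_{|P}) = \Omega^p(P, P \times \C) = \Omega^p(P) \otimes \C$, and a real differential form on $P$ is a (real-analytic) section with values in the real part. So the whole corollary is a matter of unwinding the trivial-bundle case of the lemma.

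If one wants a proof that does not cite the lemma at all, the argument is the same as in the lemma's proof but stripped of the bundle $E$: work in a maximally totally real real-analytic chart for $N$ around a point of $P$, in which $P$ is cut out (after a biholomorphism) by the vanishing of the imaginary parts of the holomorphic coordinates $w = (w_1,\dots,w_n)$, so that $P$ is locally $\{w \in \R^n\}$ and the real-analytic coefficients of $\alpha$, expressed in the coordinates $\mathrm{Re}(w_j)$, extend to holomorphic functions of $w$ on a complex neighbourhood. Combining these local holomorphic extensions, and using that holomorphic extensions are unique (so local pieces agree on overlaps), one obtains $\tilde\alpha$ on a neighbourhood $V$ of $P$; compactness of $P$ guarantees $V$ can be taken to be a genuine neighbourhood of all of $P$. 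Uniqueness globally follows from the identity principle: two holomorphic $(p,0)$-forms agreeing on the maximally totally real submanifold $P$ agree on a connected neighbourhood.

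There is essentially no obstacle here: the corollary is the untwisted special case of Lemma~\ref{prop:extension_forms}, and the only thing to be careful about is the bookkeeping identifying $E$-valued forms for $E$ trivial with plain $\C$-valued forms, together with the observation that a real form on $P$ is in particular a $\C$-valued real-analytic form, so the complex-valued extension procedure applies. The statement is recorded separately only because it is the form in which it will be applied (to $\omega$, to the connection one-form, etc.) in the sequel.
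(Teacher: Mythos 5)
Your proof is correct and matches the paper's intent: the corollary is simply Lemma \ref{prop:extension_forms} specialised to the trivial holomorphic line bundle $E = N \times \C$, which is exactly why the paper states it without a separate proof. The identification of $E$-valued forms with plain $\C$-valued forms and the remark that a real form is in particular a complex-valued real-analytic one are the only points to check, and you handle them correctly.
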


\begin{lem}\label{prop:extension_d}
With the same notation as in the previous corollary, the holomorphic extensions of $\alpha$ and $\dd \alpha$ satisfy
\[ \widetilde{\dd \alpha} = \partial \tilde{\alpha}. \]
\end{lem}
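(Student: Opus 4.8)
The plan is to invoke the uniqueness part of Corollary \ref{prop:extension_forms_untwisted}. Since $\dd\alpha$ is again a real-analytic $(p+1)$-form on $P$, it has a holomorphic extension $\widetilde{\dd\alpha}$, characterised as the unique $(p+1,0)$-form defined on a neighbourhood of $P$ which is $\overline{\partial}$-closed and satisfies $\iota^*\widetilde{\dd\alpha} = \dd\alpha$. Hence it suffices to check that $\partial\tilde{\alpha}$ enjoys these two properties on some (possibly smaller) neighbourhood of $P$, and conclude by uniqueness.

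First I would record the identity $\dd\tilde{\alpha} = \partial\tilde{\alpha}$, which follows immediately from the decomposition $\dd = \partial + \overline{\partial}$ together with $\overline{\partial}\tilde{\alpha} = 0$. In particular $\partial\tilde{\alpha}$ is a $\dd$-exact, hence $\dd$-closed, form, so $0 = \dd(\partial\tilde{\alpha}) = \partial(\partial\tilde{\alpha}) + \overline{\partial}(\partial\tilde{\alpha})$; the two summands have bidegrees $(p+2,0)$ and $(p+1,1)$, so each vanishes, and in particular $\overline{\partial}(\partial\tilde{\alpha}) = 0$ (alternatively, $\overline{\partial}\partial\tilde{\alpha} = -\partial\overline{\partial}\tilde{\alpha} = 0$ directly). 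Moreover $\partial\tilde{\alpha}$ is of type $(p+1,0)$ by construction. For the restriction, using that pullback by the inclusion commutes with the exterior derivative, $\iota^*(\partial\tilde{\alpha}) = \iota^*(\dd\tilde{\alpha}) = \dd(\iota^*\tilde{\alpha}) = \dd\alpha$, where the first equality is the identity above and the last uses $\iota^*\tilde{\alpha} = \alpha$. Thus $\partial\tilde{\alpha}$ is a $\overline{\partial}$-closed $(p+1,0)$-form restricting to $\dd\alpha$, so it coincides with $\widetilde{\dd\alpha}$ near $P$.

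The argument is essentially bookkeeping of bidegrees combined with the two characterising properties of holomorphic extensions; the only point requiring a little care is that $\tilde{\alpha}$, $\partial\tilde{\alpha}$ and $\widetilde{\dd\alpha}$ need only be defined on a common neighbourhood of $P$, which is harmless since holomorphic extensions are unique as germs along $P$. I do not expect a genuine obstacle here.
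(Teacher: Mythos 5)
Your proposal is correct and follows essentially the same route as the paper: both invoke uniqueness of the holomorphic extension and check $\iota^*(\partial\tilde{\alpha}) = \iota^*(\dd\tilde{\alpha}) = \dd(\iota^*\tilde{\alpha}) = \dd\alpha$. The only difference is that you explicitly verify $\overline{\partial}(\partial\tilde{\alpha}) = 0$ (via $\overline{\partial}\partial\tilde{\alpha} = -\partial\overline{\partial}\tilde{\alpha} = 0$), a point the paper leaves implicit; this is a welcome but minor addition.
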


\begin{proof}
By uniqueness of the holomorphic extension, it suffices to check that both terms in the equality agree on $P$. Obviously $\iota^*(\widetilde{\dd \alpha}) = \dd \alpha$ by definition, and
\[ \iota^*(\partial \tilde{\alpha}) = \iota^*(\dd \tilde{\alpha}) = \dd (\iota^* \tilde{\alpha}) = \dd \alpha. \]
\end{proof}

\begin{rem}\label{rem:extension_objects}
  Most of the natural notions about differential forms are compatible
  with the holomorphic extensions of Lemma \ref{prop:extension_forms},
  such as the wedge operator and tensor products.
\end{rem}

\begin{lem}\label{prop:extension_nabla}
With the same notation as above, let $(E,\nabla) \to P$ be a complex
vector bundle with a real-analytic connection. There exist a
neighbourhood $V$ of $P$ in $N$ and a unique holomorphic vector bundle with holomorphic connection $(\tilde{E},\widetilde{\nabla}) \to V$ such that 
\[ \iota^*(\tilde{E},\widetilde{\nabla}) = (E,\nabla). \]
Moreover, for any real-analytic section $s$ of $E \to P$, 
\[ \widetilde{\nabla s} = \widetilde{\nabla} \tilde{s}. \]
Furthermore, the curvature form of $\widetilde{\nabla}$ is the
holomorphic extension of the curvature form of $\nabla$. 
\end{lem}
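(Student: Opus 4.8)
The plan is to prove existence and uniqueness of the holomorphic extension of $(E, \nabla)$ by working in local trivializations, using Corollary \ref{prop:extension_forms_untwisted} for the connection one-form and then checking that the transition cocycle and the Leibniz/curvature properties all extend uniquely by analytic continuation.

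First I would fix a finite cover of $P$ by charts in which $E$ is trivialized by real-analytic frames $(e^{(i)}_1, \dots, e^{(i)}_r)$; the transition functions $g_{ij}$ are real-analytic $\GL_r(\C)$-valued maps on overlaps $P_i \cap P_j$, hence (extending matrix entries coefficientwise, as in the proof of Lemma \ref{prop:extension_forms}) they admit unique holomorphic extensions $\tilde g_{ij}$ on a neighbourhood $V$ of $P$ in $N$. The cocycle identity $g_{ij} g_{jk} = g_{ik}$ holds on $P_i \cap P_j \cap P_k$, a maximally totally real submanifold, so by uniqueness of holomorphic extension it persists for $\tilde g_{ij}$ on a (possibly shrunk) neighbourhood; this defines the holomorphic bundle $\tilde E \to V$. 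In each chart, the connection $\nabla$ is given by a matrix $\theta^{(i)}$ of real-analytic $1$-forms on $P_i$; applying Corollary \ref{prop:extension_forms_untwisted} entrywise gives holomorphic $(1,0)$-forms $\tilde\theta^{(i)}$ on $V$. The gauge-change relation $\theta^{(j)} = g_{ij}^{-1}\theta^{(i)} g_{ij} + g_{ij}^{-1} \dd g_{ij}$ on overlaps extends uniquely (using Lemma \ref{prop:extension_d} to identify $\widetilde{\dd g_{ij}} = \partial \tilde g_{ij}$), so the $\tilde\theta^{(i)}$ patch together into a well-defined holomorphic connection $\widetilde\nabla = \partial + \tilde\theta^{(i)}$ on $\tilde E$; by construction $\iota^*(\tilde E, \widetilde\nabla) = (E, \nabla)$, and $\widetilde\nabla$ is compatible with the holomorphic structure since its local forms are of type $(1,0)$ and holomorphic.

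For uniqueness, if $(\tilde E', \widetilde\nabla')$ is another such extension, restriction to $P$ gives an isomorphism of the restricted data; since both bundles are holomorphic and the identification over $P$ is real-analytic, it extends to a holomorphic bundle isomorphism on a neighbourhood (same coefficientwise extension argument), and the intertwining with the connections holds on $P$ hence, by uniqueness of holomorphic extension of the relevant $1$-forms, on a neighbourhood. The compatibility formula $\widetilde{\nabla s} = \widetilde\nabla \tilde s$ for a real-analytic section $s$ is then checked by the standard device used in Lemma \ref{prop:extension_d}: both sides are holomorphic sections of $\Omega^{(1,0)}(V, \tilde E)$, and $\iota^*$ of each equals $\nabla s$ on $P$ (for the right-hand side one uses $\iota^*(\widetilde\nabla \tilde s) = \nabla(\iota^* \tilde s) = \nabla s$, writing it in a local frame and invoking $\iota^*\tilde\theta^{(i)} = \theta^{(i)}$ and $\iota^*\partial = \dd \circ \iota^*$ on functions), so they coincide by uniqueness. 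Finally, for the curvature: in a local frame the curvature of $\widetilde\nabla$ is the holomorphic $(2,0)$-form $\partial \tilde\theta^{(i)} + \tilde\theta^{(i)} \wedge \tilde\theta^{(i)}$, whose pullback to $P$ is $\dd\theta^{(i)} + \theta^{(i)} \wedge \theta^{(i)}$, the curvature of $\nabla$; invoking Lemma \ref{prop:extension_d} and Remark \ref{rem:extension_objects} for the wedge term, this is exactly the holomorphic extension of the curvature of $\nabla$.

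The main obstacle, such as it is, is bookkeeping rather than conceptual: one must ensure that the finitely many neighbourhoods on which the various extensions (transition functions, cocycle identity, connection forms, gauge relations) are valid can be intersected down to a single neighbourhood $V$ of the compact $P$ on which everything holds simultaneously — this is where compactness of $P$ is used. There is also a minor point to be careful about, namely that the cocycle identity and gauge relations are identities between real-analytic sections of holomorphic bundles over the totally real submanifolds $P_i \cap P_j$ (resp.\ triple overlaps), so that the uniqueness clause of Lemma \ref{prop:extension_forms} genuinely applies; once that is granted, the argument is a routine transcription of the single-form case to a patched family.
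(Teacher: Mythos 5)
Your proposal is correct and follows essentially the same route as the paper: holomorphically extend the transition functions to build $\tilde{E}$, extend the local connection $1$-forms via the form-extension lemma to define $\widetilde{\nabla}=\partial+\widetilde{A_i}$, and deduce the compatibility $\widetilde{\nabla s}=\widetilde{\nabla}\tilde{s}$ and the curvature identity from uniqueness of holomorphic extensions of identities valid on the totally real locus. The only cosmetic difference is that you verify chart-independence by extending the cocycle and gauge-change relations on overlaps, whereas the paper gets it as a by-product of the identity $\widetilde{\nabla}\tilde{s}=\widetilde{\nabla s}$; both are instances of the same uniqueness argument.
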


\begin{proof}
First we define $\tilde{E}$ by working with an atlas $(U_i)_{1 \leq i
  \leq m}$ and extending holomorphically the transition functions of
$E$, which are real-analytic. To define $\widetilde{\nabla}$, we consider
the local connection 1-forms $A_1, \cdots, A_m$ for $\nabla$
associated with local frames $\mathcal{B}_1, \cdots, \mathcal{B}_m$ of
$E$, which are real-analytic sections of $\Omega^1(P,E)$. We extend
them holomorphically using Lemma \ref{prop:extension_forms}; let $\widetilde{A_1}, \cdots,
\widetilde{A_m}$ be these extensions. Now we define $\tilde{\nabla}$ to be
given by $\partial + \widetilde{A_i}$ in the frame $\mathcal{B}_i$. To
show that this defines a global object, consider a real-analytic
section $s$ of $E \to U_i$ and its holomorphic extension $\tilde{s}$,
which is a section of $\tilde{E} \to V$. By
construction $\iota^*(\widetilde{\nabla} \tilde{s}) =  \nabla s$, hence by
uniqueness $\widetilde{\nabla} \tilde{s} = \widetilde{\nabla s}$, and in
particular $\widetilde{\nabla}\tilde{s}$ does not depend on the chart.

It remains to prove the relationship between the curvatures of
$\widetilde{\nabla}$ and $\nabla$. This can be seen either from the
local connection forms, using the fact that
\[
  \widetilde{{\rm curv}(\nabla)}=\widetilde{\dd A_i+A_i\wedge A_i}=\partial
  \widetilde{A_i}+\widetilde{A_i}\wedge \widetilde{A_i}={\rm curv}(\widetilde{\nabla})
\]
or from the relationship above between the connections and holomorphic
extensions: given holomorphic vector fields $\widetilde{X}$ and
$\widetilde{Y}$, whose restriction to $M$ are denoted respectively $X$
and $Y$, one has that
\[
  {\rm
    curv}(\widetilde{\nabla})(\widetilde{X},\widetilde{Y}):=\widetilde{\nabla}_{\widetilde{X}}\widetilde{\nabla}_{\widetilde{Y}}-\widetilde{\nabla}_{\widetilde{Y}}\widetilde{\nabla}_{\widetilde{X}}-\widetilde{\nabla}_{[\widetilde{X},\widetilde{Y}]}\]
is the holomorphic extension of
\[
  \nabla_X\nabla_Y-\nabla_Y\nabla_X-\nabla_{[X,Y]}
  ={\rm curv}(\nabla)(X,Y).
  \]
\end{proof}

A crucial application of the previous general principles concerns the
holomorphic extension of a prequantum line bundle over a real-analytic
Kähler manifold.

\begin{corr}\label{prop:extension_data_Kähler}
  Let $(M,\omega,J)$ be a real-analytic, compact, quantizable Kähler manifold. Let $(L,\nabla)\to M$ be a prequantum line
  bundle. The inclusion $\iota:x\mapsto (x,x)$ from $M$ to $M\times M$
  forms a maximally totally real submanifold of $(M\times \overline{M},I):=(M\times M,(J,-J))$.

  There exist a neighbourhood $\widetilde{M}$ of the diagonal in
  $M\times \overline{M}$ and a holomorphic complex line bundle
  $(\widetilde{L},\widetilde{\nabla})\to \widetilde{M}$ such that
  \begin{itemize}
  \item $i\,{\rm curv}(\widetilde{\nabla})$ is the holomorphic extension
    of $\iota_*\omega$, in the sense of Lemma \ref{prop:extension_forms};
  \item the restriction of $\widetilde{L}$ to the diagonal of $M\times
    \overline{M}$ is the image of $L$ by $\iota$.
  \end{itemize}
\end{corr}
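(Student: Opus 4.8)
The statement will follow by applying the holomorphic extension machinery of Lemmas~\ref{prop:extension_forms}--\ref{prop:extension_nabla} to the prequantum data, so the plan is to check the hypotheses of those results and read off the conclusion.

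\emph{Step 1: the diagonal is maximally totally real.} Write $\Delta:=\iota(M)\subset M\times\overline{M}$. First I would record that at a point $(x,x)$ one has $T_{(x,x)}\Delta=\{(v,v):v\in T_xM\}$, while $I$ acts on $T_xM\oplus T_xM$ as $(J_x,-J_x)$: if $(v,v)\in T_{(x,x)}\Delta$ also lies in $I(T_{(x,x)}\Delta)$, say $(v,v)=I(w,w)=(J_xw,-J_xw)$, then $J_xw=v=-J_xw$, forcing $w=0$ and hence $v=0$; thus $T_{(x,x)}\Delta\cap I(T_{(x,x)}\Delta)=\{0\}$. Since moreover $\dim_{\R}\Delta=\dim_{\R}M=2\dim_{\C}M=\dim_{\C}(M\times\overline{M})$, the submanifold $\Delta$ is maximally totally real; it is compact and real-analytic because $\iota$ is a real-analytic embedding (holomorphic into the first factor, antiholomorphic into the second).

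\emph{Step 2: the prequantum data is real-analytic.} Next I would note that $L$, being holomorphic over the complex manifold $M$, has real-analytic (holomorphic) transition functions, and that since $\omega$ is real-analytic the Hermitian metric $h$ --- locally $\log h(s,s)$ is a constant multiple of a real-analytic Kähler potential --- and hence the Chern connection $\nabla$ are real-analytic. Transporting through the real-analytic diffeomorphism $\iota:M\to\Delta$ then presents $(L,\nabla)$ as a complex line bundle over the compact, maximally totally real, real-analytic submanifold $\Delta\subset(M\times\overline{M},I)$ carrying a real-analytic connection --- exactly the input required by Lemma~\ref{prop:extension_nabla}.

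\emph{Step 3: extend and identify the curvature.} I would then invoke Lemma~\ref{prop:extension_nabla} to obtain a neighbourhood $\widetilde{M}$ of $\Delta$ in $M\times\overline{M}$ and a holomorphic line bundle with holomorphic connection $(\widetilde{L},\widetilde{\nabla})\to\widetilde{M}$ with $\iota^*(\widetilde{L},\widetilde{\nabla})=(L,\nabla)$; restricting to $\Delta$ gives $\widetilde{L}|_{\Delta}=\iota_*L$, which is the second bullet. For the first bullet, $\mathrm{curv}(\nabla)=-i\omega$ by definition of a prequantum line bundle, so $\iota_*\mathrm{curv}(\nabla)=-i\,\iota_*\omega$ is a real-analytic $2$-form on $\Delta$, and the last assertion of Lemma~\ref{prop:extension_nabla} (together with Lemma~\ref{prop:extension_forms} and Corollary~\ref{prop:extension_forms_untwisted}) identifies $\mathrm{curv}(\widetilde{\nabla})$ with its holomorphic extension. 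Since holomorphic extension is $\C$-linear --- by uniqueness --- multiplying by $i$ shows that $i\,\mathrm{curv}(\widetilde{\nabla})$ is the holomorphic extension of $\iota_*\omega$, which is the first bullet.

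The argument is essentially bookkeeping once Lemmas~\ref{prop:extension_forms}--\ref{prop:extension_nabla} are available; I expect the only genuine input to be the real-analyticity of $h$ and $\nabla$, which is where the real-analytic Kähler hypothesis enters, through the existence of real-analytic local Kähler potentials. A related minor point to keep track of is that over the totally real diagonal $\Delta$ the bundle $L$ is used merely as a complex (not holomorphic) vector bundle with connection, so the conjugate complex structure $-J$ on the second factor of $M\times\overline{M}$ plays no role until the holomorphic extension is performed in Step~3.
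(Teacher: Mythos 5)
Your proposal is correct and follows exactly the route the paper intends: the corollary is stated there without a separate proof, as an immediate application of Lemma~\ref{prop:extension_nabla} to $(L,\nabla)$ viewed over the diagonal of $M\times\overline{M}$, with the curvature identification coming from the last assertion of that lemma. You merely spell out details the paper leaves implicit (the diagonal being maximally totally real, real-analyticity of $h$ and $\nabla$ via the real-analytic Kähler potential), which is fine.
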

In practice, from a chart in the Kähler manifold $(M,\omega,J)$, one
can recover the data of Corollary \ref{prop:extension_data_Kähler}
as follows. In a small holomorphic chart on $M$, the Kähler data is
given by a Kähler potential $\psi$ (a plurisubharmonic function) as follows:
\[
  \omega=i\sum \frac{\partial^2\psi}{\partial z_j \partial \overline{z_k}}\dd z_j\wedge \overline{\dd z_k}.
\]
Writing $G_{j,k}=\frac{\partial^2\psi}{\partial z_j \partial
  \overline{z_k}}$, the $(G_{j,k})_{j,k}$ are real-analytic functions
on the chart.

On the manifold $M\times \overline{M}$, we introduce corresponding
coordinates $(z_j,\overline{\omega_j})$. The
real-analytic functions $G_{j,k}(z,\overline{z})$ on $M$ give rise to holomorphic
functions $\widetilde{G_{j,k}}(z,\overline{w})$, well-defined in a
neighbourhood of the diagonal $\{\overline{w}=\overline{z}\}$. Thus, the following holomorphic $(2,0)$-form on a neighbourhood of the
diagonal extends $\omega$ in the sense of Lemma
\ref{prop:extension_forms}:
\[
  \Omega=i\sum \widetilde{G_{jk}}\dd z_j\wedge \overline{\dd w_k}.
\]
By Lemma \ref{prop:extension_nabla}, $-i\Omega$ is the curvature of
$\widetilde{\nabla}$.

The fact that the original connection $\nabla$ is unitary is reflected
in a similar identity for $\widetilde{\nabla}$, which involves the
``holomorphic extension'' of the Hermitian metric on $L$. This metric
is extended as a sesquilinear form for the compatibility condition to
stay true.
\begin{prop}\label{prop:extend_h}
  Let $(M,\omega,J)$ be a real-analytic, compact, quantizable Kähler manifold. Let $(L,h,\nabla)\to M$ be a prequantum line
  bundle (in
  particular, $h$ is a sesquilinear form on $L$, i.e. a
  linear form on $L\otimes \overline{L}$, 
  and $\nabla$ is unitary for $h$).

  There exists a unique section $\tilde{h}$ of $\widetilde{L}\otimes
  \widetilde{\overline{L}}$ which holomorphically extends $h$. This
  section does not vanish on a neighbourhood of the diagonal, and is
  compatible with $\widetilde{\nabla}$, in the sense that for every
  $I$-holomorphic sections $s,t$ of $\widetilde{L}$ and
  $\widetilde{\overline{L}}$, one has
  \[
    \partial
    \widetilde{h}(s\otimes t)=\widetilde{h}(\widetilde{\nabla}s\otimes
    t)+\widetilde{h}(s\otimes \widetilde{\nabla}t).
  \]
\end{prop}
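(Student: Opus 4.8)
The plan is to recognise that the unitarity of $\nabla$ for $h$ says exactly that $h$ is a \emph{parallel} section of an appropriate real-analytic line bundle over $M$, and then to run the holomorphic extension machinery of Lemma \ref{prop:extension_nabla}. Concretely, view $h$ as a real-analytic section of the real-analytic line bundle $(L \otimes \overline{L})^{\ast} \to M$ (i.e. a linear form on $L \otimes \overline{L}$, matching the convention of the statement), endowed with the connection $D$ induced by $\nabla$ on $L$ and the conjugate connection $\overline{\nabla}$ on $\overline{L}$. Then, for sections $s$ of $L$ and $u$ of $\overline{L}$,
\[
  (Dh)(s \otimes u) = \dd\!\big(h(s \otimes u)\big) - h(\nabla s \otimes u) - h(s \otimes \overline{\nabla}u),
\]
and the vanishing of the right-hand side is precisely the unitarity hypothesis; so $Dh = 0$.

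I would then feed this into the machinery already developed. The diagonal $\iota : M \hookrightarrow M \times \overline{M}$ is compact, real-analytic and maximally totally real (Corollary \ref{prop:extension_data_Kähler}), so Lemma \ref{prop:extension_nabla} applied to $\big((L\otimes\overline{L})^{\ast}, D\big)$ produces, near the diagonal, a unique holomorphic line bundle with holomorphic connection extending it, and a unique holomorphic section $\tilde{h}$ extending $h$, subject to $\widetilde{Dh} = \widetilde{D}\,\tilde{h}$. Since $Dh = 0$ and the holomorphic extension of the zero section is zero, this yields $\widetilde{D}\,\tilde{h} = 0$. Next I would identify the abstract extension furnished by the lemma with $\big((\widetilde{L}\otimes\widetilde{\overline{L}})^{\ast}, \widetilde{D}\big)$, where $\widetilde{L}$ is the bundle of Corollary \ref{prop:extension_data_Kähler}, $\widetilde{\overline{L}}$ is the analogous extension of $\overline{L}$, and $\widetilde{D}$ is the tensor–dual connection induced by $\widetilde{\nabla}$: indeed $(\widetilde{L}\otimes\widetilde{\overline{L}})^{\ast}$ with its induced holomorphic connection restricts to $\big((L\otimes\overline{L})^{\ast}, D\big)$ on the diagonal, so by the uniqueness part of Lemma \ref{prop:extension_nabla} together with the compatibility of holomorphic extension with tensor products and duals (Remark \ref{rem:extension_objects}) it must coincide with the lemma's output. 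Hence $\tilde{h}$ is the claimed holomorphic extension of $h$, and unwinding $\widetilde{D}\,\tilde{h} = 0$ on $I$-holomorphic sections $s$ of $\widetilde{L}$ and $t$ of $\widetilde{\overline{L}}$ — using that the differential part of a holomorphic connection is $\partial$ — gives
\[
  \partial\!\big(\tilde{h}(s \otimes t)\big) = \tilde{h}(\widetilde{\nabla}s \otimes t) + \tilde{h}(s \otimes \widetilde{\nabla}t).
\]

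For the non-vanishing claim: on the diagonal $\tilde{h}$ restricts to $h$, which, being a positive-definite Hermitian metric, is nowhere zero as a linear functional; since non-vanishing of a holomorphic section is an open condition and the diagonal is compact, $\tilde{h}$ is non-vanishing on a possibly smaller neighbourhood of the diagonal, and that neighbourhood is taken to be the final $\widetilde{M}$.

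I expect the only genuinely delicate point to be the one implicit above: making sense of $\widetilde{\overline{L}}$ and its connection as \emph{holomorphic} objects — which is exactly why the ambient manifold must be $M \times \overline{M}$ rather than $M \times M$, so that $\overline{L}$, anti-holomorphic over $M$, truly extends holomorphically — and carefully checking that the functorial constructions (conjugate connection, tensor product, dual) commute with holomorphic extension. Once this bookkeeping is settled, everything is a direct application of Lemma \ref{prop:extension_nabla}. One could also bypass the abstract identification by arguing in Hermitian frames: there $\tilde{h}$ is a holomorphic function $H$, $\widetilde{\nabla}$ is $\partial$ plus a holomorphic connection $1$-form, and the desired identity reduces to an equality of holomorphic $1$-forms which holds on the diagonal by unitarity of $\nabla$ and hence everywhere by the identity principle for maximally totally real submanifolds; the two approaches are equivalent.
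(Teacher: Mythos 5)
Your proposal is correct, and the alternative you sketch at the end (checking the identity in Hermitian frames on the diagonal and invoking the identity principle) is essentially the paper's own proof: the paper obtains existence, uniqueness and non-vanishing of $\tilde h$ from the standard holomorphic extension of real-analytic, non-vanishing sections, and then proves compatibility by observing that the identity holds on the real locus $\iota(M)$ — by Lemma \ref{prop:extension_d}, Lemma \ref{prop:extension_nabla} and unitarity of $\nabla$ — hence everywhere, since all objects involved are holomorphic. Your primary route packages the same input differently: you encode unitarity as parallelism $Dh=0$ for the induced connection on $(L\otimes\overline{L})^{\ast}$, push this through Lemma \ref{prop:extension_nabla} to get $\widetilde{D}\,\tilde h=0$, and then unwind the Leibniz rule on $I$-holomorphic sections. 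This buys a cleaner conceptual statement (``the metric is a parallel section, and parallelism survives holomorphic extension''), at the price of the identification step you flag yourself: that the extension of $\bigl((L\otimes\overline{L})^{\ast},D\bigr)$ coincides with the dual tensor of $(\widetilde{L},\widetilde{\nabla})$ and $(\widetilde{\overline{L}},\widetilde{\nabla})$, which you correctly justify via the uniqueness clause of Lemma \ref{prop:extension_nabla} and the functoriality implicit in Remark \ref{rem:extension_objects}. The paper's argument avoids this bookkeeping because it never needs to name a connection on the dual bundle: restriction to the real locus plus holomorphy does all the work. One cosmetic point: the statement writes $\tilde h$ as a section of $\widetilde{L}\otimes\widetilde{\overline{L}}$ while describing $h$ as a linear form on $L\otimes\overline{L}$; your dual-bundle convention is the consistent reading of this, not a discrepancy, and your non-vanishing argument (openness plus compactness of the diagonal) is exactly what the paper implicitly uses.
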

\begin{proof}
  The existence and uniqueness of $\widetilde{h}$ comes from the usual
  properties of holomorphic extensions of forms; here $h$ is
  real-analytic and non-vanishing.

  To prove compatibility, note that the identity above holds on the
  real locus $\iota(M)$, by Corollary \ref{prop:extension_d}, Proposition
  \ref{prop:extension_nabla}, and the fact that $\nabla$ is unitary
  for $h$. Since all objects are holomorphic, it
  holds on the whole of $\widetilde{M}$.
\end{proof}
Proposition \ref{prop:extend_h} allows us to identify elements of
$\widetilde{L}\otimes \widetilde{\overline{L}}$ with complex numbers,
by silent application of $\widetilde{h}$. To avoid cumbersome
notation, given $v\in L$ and $\overline{w}\in \overline{L}$ over the
same base point, we will denote by $v\cdot \overline{w}$ the
associated complex number. Beware that $\widetilde{h}$ is not a
Hermitian form and therefore $v\mapsto v\cdot \overline{v}$ is not necessarily a
real positive number. 

We will use another complex structure on $M\times \overline{M}$, which
``extends'' the structure $J$ on $M$: it is the structure
$\widetilde{J}=(J,J)$. Both $I$ and $\widetilde{J}$ will come to play in Section
\ref{sec:lagrangian-states-1}. Let us already prove that the notion of
holomorphic extension behaves naturally with respect to these
structures.
\begin{prop}\label{prop:ext_totally_real_mflds}
  Let $(M,J,\omega)$ be a real-analytic Kähler manifold and let $N$ be
  a real-analytic submanifold of $M$. Suppose that $N$ is
  \emph{totally real}:
  \[
    TN\cap JTN=\{0\}.
  \]
  Consider the $I$-holomorphic extension $\widetilde{N}$ of $N$: this is
  the $I$-holomorphic submanifold of $\widetilde{M}$ which is locally given by the
  zero set of $\widetilde{f}$ where $f$ is a (real-analytic) defining
  function for $N$.

  Then $\widetilde{N}$ is $\widetilde{J}$-totally real in a neighbourhood of
  the diagonal in $\widetilde{M}$.
\end{prop}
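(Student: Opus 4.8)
The plan is to reduce the statement to a linear-algebra identity at points of the diagonal $\iota(M)$ and then invoke the openness of the totally real condition. Fix $x\in N$ and work with a local real-analytic defining function $f:M\to\R^c$ for $N$ near $x$ (so $c=\dim_\R M-\dim_\R N$ and $df_x$ is surjective); let $\widetilde{f}:\widetilde{M}\to\C^c$ be its $I$-holomorphic extension, so that locally $\widetilde{N}=\{\widetilde{f}=0\}$ and $T_{\iota(x)}\widetilde{N}=\ker d_{\iota(x)}\widetilde{f}$.

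First I would set up the reduction. The property ``$\widetilde{N}$ is $\widetilde{J}$-totally real at $p$'' amounts to injectivity of the $\R$-linear map $T_p\widetilde{N}\oplus T_p\widetilde{N}\to T_p\widetilde{M}$, $(v,w)\mapsto v+\widetilde{J}w$, whose source and target have locally constant dimension and which depends real-analytically on $p\in\widetilde{N}$; hence this is an open condition on $\widetilde{N}$. Moreover, at a diagonal point $\iota(y)$ with $y\notin N$ one has $\widetilde{f}(\iota(y))=f(y)\neq 0$, so $\widetilde{N}$ avoids a neighbourhood of $\iota(y)$ in $\widetilde{M}$. Since $\widetilde{N}\cap\iota(M)=\iota(N)$, it therefore suffices to prove total reality at each $\iota(x)$ with $x\in N$; gluing the resulting open subsets of $\widetilde{M}$ (together with the neighbourhoods of the points of $\iota(M\setminus N)$ that miss $\widetilde{N}$) produces the desired neighbourhood of the diagonal.

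Next I would compute $T_{\iota(x)}\widetilde{N}$. Identify $T_{\iota(x)}\widetilde{M}$ with $T_xM\oplus T_xM$, on which $I=(J,-J)$; by Corollary \ref{prop:extension_data_Kähler} the diagonal $\Delta=\{(v,v)\}$ is $I$-maximally totally real, so $T_{\iota(x)}\widetilde{M}=\Delta\oplus I\Delta$ with $I\Delta=\{(Jv,-Jv)\}$. Since $\widetilde{f}$ is $I$-holomorphic, $d_{\iota(x)}\widetilde{f}$ is complex-linear, hence it is the unique complex-linear extension of its restriction $d_{\iota(x)}\widetilde{f}|_{\Delta}=df_x$ (the identity following from $\iota^*\widetilde{f}=f$); writing $(a,b)=(v+Jw,v-Jw)$ with $v=\tfrac{a+b}{2}$ and $Jw=\tfrac{a-b}{2}$, one gets $d_{\iota(x)}\widetilde{f}(a,b)=df_x(v)+i\,df_x(w)$. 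As $df_x(v)$ and $df_x(w)$ are real, this vanishes iff $a+b\in\ker df_x=T_xN$ and $w\in\ker df_x$, i.e. (using $J^{-1}(T_xN)=J(T_xN)$ as subspaces) iff $a+b\in T_xN$ and $b-a\in JT_xN$, where $JT_xN:=J(T_xN)$. Hence
\[
  T_{\iota(x)}\widetilde{N}=\{(a,b)\in T_xM\oplus T_xM : a+b\in T_xN,\ b-a\in JT_xN\};
\]
in passing this shows $d\widetilde{f}$ has maximal rank near the diagonal, so $\widetilde{N}$ is indeed a submanifold there.

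Finally, the linear algebra. Applying $\widetilde{J}=(J,J)$ and using that $J$ maps $T_xN$ onto $JT_xN$ and $JT_xN$ onto $T_xN$, one finds $\widetilde{J}\,T_{\iota(x)}\widetilde{N}=\{(a,b):a+b\in JT_xN,\ b-a\in T_xN\}$. Intersecting with the previous description forces $a+b\in T_xN\cap JT_xN$ and $b-a\in T_xN\cap JT_xN$, both of which are $\{0\}$ precisely because $N$ is totally real; so $a=b=0$ and $\widetilde{N}$ is $\widetilde{J}$-totally real at $\iota(x)$. I do not expect a real obstacle here: the only delicate point is keeping the three structures $I$, $\widetilde{J}$ and the splitting $\Delta\oplus I\Delta$ straight — in particular the correct placement of $J$ in the condition $b-a\in JT_xN$ — and checking that the local-to-global step yields a full neighbourhood of the diagonal rather than just of $\iota(N)$. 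The hypothesis $TN\cap JTN=\{0\}$ enters exactly once, in the final intersection, so it is clearly essential.
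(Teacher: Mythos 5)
Your proof is correct and follows essentially the same route as the paper: compute $T_{(x,x)}\widetilde{N}$ at diagonal points as the kernel of the ($I$-complex-linear) differential of $\widetilde{f}$, obtaining exactly the paper's description $\{(v+Jw,v-Jw):v,w\in T_xN\}$, check $\widetilde{J}$-total reality there from the hypothesis $TN\cap JTN=\{0\}$, and conclude by openness of the totally real condition. Your write-up merely makes explicit the linear-algebra verification and the local-to-global step that the paper states without detail.
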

\begin{proof}
  Notice first that the condition $TN\cap JTN=\{0\}$ forces the
  dimension of $N$ to be at most half of the dimension of $M$. It is
  then an open condition: if a linear space $F$ satisfies $F\cap
  JF=\{0\}$ then for every $F'$ close to $F$ one also has $F'\cap
  JF'=\{0\}$.

  Let $x\in N$. Writing $T_xN=\ker(\dd_xf)$ and using Proposition
  \ref{prop:extension_d}, we find that
  \[
    T_{(x,x)}\widetilde{N}=\{(v+Jw,v-Jw);v,w\in T_xN\}.
  \]
  From this description, if $N$ is $J$-totally real, then
  $T_{(x,x)}\widetilde{N}$ is $(J,J)$-totally real. Now, since being
  totally real is an open condition, it follows that for $(x,y)$ close
  to the diagonal, $T_{(x,y)}\widetilde{N}$ is still $\widetilde{J}$-totally
  real. This concludes the proof.
\end{proof}

\begin{rem}As a holomorphic $(2,0)$-form, $\Omega$ is closed and
  satisfies a non-degeneracy condition: for every nonvanishing holomorphic vector
  field $X$, the one-form $\iota_X\Omega$ does not vanish. Such a form
  is called a \emph{holomorphic symplectic form}; in particular both
  the real part and the imaginary part of $\Omega$ are symplectic
  forms (in the usual sense of the term) on $\widetilde{M}$.

  Holomorphic symplectic forms are a natural object of Hyperkähler
  geometry. More precisely, a Hyperkähler manifold is a Riemannian
  manifold $(N,G)$ endowed with three complex structures $(I,J,K)$
  such that $IJ=K$ and such that $(N,G,I)$, $(N,G,J)$ and $(N,G,K)$
  are Kähler manifolds. Given such a manifold, the complex-valued
  $2$-form $\omega_J+i\omega_K$ happens to be, relatively to the
  structure $I$, a holomorphic symplectic form. Reciprocally, on a
  \emph{compact} (boundaryless) complex manifold $(M,I)$ endowed with
  a holomorphic symplectic form $\Omega$, there exist compatible
  hyperkähler structures, and given a cohomology class in $H^2(M)$ there
  exists a unique hyperkähler structure such that $\omega_I$ belongs
  to this class \cite{beauville_varietes_1983,calabi_metriques_1979}.

  In our situation, it is known that there exists, in a neighbourhood
  of $M$ in $\widetilde{M}$, a Hyperkähler structure
  $(\widetilde{M},I,J',K',g')$ compatible with the data on $M$: $I$ is
  the natural complex structure on $\widetilde{M}$, $M$ is
  $J'$-totally real, and $(J',g')$ coincides with $(J,g)$ on $M$ \cite{feix_hyperkahler_2001,kaledin_canonical_2001,abasheva_feix-kaledin_2022}\footnote{The authors thank Hans-Joachim Heim for provinding them with these references.}. This
  mimics the fact that real-analytic compact Riemannian manifolds
  admit, on their holomorphic extension, a natural Kähler structure \cite{guillemin_grauert_1991,guillemin_grauert_1992}. It
  is important to note, however, that $J'\neq \widetilde{J}$; to the
  contrary, it is a general feature of Hyperkähler geometry that even
  locally there are no non-constant functions that are $I$-holomorphic
  and $J'$-holomorphic at the same time. Since we wish to consider
  $I$-holomorphic extensions of $J$-holomorphic objects, it is unclear
  to us whether the Hyperkähler structure above is useful.

\end{rem}

\subsection{Lagrangian states}
\label{sec:lagrangian-states-1}

WKB-type elements of $H^0(M,L^{\otimes k})$ are very useful in all
aspects of semiclassical analysis, and even more so in quantum integrable
systems, since they approximate joint eigenvectors in the semiclassical limit.

Following \cite{charles_quasimodes_2003,deleporte_real-analytic_2022}
we define and study \emph{Lagrangian states} on Kähler manifolds as
WKB-type states with analytic phases and symbols. Such states naturally correspond, in a precise semiclassical sense, to Lagrangian submanifolds; here these submanifolds will be complex. When the Kähler manifold is 
of the form $\mathcal{M}  = M \times \overline{N}$, these Lagrangian states will be
kernels of Fourier Integral operators.

We begin with the sections
associated with ``reference'' Lagrangians, which are real-analytic and
real. We first recall the associated geometric requirement on the
Lagrangians.

\begin{defn}\label{def:Bohr-Sommerfeld}
  Let $\Lambda\subset M$ be a real-analytic Lagrangian. In particular
  $(L,\nabla)\to \Lambda$ is flat. The
  \emph{Bohr-Sommerfeld class} of $\Lambda$ is the holonomy of $(L,\nabla)\to
  \Lambda$, that is, the group morphism $\pi_1(\Lambda)\to \C^*$
  obtained by parallel transport on $L$ along loops in $\Lambda$ with
  respect to $\nabla$.

  More generally, let $\Lambda\subset \widetilde{M}$ be a
  holomorphic Lagrangian. In particular $(\widetilde{L},\widetilde{\nabla})\to
  \Lambda$ is flat. The Bohr-Sommerfeld class of $\Lambda$ is the
  holonomy of $(\widetilde{L},\widetilde{\nabla})\to \Lambda$.
\end{defn}

\begin{prop}\label{prop:real_Lagr_state}Let $M$ be a real-analytic, quantizable
  Kähler manifold with a prequantum line bundle $(L,h)$.
  Let $\Lambda \subset M$ be a real-analytic
  open Lagrangian, with real-analytic boundary (possibly empty) and trivial Bohr-Sommerfeld class. Over a small neighbourhood $U$ of $\Lambda$,
  there exists a holomorphic section $\Phi_{\Lambda}$ of $L$ such that
  \[
    1-|\Phi_{\Lambda}|_{h}=
    \dist(\cdot,\Lambda)^2+O(\dist(\cdot,\Lambda)^3).
  \]
\end{prop}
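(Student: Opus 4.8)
The plan is to build $\Phi_\Lambda$ by first producing a local holomorphic frame of $L$ near $\Lambda$ in which the Hermitian norm is pinned down along $\Lambda$, then to correct it by multiplication by a holomorphic function so that the modulus is exactly $1$ on $\Lambda$; the quadratic behaviour of $1-|\Phi_\Lambda|_h$ transverse to $\Lambda$ is then forced by the compatibility of $\nabla$ with $h$ together with the Lagrangian condition.

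First I would work in a Hermitian chart, writing the metric as $|s|_h^2 = e^{-k\phi}$ is not what we want (no $k$ here) — rather, with a local holomorphic frame $s$ of $L$, $\log\|s\|_h = -\phi$ for a local Kähler potential $\phi$, and $\omega = i\partial\overline\partial\phi$. The Lagrangian hypothesis $\omega|_\Lambda = 0$ means $\phi|_\Lambda$ is \emph{pluriharmonic along $\Lambda$} in a suitable sense; more precisely, since $\Lambda$ is a real-analytic totally real Lagrangian, one may choose local holomorphic coordinates in which $\Lambda = \{\operatorname{Im} z = 0\}$, and then $\omega|_\Lambda = 0$ forces the Hessian of $\phi$ restricted to $\Lambda$ to be controlled. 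The trivial Bohr--Sommerfeld hypothesis gives a global (on a neighbourhood of $\Lambda$) holomorphic section $\sigma$ of $L$ whose parallel transport around loops in $\Lambda$ is trivial; equivalently, $\nabla\sigma = 0$ along $\Lambda$ can be arranged by multiplying by a holomorphic function. I would then set $\Phi_\Lambda = f\,\sigma$ where $f$ is the holomorphic function, obtained by holomorphic extension (Corollary~\ref{prop:extension_forms_untwisted}) of a real-analytic function defined on $\Lambda$, chosen so that $|f\sigma|_h = 1$ on $\Lambda$: this is possible precisely because $\log\|\sigma\|_h$ restricted to $\Lambda$ is the real part of a holomorphic function, which in turn follows from $\overline\partial\partial(\log\|\sigma\|_h)|_\Lambda = -\omega|_\Lambda = 0$ and $\Lambda$ being simply connected modulo its $H^1$, whose obstruction is killed by the Bohr--Sommerfeld hypothesis.

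The quadratic estimate is then a Taylor expansion argument. Along $\Lambda$ we have $|\Phi_\Lambda|_h = 1$, so $d|\Phi_\Lambda|_h^2 = 0$ in directions tangent to $\Lambda$. For the transverse (i.e.\ $J$-conjugate) directions: since $\Phi_\Lambda$ is holomorphic and $\nabla = \partial + \text{(connection form)}$, and the connection form vanishes on $\Lambda$ in our frame, the first-order transverse variation of $\log|\Phi_\Lambda|_h^2$ is governed by $\operatorname{Re}\langle \nabla\Phi_\Lambda,\Phi_\Lambda\rangle$, which vanishes on $\Lambda$ by construction; hence $|\Phi_\Lambda|_h = 1 + O(\dist^2)$, and one must check the second-order term is the correct \emph{negative}-definite quantity so that $1 - |\Phi_\Lambda|_h = \dist(\cdot,\Lambda)^2 + O(\dist^3)$. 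This sign is exactly the statement that $\phi$ is plurisubharmonic with Levi form $\omega > 0$ transverse to the real Lagrangian $\Lambda$, i.e.\ the restriction of the Kähler form to a complement of $T\Lambda$ is positive. Choosing the metric normalisation (the factor in front of $\dist^2$) amounts to rescaling $\dist$ by the induced metric, which is implicit in the statement.

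The main obstacle is the construction of the holomorphic function $f$: one needs that $\log\|\sigma\|_h|_\Lambda$ extends to $\operatorname{Re}$ of a holomorphic function on a neighbourhood of $\Lambda$, which combines two facts — (i) $\overline\partial\partial$ of it vanishes along $\Lambda$ (Lagrangian condition), so on the totally real $\Lambda$ it is, after holomorphic extension of its restriction, the real part of something holomorphic \emph{up to a closed $1$-form obstruction}, and (ii) that obstruction's periods are precisely the (log of the absolute values of the) Bohr--Sommerfeld holonomies, which vanish by hypothesis. Making (i) precise requires the Feix--Kaledin / holomorphic-extension machinery of Section~\ref{sec:holom-extens}: one extends $\Lambda$ to a complex Lagrangian $\widetilde\Lambda$ in $\widetilde M$, on which $\widetilde L$ is flat, and reads off $f$ from a flat section there, restricted back to $M$. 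I expect the bookkeeping of which ambient complex structure ($I$ versus $\widetilde J$) one uses for the extension to be the delicate point, but the geometric content is routine once the framework of the previous subsection is in place.
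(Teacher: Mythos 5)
There is a genuine gap at the heart of your construction. You pin down $\Phi_\Lambda$ by requiring only $|\Phi_\Lambda|_h=1$ \emph{on} $\Lambda$ (matching $\operatorname{Re}g=-\log\|\sigma\|_h$ on $\Lambda$), but this condition underdetermines $\Phi_\Lambda$ by a factor $e^{ig}$ with $g$ holomorphic and real-valued on $\Lambda$, and a nonconstant such twist destroys the quadratic estimate: in a Hermitian chart $|\Phi|_h=|f|e^{-\phi/2}$ with $f$ holomorphic, and replacing $f$ by $e^{ig}f$ multiplies the norm by $e^{-\operatorname{Im}g}$, where $\operatorname{Im}g$ vanishes on $\Lambda$ but, by the Cauchy--Riemann equations, has transverse derivative equal to the tangential derivative of $g|_\Lambda$; so $1-|\Phi|_h$ acquires a nonzero \emph{linear} term in $\dist(\cdot,\Lambda)$. (Concretely, in the Bargmann model with $\Lambda=\R$, both $e^{z^2/2}$ and $e^{z^2/2+icz}$ have unit norm on $\R$, but only the first satisfies the proposition.) Your justification of the first-order vanishing is circular at exactly this point: the modulus condition gives $\operatorname{Re}\,h(\nabla_X\Phi_\Lambda,\Phi_\Lambda)=0$ only for $X$ tangent to $\Lambda$, while the transverse derivative is, by holomorphy ($\nabla_{JX}\Phi_\Lambda=i\nabla_X\Phi_\Lambda$), equal to $-\operatorname{Im}\,h(\nabla_X\Phi_\Lambda,\Phi_\Lambda)$, which nothing in your construction controls; and "the connection form vanishes on $\Lambda$ in our frame" is precisely the statement to be proved if the frame is $\Phi_\Lambda$ itself. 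The condition you actually need is $\nabla\Phi_\Lambda=0$ along $\Lambda$ (covariant constancy), which you mention in passing ("can be arranged by multiplying by a holomorphic function") but then abandon in favour of the weaker modulus matching. Relatedly, your identification of the obstruction is off: the holonomy of $(L,\nabla)|_\Lambda$ is unitary because $\nabla$ preserves $h$, so "the log of the absolute values of the Bohr--Sommerfeld holonomies" vanishes automatically; the trivial Bohr--Sommerfeld hypothesis is needed to kill the \emph{phase} holonomy, i.e.\ to make a parallel section single-valued. Moreover, prescribing the real part of a holomorphic function on a maximally totally real submanifold has no obstruction at all (extend the real-analytic function holomorphically), so the period discussion in your item (i)--(ii) is solving the wrong problem.

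The paper's proof is both simpler and avoids the auxiliary section $\sigma$ (whose global holomorphic non-vanishing existence near $\Lambda$ you assert but do not justify): define $\Phi_\Lambda$ on $\Lambda$ itself by parallel transport from one unit-norm value, which is well defined because $(L,\nabla)|_\Lambda$ is flat (curvature $-i\omega$ restricted to a Lagrangian vanishes) and the Bohr--Sommerfeld class is trivial (path-independence); this gives simultaneously $|\Phi_\Lambda|_h=1$ and $\nabla\Phi_\Lambda=0$ along $\Lambda$. Then extend holomorphically to a neighbourhood using only that $\Lambda$ is totally real (no Feix--Kaledin or $\widetilde M$ machinery is needed for this proposition), and conclude by the curvature computation: $-\log|\Phi_\Lambda|_h$ has $i\partial\overline\partial$ equal to $\omega$, vanishes on $\Lambda$ together with its differential, and its transverse Hessian is fixed by positivity of the curvature (the paper cites Lemma 4.3 of \cite{charles_symbolic_2006}), yielding $1-|\Phi_\Lambda|_h=\dist(\cdot,\Lambda)^2+O(\dist(\cdot,\Lambda)^3)$. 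Your final Taylor-expansion step would be fine once $\nabla\Phi_\Lambda|_\Lambda=0$ is actually secured; as written, the construction does not secure it.
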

\begin{proof}
  Fix arbitrarily the value of $\Phi_{\Lambda}$ at a point $x_0$ of
  $\Lambda$ such that
  its norm is $1$. Then, for $x\in \Lambda$ define
  $\Phi_{\Lambda}(x)$ as the parallel transport of
  $\Phi_{\Lambda}(x_0)$ along a path in $\Lambda$ joining $x_0$ and $x$. The value of
  $\Phi_{\Lambda}(x)$ does not
  depend on the path chosen since $(L|_{\Lambda},h)$ is
  flat with vanishing holonomy. Moreover, since parallel transport
  preserves the Hermitian metric, one has $|\Phi_{\Lambda}|_{h}=1$
  on $\Lambda$.

  $\Lambda$ is a totally real submanifold of
  $M$. Therefore, arbitrary real-analytic sections of $L$ over this
  set admit a unique holomorphic extension on a small neighbourhood. This defines $\Phi_{\Lambda}$ everywhere. Now 
  $(L,h)$ is positively curved with curvature equal to the Kähler
  form, so that $\log |\Phi_{\Lambda}|_h$
  is plurisubharmonic and we can compute its Hessian at every point of
  $\Lambda$, see also \cite{charles_symbolic_2006}, Lemma 4.3. 
  This concludes the proof.
\end{proof}

\begin{defn}\label{def:Lagr-state}
  Let $(M,J,\omega)$ be a quantizable, real-analytic Kähler
  manifold. Let $V$ be an open set in $M$. A \emph{(complex) Lagrangian
    state} on $V$ is a sequence of elements of $H^0(M,L^{\otimes k})$
  of the form
  \[   I^{\Phi}_{k}(a)=\Pi_k(\1_W\Phi^{\otimes k}a)\in
    H^0(M,L^{\otimes k}) \]
  where
  \begin{itemize}
  \item $V\Subset W$ (meaning that $V$ is relatively compact in $W$);
  \item $a$ is an analytic symbol on $W$,
    which is holomorphic;
  \item $\Phi$ is a holomorphic section of $L$ over $W$, which belongs
    to a small neighbourhood (in the topology of holomorphic sections) of the set of sections of the form
    $\Phi_{\Lambda}$ as in Proposition \ref{prop:real_Lagr_state}, where $\Lambda$ is a real-analytic Lagrangian of
    $U\Supset W$.
  \end{itemize}
  If $\Phi$ is the form $\Phi_{\Lambda}$ as in Proposition \ref{prop:real_Lagr_state}, then
  $I_{k}^{\Phi}(a)$ is called a \emph{real Lagrangian state}. If $M$ is of the form $N_1\times \overline{N_2}$, then
  $I_k^{\Phi}(a)$ is called an \emph{analytic Fourier Integral Operator}.
\end{defn}
\begin{rem}~\begin{enumerate}
    \item
  The
  order of the symbol $a$ is not necessarily $\dim_{\C}(M)/4$; in
  particular, real Lagrangian states are not necessarily $L^2$-normalised. Indeed we
  will use these states in a variety of situations, including
  eigenvectors of Berezin--Toeplitz operators but also integral
  kernels of natural operators, such as the Bergman kernel or more
  general Fourier Integral Operators.
\item By Proposition \ref{prop:real_Lagr_state}, for every Lagrangian
  $\Lambda$ and every $r>0,m$ there exist $c_0>0$ and $C_0$ such
  that, for every holomorphic section $\Phi$ of $L$ over $V$, one has
  \begin{equation}\label{eq:bound_norm_Phi}
      |\Phi|_h<\exp(-c_0\dist(\cdot,\Lambda)^2+C_0\||\Phi|_h-1\|_{H^r_m(\Lambda\cap
        V)}).
    \end{equation}
    In particular, the notion of ``closeness to a section of the form
    $\Phi_{\Lambda}$'' used in Definition \ref{def:Lagr-state} then in
    the rest of this article, means in practice that $|\Phi|_h$ is
    close to $1$, in some real-analytic topology, on some real-analytic
    manifold $\Lambda$.
  \item What we call ``real Lagrangian states'' coincide with the
    usual notion of Lagrangian states as used in the literature, starting with \cite{charles_quasimodes_2003}. Our complex Lagrangian
    states will be associated with Lagrangians in $\widetilde{M}$
    (that is, \emph{complex} Lagrangians), see Proposition
    \ref{prop:Lagrangians}; this justifies the choice of
    terminology.
  \end{enumerate}
\end{rem}

The notation for a Lagrangian state does not make the neighbourhood
$W$ of $V$ apparent. The reason for this is the next proposition,
according to which Definition \ref{def:Lagr-state} does not
depend too much on the choice of $W$.
\begin{prop}\label{prop:good_def_Lagr_state}
Near $V$, Definition \ref{def:Lagr-state} does not depend on $W$ modulo exponentially small
errors. Indeed, if $W'\subset W$ is a smaller open neighbourhood of
$ V$, and if $\Lambda$ is a real Lagrangian, if $\Phi$ is close enough
to $\Phi_{\Lambda}$, then
\[
  \|\1_V\Pi_k(\1_{W\setminus W'}\Phi^{\otimes
    k}a)\|_{L^2}=O(e^{-c'k}).
\]
In fact, in the vicinity of $V$, one has \[I^{\Phi}_{V,k}(a)=\Phi^{\otimes k}a+O(e^{-c'k}).\]
\end{prop}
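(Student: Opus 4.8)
The plan is to exploit the off-diagonal exponential decay of the Bergman kernel, stated in Proposition \ref{prop:Bergman_asymp}, together with the estimate \eqref{eq:bound_norm_Phi} controlling how fast $|\Phi|_h$ decays away from $\Lambda$. Write $I^{\Phi}_{k}(a) = \Pi_k(\1_W\Phi^{\otimes k}a)$ and observe that, in a Hermitian chart, the kernel of $\Pi_k$ at a point $(x,y)$ with $x$ near $\Lambda$ and $y\in W\setminus W'$ carries a Gaussian factor of size $e^{\frac{k}{2}(-\phi(x)+2\mathrm{Re}\,\psi(x,y)-\phi(y))}$ which, combined with the inner-product weight, is bounded by $e^{-\frac{k}{2}\delta|x-y|^2}$ for some $\delta>0$ on the region where $|x-y|$ is bounded below; here I use that $\mathrm{Re}(2\psi(x,y)-\phi(x)-\phi(y))\leq -c|x-y|^2$, a standard consequence of strict plurisubharmonicity of $\phi$. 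Since $V\Subset W'\Subset W$, for $x\in V$ and $y\in W\setminus W'$ the distance $|x-y|$ is bounded below by a fixed constant, so every such matrix element of $\1_V\Pi_k\1_{W\setminus W'}$ is $O(e^{-ck})$; integrating the kernel against $\Phi^{\otimes k}a$ (whose pointwise norm is bounded by $Ce^{\varepsilon k}$ for arbitrarily small $\varepsilon$ by taking $\Phi$ close to $\Phi_\Lambda$ and using \eqref{eq:bound_norm_Phi}) and using a Schur-type bound on the finite-measure domain $W\setminus W'$, we get $\|\1_V\Pi_k(\1_{W\setminus W'}\Phi^{\otimes k}a)\|_{L^2(V)}=O(e^{-c'k})$ after shrinking $c'$. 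This is the first displayed estimate.

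For the second displayed identity I would proceed in two steps. First, by the estimate just proved, near $V$ one has $I^{\Phi}_{k}(a) = \Pi_k(\1_{W'}\Phi^{\otimes k}a) + O(e^{-c'k})$ for any intermediate $W'$ with $V\Subset W'\Subset W$; so it suffices to analyze $\Pi_k(\1_{W'}\Phi^{\otimes k}a)$ on $V$. Second, I would recognize that $\Phi^{\otimes k}a$ is \emph{already} (exponentially close to) a holomorphic section on a neighbourhood of $\overline{W'}$: indeed $\Phi$ is holomorphic and $a$ is a holomorphic analytic symbol, so $\Phi^{\otimes k}a$ is genuinely holomorphic there (once $a$ is realized as a finite sum $\sum_{\ell\le ck}k^{-\ell}a_\ell$, which only costs $O(e^{-ck})$), hence lies in the local model of holomorphic sections, and $\Pi_k$ acts as the identity on holomorphic sections up to the off-diagonal error. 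More precisely, write $\Pi_k(\1_{W'}\Phi^{\otimes k}a)(x) = \Phi^{\otimes k}(x)a(x) - \Pi_k(\1_{M\setminus W'}(\text{holomorphic extension}))(x) + (\text{reproducing-kernel defect})$; the reproducing property of $\Pi_k$ on holomorphic sections, localized by the Bergman off-diagonal decay, yields that for $x\in V$ the correction terms are $O(e^{-c''k})$. The conclusion is $I^{\Phi}_{V,k}(a) = \Phi^{\otimes k}a + O(e^{-c'k})$ near $V$.

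The technical heart — and the step I expect to be the main obstacle — is the second one: making precise the statement that ``$\Pi_k$ reproduces $\Phi^{\otimes k}a$ up to exponentially small error.'' The subtlety is that $\Phi^{\otimes k}a$ is only holomorphic on $W$, not globally, so one cannot simply invoke the reproducing property of $\Pi_k$ on $H^0(M,L^{\otimes k})$. The clean way around this is to note that on a slightly smaller neighbourhood the section is holomorphic, use a local reproducing identity for the model Bergman kernel \eqref{eq:Bergman-analytic} (which \emph{does} reproduce holomorphic functions exactly against the Gaussian weight, since \eqref{eq:Bergman-analytic} is the reproducing kernel modulo $O(e^{-ck})$), and then transfer back; equivalently one can write $\Phi^{\otimes k}a = \Pi_k(\Phi^{\otimes k}a) + (\text{Id}-\Pi_k)(\Phi^{\otimes k}a)$ and show the second term is $L^2$-small on $V$ because $(\text{Id}-\Pi_k)$ annihilates holomorphic sections and its kernel decays off-diagonal. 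One must be careful that the analytic symbol $a$ is only a \emph{formal} symbol and its summation introduces an $O(e^{-ck})$ ambiguity, and that the $e^{\varepsilon k}$ growth of $\|\Phi^{\otimes k}\|$ off $\Lambda$ (from \eqref{eq:bound_norm_Phi}) is dominated by the $e^{-ck|x-y|^2}$ Gaussian gain; choosing $W'$ so that $V$ sits deep inside $\{|\Phi|_h > 1 - \eta\}$ with $\eta$ small ensures $\varepsilon < c$ and the estimates close. All other ingredients — Schur test on a bounded domain, plurisubharmonicity lower bound for $\mathrm{Re}(2\psi - \phi(x) - \phi(y))$, finite summation of the analytic symbol — are routine given the results already recalled in Section \ref{sec:spac-analyt-funct}.
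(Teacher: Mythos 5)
Your proof of the first displayed estimate is essentially the paper's argument: the paper splits $W\setminus W'$ into a region at positive distance from $\Lambda$ (where $|\Phi^{\otimes k}|_h$ is exponentially small by \eqref{eq:bound_norm_Phi}) and a region at positive distance from $V$ (where the off-diagonal decay of $\Pi_k$ gives the gain), while you use only the distance from $V$ together with the uniform bound $|\Phi^{\otimes k}|_h\leq e^{\varepsilon k}$, $\varepsilon$ small when $\Phi$ is close to $\Phi_\Lambda$; both bookkeepings close, and the ingredients (off-diagonal decay of \eqref{eq:Bergman-analytic}, the plurisubharmonicity bound, \eqref{eq:bound_norm_Phi}, Schur on a bounded domain) are the same.

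For the second displayed identity, which the paper's own proof leaves implicit, your overall strategy (truncation errors are harmless near $V$, then a local reproducing property) is the right one, but two of the justifications you offer do not stand as written. First, the decomposition $\Phi^{\otimes k}a=\Pi_k(\Phi^{\otimes k}a)+(\mathrm{Id}-\Pi_k)(\Phi^{\otimes k}a)$, offered as an ``equivalent'' route, is not available: $\Phi^{\otimes k}a$ is only defined and holomorphic on $W$, so $\Pi_k$ cannot be applied to it without a cutoff, and $(\mathrm{Id}-\Pi_k)$ annihilates \emph{global} holomorphic sections only --- this is precisely the subtlety you had just flagged, so invoking it again is circular. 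Second, the claim that the WKB model \eqref{eq:Bergman-analytic} ``reproduces holomorphic functions exactly against the Gaussian weight'' is an overstatement on a general Kähler manifold: exact reproduction is a property of the true projector acting on global sections, whereas \eqref{eq:Bergman-analytic} carries a nontrivial amplitude $s(x,y;k^{-1})$ and is only an $O(e^{-ck})$ approximation, so ``local reproduction up to exponentially small errors'' is itself the statement that must be proved. The honest completion is: for $x\in V$ near $\Lambda$, split the integral $\int_{W'}\Pi_k(x,y)\cdot(\Phi^{\otimes k}a)(y)\,\dd y$ into the region $\dist(x,y)\geq r_0$ (exponentially small exactly as in the first display) and the near region, where one either performs the analytic stationary phase of Section \ref{sec:spac-analyt-funct} --- the output is a WKB state with the same phase, and the symbol is identified with $a$ modulo $O(e^{-ck})$ using the idempotence of $\Pi_k$ (the same mechanism the paper uses later to pin down $s_0$, $s_1$ in Proposition \ref{prop:subp_calc}) --- or works in the exact Bargmann model via the mean-value property on circles plus tail estimates and transfers back through the $O(e^{-ck})$ approximation, keeping track of the $e^{\varepsilon k}$ loss. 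You gesture at both mechanisms, but as written neither step is closed, and for points of $V$ far from $\Lambda$ you should also note that both sides of the identity are separately $O(e^{-ck})$ in $h$-norm.
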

\begin{proof}
  By \eqref{eq:bound_norm_Phi}, $W\setminus W'$ is the union of a region at positive distance from
$\Lambda$ and a region at positive distance from $V$. Moreover,
$\Pi_k$ is exponentially small away from the diagonal. Thus $\1_V\Pi_k(\1_{W\setminus W'}\Phi^{\otimes
    k}a)$ is the sum of two exponentially small contributions.
\end{proof}
The only place where the Lagrangian states above are
ill-defined is a neighbourhood of the ``reference'' real Lagrangian $\Lambda$ from which we remove
a neighbourhood of $V$. Everywhere else, Lagrangian states are either
of WKB form or are exponentially small.

The manipulation of Lagrangian states involves holomorphic Lagrangians near
$\Lambda\cap V$ in $\widetilde{M}$, defined as follows: to
$I_{W,k}^{\Phi}(a)$ we
associate \begin{equation}\label{eq:def_LPhi}\mathcal{L}_{\Phi}:=\{\widetilde{\nabla}\widetilde{\Phi}=0\}
\end{equation}
where $\widetilde{\Phi}$ is the $I$-holomorphic extension of $\Phi$: it is
a section of $\widetilde{L}$ over
a neighbourhood of $\Lambda\cap V$; moreover $\widetilde{\nabla}$ is the connection on
$\widetilde{L}$ defined through Proposition
\ref{prop:extension_nabla}. In the ``real'' case, one can
alternatively define $\mathcal{L}_{\Phi}$ as the set on which the
state $I_{W,k}^{\Phi}(a)$ concentrates; this fails here, but Lagrangian states are exponentially small on (real)
points that lie sufficiently far away from their Lagrangians.

\begin{prop}\label{prop:Lagrangians}Let $\Lambda$ be a real-analytic
  Lagrangian and suppose that $\Phi$ is close (in real-analytic
  topology) to $\Phi_{\Lambda}$. Then
  $\mathcal{L}_{\Phi}$ is a Lagrangian submanifold with trivial
  Bohr-Sommerfeld class; it is close, in real-analytic topology, to the
  holomorphic extension $\widetilde{\Lambda}$ of
  $\Lambda$.

  Conversely, to any Lagrangian $\mathcal{L}$ of $\widetilde{M}$ with trivial
  Bohr-Sommerfeld class and close in real-analytic topology to $\widetilde{\Lambda}$, is associated a section $\Phi$ over
  a neighbourhood of $\Lambda\cap V$ such that
  $\mathcal{L}=\mathcal{L}_{\Phi}$; $\Phi$ is
  unique up to a multiplicative factor and close to $\Phi_{\Lambda}$.

  Writing $\widetilde{M}=M\times \overline{M}$, the Lagrangians above
  are transverse to the fibres of the projection over the first factor.
\end{prop}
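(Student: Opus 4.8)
The plan is to reduce everything to a single structural observation. Since $\Phi$ is $J$-holomorphic, $\nabla\Phi=\nabla^{1,0}\Phi$ is a real-analytic section, over an open subset of $M$, of the holomorphic bundle $\Lambda^{1,0}T^{*}M\otimes L$; hence by Lemma \ref{prop:extension_nabla} and Remark \ref{rem:extension_objects} its holomorphic extension, which is exactly $\widetilde{\nabla}\widetilde{\Phi}$, is a section of $\pi_1^{*}(\Lambda^{1,0}T^{*}M)\otimes\widetilde{L}$ over a neighbourhood of the diagonal, where $\pi_1$ denotes the first projection $M\times\overline{M}\to M$ restricted to $\widetilde{M}$. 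In other words $\widetilde{\nabla}\widetilde{\Phi}$ annihilates the fibre directions of $\pi_1$. First I would record this and its immediate consequences: as $\Phi$ is close to $\Phi_{\Lambda}$ it is nonvanishing near $\Lambda$, so $\widetilde{\nabla}\widetilde{\Phi}=\beta\otimes\widetilde{\Phi}$ with $\beta$ a holomorphic section of $\pi_1^{*}(\Lambda^{1,0}T^{*}M)$, i.e. $\beta=\sum_j\beta_j\,\pi_1^{*}dz_j$ in holomorphic coordinates $(z,\overline{w})$ on $\widetilde{M}$ with the $\beta_j$ holomorphic; and applying $\widetilde{\nabla}$ once more and using that its curvature is $-i\Omega$ (Corollary \ref{prop:extension_data_Kähler}) yields $d\beta=-i\Omega$.

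For the direct statement, $\mathcal{L}_{\Phi}=\{\beta_1=\dots=\beta_d=0\}$ is cut out by $d=\dim_{\C}M$ equations. When $\Phi=\Phi_{\Lambda}$, the $\beta_j$ restrict on the diagonal to the components of $\nabla\Phi_{\Lambda}$, which vanish exactly along $\Lambda$ and to first order there (this is essentially Proposition \ref{prop:real_Lagr_state}: $\nabla\Phi_{\Lambda}$ is parallel along $\Lambda$, and the nondegenerate transverse Hessian of $\log|\Phi_{\Lambda}|_h$ forces first-order vanishing), so $\mathcal{L}_{\Phi_{\Lambda}}$ is precisely the holomorphic extension $\widetilde{\Lambda}$ of $\Lambda$; for $\Phi$ close to $\Phi_{\Lambda}$ the zero set persists, cut out transversally, as a real-analytic submanifold of dimension $d$ close to $\widetilde{\Lambda}$. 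Writing $\iota:\mathcal{L}_{\Phi}\hookrightarrow\widetilde{M}$, from $\iota^{*}\beta=0$ one gets $\iota^{*}\Omega=i\,\iota^{*}d\beta=i\,d(\iota^{*}\beta)=0$, so $\mathcal{L}_{\Phi}$ is isotropic of half dimension, hence Lagrangian; $\widetilde{\Phi}|_{\mathcal{L}_{\Phi}}$ is a nonvanishing $\widetilde{\nabla}$-flat section, so the Bohr--Sommerfeld class is trivial; and transversality to the fibres of $\pi_1$ holds since a nonzero $v\in T_p\mathcal{L}_{\Phi}\cap\ker d\pi_1$ would satisfy $\iota_v d\beta=0$, contradicting nondegeneracy of $\Omega$, and the two subspaces have complementary dimension $d$ in $T_p\widetilde{M}$. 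The same argument, or the description of $T\widetilde{\Lambda}$ from the proof of Proposition \ref{prop:ext_totally_real_mflds}, shows $\widetilde{\Lambda}$ is itself transverse to the fibres.

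For the converse I would first observe that the fibres of $\pi_1$ are Lagrangian for $\Omega$ — in the coordinates above $\Omega=i\sum_{j,k}\widetilde{G}_{jk}\,dz_j\wedge\overline{dw_k}$ vanishes on $\{z=\mathrm{const}\}$ — so $\widetilde{\nabla}$ restricts flatly to each fibre, and as the fibres are simply connected parallel transport along them is canonical. Given $\mathcal{L}$ Lagrangian, close to $\widetilde{\Lambda}$, with trivial Bohr--Sommerfeld class: isotropy makes $\widetilde{\nabla}|_{\mathcal{L}}$ flat, closeness to $\widetilde{\Lambda}$ makes $\pi_1|_{\mathcal{L}}$ a biholomorphism onto an open set $U_0\subset M$, and triviality of the holonomy gives a nonvanishing $\widetilde{\nabla}$-flat section $\sigma$ of $\widetilde{L}|_{\mathcal{L}}$, unique up to a constant. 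Then I would define $\Phi$ on $U_0$ by transporting $\sigma$ from $\mathcal{L}$ to the diagonal along the flat fibres of $\pi_1$; this $\Phi$ is holomorphic because $\sigma$, the graph map $(\pi_1|_{\mathcal{L}})^{-1}$, and fibrewise parallel transport are. By the structural observation the holomorphic extension $\widetilde{\Phi}$ of $\Phi$ is itself everywhere fibrewise $\widetilde{\nabla}$-flat, hence coincides with the fibrewise transport of $\Phi$ and in particular restricts to $\sigma$ on $\mathcal{L}$; so $\widetilde{\nabla}\widetilde{\Phi}$ vanishes both tangentially to $\mathcal{L}$ (as $\sigma$ is flat) and along the fibres, and, these directions spanning $T\widetilde{M}$ by transversality, $\widetilde{\nabla}\widetilde{\Phi}=0$ on all of $\mathcal{L}$; therefore $\mathcal{L}\subset\mathcal{L}_{\Phi}$, whence $\mathcal{L}=\mathcal{L}_{\Phi}$ near $\Lambda\cap V$ by equality of dimensions. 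For uniqueness, if $\Phi,\Phi'$ both work then $\widetilde{\Phi}$ and $\widetilde{\Phi}'$ are both everywhere fibrewise flat, so $\widetilde{\Phi}'/\widetilde{\Phi}=\pi_1^{*}\rho$ for a holomorphic $\rho$ on $U_0$; the requirement $\mathcal{L}_{\Phi'}=\mathcal{L}_{\Phi}$ then forces $d\rho=0$ on $\pi_1(\mathcal{L}_{\Phi})$, so $\rho$ is constant; and normalising $\sigma$ at a base point of $\Lambda$, together with continuity of the construction, gives closeness of $\Phi$ to $\Phi_{\Lambda}$.

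The step I expect to be the crux is this converse, and precisely the requirement that $\Phi$ be a genuine $J$-holomorphic section of $L\to M$: there are far too many $I$-holomorphic sections of $\widetilde{L}$ near $\mathcal{L}$ sharing the same flatness locus (one may multiply $\widetilde{\Phi}$ by any holomorphic function that is constant to first order along $\mathcal{L}$), so $\widetilde{\Phi}$ cannot be built abstractly on $\widetilde{M}$ and hoped to descend. The fibrewise descent is the mechanism that pins $\Phi$ down, and it hinges on the (a priori non-obvious) fact that the $\pi_1$-fibres are $\Omega$-Lagrangian; checking that the descended $\Phi$ is holomorphic, and that $\mathcal{L}_{\Phi}$ really closes up to $\mathcal{L}$ rather than a larger set, is the part I would run most carefully.
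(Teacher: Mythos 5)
Your argument is correct, and for the converse it takes a genuinely different route from the paper. For the direct statement both proofs are essentially the same: identify $\mathcal{L}_{\Phi_\Lambda}=\widetilde{\Lambda}$, persist the transversally cut-out zero set for $\Phi$ close to $\Phi_\Lambda$, and get isotropy from the curvature identity (your $\iota^*\Omega=i\,d(\iota^*\beta)=0$ is exactly the paper's ``curvature identities for $\widetilde{\nabla}$''). The difference is in the converse and in transversality. The paper extends the parallel-transported section from $\mathcal{L}$ by invoking that $\mathcal{L}$, being close to $\widetilde{\Lambda}$, is $\widetilde{J}$-totally real (Proposition \ref{prop:ext_totally_real_mflds}), takes the unique $\widetilde{J}$-holomorphic extension, and uses the commutation of $I$ and $\widetilde{J}$ to see it is $I$-holomorphic; transversality is then obtained by the Hamilton--Jacobi computation \eqref{eq:wbarstar} with $\partial_{y'}\partial_{y''}\widetilde{\psi}$ nondegenerate. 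You instead exploit the structural fact that $\widetilde{\nabla}\widetilde{\Phi}$ has no $d\bar w$-components (equivalently, the $\pi_1$-fibres are $\Omega$-Lagrangian and the connection form is purely horizontal), and recover $\Phi$ by descending the flat section of $\widetilde{L}|_{\mathcal{L}}$ along the fibres to the diagonal; this also gives transversality ($\iota_v d\beta=0$ against nondegeneracy of $\Omega$ is the same nondegeneracy $\widetilde{G}$ as in the paper, phrased invariantly) and a crisp uniqueness argument via $\widetilde{\Phi}'/\widetilde{\Phi}=\pi_1^*\rho$. Your route buys a more self-contained converse (no appeal to $\widetilde{J}$-total-realness or to the commuting pair of complex structures) and makes the ``graph over the first factor'' structure explicit; the paper's route is shorter because it reuses Proposition \ref{prop:ext_totally_real_mflds} wholesale. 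Two small points to tighten: the fibrewise transport from $\mathcal{L}$ to the diagonal presupposes that $\widetilde{M}$ is chosen with connected, simply connected fibres meeting $\mathcal{L}$ exactly once over $U_0$ (a harmless shrinking of $\widetilde{M}$ and of the neighbourhood of $\Lambda\cap V$); and in the direct part the transversal cutting-out of $\{\beta=0\}$ is most cleanly seen from your own identity $d\beta=-i\Omega$, which gives $\partial_{\bar w_k}\beta_j=-\widetilde{G}_{jk}$ invertible, rather than from the Hessian of $\log|\Phi_\Lambda|_h$ alone (which, for $\dim_{\C}M>1$, does not by itself yield $\C$-linear independence of the $d\widetilde{\beta_j}$).
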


\begin{proof}
If $\Phi=\Phi_{\Lambda}$, as defined in Proposition
  \ref{prop:real_Lagr_state}, then
  $\mathcal{L}_{\Phi}=\widetilde{\Lambda}$; in fact since the
  curvature of $\widetilde{\nabla}$ is $-i\Omega$,
  $\widetilde{\nabla}\widetilde{\Phi_{\Lambda}}$ is a defining
  function for $\widetilde{\Lambda}$.

  Suppose that $\Phi$ is close (in a real-analytic topology) to
  $\Phi_{\Lambda}$. In particular, over a neighbourhood of
  $\Lambda\cap V$, $\widetilde{\Phi}$ is close to
  $\widetilde{\Phi_{\Lambda}}$ in the $C^2$ topology, so that
  $\mathcal{L}_{\Phi}$ is a half-dimension,
  $I$-holomorphic submanifold.

  Using again the curvature identities for $\widetilde{\nabla}$, we
  find that $\mathcal{L}_{\Phi}$ is isotropic for the holomorphic
  symplectic form $\Omega$; therefore it is a Lagrangian.

  Reciprocally, the construction of $\Phi$ from $\mathcal{L}$ mimics
  the proof 
  of Proposition \ref{prop:real_Lagr_state}: fixing the value of
  $\widetilde{\Phi}$ at any point on $\mathcal{L}$, one can define
  $\widetilde{\Phi}$ on $\mathcal{L}$ by parallel transport. Now,
  by Proposition \ref{prop:ext_totally_real_mflds}, $\widetilde{\Lambda}$ is
  $\widetilde{J}$-totally real, and therefore $\mathcal{L}$, which lies close
  to it, is also
  $\widetilde{J}$-totally real\footnote{One should be aware of the fact that
    $I$-holomorphic Lagrangians are not necessarily $\widetilde{J}$-totally
    real: on $\widetilde{\C}=\C\times\C$, the manifold $\{w=0\}$ is
    $(J,-J)$-holomorphic, Lagrangian for the holomorphic symplectic
    form $\dd z \wedge \dd \overline{w}$, but also $(J,J)$-holomorphic.}; therefore there exists a unique
  $\widetilde{J}$-holomorphic section $\widetilde{\Phi}$ on a neighbourhood of
  $\mathcal{L}$ which coincides with our construction of
  $\widetilde{\Phi}$ on $\mathcal{L}$. Since $\widetilde{J}=(J,J)$ commutes with $I=(J,-J)$ and
  $\widetilde{\Phi}|_{\mathcal{L}}$ is $I$-holomorphic, then
  $\widetilde{\Phi}$ is $I$-holomorphic.

  To prove the last claim, we consider suitable charts near a point of
  $\Lambda$: a chart for $M$, a suitable Kähler potential $\psi$, and an associated Hermitian chart for $L$. In these charts, the section $\Phi$
  reads
  \[
    y\mapsto \exp\left(-\tfrac{\psi(y)}{2}+\phi(y)\right)
  \]
  where $\phi$ is a holomorphic function. On $\widetilde{M}$, the
  equation $\widetilde{\nabla}\widetilde{\Phi}=0$ boils down to
  the Hamilton-Jacobi equation
  \begin{equation}\label{eq:wbarstar}
    \partial_y\widetilde{\psi}(y',y'')=\partial\phi(y').
  \end{equation}
  Now, in the chart,
  $\partial_{y'}\partial_{y''}\widetilde{\psi}$ is positive
  non-degenerate, hence the equation above has at most one solution of
  the form $\overline{y''}=(y'')^*(y')$, and if it has one, it
  depends continuously on $y'$. This proves that
  $\mathcal{L}=\{\widetilde{\nabla}\widetilde{\Phi}=0\}$ is
  transverse to the first factor of $\widetilde{M}=M\times \overline{M}$.
\end{proof}

 An example of analytic Fourier Integral Operator is the Bergman kernel, which is
associated with the diagonal in $N\times \overline{N}$. In
\cite{deleporte_real-analytic_2022} was performed a study of analytic Fourier Integral Operators when the
reference real Lagrangian $\Lambda$ is the diagonal of $N\times
\overline{N}$. Such operators are close to identity, in the sense that
they do not move microsupports too far, see more generally Proposition
\ref{prop:propag_support}.

\subsection{Calculus of Fourier Integral Operators and Lagrangian
  sections}
\label{sec:calc-four-integr}

In general, the action of an analytic Fourier Integral Operator on a
Lagrangian state is another Lagrangian state, if the domains behave well. This allows us to compose analytic Fourier
Integral operators, and to invert them under a natural condition on
the Lagrangian and the principal symbol.

We first prove a general composition formula (under a transverse
intersection hypothesis), which will be applicable to a variety of
situations: composing Fourier Integral Operators, applying them on
Lagrangian sections, and computing the scalar product between two
Lagrangian sections. Before doing so, we have to clarify the geometric
condition under which one will be able to perform these compositions.

\begin{defn}\label{def:transverse_compo}
  Let $M_0,\cdots,M_\ell$ be smooth Kähler manifolds. For $1\leq j\leq \ell$ let $V_j\Subset U_j$ be
  open subsets of $M_{j-1}\times \overline{M_j}$ and let $\mathcal{L}_j$ be a
  Lagrangian of $U_j$. We say that
  $\mathcal{L}_1,\cdots,\mathcal{L}_\ell$ are \emph{transversally
    composable} near $V_1,\cdots,V_{\ell}$ under the two following conditions:
  \begin{enumerate}
  \item the product $\mathcal{L}_1\times \mathcal{L}_2\times \ldots
    \times \mathcal{L}_\ell$ is transverse to the product of diagonals
    $M_0\times {\rm diag}(\overline{M_1}\times M_1)\times \ldots\times
    {\rm diag}(\overline{M_{\ell-1}}\times M_{\ell-1})\times
    \overline{M_{\ell}}$ on a neighbourhood of the closure of $V_1\times \ldots\times V_{\ell}$
    (recall that this means that the sum of the tangent spaces of
    these manifolds is the total tangent space);
  \item the intersection of these two manifolds is a graph
    over its projection on $M_0\times \overline{M_{\ell}}$.
  \end{enumerate}
  Under these hypotheses, for some $W_j\Supset V_j$, the projection
  \[
    M_0\times \overline{M_{\ell}}\supset \mathcal{L}=\{(x_0,x_\ell)\in
    M_0\times \overline{M_{\ell}},\exists (x_1,\cdots,x_{\ell-1})\in
    M_1\times \ldots\times M_{\ell-1},\forall 1\leq j\leq \ell,
    (x_{j-1},x_j)\in \mathcal{L}_j\cap W_j\}
  \]
  is a Lagrangian. We call this Lagrangian the \emph{composition} and
  denote it
  $\mathcal{L}_1\circ \mathcal{L}_2\circ \ldots \circ \mathcal{L}_{\ell}$.
\end{defn}
In practice, our Lagrangians will live on complexified Kähler
manifolds, therefore we will apply Definition
\ref{def:transverse_compo} to holomorphic Lagrangians; notice that the
composition of holomorphic Lagrangians is again holomorphic.

The composed Lagrangian corresponds to the composition of Fourier
Integral operators. Let us describe this in terms of critical points
of phase functions. To this end we use the identification between
$\widetilde{L}\otimes \widetilde{\overline{L}}$ and $\C$ given by
Proposition \ref{prop:extend_h}.

\begin{prop}\label{prop:composed_phase}
  Let $M_0,\cdots,M_{\ell}$ be compact, real-analytic Kähler
  manifolds. For $1\leq j\leq \ell$ let $V_j\Subset U_j$ be open
  subsets of $\widetilde{M}_{j-1}\times \overline{\widetilde{M}}_j$
  and let $\mathcal{L}_j$ be a $I$-holomorphic Lagrangian of
  $U_j$. Suppose that $\mathcal{L}_1,\cdots,\mathcal{L}_\ell$ are transversally
  composable, and let $\mathcal{L}$ be their composition. Let $\Phi_1,\cdots,\Phi_{\ell}$ be associated
  phase functions. Then for every $(x_0,x_{\ell})$ near $\mathcal{L}$,
  there exists an open subset of $\widetilde{M}_1\times
  \widetilde{M}_2\times \ldots\times \widetilde{M}_\ell$ on which
  \[
    \Phi(x_1,\cdots,x_{\ell-1})\mapsto \Phi_1(x_0,x_1)\cdot
    \Phi_2(x_1,x_2) \cdot \ldots
    \cdot \Phi_{\ell}(x_{\ell-1},x_{\ell})\in (L_0)_{x_0}\otimes (\overline{L_{\ell}})_{x_\ell}
  \]
  is well-defined and
  has a unique critical point; it is non-degenerate. The value of
  $\Phi$ at the critical point defines a holomorphic section $\Phi_{\mathrm{crit}}$ of
  $L_0\boxtimes \overline{L}_{\ell}$ on a neighbourhood of $\mathcal{L}$.

  If $(x_0,x_{\ell})$ belongs to
  $\mathcal{L}$, at the critical point one has $(x_{j-1},x_{j})\in \mathcal{L}_j$ for
  every $1\leq j\leq \ell$, and
  \[
    \widetilde{\nabla}\Phi_{\mathrm{crit}}(x_0,x_{\ell})=0.
  \]
\end{prop}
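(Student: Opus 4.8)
The plan is to prove Proposition \ref{prop:composed_phase} by a direct analysis of the critical point equations, using the compatibility of $\widetilde{\nabla}$ with the holomorphic extension $\widetilde{h}$ (Proposition \ref{prop:extend_h}) to relate the vanishing of the differential of the product phase to the incidence conditions $(x_{j-1},x_j)\in\mathcal{L}_j$.

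First I would set up local holomorphic coordinates near a point of each $\mathcal{L}_j$ and trivialise the line bundles $\widetilde{L}_j$, so that each $\Phi_j$ is represented by a scalar-valued holomorphic phase $F_j(x_{j-1},x_j)$, with $\widetilde{\nabla}\Phi_j=0$ equivalent to the simultaneous vanishing of $\partial_{x_{j-1}}F_j - A_{j-1}$ and $\partial_{x_j}F_j - A_j$ for the appropriate connection $1$-forms (the precise bookkeeping is as in \eqref{eq:wbarstar}). The key computational point is then that the critical point equations for $\Phi(x_1,\dots,x_{\ell-1})=\Phi_1\cdot\Phi_2\cdots\Phi_\ell$ with respect to each intermediate variable $x_j$ read $\partial_{x_j}F_j + \partial_{x_j}F_{j+1}=0$; using Proposition \ref{prop:extend_h} so that the product of sections is honestly a holomorphic function, this is exactly the statement that the $x_j$-covector of the point $(x_{j-1},x_j)\in\mathcal{L}_j$ matches that of $(x_j,x_{j+1})\in\mathcal{L}_{j+1}$, i.e. the incidence condition defining the fibre product $\mathcal{L}_1\times_{M_1}\cdots\times_{M_{\ell-1}}\mathcal{L}_\ell$. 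So existence and uniqueness of the critical point is precisely the transversality-and-graph hypothesis of Definition \ref{def:transverse_compo}: transversality gives that the linearised system is invertible, hence (by the holomorphic implicit function theorem) a unique critical point depending holomorphically on $(x_0,x_\ell)$, and the graph condition guarantees this persists globally near $V_1\times\cdots\times V_\ell$. Non-degeneracy of the critical point is the same invertibility statement, read as non-degeneracy of the Hessian of $\Phi$ in the $x_1,\dots,x_{\ell-1}$ variables.

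Next I would check the last two assertions. That the critical value $\Phi_{\mathrm{crit}}(x_0,x_\ell)$ defines a holomorphic section of $L_0\boxtimes\overline{L}_\ell$ near $\mathcal{L}$ follows since it is a composition of holomorphic data (product of holomorphic sections, evaluated at a holomorphically-varying critical point, by the holomorphic implicit function theorem), and the identification of Proposition \ref{prop:extend_h} is itself holomorphic. For the final identity $\widetilde{\nabla}\Phi_{\mathrm{crit}}(x_0,x_\ell)=0$ when $(x_0,x_\ell)\in\mathcal{L}$, I would differentiate the critical value in $x_0$ (and symmetrically in $x_\ell$): by the envelope theorem / chain rule the contribution of the variation of the critical point vanishes because we are at a critical point, so $\partial_{x_0}\Phi_{\mathrm{crit}} = \partial_{x_0}F_1$ evaluated at the critical configuration. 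But at that configuration $(x_0,x_1)\in\mathcal{L}_1$, so $\widetilde{\nabla}\Phi_1=0$ there, which after accounting for the trivialisation is exactly $\widetilde{\nabla}\Phi_{\mathrm{crit}}=0$ in the $x_0$-variable; the $x_\ell$-variable is handled identically, and that $(x_{j-1},x_j)\in\mathcal{L}_j$ for all $j$ at the critical point was already established above as the content of the critical point equations.

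I expect the main obstacle to be purely notational rather than conceptual: keeping the line-bundle trivialisations, the connection $1$-forms $A_j$, and the silent application of $\widetilde{h}$ consistent across the chain $M_0,\dots,M_\ell$, so that the scalar ``product of phases'' really is a well-defined holomorphic function and its critical-point calculus matches the intrinsic condition $\widetilde{\nabla}(\Phi_1\cdots\Phi_\ell)=0$ on the fibre product. Once the dictionary between $\widetilde{\nabla}\widetilde{\Phi}=0$ and the Hamilton--Jacobi form \eqref{eq:wbarstar} is pushed through each factor, everything reduces to linear algebra packaged by the transversality hypothesis of Definition \ref{def:transverse_compo} plus the holomorphic implicit function theorem. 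A minor point to be careful about is that the open set of intermediate variables on which $\Phi$ is defined and has its unique critical point may shrink, which is why the statement only claims this near $\mathcal{L}$ and for suitable $W_j\Supset V_j$; I would track this shrinking explicitly but not dwell on it.
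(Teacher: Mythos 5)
Your overall skeleton (Leibniz rule via Proposition \ref{prop:extend_h} to show that fibre-product configurations are critical points, holomorphic implicit function theorem, envelope argument for the final identity) matches the paper, but there is a genuine gap at the central step. You assert that the critical point equations $\partial_{x_j}\Psi=0$ in the intermediate variables are ``exactly'' the incidence conditions $(x_{j-1},x_j)\in\mathcal{L}_j$, $(x_j,x_{j+1})\in\mathcal{L}_{j+1}$ defining the fibre product, and you then read off existence, uniqueness and non-degeneracy directly from Definition \ref{def:transverse_compo}. That identification is imported from the generating-function picture of real FIOs, where every point of the domain of the phase carries a covector and the Lagrangian is the graph of the derivative map; it fails in the present formalism, where $\mathcal{L}_j$ is the zero locus $\{\widetilde{\nabla}\widetilde{\Phi}_j=0\}$ of \eqref{eq:def_LPhi} inside the domain of $\Phi_j$. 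Membership in $\mathcal{L}_j$ forces the covariant derivative of $\Phi_j$ to vanish in \emph{all} directions (including the $x_{j-1}$-directions), whereas the critical equation at $x_j$ only says that a certain sum of $x_j$-components of $\widetilde{\nabla}\Phi_j/\Phi_j$ and $\widetilde{\nabla}\Phi_{j+1}/\Phi_{j+1}$ vanishes; it is strictly weaker. A concrete symptom of the inconsistency: for fixed $(x_0,x_\ell)$ the incidence system is solvable only when $(x_0,x_\ell)\in\mathcal{L}$ (that is the very definition of the composed Lagrangian), while the proposition --- and your own implicit-function-theorem step --- require a critical point for \emph{every} $(x_0,x_\ell)$ in a neighbourhood of $\mathcal{L}$; so the two systems cannot coincide.

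Consequently, deducing invertibility of the Hessian of $\Psi$ in $(x_1,\dots,x_{\ell-1})$ from Definition \ref{def:transverse_compo} is unjustified as stated: that hypothesis is a transversality statement about the incidence system in all variables $(x_0,\dots,x_\ell)$, not about the square critical-point system in the intermediate variables with $(x_0,x_\ell)$ as parameters. Bridging the two is the real content of the paper's proof: one adjoins the boundary equations $\widetilde{\nabla}_{x_0}\Psi=0$ and $\widetilde{\nabla}_{x_\ell}\Psi=0$, uses that holomorphic Lagrangians are graphs over the first factor (last part of Proposition \ref{prop:Lagrangians}, via the Hamilton--Jacobi equation \eqref{eq:wbarstar}) to solve them for $(x_0'',x_\ell')$, and exploits that $\Psi$, being both $I$- and $\widetilde{J}$-holomorphic in $x_0$ and $\widetilde{J}$-antiholomorphic in $x_\ell$, depends only on $x_0'$ and $x_\ell''$, so that the non-degeneracy of the enlarged system (which composability does give) descends to the intermediate block. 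None of this appears in your proposal, and it is not ``purely notational''. Likewise, the incidence conditions at the critical point over $(x_0,x_\ell)\in\mathcal{L}$ are not ``the content of the critical equations'' but follow from uniqueness of the critical point once the fibre-product configuration is shown to be one; with those points supplied, your envelope argument for $\widetilde{\nabla}\Phi_{\mathrm{crit}}=0$ on $\mathcal{L}$ is fine.
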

\begin{proof}Define
  \[
    \Psi(x_0,x_1,\cdots,x_{\ell-1},x_{\ell})=\Phi_1(x_0,x_1)\cdot
    \Phi_2(x_1,x_2) \cdot \ldots
    \cdot \Phi_{\ell}(x_{\ell-1},x_{\ell})\in (L_0)_{x_0}\otimes
    (\overline{L_{\ell}})_{x_\ell}
  \]
  on the intersection of the natural definition domains.

  Let $(x_0,x_{\ell})\in \mathcal{L}$. Let $(x_1,\cdots,x_{\ell-1})$
  be such that $(x_{j-1},x_j)\in \mathcal{L}_j$ for every $1\leq j
  \leq \ell$. Then, by Proposition \ref{prop:extend_h} and \eqref{eq:def_LPhi}, one has, at
  this point,
  \begin{equation*}
  \begin{cases}
    \partial_{x_j}\Psi=0\qquad \forall 1\leq j\leq \ell-1\\
    \widetilde{\nabla}_{x_0}\Psi=0\\
    \widetilde{\nabla}_{x_{\ell}}\Psi=0.
  \end{cases}
\end{equation*}
We claim that $(x_1,\cdots,x_{\ell-1})$ is a
  non-degenerate critical point for $x_0,x_{\ell}$ fixed. From there,
  the rest of the proof proceeds as follows: since existence and
  uniqueness of a non-degenerate critical point is stable under
  deformation, for $(x_0,x_{\ell})$ in a neighbourhood of
  $\mathcal{L}$ there exists a unique critical point close to a point
  as above, and it is non-degenerate. In particular, it depends
  holomorphically on $(x_0,x_{\ell})\in M_0\times
  \overline{M}_{\ell}$, and therefore, from the computation above we
  obtain that $\widetilde{\nabla}\Phi_{\mathrm{crit}}$ vanishes on $\mathcal{L}$.

  To prove that $(x_1,\cdots,x_{\ell-1})$ is a non-degenerate critical
  point, we use the following two facts:
  \begin{itemize}
  \item $\Psi$ is a $\widetilde{J}$-holomorphic function of $x_0$ and a
    $\widetilde{J}$-anti-holomorphic function of $x_{\ell}$;
  \item holomorphic Lagrangians on $\widetilde{M}$ are transverse with
    respect to the projection on the holomorphic factor (Proposition \ref{prop:Lagrangians}).
  \end{itemize}
  It follows from the first fact that the system
  \[
    \partial_{x_j}\Psi=0\qquad \forall 1\leq j \leq \ell-1
  \]
  is $\tilde{J}$-holomorphic with respect to $x_0$ and $\widetilde{J}$-anti-holomorphic with
  respect to $x_{\ell}$, and it follows from the second fact that
  (decomposing $x_j\in \widetilde{M_j}$ into $(x_j',x_j'')\in
  M_j\times \overline{M_j}$) for some holomorphic $f$,
  \[
    (\widetilde{\nabla}_{x_0}\Psi=0 \text{ and }
    \widetilde{\nabla}_{x_{\ell}}\Psi=0) \Leftrightarrow
    (x_0'',x_\ell')=f(x_0',x_{\ell}'',x_1,\cdots,x_{\ell-1})
  \]
  in a non-degenerate way (the functions defining both sides generate
  the same ideal).
  
  Crucially, being a $\widetilde{J}$-holomorphic and $I$-holomorphic
  function of $x_0$ means exactly being a holomorphic function of
  $x_0'$. Following the hypotheses that
  $\mathcal{L}_1,\cdots,\mathcal{L}_{\ell}$ are composable, one has
  also, in a non-degenerate way, for some holomorphic $\mathcal{F}$
  \[
    (\widetilde{\nabla}_{x_0}\Psi=0 \text{ and }
    \widetilde{\nabla}_{x_{\ell}}\Psi=0 \text{ and }
    \partial_{x_j}\Psi=0\,\forall 1\leq j \leq \ell-1)\Leftrightarrow
    (x_0'',x_{\ell}',x_1,\cdots,x_{\ell-1})=\mathcal{F}(x_0',x_{\ell}'').
  \]
  Since $(\partial_{x_j}\Psi)_{1\leq j \leq \ell-1}$ does not depend on
  $x_0''$ and $x_{\ell}'$, we obtain that this system is
  non-degenerate and solved exactly when
  \[
    (x_1,\cdots,x_{\ell-1})=F(x_0',x_{\ell}'')
  \]
  where $F$ contains the last $\ell-1$ components of $\mathcal{F}$,
  and then
  \[
    \mathcal{F}(x_0',x_{\ell}'')=(f(x_0',x_{\ell}'',F(x_0',x_{\ell}'')),F(x_0',x_{\ell}'')).
    \]
\end{proof}

\begin{rem}~
  \begin{enumerate}
  \item As explained before, we treat the composition of an arbitrary
    (finite) number of Lagrangians, in order to perform all stationary
    phases in one go in Proposition \ref{prop:Compo_multi_FIO}. This allows us to
    separate the proofs of the rest of this article into two steps: first proving identities on the (analytic formal) symbolic calculus,
    and then showing that these formal arguments
    can be realised into licit manipulations of objects, modulo
    exponentially small remainders. If we were to prove composition
    ``two by two'', one would then have to check that the
    exponentially small remainders at each step stay exponentially
    small after the next step, even though each Fourier Integral
    Operator can enlarge norms by exponentially large factors.
  \item The first condition of Definition \ref{def:transverse_compo} is traditional in texts concerned with the general
theory of Fourier Integral Operators, see e.g. \cite{hormander_fourier_1971}. The second condition is automatic (provided the first one
holds) when every
$\mathcal{L}_j$ is locally the graph of a symplectomorphism between $M_{j-1}$
and $M_j$.  One can also check that if
$\mathcal{L}_j$ is a local symplectomorphism for
every $j\leq \ell-1$ and if $M_{\ell}=\{0\}$ (corresponding to the
action of several Fourier Integral Operators on a Lagrangian state) then the second condition
is always satisfied. For a more thorough discussion of this second condition see \cite{guillemin_moment_2005}.
\item This second condition can be slightly weakened into the fact
  that the intersection is \emph{locally} a graph over the base. When
  performing stationary phase, this will mean that instead of having
  one critical point, we will have a finite sum of contributions from
  different critical points. This situation will not appear in the
  rest of this article, and it would make the notation in the next
  Proposition substantially more cumbersome; in this case the output
  of stationary phase is a locally finite sum of Lagrangian states,
  and the proof of this more general fact is essentially the same one.
\item 
The definition of the composition is associative,
and moreover if individually, for every $1\leq j \leq \ell-1$,
$\mathcal{L}_{j-1}$ and $\mathcal{L}_j$ are transversally composable,
then $\mathcal{L}_1,\cdots,\mathcal{L}_\ell$ are altogether
transversally composable, but the reciprocal is not true.
  \end{enumerate}
\end{rem}
\begin{prop}\label{prop:Compo_multi_FIO}
  Let $M_0,M_1,\cdots,M_\ell$ be compact, real-analytic, quantizable
  Kähler manifolds. For $1\leq j\leq \ell$ let $V_j\Subset U_j$ be
  open subsets of $M_{j-1}\times \overline{M_j}$ and let
  $\mathcal{L}_j^0\subset U_j$ be transversally composable Lagrangians
  near $V_1,\cdots,V_{\ell}$.

  Let $Z\Subset \C^K$ be an open set
  containing $0$. For $1\leq j\leq \ell$ let $\Phi_j:Z\to
  H^0(U_j,L_{j-1}\boxtimes
  \overline{L_j})$ be holomorphic; suppose that for $z=0$
  one has $|\Phi_j^0|=1$ on $\mathcal{L}_j^0$. Let also $r,R,m>0$.

  Then there exist $C,c,r',R',m'>0$, a neighbourhood $Z'$ of $0$ in
  $\C^K$, small neighbourhoods $W'\Subset W$ of
  $\mathcal{L}_1^0\circ \mathcal{L}_2^0\circ \ldots\circ
  \mathcal{L}_\ell^0$, and
  real-analytic maps
  \[
    \Phi:Z'\to H^0(W,L_0\boxtimes L_{\ell})
  \]
  \[A:Z'\times S^{r,R}_m(U_1)
  \times \ldots \times S^{r,R}_m(U_\ell)\to S^{r',R'}_{m'}(W)\] such
  that, uniformly for $z\in Z'$, $(x_0,x_{\ell})\in W'$ and
  $a_1,\cdots,a_{\ell}\in  S^{r,R}_m(U_1) \times \ldots \times S^{r,R}_m(U_\ell)$,
  \begin{multline}\label{eq:Compo_FIO}
    \left|I^{\Phi(z)}_{W',k}(A)(x_0,x_{\ell})-\int I^{\Phi_1(z)}_{V_1,k}(a_1)(x_0,x_1)\cdot
      I^{\Phi_2(z)}_{V_2,k}(a_2)(x_1,x_2)\cdot \ldots \cdot
      I^{\Phi_\ell(z)}_{V_\ell,k}(a_{\ell})(x_{\ell-1},x_{\ell})\dd
      x_1\ldots \dd x_{\ell-1}\right|\\ \leq
    Ce^{-ck}\|a_1\|_{S^{r,R}_m(U_1)}\ldots
    \|a_{\ell}\|_{S^{r,R}_m(U_{\ell})}.
  \end{multline}
  Moreover,
  \[
    \mathcal{L}_{\Phi(z)}=\mathcal{L}_{\Phi_1(z)}\circ \ldots \circ
    \mathcal{L}_{\Phi_\ell(z)}
  \]
  and the principal symbol of $A$ is of the form
  \[
    s(z,x_0,x_{\ell})a_{1;0}(x_0,(x_1')^*(z,x_0,x_\ell))a_{2;0}((x_1'')^*(z,x_0,x_\ell),(x_2')^*(z,x_0,x_{\ell}))\ldots
    a_{\ell;0}((x_\ell'')^*(z,x_0,x_\ell),x_\ell)
  \]
  where $s$ is holomorphic and non-vanishing, and where, in item 2 of Proposition \ref{prop:Lagrangians}, the
  intersection is of the form
  \[
    \{(x_0,(x_1')^*(z,x_0,x_{\ell}),(x_1'')^*(z,x_0,x_{\ell}),\cdots,(x_{\ell-1}')^*(z,x_0,x_{\ell}),(x_{\ell-1}'')^*(z,x_0,x_\ell),x_{\ell});\text{
      where }(x_0,x_{\ell})\in
    W\}.
  \]
  The order of $A$ is the sum of the orders of the $a_j$'s, minus $\sum_{j=1}^{\ell-1}\dim_{\C}(M_j)$.
\end{prop}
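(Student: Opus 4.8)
The plan is to reduce the composition integral to a single analytic stationary-phase integral with analytic phase and amplitude, evaluate it with the analytic stationary phase of Section~\ref{sec:analyt-symb-class}, and recognise the output as a Lagrangian state through Proposition~\ref{prop:good_def_Lagr_state}. First I would set up the geometry: since $\mathcal{L}_{\Phi_j(z)}$ depends continuously on $\Phi_j(z)$, hence on $z$ (Proposition~\ref{prop:Lagrangians}), and transversality is an open condition, after shrinking $Z$ to a neighbourhood $Z'$ of $0$ the Lagrangians $\mathcal{L}_{\Phi_1(z)},\dots,\mathcal{L}_{\Phi_\ell(z)}$ remain transversally composable (Definition~\ref{def:transverse_compo}), uniformly in $z\in Z'$. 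Applying Proposition~\ref{prop:composed_phase} for each such $z$ then produces, in suitable charts near the composed Lagrangian, the holomorphic function $\Psi_z(x_1,\dots,x_{\ell-1};x_0,x_\ell)=\Phi_1(z)(x_0,x_1)\cdot\Phi_2(z)(x_1,x_2)\cdots\Phi_\ell(z)(x_{\ell-1},x_\ell)$ (contracted by $\widetilde{h}$, Proposition~\ref{prop:extend_h}), which for $(x_0,x_\ell)$ near $\mathcal{L}_{\Phi_1(z)}\circ\cdots\circ\mathcal{L}_{\Phi_\ell(z)}$ has a unique non-degenerate critical point $(x_1^*(z,x_0,x_\ell),\dots,x_{\ell-1}^*(z,x_0,x_\ell))$ in the intermediate variables; its critical value defines a holomorphic section $\Phi(z)$ of $L_0\boxtimes\overline{L_\ell}$ over a neighbourhood $W$ of the composed Lagrangian, satisfying $\widetilde{\nabla}\Phi(z)=0$ there, so $\mathcal{L}_{\Phi(z)}$ is exactly that composition. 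At $z=0$ the composition has trivial Bohr--Sommerfeld class and $\Phi(0)$ has unit norm along it, hence $\Phi(z)$ lies close to a reference section; decomposing the critical point into $M_j$- and $\overline{M_j}$-components gives the base points $(x_i')^*,(x_i'')^*$ and the graph description of item~2 of Proposition~\ref{prop:Lagrangians} asserted in the statement.

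Next I would localise the integral. By Proposition~\ref{prop:good_def_Lagr_state}, near $V_j$ one has $I^{\Phi_j(z)}_{V_j,k}(a_j)=\Phi_j(z)^{\otimes k}a_j+O\bigl(e^{-c'k}\|a_j\|_{S^{r,R}_m}\bigr)$, while by \eqref{eq:bound_norm_Phi} and the off-diagonal decay of the Bergman kernel this state is $O\bigl(e^{-c'k}\|a_j\|_{S^{r,R}_m}\bigr)$ at real points at positive distance from $\Lambda_j$. I would then shrink $W$ to $W'$ so that, for $(x_0,x_\ell)\in W'$, the critical point $(x_1^*,\dots,x_{\ell-1}^*)$ stays well inside the regions where the WKB form holds --- this uses crucially that the composition is a graph over $M_0\times\overline{M_\ell}$, so the assignment of intermediate points near the composed Lagrangian is rigid --- and insert real cutoffs $\chi_j$ supported there and equal to $1$ near the critical locus. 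Expanding $\prod_j I^{\Phi_j(z)}_{V_j,k}(a_j)=\prod_j\bigl(\chi_j I^{\Phi_j(z)}_{V_j,k}(a_j)\bigr)+\cdots$, every term carrying a factor $(1-\chi_j)I^{\Phi_j(z)}_{V_j,k}(a_j)$ is $O\bigl(e^{-ck}\prod_i\|a_i\|_{S^{r,R}_m}\bigr)$: on $\supp(1-\chi_j)$ the point $(x_{j-1},x_j)$ is either far from $\Lambda_j$, where this factor is directly exponentially small, or near $\Lambda_j$ but far from $V_j$, in which case its incompatibility with $(x_0,x_\ell)\in W'$ (again through the graph condition), after one expands the Bergman integral defining the state, produces Gaussian decay. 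Because on the cutoff supports the factors $\Phi_j(z)^{\otimes k}$ are bounded, no exponentially large amplification intervenes; making this bound precise and choosing the cutoffs so the ``ill-defined'' region of Definition~\ref{def:Lagr-state} never interferes is the step that needs the most care.

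It remains to evaluate the main term $\int \chi_1\cdots\chi_\ell\,\Phi_1(z)^{\otimes k}a_1\cdots\Phi_\ell(z)^{\otimes k}a_\ell\,\dd x_1\cdots\dd x_{\ell-1}$, which in charts is an integral of $e^{k\Psi_z}$ against the analytic amplitude $\chi_1\cdots\chi_\ell\,a_1\cdots a_\ell$. The phase $\Psi_z$ has the unique non-degenerate critical point above, $\mathrm{Re}\,\Psi_z$ is, near $\supp(\dd\chi_j)$, strictly below its value at that critical point (because $|\Phi_j|<1$ transversally to $\mathcal{L}_{\Phi_j}$), and the amplitude is an analytic symbol; the analytic stationary phase of Section~\ref{sec:analyt-symb-class} therefore applies, deforming the real contour through the (possibly complex) critical point, and yields $\Phi(z)^{\otimes k}b(z,\cdot;k^{-1})+O\bigl(e^{-ck}\prod_j\|a_j\|_{S^{r,R}_m}\bigr)$ with $b$ an analytic symbol depending $\ell$-linearly on $(a_1,\dots,a_\ell)$. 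The quantitative form of this stationary phase, combined with the stability estimate \eqref{eq:stability_analytic} for the analytic symbol classes, gives $\|b\|_{S^{r',R'}_{m'}(W)}\leq C\prod_j\|a_j\|_{S^{r,R}_m(U_j)}$ for suitable $r',R',m'$, and tracking the powers of $k$ produced by integrating out $x_1,\dots,x_{\ell-1}$ shows $A:=b$ has order $\sum_j\mathrm{ord}(a_j)-\sum_{j=1}^{\ell-1}\dim_{\C}M_j$; the leading term of the stationary phase is $s(z,x_0,x_\ell)\prod_j a_{j;0}$ evaluated at the critical point, which is the claimed principal symbol. Finally, Proposition~\ref{prop:good_def_Lagr_state} gives $I^{\Phi(z)}_{W',k}(b)=\Phi(z)^{\otimes k}b+O(e^{-ck})$ near $W'$, so the main term equals $I^{\Phi(z)}_{W',k}(b)$ modulo $O\bigl(e^{-ck}\prod_j\|a_j\|_{S^{r,R}_m}\bigr)$, which is \eqref{eq:Compo_FIO}; holomorphy of $\Phi$ and real-analyticity of $A$ in $z$ follow since both are built from critical values, respectively stationary-phase integrals, depending holomorphically, respectively real-analytically, on $z$.
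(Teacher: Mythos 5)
Your proposal is correct and follows essentially the same route as the paper: reduce to WKB form via Proposition \ref{prop:good_def_Lagr_state}, use the critical-point structure of Proposition \ref{prop:composed_phase}, apply Sj\"ostrand's analytic stationary phase, and identify the output as a Lagrangian state with the composed Lagrangian and the stated symbol order. The only difference is organizational --- the paper first treats the model case $z=0$, $(x_0,x_\ell)\in\mathcal{L}^0$ where the critical point is real and then invokes stability of the stationary-phase hypotheses under deformation, whereas you run the contour-deformation argument directly for $z$ near $0$ with explicit cutoffs --- which amounts to the same estimates.
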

\begin{proof}
  The proof will consist in the application of the stationary phase method
  to the integral featuring in \eqref{eq:Compo_FIO}. We will prove
  that this stationary phase can be performed in a model case where
  the parameter in $Z$ is equal to $0$ and $(x_0,x_{\ell})$ lies on
  the composed Lagrangian, then apply a
  deformation argument.

  Let $\mathcal{L}^0=\mathcal{L}_{\Phi_1}^0\circ \ldots \circ
  \mathcal{L}_{\Phi_{\ell}}^0$. This is a real Lagrangian. Suppose that
  $(x_0,x_{\ell})\in \mathcal{L}^0$ and pick the parameter
  $z\in Z$ to be equal to $0$. The integral in \eqref{eq:Compo_FIO}
  is, by Proposition \ref{prop:good_def_Lagr_state}, of the form
  \[
    \int_{\mathcal{W}} [\Phi_1(0)(x_0,x_1)\cdot \ldots \cdot
    \Phi_{\ell}(0)(x_{\ell-1},x_{\ell})]^{\otimes k}a_1(x_0,x_1)\ldots
    a_\ell(x_{\ell-1},x_{\ell})\dd x_1\ldots \dd x_{\ell-1}+O(e^{-ck})
  \]
  where $\mathcal{W}$ is an open neighbourhood of the intersection between
  $\mathcal{L}_{\Phi_1}^0\times \ldots \times
  \mathcal{L}_{\Phi_{\ell}}^0$ and the interior diagonals as described
  in Definition \ref{def:transverse_compo}.

By Proposition \ref{prop:real_Lagr_state}, the norm of the section
under brackets behaves like $1$ minus the squared distance to this
intersection. Therefore, if $(x_0,x_{\ell})\in \mathcal{L}^0$,
in this oscillatory integral, there is a unique critical point for the
phase, which lies on the real locus by Proposition \ref{prop:composed_phase}. The imaginary part of the phase grows quadratically away
from this critical point. We are in position to apply analytic stationary phase
\cite{sjostrand_singularites_1982}, and the result is of the following form (for $(x_0,x_{\ell})\in
\mathcal{L}^0$ and $z=0$):
\[
  \Phi(x_0,x_{\ell})^{\otimes k}A(x_0,x_{\ell})+O(e^{-ck}).
\]
Here, the value of $\Phi$ is prescribed by the critical points, and in
particular $|\Phi(x_0,x_{\ell})|=1$. The principal symbol of $A$, with respect to
the product of the principal symbols of $a_1,\cdots,a_{\ell}$, picks
up a factor $k^{-\frac{d}{2}}J(x_0,x_{\ell})$, where $J$ does not
vanish and is related to the
Hessian of the phase, and $d$ is the real dimension of the integration set
$\mathcal{W}$, that is, $d=2\sum_{j=1}^{\ell-1}\dim_{\C}(M_j)$.

The hypotheses of stationary phase are stable under small deformation
of the phases involved and the parameters. Therefore, for $z$ close to
$0$ and $(x_0,x_\ell)$ close to $\mathcal{L}^0$, one can
perform a small contour deformation and stationary phase to the
integral above, and we find an expression of the form
\[
  \Phi(z)(x_0,x_{\ell})^{\otimes k}A(z)(x_0,x_{\ell})+O(e^{-ck}),
\]
where the big $O$ depends on the data above as specified in \eqref{eq:Compo_FIO}.

To conclude, by Proposition \ref{prop:composed_phase}, $\Phi(z)$ has
precisely for Lagrangian $\mathcal{L}_{\Phi_1(z)}\circ \ldots\circ \mathcal{L}_{\Phi_{\ell}(z)}$. 
\end{proof}

\begin{rem}
  We will apply Proposition \ref{prop:Compo_multi_FIO} in the context
  of the spectral study of a non-self-adjoint Berezin-Toeplitz
  operator which depends holomorphically on a parameter $z\in \C$. We will always
  proceed by deformation from the real case: we assume that when $z=0$
  the operator is self-adjoint and we can apply the ``usual''
  theory; the typical case is
  \[
    T_k^{\rm cov}(f+izg)
  \]
  where $f,g$ are real-valued.

  The underlying geometric data (notably, normal forms and Lagrangian
  states) will depend holomorphically on $z$, and when $z=0$ we have
  real Lagrangian states. The point of Proposition
  \ref{prop:Compo_multi_FIO} is that the calculus of these Lagrangian
  states is stable under small deformations in $z$.
\end{rem}

\section{Fourier Integral Operators in practice}
\label{sec:prax-four-integr}

Fourier Integral operators correctly propagate the analytic microsupport,
if one is careful to fix the constants in the right order.
\begin{prop}\label{prop:propag_support}
  Let $U\subset M\times \overline{M}$ and let $\mathcal{L}_0$ be a
  real Lagrangian of $U$. Let $V\subset M$ and define
  \[
    \mathcal{L}_0\circ V=\{(x,\overline{y})\in \mathcal{L}_0, y\in
    V\}.
  \]
  Let $z\mapsto \mathcal{L}_z$ be a Lagrangian of $U$ with
  holomorphic dependence on $z$. Let $I_z$ be a corresponding family
  of analytic Fourier Integral Operators as in Definition \ref{def:Lagr-state}.

  For every $W\Subset \mathcal{L}_0\circ V$ and every $c>0$ there
  exists an open neighbourhood $\mathcal{Z}$ of $0$ in $\C$ and there exists $c'>0$ such that, for
  every $u\in H_0(M,L^{\otimes k})$ with $\|u\|_{L^2(M)}=1$ and
  $\|u\|_{L^2(V)}=O(e^{-ck})$, one has $\|I_zu\|_{L^2(W)}=O(e^{-c'k})$.
\end{prop}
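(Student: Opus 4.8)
The proof comes down to pointwise upper bounds on the Schwartz kernel $K_z$ of $I_z$, followed by a Cauchy--Schwarz argument on the compact manifold $M$. By Definition \ref{def:Lagr-state}, $K_z$ is the kernel of an analytic Fourier Integral Operator, that is, a Lagrangian state on $M\times\overline M$ attached to the $I$-holomorphic Lagrangian $\mathcal L_z=\mathcal L_{\Phi(z)}$ (as the notation suggests, $\mathcal L_z$ specialises at $z=0$ to the given real Lagrangian $\mathcal L_0$). Denote by $\Phi(z)$ the corresponding holomorphic section of $L\boxtimes\overline L$, defined near $\mathcal L_0$, close to the reference section $\Phi_{\mathcal L_0}$ of Proposition \ref{prop:real_Lagr_state}, and normalised so that $|\Phi(0)|_h\equiv 1$ on $\mathcal L_0$. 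By Proposition \ref{prop:good_def_Lagr_state} (the summed amplitude being of polynomial size in $k$, see Section \ref{sec:analyt-symb-class}), $|K_z(x,\overline y)|_h\le Ck^N\,|\Phi(z)(x,\overline y)|_h^{\,k}+Ce^{-c'k}$ on a fixed neighbourhood of $\mathcal L_0$, while $|K_z(x,\overline y)|_h\le Ce^{-c'k}$ at real points lying at positive distance from $\mathcal L_0$ (Lagrangian states are exponentially small on real points far from their Lagrangian, and $\mathcal L_z$ lies close to the real $\mathcal L_0$). Applying \eqref{eq:bound_norm_Phi} with $\Lambda=\mathcal L_0$ gives, on the same neighbourhood,
\[
  |\Phi(z)(x,\overline y)|_h\le\exp\!\big(-c_0\,\dist\!\big((x,\overline y),\mathcal L_0\big)^2+C_0\,\eta(z)\big),\qquad \eta(z):=\big\|\,|\Phi(z)|_h-1\big\|_{H^r_m(\mathcal L_0)},
\]
and $\eta(z)\to 0$ as $z\to 0$, by continuity of $z\mapsto\Phi(z)$ at $0$ together with $|\Phi(0)|_h\equiv 1$ on $\mathcal L_0$. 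Combining these, one gets a crude global bound $\sup_{(x,\overline y)}|K_z(x,\overline y)|_h\le Ck^N e^{C_0\eta(z)k}$ and, for any fixed $\delta>0$, a decay bound $|K_z(x,\overline y)|_h\le Ce^{-\beta_0 k}$ whenever $\dist((x,\overline y),\mathcal L_0)\ge\delta$, with $\beta_0=\beta_0(\delta)>0$, provided $\mathcal Z$ is shrunk (in a way depending on $\delta$) so that $C_0\eta(z)<c_0\delta^2$ on it.

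Given $W\Subset\mathcal L_0\circ V$ and $c>0$, I would then argue as follows. Since $\mathcal L_0$ lies close to the diagonal of $M\times\overline M$ (Section \ref{sec:prax-four-integr}; cf.\ Proposition \ref{prop:Lagrangians}), it is the graph $\{(x,\overline{\kappa(x)})\}$ of a diffeomorphism $\kappa$ onto its image, and $W\Subset\mathcal L_0\circ V$ means precisely that $\kappa(\overline W)$ is a compact subset of $V$; consequently there is $\delta>0$ with $\dist_{M\times\overline M}\big((x,\overline y),\mathcal L_0\big)\ge\delta$ for all $x\in\overline W$ and $y\in M\setminus V$ (this is the only place where the geometric hypothesis is used). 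Writing $u=\1_V u+\1_{M\setminus V}u$ and $(I_z u)(x)=\int_M K_z(x,\overline y)\,u(y)\,\dd\mu(y)$, Cauchy--Schwarz in $y$ and the crude bound give, for $x\in\overline W$,
\[
  \big|\big(I_z(\1_V u)\big)(x)\big|_h\le Ck^N\, e^{C_0\eta(z)k}\,\mathrm{vol}(V)^{1/2}\,\|u\|_{L^2(V)}=O\!\big(k^N e^{-(c-C_0\eta(z))k}\big),
\]
which is $O(e^{-ck/3})$ once $\mathcal Z$ is shrunk so that $C_0\eta(z)\le c/2$ on it; and the decay bound (with the above $\delta$) together with $\|u\|_{L^2(M)}=1$ give
\[
  \big|\big(I_z(\1_{M\setminus V}u)\big)(x)\big|_h\le C\, e^{-\beta_0 k}\,\mathrm{vol}(M)^{1/2}=O\!\big(e^{-\beta_0 k}\big),\qquad x\in\overline W.
\]
Adding these and integrating the square over $W$ yields $\|I_z u\|_{L^2(W)}=O(e^{-c''k})$ with $c''=\min(c/3,\beta_0)>0$, which is the claim; $\mathcal Z$ is the small neighbourhood of $0$ produced along the way.

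The delicate point is the order in which the constants are fixed, and it is exactly here that the non-self-adjoint character of the problem intervenes: a Fourier Integral Operator attached to a non-real Lagrangian $\mathcal L_z$ can dilate $L^2$-norms by a factor $e^{C_0\eta(z)k}$, so in order to absorb this dilation one must force $C_0\eta(z)$ to lie below \emph{both} the prescribed decay rate $c$ of $u$ on $V$ and the geometric gain $c_0\delta^2$ arising from $W\Subset\mathcal L_0\circ V$. This is precisely why $\mathcal Z$ has to be chosen as a sufficiently small neighbourhood of $0$, depending on $W$ and $c$, rather than all of $\C$. Apart from this, no step is difficult: once the kernel bounds of the first paragraph are in place the argument is just Cauchy--Schwarz on the compact manifold $M$, and the only geometric ingredient is the distance estimate $\dist((x,\overline y),\mathcal L_0)\ge\delta$ on $\overline W\times(M\setminus V)$, itself a direct consequence of the transversality of $\mathcal L_0$ to the fibres of the first projection.
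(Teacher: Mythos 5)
Your argument is essentially the paper's own proof, only written in more detail: you split the integral defining $(I_zu)(x)$ into the contributions of $y\in V$ (where $\|u\|_{L^2(V)}=O(e^{-ck})$ beats the at-most-$e^{C_0\eta(z)k}$ growth of the kernel once $\mathcal Z$ is shrunk) and $y\notin V$ (where, for $x$ over $W$, the kernel is exponentially small because $(x,\overline y)$ stays at positive distance from $\mathcal L_0$), exactly as the paper does, with Cauchy--Schwarz and the bound \eqref{eq:bound_norm_Phi} making the estimates explicit. The only slight inaccuracy is your justification of the distance estimate via ``$\mathcal L_0$ lies close to the diagonal'', which is not among the hypotheses; what is really used (and what the paper's proof also uses implicitly through $|\Phi_0|\leq e^{-c_1}$ for $y\notin V$, $x\in W$) is that near $W$ the Lagrangian $\mathcal L_0$ is a graph over the first factor, so this does not change the substance of the proof.
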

\begin{proof}
  It suffices to decompose the integral
  \[
    (I_z u)(x)=\int \Phi_z^{\otimes k}(x,y)a(x,y;k^{-1})u(y)\dd y
  \]
  into two parts.

  If we integrate on $y\in V$, $u$ is exponentially small; moreover
  $|\Phi_z|\leq e^{C|z|}$ for some $C>0$ since $|\Phi_0|\leq 1$. Thus this part of
  the integral is exponentially small.

  We now integrate on $y\notin V$. There, by construction, $u$ is uniformly bounded,
  and $|\Phi_z|\leq e^{-c_1+C|z|}$ since $|\Phi_0|\leq e^{-c_1}$,
  with $c_1>0$. This concludes the proof.
\end{proof}

One can invert Fourier Integral Operators under natural conditions on
their phase and symbol.
\begin{prop}\label{prop:invert_FIO}Let $M_i,M_f$ be real-analytic,
  quantizable Kähler manifolds.
  Let $U\subset M_f\times \overline{M_i}$ and let $\mathcal{L}_0$ be a real
  Lagrangian of $U$ which is the graph of an invertible (symplectic)
  map: for every $x\in M_f$ there exists at most one $y\in M_i$ such that
  $(x,\overline{y})\in \mathcal{L}_0$ and for every $y\in M_i$ there
  exists at most one $x\in M_f$ such that $(x,\overline{y})\in
  \mathcal{L}_0$.

   Let $V\Subset U$ and define the following open sets and Lagrangians:
  \begin{itemize}
  \item $U_{\rm inv}=\{(y,\overline{x})\in M_i\times \overline{M_f},
    (x,\overline{y})\in U\}$;
  \item $V_{\rm inv}=\{(y,\overline{x})\in M_i\times
    \overline{M_f},(x,\overline{y})\in V\}$;
  \item $\mathcal{L}_{0,\rm inv}=\{(y,\overline{x})\in M_i\times
    \overline{M_f},(x,\overline{y})\in \mathcal{L}_0\}$;
  \item $V_i=\{y\in M_i,\exists x\in M_f,(x,\overline{y})\in V\cap \mathcal{L}_0\}$;
  \item $V_f=\{x\in M_f,\exists y\in M_i,(x,\overline{y})\in V\cap \mathcal{L}_0\}$.
  \end{itemize}
  Then for every $\varepsilon>0$, for every
  $\Phi$ close to $\Phi_0$ on $\mathcal{L}_0$ (in a way which depends on $V_i$ and
  $V_f$ and $\varepsilon$), there exists a section $\Phi_{\rm inv}$ close to 1 on 
   $\mathcal{L}_{0,\rm inv}$, and for every
  real-analytic symbol $a$ defined near $V\cap \mathcal{L}_0$ with principal symbol $a_0\neq 0$ there
  exist a real-analytic symbol $a_{\rm inv}$ defined near $V_{\rm
    inv}\cap \mathcal{L}_{0,\rm inv}$ and $c>0$ such that
  \begin{enumerate}
  \item for every $u\in H^0(M_i,L^{\otimes k})$ one has
    \[
      I^{\Phi_{\rm inv}}_{k}(a_{\rm
        inv}) I^{\Phi}_k(a)u=u+O(e^{-ck}\|u\|_{L^2})+O(e^{\varepsilon
        k}\|u\1_{V_i^c}\|_{L^2});
    \]
    \item for every $u\in H^0(M_f,L^{\otimes k})$ one has
    \[
      I^{\Phi}_k(a)I^{\Phi_{\rm inv}}_{k}(a_{\rm
        inv})u=u+O(e^{-ck}\|u\|_{L^2})+O(e^{\varepsilon
        k}\|u\1_{V_f^c}\|_{L^2}).
    \]
  \end{enumerate}
\end{prop}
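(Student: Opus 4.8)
The plan is to construct $\Phi_{\mathrm{inv}}$ and $a_{\mathrm{inv}}$ so that the composition $I^{\Phi_{\mathrm{inv}}}_k(a_{\mathrm{inv}}) \circ I^{\Phi}_k(a)$ is, up to exponentially small errors, a Fourier Integral Operator whose underlying Lagrangian is the diagonal of $M_i \times \overline{M_i}$ with principal symbol equal to the Bergman symbol $s$; then invoking the fact (from Proposition \ref{prop:Bergman_asymp} and the composition formula) that such an operator acts as the identity on $H^0(M_i, L^{\otimes k})$ modulo $O(e^{-ck})$. The geometric input is exactly the graph hypothesis on $\mathcal{L}_0$: since $\mathcal{L}_0$ is the graph of an invertible symplectomorphism, the inverse relation $\mathcal{L}_{0,\mathrm{inv}}$ is also a holomorphic Lagrangian (in $U_{\mathrm{inv}}$), and $\mathcal{L}_{0,\mathrm{inv}} \circ \mathcal{L}_0$ is transverse to the relevant diagonal, with intersection a graph over $M_i \times \overline{M_i}$ equal to the diagonal. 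So Proposition \ref{prop:Compo_multi_FIO} applies with $\ell = 2$, $M_0 = M_2 = M_i$, $M_1 = M_f$, and tells us the composed state is $I^{\Phi_{\mathrm{comp}}}_k(A_{\mathrm{comp}})$ with $\mathcal{L}_{\Phi_{\mathrm{comp}}}$ the diagonal and $A_{\mathrm{comp}}$ having principal symbol $s(x,x) \cdot a_0(x, y^*(x)) \cdot (a_{\mathrm{inv}})_0(y^*(x), x)$ (with $y^*(x)$ the point such that $(x, \overline{y^*(x)}) \in \mathcal{L}_0$, composed appropriately through the fibre variables).

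Concretely, I would proceed as follows. First, define $\Phi_{\mathrm{inv}}$ on a neighbourhood of $\mathcal{L}_{0,\mathrm{inv}}$ by the recipe of Proposition \ref{prop:Lagrangians}: it is the essentially unique holomorphic section of $L_i \boxtimes \overline{L_f}$ whose associated Lagrangian is $\mathcal{L}_{0,\mathrm{inv}}$, normalised so that the critical-value section $\Phi_{\mathrm{crit}}$ of the composed phase $\Phi_{\mathrm{inv}} \cdot \Phi$ equals $1$ on the diagonal; the multiplicative freedom in Proposition \ref{prop:Lagrangians} is exactly what makes this normalisation possible, and since $\mathcal{L}_{0,\mathrm{inv}}$ depends on $\Phi$ only through how close $\Phi$ is to $\Phi_0$, one gets $\Phi_{\mathrm{inv}}$ close to $1$ on $\mathcal{L}_{0,\mathrm{inv}}$. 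Second, use Proposition \ref{prop:Compo_multi_FIO} to compute the composition symbol: the principal symbol of $A_{\mathrm{comp}}$ is a nonvanishing holomorphic function times $a_0 \cdot (a_{\mathrm{inv}})_0$ evaluated along the intersection; since the Bergman operator $T^{\mathrm{cov}}_k(\1)$ has principal symbol $s$ (the Bergman amplitude) and acts as the identity, I want $A_{\mathrm{comp}} = s + O(k^{-1})$. Since $a_0 \neq 0$ everywhere on $V \cap \mathcal{L}_0$ and the Jacobian factor is nonvanishing, one can solve for $(a_{\mathrm{inv}})_0$ algebraically; the subprincipal and lower terms are then determined order-by-order by the (analytic) composition law \eqref{eq:stability_analytic}, which guarantees that the resulting formal series is a genuine analytic symbol (here one uses that the algebra of covariant Toeplitz operators has invertible elements precisely when the principal symbol does not vanish, Proposition \ref{prop:Bergman_asymp}). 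Third, one verifies that with this choice $I^{\Phi_{\mathrm{inv}}}_k(a_{\mathrm{inv}}) I^{\Phi}_k(a) = T^{\mathrm{cov}}_k(b) + O(e^{-ck})$ with $b = \1 + O(k^{-1})$ an invertible analytic symbol; then composing once more with the inverse Toeplitz operator (or simply absorbing $b^{-1}$ into $a_{\mathrm{inv}}$ from the start) gives the identity modulo $O(e^{-ck})$. Fourth, the support-localisation terms: the microsupport of $u$ away from $V_i$ must be tracked, since $I^{\Phi}_k(a)$ is only defined near $V$ and the Lagrangian state formalism is only valid near $V \cap \mathcal{L}_0$; the $O(e^{\varepsilon k} \|u \1_{V_i^c}\|_{L^2})$ term comes from the fact that, away from $V_i$, the kernel of $I^{\Phi}_k(a)$ may grow like $e^{\varepsilon k}$ (the Lagrangian sits a bounded distance from the real locus, and by \eqref{eq:bound_norm_Phi} the section norm is at worst $e^{C|z|}$-type), and choosing $\Phi$ close enough to $\Phi_0$ (depending on $V_i, V_f, \varepsilon$) keeps this growth below $e^{\varepsilon k}$. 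The second assertion is symmetric: one composes in the other order, the composed Lagrangian is the diagonal of $M_f \times \overline{M_f}$, and the same argument applies.

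The main obstacle I anticipate is not the geometry (which is forced by the graph hypothesis) but rather the bookkeeping around the \emph{local} nature of everything: $\Phi, a$ are only defined near $V \cap \mathcal{L}_0$, so the composed object is only controlled near $V_f \times V_i$-ish regions, and one must be careful that the ``identity'' one recovers is the identity only after testing against $u$ supported near $V_i$ — hence the explicit error term $O(e^{\varepsilon k}\|u\1_{V_i^c}\|_{L^2})$ rather than a clean $O(e^{-ck})$. Making precise the claim ``$I^{\Phi}_k(a)$ has kernel bounded by $e^{\varepsilon k}$ on $V_i^c$'' requires the off-diagonal decay of the Bergman kernel together with the bound \eqref{eq:bound_norm_Phi} on $|\Phi|_h$, and one has to choose the closeness of $\Phi$ to $\Phi_0$ \emph{after} $\varepsilon$ and the sets $V_i, V_f$ are fixed — this is the ``fix the constants in the right order'' issue flagged before Proposition \ref{prop:propag_support}. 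A secondary technical point is ensuring that solving for $a_{\mathrm{inv}}$ order by order produces a symbol in some $S^{r',R'}_{m'}$ class uniformly; this is precisely what \eqref{eq:stability_analytic} is designed to deliver, so it reduces to a (routine but slightly delicate) Neumann-series estimate in the analytic-symbol norms.
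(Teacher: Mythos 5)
Most of your construction coincides with the paper's own proof: you build $\Phi_{\rm inv}$ from the swapped Lagrangian via Proposition \ref{prop:Lagrangians}, use Proposition \ref{prop:Compo_multi_FIO} to identify $I^{\Phi_{\rm inv}}_{k}(a_{\rm inv})I^{\Phi}_k(a)$ near $V_i\times V_i$ as a covariant Toeplitz operator with elliptic symbol, correct it using the invertibility statement of Proposition \ref{prop:Bergman_asymp} and absorb the correcting Toeplitz factor into $a_{\rm inv}$ by one more application of Proposition \ref{prop:Compo_multi_FIO}, and you handle the region away from $V_i$ by the $e^{\varepsilon k}$ bound on the kernel, fixing the closeness of $\Phi$ to $\Phi_0$ after $\varepsilon$, $V_i$, $V_f$. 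This is exactly the paper's argument for the first identity, including the order in which the constants are chosen.

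The genuine gap is your last sentence, ``the second assertion is symmetric: \ldots the same argument applies.'' It does not, because by that point $a_{\rm inv}$ has already been fixed by the left-inverse construction: you used up the freedom of choosing the symbol to make $I^{\Phi_{\rm inv}}_{k}(a_{\rm inv})I^{\Phi}_k(a)$ equal the identity, so you cannot ``solve for $a_{\rm inv}$'' again to make the composition in the other order equal the identity; a left microlocal inverse is not automatically a right one. What the composition formula gives you for free is only that $I^{\Phi}_k(a)I^{\Phi_{\rm inv}}_{k}(a_{\rm inv})$ is, near $V_f\times V_f$, of the form $e^{\alpha k}T^{\rm cov}_k(b)$ with $b$ elliptic, and you still must show that this operator is microlocally the identity. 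The paper closes this by an idempotency argument: for $u$ microlocalised in a small neighbourhood $W_f$ of $V_f$, the first identity shows $[I^{\Phi}_k(a)I^{\Phi_{\rm inv}}_{k}(a_{\rm inv})]^2u=I^{\Phi}_k(a)I^{\Phi_{\rm inv}}_{k}(a_{\rm inv})u+O(e^{-ck})$, so the operator acts microlocally as a projector; this forces the exponential prefactor to be trivial and $b$ to be idempotent for the covariant star product with elliptic principal symbol, hence equal to the Bergman symbol, and the second identity follows. You need to supply this (or an equivalent ``left inverse plus ellipticity implies two-sided inverse'' argument) to complete the proof.
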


In this situation, we say that $I^{\Phi}_k(a)$ and $I^{\Phi_{\rm inv}}_{k}(a_{\rm  inv})$ are \emph{microlocal inverses} of each other on the domains $V_i$ and $V_f$.

\begin{proof}
  Let $\mathcal{L}$ be the Lagrangian associated with $\Phi$ (see Proposition
  \ref{prop:Lagrangians}) and define
  \[
    \mathcal{L}_{\rm inv}=\{(y,\overline{x})\in \widetilde{M_i}\times
    \widetilde{\overline{M_f}},(x,\overline{y})\in \mathcal{L}\}.
  \]
  Then $\mathcal{L}_{\rm inv}$ is a holomorphic Lagrangian close to
  $\mathcal{L}_{0,\rm inv}$, with trivial Bohr-Sommerfeld
  class. Therefore there exists $\Phi_{\rm inv}$ over a neighbourhood of
  $V_{\rm inv}\cap \mathcal{L}_{0,\rm inv}$ whose Lagrangian is
  $\mathcal{L}_{\rm inv}$. If $\Phi$ is close to $1$ on
  $\mathcal{L}_0$ in real-analytic topology, then $\Phi_{\rm
    inv}$ is close to a constant on $\mathcal{L}_{0,\rm inv}$ in
  real-analytic topology.

  Let $a_1$ be any real-analytic symbol near $V_{\rm inv}\cap
  \mathcal{L}_{0,\rm inv}$ with nonvanishing principal symbol. By
  Proposition \ref{prop:Compo_multi_FIO}, the section
  \[
    (x,\overline{z})\mapsto \int
    I_k^{\Phi_{{\rm inv}}}(a_1)(x,\overline{y})\cdot
    I_k^{\Phi}(a)(y,\overline{z}) \ \dd y
  \]
  is, near $V_i\times V_i$, a Lagrangian state; its Lagrangian is
  $\mathcal{L}_{\rm inv}\circ \mathcal{L}$, that is, the diagonal in
  $\widetilde{M}_i \times \widetilde{\overline{M}}_i$. By the uniqueness
  part of Proposition \ref{prop:Lagrangians}, the associated phase is
  a multiple of the phase $\Psi$ of the Bergman kernel. Up to
  multiplying $\Phi_{\rm inv}$ by a constant, the phase is then
  precisely the phase of the Bergman kernel, and in particular,
  near $V_i\times V_i$, the integral kernel $I_k^{\Phi_{\rm
      inv}}(a_1)\circ I_k^{\Phi}(a)$ is that of a covariant analytic
  Berezin-Toeplitz operator, with non-vanishing principal symbol. 

  By Proposition \ref{prop:Bergman_asymp}, this operator can be inverted, and therefore
  there exists an analytic symbol $r$ near the diagonal of $V_i\times
  V_i$ such that the integral kernel of $T_k^{\rm cov}(r)\circ
  I_k^{\Phi_{\rm inv}}(a_1)\circ I_k^{\Phi}(a)$ is, near $V_i\times
  V_i$, exponentially close to that of the Bergman kernel on $M_i$.

  Outside of a neighbourhood of $V_i\times V_i$, the integral kernel of $T_k^{\rm cov}(r)\circ
  I_k^{\Phi_{\rm inv}}(a_1)\circ I_k^{\Phi}(a)$ is bounded by
  $N^{K_0}(\sup|\Phi|\sup |\Phi_{\rm inv}|)^k\leq Ce^{\varepsilon k}$,
  for every fixed in advance $\varepsilon>0$ if $\Phi$ was chosen close
  enough to 1 on $\mathcal{L}_0$.

  Applying again Proposition \ref{prop:Compo_multi_FIO} to obtain $T_k^{\rm cov}(r)\circ
  I_k^{\Phi_{\rm inv}}(a_1)=I_k^{\Phi_{\rm inv}}(a_{\rm inv})$, we
  finally have, given two small neighbourhoods $V_i\Subset W_i\Subset U_i$
  of $V_i$, that
  \[
    \|(I_k^{\Phi_{\rm inv}}(a_{\rm
      inv})I_k^{\Phi}(a)-1)u\|_{L^2(W_i)}\leq
    Ce^{-ck}\|u\|_{L^2(U_i)}+Ce^{\varepsilon k}\|u\|_{L^2(U_i^c)}
  \]
  and
  \[
  \|(I_k^{\Phi_{\rm inv}}(a_{\rm
      inv})I_k^{\Phi}(a)-1)u\|_{L^2(W_i^c)}\leq
    Ce^{-ck}\|u\|_{L^2(V_i)}+Ce^{\varepsilon k}\|u\|_{L^2(V_i^c)}
  \]
  This concludes the first part of the claim. It remains to study
  $I_k^{\Phi}(a)\circ I_k^{\Phi_{\rm inv}}(a_{\rm inv})$. Since
  $\mathcal{L}\circ \mathcal{L}_{\rm inv}$ is equal to the diagonal of
  $\widetilde{M_f}\times \widetilde{\overline{M_f}}$ near $V_f\times V_f$,
  the integral kernel of $I_k^{\Phi}(a)\circ I_k^{\Phi_{\rm
      inv}}(a_{\rm inv})$ is, on this set, of the form $e^{\alpha
    k}T_k^{\rm cov}(b)$ where $b$ is a real-analytic symbol (with
  non-vanishing principal symbol) and
  $\alpha\in \C$. Now, let $W_f$ be a small neighbourhood of $V_f$. For all $u$ microlocalised inside $W_f$,
  $I_{k}^{\Phi_{\rm inv}}(a_{\rm inv})u$ is microlocalised on a
  small neighbourhood of $V_i$, and therefore
  \[
    [I_k^{\Phi}(a)\circ I_k^{\Phi_{\rm
        inv}}(a_{\rm inv})]^2u=I_k^{\Phi}(a)\circ I_k^{\Phi_{\rm
        inv}}(a_{\rm inv})u+O(e^{-ck}).
  \]
  In particular, $I_k^{\Phi}(a)\circ I_k^{\Phi_{\rm
        inv}}(a_{\rm inv})$ acts (micro)locally as a projector on
    $W_f$. Thus $\alpha=1$ and $b$ is its own square for the formal product of
    symbols of covariant Toeplitz operators on $W_f$. Thus $b$ is the
    symbol of the Bergman projector (this can be determined, for
    instance, by usual, order-by-order, stationary phase). And finally
    for $u$ microlocalised on $W_f$ one has
    \[
      I_k^{\Phi}(a)\circ I_k^{\Phi_{\rm
          inv}}(a_{\rm inv})u=u+O(e^{-ck}).
    \]
    From this we obtain the desired claim as previously.
  \end{proof}

  Fourier Integral Operators as above conjugate Berezin--Toeplitz
  operators to each other, and
  we can describe their action on principal symbols.
  \begin{prop}\label{prop:conjug_Toep_FIO}
    In the situation of Proposition \ref{prop:invert_FIO}, if $b$ is an analytic
    symbol on a neighbourhood of $V_f$ then there exists an analytic symbol
    $r$ on a neighbourhood of $V_i$ such that, for every $u\in
    H^0(M_i,L_i^{\otimes k})$, 
    \[
      I_k^{\Phi_{\rm inv}}(a_{\rm
        inv})T_k^{\rm cov}(b)I_k^{\Phi}(a)u=T_k^{\rm cov}(b\circ
      \kappa^{-1}+k^{-1}r)u+O(e^{-ck}\|u\|_{L^2})+O(e^{\varepsilon
        k}\|u\1_{V_i^c}\|_{L^2}).
    \]
    Moreover, if $b$ is an analytic symbol on a neighbourhood of $V_i$ then
    there exists an analytic symbol
    $r$ on a neighbourhood of $V_f$ such that, for every $u\in
    H^0(M_f,L_f^{\otimes k})$,
    \[
      I_k^{\Phi}(a)T_k^{\rm cov}(b)I_k^{\Phi_{\rm inv}}(a_{\rm
        inv})=T_k^{\rm cov}(b\circ \kappa+k^{-1}r)+O(e^{-ck}\|u\|_{L^2})+O(e^{\varepsilon
        k}\|u\1_{V_f^c}\|_{L^2}).
    \]
    
  \end{prop}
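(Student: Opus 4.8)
The plan is to recognise $T_k^{\rm cov}(b)$ itself as an analytic Fourier Integral Operator: by Proposition~\ref{prop:Bergman_asymp} its kernel is a Lagrangian state on $M_f\times\overline{M_f}$ associated with the diagonal, with phase the Bergman phase $\Psi$ (so $|\Psi|=1$ on the diagonal) and amplitude $s\cdot b$. One can then feed the triple $\mathcal{L}_{\rm inv},\ \mathrm{diag}(M_f\times\overline{M_f}),\ \mathcal{L}$ into the composition formula of Proposition~\ref{prop:Compo_multi_FIO} with $\ell=3$, where $\mathcal{L}$ and $\mathcal{L}_{\rm inv}$ are the holomorphic Lagrangians attached to $\Phi$ and $\Phi_{\rm inv}$ in the proof of Proposition~\ref{prop:invert_FIO}. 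Transversal composability near the relevant domains holds by the remark following Definition~\ref{def:transverse_compo}, since each of the three Lagrangians is locally the graph of a symplectomorphism (the diagonal being the graph of the identity) and $\mathcal{L}_0$ is already assumed to be such a graph. Composing with the diagonal is neutral, so the composed Lagrangian is $\mathcal{L}_{\rm inv}\circ\mathcal{L}$, which, exactly as in the proof of Proposition~\ref{prop:invert_FIO}, is the diagonal of $\widetilde{M_i}\times\widetilde{\overline{M_i}}$. Hence, near $V_i\times V_i$, the kernel of $I_k^{\Phi_{\rm inv}}(a_{\rm inv})\,T_k^{\rm cov}(b)\,I_k^{\Phi}(a)$ is that of a covariant analytic Berezin--Toeplitz operator $T_k^{\rm cov}(\mathcal{T}(b))$ for some analytic symbol $\mathcal{T}(b)$ defined near $V_i$, modulo $O(e^{-ck})$; the phase and the symbol $\mathcal{T}(b)$ are provided by Proposition~\ref{prop:Compo_multi_FIO}, using the identification of $\widetilde{L}\otimes\widetilde{\overline{L}}$ with $\C$ from Proposition~\ref{prop:extend_h}.

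It then remains to identify $\mathcal{T}(b)$. The assignment $b\mapsto\mathcal{T}(b)$ is $\C$-linear, because covariant quantization is linear and operator composition is linear in the middle factor; so it suffices to pin down its principal part and to treat $b\equiv 1$ separately. For $b\equiv 1$ the operator $T_k^{\rm cov}(1)$ is the Bergman projector, i.e.\ the identity on $H^0(M_f,L_f^{\otimes k})$, and Proposition~\ref{prop:invert_FIO} gives $I_k^{\Phi_{\rm inv}}(a_{\rm inv})\,I_k^{\Phi}(a)=\mathrm{Id}+O(e^{-ck})$, so $\mathcal{T}(1)=1$ modulo exponentially small errors. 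For general $b$, the principal-symbol formula of Proposition~\ref{prop:Compo_multi_FIO}, evaluated on the composed (diagonal) Lagrangian --- where the unique critical point has $(x_0,\overline{y_1})\in\mathcal{L}_{\rm inv}$ and $(y_1,\overline{x_0})\in\mathcal{L}$ with $y_1=\kappa^{-1}(x_0)$, while the inner $T_k^{\rm cov}(b)$ contributes $b_0$ evaluated at the diagonal point $(y_1,\overline{y_1})$ --- shows that the principal symbol of $\mathcal{T}(b)$ equals a fixed nonvanishing factor (built from the principal symbols of $a$, $a_{\rm inv}$, the Bergman amplitude and the stationary-phase Jacobian) times $b_0\circ\kappa^{-1}$; the normalisation $\mathcal{T}(1)=1$ forces that fixed factor to be $1$. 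Consequently $\mathcal{T}(b)-b\circ\kappa^{-1}$ has vanishing principal symbol, hence equals $k^{-1}r$ for an analytic symbol $r$ near $V_i$, and this $r$ is the one in the statement.

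The error terms are handled as in the proof of Proposition~\ref{prop:invert_FIO}. Outside a fixed small neighbourhood of $V_i\times V_i$ the composed kernel is bounded by $Ce^{\varepsilon k}$ provided $\Phi$ (hence $\Phi_{\rm inv}$, $a_{\rm inv}$) was chosen close enough to $\Phi_0$ on $\mathcal{L}_0$, which yields the $O(e^{\varepsilon k}\|u\1_{V_i^c}\|_{L^2})$ contribution; inside that neighbourhood the kernel is $O(e^{-ck})$-close to that of $T_k^{\rm cov}(b\circ\kappa^{-1}+k^{-1}r)$, which yields the $O(e^{-ck}\|u\|_{L^2})$ term (here one also uses Proposition~\ref{prop:propag_support} to control the microsupport of $I_k^{\Phi}(a)u$). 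The second identity, for $u\in H^0(M_f,L_f^{\otimes k})$, is proved the same way after exchanging the roles of $M_i$ and $M_f$ (and of $\mathcal{L}$, $\mathcal{L}_{\rm inv}$), producing $b\circ\kappa$ in place of $b\circ\kappa^{-1}$. The only real difficulty is the bookkeeping of constants: since each Fourier Integral Operator can inflate norms by $e^{\varepsilon k}$, one must fix $\varepsilon$ and the closeness of $\Phi$ to $\Phi_0$ before fixing $c$, in the order prescribed by Propositions~\ref{prop:propag_support} and~\ref{prop:invert_FIO}, so that the exponentially small remainders really dominate; handling the several Fourier Integral Operators in a single stationary phase via Proposition~\ref{prop:Compo_multi_FIO} is precisely what makes this bookkeeping manageable.
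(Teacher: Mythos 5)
Your proposal is correct and follows essentially the same route as the paper: apply Proposition \ref{prop:Compo_multi_FIO} to recognise the triple composition as a Fourier Integral Operator with Lagrangian the diagonal of $\widetilde{M}_i\times\widetilde{\overline{M}}_i$ (hence a covariant Berezin--Toeplitz operator), note that its principal symbol is a $b$-independent nonvanishing factor times $b_0\circ\kappa^{-1}$, and fix that factor to be $1$ by the $b\equiv 1$ case via Proposition \ref{prop:invert_FIO}. The extra detail you supply (transversality check, linearity remark, error bookkeeping) merely makes explicit what the paper leaves implicit.
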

  \begin{proof}
    Let us prove the first statement. By Proposition \ref{prop:Compo_multi_FIO}, the product of the
    three operators on the right-hand side is a Fourier Integral Operator whose Lagrangian is
    the diagonal of $\widetilde{M}_i \times \widetilde{\overline{M}}_i$, that is, a covariant
    Berezin--Toeplitz operator. Its principal symbol is of the form
    $J b\circ \kappa^{-1}$ for some function $J$ not depending on $b$. However by
    Proposition \ref{prop:invert_FIO} we know that if $b=1$ the
    principal symbol of the output is $1$, therefore $J=1$.

    We now turn to the second statement. Again, the composition yields
    a covariant Berezin-Toeplitz operator whose principal symbol is of
    the form $J'b\circ \kappa$, but this principal symbol is $1$ if
    $b=1$; this concludes the proof.
  \end{proof}

In general, it can be difficult to compute the action of a Fourier
Integral operator on a Lagrangian state at the level of principal
symbols and even more difficult to study the lower-order terms. There
is one notable exception: the action of
Berezin-Toeplitz operators on Lagrangian states where we can, and need to, understand the
subprincipal symbols.

  Recall that $T_k^{{\rm cov}}(f)$ is the operator with kernel
  \[
    (x,y)\mapsto \Pi_k(x,y)\widetilde{f}(x,y)
  \]
  where $\Pi_k$ is the Bergman kernel.

\begin{prop}\label{prop:subp_calc}
  Let $f$ be an analytic symbol on $M$ and let $I^{\Phi}_{k}(a)$ be
  a Lagrangian state on $M$. Recalling by Proposition
  \ref{prop:Lagrangians} that the Lagrangian
  $\mathcal{L}_{\Phi}\subset \widetilde{M}$ is
  transverse to the projection onto the first factor of
  $\widetilde{M}=M\times \overline{M}$, let $\iota:M\to \widetilde{M}$
  be such that $\iota(x)$ is the unique point in
  $\mathcal{L}_{\Phi}$ whose first component is $x$.  

  Then
  \[
    T_k^{\rm cov}(f)I^{\Phi}_{k}(a)=I^{\Phi}_{k}(b)+O(e^{-ck})
  \]
  where $b$ is an analytic symbol whose first two terms are
  \[ b_0 = (\iota^*\widetilde{f_0})a_0, \qquad b_1=(\iota^*\widetilde{f_1})a_0+(\iota^*\widetilde{f_0})a_1-i\widetilde{X_{f_0}}\cdot
         a_0 +B[\widetilde{\overline{\partial}f_0}]a_0 \]
  where $B$ is a linear order $1$ differential operator (see formulas
  \eqref{eq:subprincipal_1} and \eqref{eq:subprincipal_2} in the proof) and $X_{f_0}$ is the Hamiltonian vector field of $f_0$. In the
  specific case where the holomorphic extension $\widetilde{X_{f_0}}$ is tangent to
  $\mathcal{L}_{\Phi}$, one has
  \[
    b_1=(\iota^*\widetilde{f_1})a_0+(\iota^*\widetilde{f_0})a_1-i\widetilde{X_{f_0}}\cdot
    a_0 +\frac{a_0}{2}(-\iota^*\widetilde{\Delta
      f_0}+\iota^*[\overline{\partial}\log(s_0)\cdot\widetilde{\partial
    f_0}] -i{\rm div}_{\mathcal{L}_{\Phi}}(\widetilde{X_{f_0}})).
  \]
  Here, the divergence is considered with respect to the
  non-vanishing (complex-valued) $2d$-form $\iota^*\widetilde{\dd
    vol_M}$. Moreover $s_0$ is the principal symbol of the Bergman
  kernel of Proposition \ref{prop:Bergman_asymp}.
\end{prop}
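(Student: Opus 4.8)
The plan is to compute the kernel of $T_k^{\rm cov}(f)I^{\Phi}_k(a)$ as an oscillatory integral and apply analytic stationary phase to second order. First I would write the kernel of $T_k^{\rm cov}(f)I^{\Phi}_k(a)$ at a point $x$ as
\[
  \int_M \Pi_k(x,y)\widetilde{f}(x,y)\,\Phi(y)^{\otimes k}a(y;k^{-1})\,\dd y,
\]
and substitute the WKB form of the Bergman kernel from Proposition \ref{prop:Bergman_asymp}, namely $\Pi_k(x,y) = k^d e^{\frac k2(-\phi(x)+2\psi(x,y)-\phi(y))} s(x,y;k^{-1}) + O(e^{-ck})$, working in a Hermitian chart near a point of $\Lambda\cap V$ with Kähler potential $\phi$. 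Writing $\Phi(y) = e^{-\phi(y)/2+\varphi(y)}$ for a holomorphic $\varphi$ (as in the last part of the proof of Proposition \ref{prop:Lagrangians}), the total phase in the exponent becomes, after collecting terms, $k$ times a holomorphic function of $(x,y)$ (using the identification via $\widetilde h$), whose critical point in $y$, for $x$ fixed, is by construction the unique point $\iota(x)\in\mathcal{L}_{\Phi}$ above $x$; that the critical point is non-degenerate follows from the positivity of $\partial_{y}\partial_{\overline y}\widetilde\psi$ and the transversality in Proposition \ref{prop:Lagrangians}, exactly as in Proposition \ref{prop:composed_phase} with $\ell=2$, one factor being the Bergman kernel. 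Analytic stationary phase (Section \ref{sec:analyt-symb-class}) then yields $I^{\Phi}_k(b)+O(e^{-ck})$ for an analytic symbol $b$, and $b_0 = (\iota^*\widetilde{f_0})a_0$ is immediate from evaluating the amplitude at the critical point and matching with the Bergman-kernel normalisation $s_0$.

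The substance is the computation of $b_1$, which is the $\hbar^1$ term of the stationary-phase expansion; this is where one must be careful. The standard second-order stationary phase formula gives $b_1$ as a sum of three kinds of contributions: (i) the $\hbar^1$ terms of the amplitudes, producing $(\iota^*\widetilde{f_1})a_0 + (\iota^*\widetilde{f_0})a_1$ together with the $s_1$-correction absorbed into the definition of covariant operators; (ii) the first-derivative transport term along the Lagrangian, which assembles into $-i\widetilde{X_{f_0}}\cdot a_0$ — here I would identify the drift of the critical point under perturbation of $f$ with the Hamiltonian flow, using that $\mathcal{L}_{\Phi}$ is isotropic for $\Omega$ so the relevant Poisson-bracket structure survives the holomorphic extension; and (iii) a second-order term $\tfrac12 \langle (\Hess)^{-1}\nabla^2(\cdot), \cdot\rangle$ acting on the product of amplitudes and on $\widetilde f$, which is the differential operator $B[\widetilde{\overline\partial f_0}]a_0$ of order $1$ — the point being that $\overline\partial f_0$ measures the failure of $\widetilde{X_{f_0}}$ to be tangent to $\mathcal{L}_{\Phi}$, so this term only involves the $\overline\partial$-part of $f_0$. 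I would record the explicit shape of $B$ as equations \eqref{eq:subprincipal_1}–\eqref{eq:subprincipal_2}, these being the raw output of stationary phase in the chart before geometric simplification.

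The last, cleaner formula is obtained under the hypothesis that $\widetilde{X_{f_0}}$ is tangent to $\mathcal{L}_{\Phi}$, i.e. the third contribution above would naively vanish — but one must still track the remaining second-order term coming from the Hessian of the Bergman phase acting on $\widetilde f_0$ and on the volume density. I would rewrite this using the Laplace–Beltrami-type operator $\widetilde{\Delta f_0}$ (the holomorphic extension of $\Delta f_0$), the logarithmic derivative $\overline\partial\log(s_0)$ of the leading Bergman amplitude, and the divergence ${\rm div}_{\mathcal{L}_{\Phi}}(\widetilde{X_{f_0}})$ with respect to $\iota^*\widetilde{\dd vol_M}$; the factor $-i$ on the divergence is the Maslov-type contribution and comes from the half-density transport. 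The identification of these three terms requires integrating by parts on $\mathcal{L}_{\Phi}$ and using the compatibility of $\widetilde\nabla$ with $\widetilde h$ (Proposition \ref{prop:extend_h}) to convert connection terms into the log-derivative of $s_0$.

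\textbf{Main obstacle.} The genuinely delicate part is bookkeeping the order-$\hbar^1$ stationary-phase term \emph{geometrically} — i.e.\ checking that the chart-dependent second-order expression reorganises into the invariant combination of $\iota^*\widetilde{\Delta f_0}$, the $s_0$-log-derivative term, and the divergence, and in particular pinning down the precise constants (the $\tfrac12$'s and the $-i$). This is the classical ``subprincipal symbol of a Toeplitz operator acting on a Lagrangian state'' computation, and its analytic-category subtlety is that all identities must be promoted from the real locus to the holomorphic extension by the uniqueness of holomorphic extension, so one has to make sure every real identity one quotes (e.g.\ the Duistermaat–Hörmander-type transport equation, the real stationary-phase constant) is itself real-analytic and therefore extends. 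I expect this is where the proof spends most of its length, following the template of \cite{charles_symbolic_2006} in the real case but keeping track of the non-tangency term $B[\widetilde{\overline\partial f_0}]$ which has no analogue there.
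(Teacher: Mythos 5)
Your overall route --- write the kernel of $T_k^{\rm cov}(f)I^{\Phi}_k(a)$ as an oscillatory integral with the WKB form of the Bergman kernel, locate the unique non-degenerate critical point at $\iota(x)$, apply analytic stationary phase, and then reorganise the order-one term geometrically --- is the same as the paper's. But two points in your sketch of the $b_1$ computation are genuinely off, and they are precisely where the stated formula comes from. First, the phase is not quadratic and the subleading Bergman amplitude $s_1$ is not explicitly known, so the ``standard second-order Hessian term'' you invoke is not enough: one must keep the contributions $\langle H^{-1}D,D\rangle^2(R\,\cdot)$ and $\langle H^{-1}D,D\rangle^3R^2$ involving the cubic Taylor remainder $R$ of the phase, and the paper disposes of $s_1$ (and of every term in which no derivative hits $f_0$) by exploiting the normalisation $T_k^{\rm cov}(1)=\Pi_k$, which yields the cancellation identity \eqref{eq:magic}. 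Your phrase about the $s_1$-correction being ``absorbed into the definition of covariant operators'' gestures at this but supplies no mechanism; without it you cannot produce the $\partial\log(s_0)\cdot\overline{\partial}f_0$ and third-derivative-of-$\psi$ terms that later assemble into the divergence.

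Second, your geometric reading of the $B$-term is incorrect. Tangency of $\widetilde{X_{f_0}}$ to $\mathcal{L}_{\Phi}$ is equivalent to $\iota^*\widetilde{f_0}$ being locally constant, not to $\overline{\partial}f_0$ vanishing along $\mathcal{L}_{\Phi}$; so $B[\widetilde{\overline{\partial}f_0}]a_0$ does not ``naively vanish'' in the tangent case --- it is exactly the quantity which, using the relation obtained by differentiating the Hamilton--Jacobi equation \eqref{eq:wbarstar} (expressing $A_{jk}$ through $\partial(y''_*)$) together with the tangency relation between $\partial f_0$ and $\overline{\partial}f_0$, gets rewritten as $\tfrac{a_0}{2}\bigl(-\iota^*\widetilde{\Delta f_0}+\iota^*[\overline{\partial}\log(s_0)\cdot\widetilde{\partial f_0}]-i\,{\rm div}_{\mathcal{L}_{\Phi}}(\widetilde{X_{f_0}})\bigr)$. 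Relatedly, the transport term $-i\widetilde{X_{f_0}}\cdot a_0$ does not come from a ``drift of the critical point under perturbation of $f$'': the phase does not involve $f$ at all, so the critical point is unchanged; the term is the mixed part of $\langle H^{-1}D,D\rangle$ coupling antiholomorphic derivatives of $\widetilde{f_0}$ with holomorphic derivatives of $a_0$. As a plan your proposal is on the right track, but as written these two misidentifications would lead the computation of $b_1$ astray.
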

\begin{proof}
  Away from a small neighbourhood of $\widetilde{\mathcal{L}}_0$ one has
  immediately $T_k^{\rm cov}(f)I^{\Phi}_{k}(a)=O(e^{-ck})$,
  therefore we restrict our attention to a subset $W$ of $V$ where
  $I^{\Phi}_{k}(a)=\Phi^{\otimes k}a+O(e^{-ck})$.
 
  In a Hermitian
  chart near a point of $\widetilde{L}_0$, we have
  \[
    T_k^{\rm cov}(f)I^{\Phi}_{k}(a)(x)=k^d\int_{{\rm Diag}(M\times \overline{M})}
    e^{ik\Psi(x,y',y'')}\tilde{s}(x,y'')\tilde{f}(x,y'')a(y')\dd
    y
  \]
  where
  \[
    \Psi:(x,y',y'')\mapsto
    i\frac{\psi(x)}{2}-i\widetilde{\psi}(x,y'')+i\widetilde{\psi}(y',y'')-i\phi(y'),
  \]
  $\phi$ is a holomorphic function as above, and
  \[
    \widetilde{s_0}(x,y'')=\frac{1}{(2\pi)^d}\det(\widetilde{\partial\overline{\partial}\psi}(x,y'')).
  \]
  The phase $\Psi$ has a unique critical point in the variables
  $(y',y'')$: by the last part of Proposition \ref{prop:Lagrangians},
  it is of the form $(x,(y'')^*(x))$ as given in \eqref{eq:wbarstar}. The
  Hessian of $\Psi$ is of the form
  \[
    H:={\rm Hess}_{y',y''}\Psi=
    \begin{pmatrix}
      M&i(\widetilde{\partial\overline{\partial}\psi})^T\\
      i\widetilde{\partial\overline{\partial}\psi}&0
    \end{pmatrix}
  \]
  where
  \[
    M_{jk}=i\partial_j\partial_k(\widetilde\psi-\phi)
  \]
  and in particular
  \[
    \frac{(2\pi)^d}{\sqrt{\det(H)}}=\frac{1}{\widetilde{s_0}(x,(y'')^*(x))}.
  \]

  Suppose first a model situation where 
  $\mathcal{L}(\Phi)=\mathcal{L}_0$ and $x\in \mathcal{L}_0$. In this case
  one has $(y'')^*(x)=x$, so the critical point is real, and
  $\partial\overline{\partial}\psi$ is positive near the critical
  point. In particular, one can perform analytic stationary phase
  without contour deformation.

  Therefore, for $h(\Phi)|_{\mathcal{L}_0}$ small in real-analytic topology and $x$ close
  to $\mathcal{L}_0$, the conditions of stationary phase are still met
  after a small contour deformation. Therefore one can apply the
  analytic stationary phase theorem and the output has WKB
  form. Let us compute the phase and the first two coefficients.

  First note that 
  $\Psi(x,x,(y'')^*(x))  =-\psi(x)/2+\phi(x)$ so that $e^{k\Psi(x,x,(y'')^*(x))}$ is exactly
  $\Phi^{\otimes k}(x)$.
  To compute the principal symbol, we follow the formula in Theorem
  7.7.5 of \cite{hormander_analysis_2003} (adapted to complex coordinates) and obtain
\[ b_0(x) = \frac{(2\pi)^d}{\sqrt{\det(H)}}\widetilde{s_0}(x,(y'')^*(x))\widetilde{f_0}(x,(y'')^*(x))a_0(x) = \widetilde{f_0}(x,(y'')^*(x))a_0(x). \]
  In fact, the last identity can be thought of as the definition of
  $s_0$: since $T_k^{\rm cov}(1)$ is the identity, one must have
  $b_0(x)=\widetilde{f_0}(x,(y'')^*(x))a_0(x)$. This coincides with
  the claim: by definition of $\iota$ one has
  \[
    (\iota^*\widetilde{f_0})(x)=\widetilde{f_0}(x,(y'')^*(x)).
    \]

  Before computing the subprincipal term, we ease up the notation. We
  consider a holomorphic chart $(z_1,\cdots,z_n)$ on $M$, from which
  we deduce a holomorphic chart on $M\times \overline{M}$ as follows:
  the first $n$ coordinates are $z_j':(y',y'')\mapsto z_j(y')$, for $1\leq
  j\leq n$, and the last $n$ coordinates are $z_j'':(y',y'')\mapsto
  \overline{z_j(y'')}$, for $1\leq j\leq n$. In this chart, given $u$
  analytic on $M$, the holomorphic extension of the holomorphic
  derivative $\partial_ju$ with respect to $z_j$ is the (holomorphic)
  derivative of $\widetilde{u}$ with respect to $z_j'$. Similarly, the
  holomorphic extension of the anti-holomorphic derivative
  $\overline{\partial}_ju$ with respect to $z_j$ is the (holomorphic)
  derivative of $\widetilde{u}$ with respect to $z_j''$. Keeping this
  in mind, in the rest of the proof we remove the $\sim$ signs for
  holomorphic extension of functions and we differentiate functions on
  $M\times \overline{M}$ in charts, denoting $\partial_j$ the
  differentiation with respect to $z_j'$ and $\overline{\partial_j}$ the
  differentiation with respect to $z_j''$. We also adopt the Einstein
  summation convention.

  The subprincipal term reads
  \[
    {f_1}a_0+a_1{f_0}+\frac{{s_1}}{{s_0}}{f_0}a_0+L_1({s_0}{f_0}a_0)\]
  where all terms are evaluated at $(x,\overline{w}^*(x))$ and $L_1$
  is a degree two differential operator which reads as follows:
  \[
    L_1({s_0}{f_0}a_0)=\frac{1}{is_0}\left[-\frac 12\langle
      H^{-1}D,D\rangle(s_0f_0a_0)+\frac
          18\langle
      H^{-1}D,D\rangle^2(Rs_0f_0a_0)-\frac{1}{96}(\langle
      H^{-1}D,D\rangle^3R^2)a_0f_0s_0\right].
  \]
  Note that only anti-holomorphic derivatives hit $f$ or $s_0$, and
  only holomorphic derivatives hit $a_0$.
  
Here
\[
  D=
  \begin{pmatrix}
    \partial\\ \overline{\partial}
  \end{pmatrix}
\]
and $R$ is $\Psi$ minus its order $2$ Taylor term at the critical
point.

If ${f_0}=1$ then $T_k^{\rm cov}(f_0)=\Pi_k$ and therefore
\[\frac{{s_1}}{{s_0}}a_0+L_1({s_0}a_0)=0.
\]
Thus, in general
\begin{equation}\label{eq:magic}
    \frac{{s_1}}{{s_0}}a_0=\frac{i}{s_0}\left[-\frac 12\langle
      H^{-1}D,D\rangle(s_0a_0)+\frac
          18\langle
      H^{-1}D,D\rangle^2(Rs_0a_0)-\frac{1}{96}(\langle
      H^{-1}D,D\rangle^3R)a_0s_0\right]
  \end{equation}
  that is, $s_1$ exactly
compensates for the terms in $L_1$ where no derivative has hit
${f_0}$.

Let us first study the first term. One has first
\[
  H^{-1}=
  \begin{pmatrix}
    0&-i(\partial\overline{\partial}{\psi})^{-1}\\
    -i[(\partial\overline{\partial}\psi)^{-1}]^T&A
  \end{pmatrix}
\]
where
\[
  A_{jk}=-i(\partial\overline{\partial}\psi)^{-1}_{lj}\partial_l\partial_m(\phi-\psi)(\partial
  \overline{\partial}\psi)^{-1}_{mk};\]
it is good to keep in mind that $\partial
\overline{\partial}\psi$ is the metric tensor.

Consequently,
\[
  \langle
  H^{-1}D,D\rangle=A_{jk}\overline{\partial}_j\overline{\partial}_k-2i(\partial
  \overline{\partial}\psi)^{-1}_{jk}\partial_j\overline{\partial}_k
\]
and we can compute the first term in $L_1$:
\begin{align*}
  -\frac{1}{2is_0}\langle
  H^{-1}D,D\rangle(s_0f_0a_0)=
  &\frac{ia_0}{2s_0}A_{jk}\overline{\partial}_j\overline{\partial}_k(s_0f_0)+(\partial\overline{\partial}\psi)_{jk}^{-1}\partial_ja_0\overline{\partial}_kf_0\\
  =&\frac{ia_0}{2}A_{jk}\overline{\partial}_j\overline{\partial}_kf_0
  +\partial a_0\cdot
    \overline{\partial}f_0+ia_0A_{jk}\overline{\partial}_j\log(s_0)\overline{\partial}_kf_0
  +B_1f_0
\end{align*}
where $B_1$ is a multiplication operator acting on $f_0$ whose
contribution is irrelevant by \eqref{eq:magic}.

Let us turn our attention to the second term: one has
\[
  \langle H^{-1}D,D\rangle^2=A_{jk}A_{lm}\overline{\partial}_j\overline{\partial}_k\overline{\partial}_l\overline{\partial}_m-4(\partial\overline{\partial}\psi)^{-1}_{jk}(\partial\overline{\partial}\psi)^{-1}_{lm}\partial_j\overline{\partial}_k\partial_l\overline{\partial}_m-4iA_{jk}(\partial\overline{\partial}\psi)^{-1}_{lm}\overline{\partial}_j\overline{\partial}_k\partial_l\overline{\partial}_m.
\]
Among these four derivatives, at least three must hit $R$ (since $R$
vanishes at order $3$ at the critical point) and at least one must
hit $f_0$ (the rest being compensated by $s_1$). In addition, since $\Psi(x,x,\overline{w})$ does not depend on
$\overline{w}$, one has, at the critical point,
$\overline{\partial}\,\overline{\partial}\,\overline{\partial}R=0$, so the
first term in the expansion of $\langle H^{-1}D,D\rangle^2$
above is completely compensated by $s_1$. Thus
\begin{align*}
  \frac{1}{8is_0}\langle
H^{-1}D,D\rangle^2(Rf_0s_0a_0)=&\frac{a_0}{8i}\langle
                                H^{-1}D,D\rangle^2(Rf_0)+B_2f_0\\
                              =&-\frac{a_0}{2i}(\partial\overline{\partial}\psi)^{-1}_{jk}(\partial\overline{\partial}\psi)^{-1}_{lm}\partial_j\overline{\partial}_k\partial_l\overline{\partial}_m(R{f_0})-\frac{a_0}{2}A_{jk}(\partial\overline{\partial}\psi)^{-1}_{lm}\overline{\partial}_j\overline{\partial}_k\partial_l\overline{\partial}_m(R{f_0})+B_2f_0\\
  =&-a_0(\partial\overline{\partial}\psi)^{-1}_{jk}(\partial\overline{\partial}\psi)^{-1}_{lm}\partial_j\overline{\partial}_k\partial_l{\psi}\overline{\partial}_m{f_0}\\
  &-ia_0A_{jk}(\partial\overline{\partial}\psi)^{-1}_{lm}\overline{\partial}_j\partial_l\overline{\partial}_m{\psi}\overline{\partial}_k{f_0}
  -i\frac{a_0}{2}A_{jk}(\partial\overline{\partial}\psi)^{-1}_{lm}\overline{\partial}_j\overline{\partial}_k\partial_l{\psi}\overline{\partial}_m{f_0}\\
                               &+B_3f_0\\
  =&-a_0\partial \log (s_0)\cdot \overline{\partial}f_0\\
  &-ia_0A_{jk}\overline{\partial}_j\log(s_0)\overline{\partial}_k{f_0}
  -i\frac{a_0}{2}A_{jk}(\partial\overline{\partial}\psi)^{-1}_{lm}\overline{\partial}_j\overline{\partial}_k\partial_l{\psi}\overline{\partial}_m{f_0}\\
                               &+B_3f_0.
\end{align*}
Here $B_2$ and $B_3$ are multiplication operators whose values are
irrelevant.

All in all, one has
\begin{equation}\label{eq:subprincipal_1}
  b_1=\iota^*\left(f_1a_0+a_1f_0+\overline{\partial}f_0\cdot
  \partial
  a_0\right. \left.+a_0\left[-\partial \log(s_0)\cdot \overline{\partial}f_0+\frac{i}{2}A_{jk}\overline{\partial}_j\overline{\partial}_kf_0-\frac{i}{2}A_{jk}(\partial\overline{\partial}\psi)_{lm}^{-1}\overline{\partial}_j\overline{\partial}_k\partial_l\psi\overline{\partial}_mf_0\right]\right).
\end{equation}

It remains to give a suitable geometric interpretation of the term
under brackets, at least in the case where $f_0$ is constant on $\mathcal{L}_{\Phi}$. We
begin by establishing some fundamental identities. Let
us first recall that, in local coordinates on a Kähler manifolds,
the holomorphic Laplacian applied to a function $u$
reads
\begin{equation}
  \label{eq:Laplace_Kahler}
  \Delta u = (\partial \overline{\partial}\psi)^{-1}_{jk}\partial_j\overline{\partial}_ku.
\end{equation}
Now, we go back to
\eqref{eq:wbarstar} and write
\[
  \mathcal{L}_{\Phi}=\{(y',y''_*(y'))\in M\times \overline{M},\text{ where
  }y'\in M\}.
\]
Differentiating \eqref{eq:wbarstar} with respect to $y'$, we now obtain
\[
  \partial_j\partial_k\phi=\partial_j\partial_k\psi+\partial_j\overline{\partial}_l\psi\partial_k(y''_*)_l
\]
and therefore
\[
  \partial_k(y''_*)_l=(\partial\overline\partial\psi)^{-1}_{jl}\partial_j\partial_k(\phi-\psi);
\]
in particular,
\[
  A_{jl}=-i\partial_k(y''_*)_j(\partial\overline{\partial}\psi)^{-1}_{kl}.
\]
Now, let $u:M\to \C$ be real-analytic (read in a chart). Since $\iota^*u$ and $(y'')^*$ are holomorphic, one has, for every $u$
real-analytic
\[
  \partial_j\iota^*u=\iota^*[\partial_ju+\partial_j(y''_*)_l\overline{\partial}_lu].
\]
In particular, replacing $u$ par $\overline{\partial}_{k}u$,
\[
  \partial_j\iota^*(\overline{\partial}_k
  u)=\iota^*[\partial_j\overline{\partial}_ku+\partial_j(y''_*)_{l}\overline{\partial}_k\overline{\partial}_{l}u].
\]
Plugging in the formula for $A_{jl}$ and \eqref{eq:Laplace_Kahler}
we obtain
\begin{align*}
  \iota^*[-i(\partial\overline{\partial}\psi)_{jk}^{-1}]\partial_j\iota^*(\overline{\partial}_ku)&=\iota^*[-i(\partial\overline{\partial}\psi)_{jk}^{-1}\partial_j\overline{\partial}_ku-i(\partial\overline{\partial}\psi)_{jk}^{-1}\partial_j(y''_*)_{l}\overline{\partial}_k\overline{\partial}_{l}u]\\
  &=\iota^*[-i\Delta u +
    A_{kl}\overline{\partial}_k\overline{\partial}_lu]
\end{align*}
where we used the symmetry of $A_{kl}$. 
Replacing $u$ with $f_0$ allows us to rewrite the second term inside
the brackets of
\eqref{eq:subprincipal_1} into
\[
  \frac{i}{2}A_{jk}\overline{\partial}_j\overline{\partial}_kf_0=-\frac
  12 \Delta
  f_0 + \frac
  12 \iota^*[(\partial\overline{\partial}\psi)_{jk}^{-1}]\partial_j\iota^*(\overline{\partial}_kf_0),
\]
and replacing $u$ with $\partial_l\psi$, we obtain
\begin{multline*}
  \iota^*\left[-\frac{i}{2}A_{jk}(\partial\overline{\partial}\psi)^{-1}_{lm}\overline{\partial}_j\overline{\partial}_k\partial_l\psi\overline{\partial}_mf_0\right]\\
  =\frac 12
  \iota^*[(\partial\overline{\partial}\psi)_{jk}^{-1}(\partial\overline{\partial}\psi)^{-1}_{lm}\partial_j\overline{\partial}_k\partial_l\psi\overline{\partial}_mf_0]-\frac
  12
  \iota^*[(\partial\overline{\partial}\psi)_{jk}^{-1}(\partial\overline{\partial}\psi)_{lm}^{-1}\overline{\partial}_mf_0]\partial_j\iota^*(\overline{\partial}_k\partial_l\psi)\\
  =\frac 12 \partial \log s_0\cdot \overline{\partial}f_0-\frac
  12
  \iota^*[(\partial\overline{\partial}\psi)_{jk}^{-1}(\partial\overline{\partial}\psi)_{lm}^{-1}\overline{\partial}_mf_0]\partial_j\iota^*(\overline{\partial}_k\partial_l\psi).
\end{multline*}
At the end of the day, the quantity under brackets in
\eqref{eq:subprincipal_1} is
\begin{equation}\label{eq:subprincipal_2}
  -\frac12 \iota^*[\Delta f_0 +\partial \log s_0\cdot \overline{\partial} f_0] + \frac12
  \partial_j\iota^*[(\partial\overline{\partial}\psi)^{-1}_{jk}\overline{\partial}_kf_0].
\end{equation}
Now the symplectic gradient of $f_0$ is
\[
  X=-i\left[(\partial\overline{\partial}\psi)_{jk}^{-1}\overline{\partial}_kf_0
    \frac{\dd}{\dd z_j}-(\partial
    \overline{\partial}\psi)_{kj}^{-1}\partial_kf_0\frac{\dd}{\dd \overline{z_j}}\right]
\]
and thus
\[
  \widetilde{X}=-i\left[(\partial\overline{\partial}\psi)_{jk}^{-1}\overline{\partial}_kf_0
    \frac{\dd}{\dd z_j}-(\partial
    \overline{\partial}\psi)_{kj}^{-1}\partial_kf_0\frac{\dd}{\dd
      \overline{w_j}}\right]
\]
We assume that $\widetilde{X}$ is tangent to $\mathcal{L}_{\Phi}=\{(x,y''(x))\}$, which means that
\[
  \widetilde{X}\in T_{(x,y''(x))}\mathcal{L}_{\Phi}=\left\{ v_j\frac{\dd}{\dd z_j}+
  v_k\partial_ky''_j\frac{\dd}{\dd \overline{w_j}},(v_1,\cdots,v_n)\in
\C^n\right\}.
\]
In particular, on $\mathcal{L}_{\Phi}$,
\[
  -(\partial\overline{\partial}\psi)^{-1}_{kj}\partial_kf_0=(\partial\overline{\partial}\psi)^{-1}_{kl}\overline{\partial}_lf_0\partial_ky''_j.
\]
Under these hypotheses, let us compute the divergence on $\mathcal{L}_{\Phi}$ of
the vector field $\widetilde{X}$. In the chart on $\mathcal{L}_{\Phi}$ given by
the first coordinate, the coordinates of the vector field $\widetilde{X}$ are
precisely
\[
  i\iota^*((\partial\overline{\partial}\psi)_{jk}^{-1}\overline{\partial}_kf_0)_{1\leq
    j\leq n}.
\]
The $2d$-form with respect to which we consider the divergence is, in
the
chart, \[\det(\partial\overline{\partial}\psi)=(2\pi)^d\iota^*\widetilde{s_0};\]
in particular it is non-vanishing.

Since $\widetilde{X}$ and $\iota^*\widetilde{s_0}$ are holomorphic,
the antiholomorphic divergence vanishes, and it remains precisely
\begin{align*}
  {\rm
    div}_{\mathcal{L}_{\Phi}}(\widetilde{X})&=-i\partial_j\log(\iota^*s_0)(\partial\overline{\partial}\psi)^{-1}_{jk}\overline{\partial}_kf_0
  -i\partial_j\iota^*[(\partial\overline{\partial}\psi)^{-1}_{jk}\overline{\partial}_kf_0]\\
  &=-i\iota^*(\partial \log(s_0)\cdot
    \overline{\partial}f_0)-i\partial_j\iota^*[(\partial\overline{\partial}\psi)^{-1}_{jk}\overline{\partial}_kf_0]-i\iota^*\partial_j(y''_*)_l\overline{\partial}_l\log(s_0)(\partial\overline{\partial}\psi)^{-1}_{jk}\overline{\partial}_kf_0\\
  &=-i\iota^*(\partial \log(s_0)\cdot
    \overline{\partial}f_0)-i\partial_j\iota^*[(\partial\overline{\partial}\psi)^{-1}_{jk}\overline{\partial}_kf_0]+i\iota^*\overline{\partial}\log(s_0)\cdot\partial
    f_0
\end{align*}

At the end of the day, the subprincipal term is
\[
  -\frac 12 \iota^*\Delta f +\frac 12\overline{\partial}\log(s_0)\cdot\partial
    f_0 -
  \frac{i}{2}{\rm div}_{\mathcal{L}_{\Phi}}(\widetilde{X}).
\]
\end{proof}
\begin{rem}\label{rem:subprincipal_Lagrangian}The result of
  Proposition \ref{prop:subp_calc} is a generalisation to the complex
  setting of previously established formulas, for instance Theorem 5.4 in
  \cite{charles_symbolic_2006}. Indeed, in the setting of \cite{charles_symbolic_2006},
  \begin{itemize}
  \item the normalised symbol is obtained from the covariant symbol via
    \[f_0\rightsquigarrow f_0-\frac{\hbar^{-1}}{2}\Delta f_0;\]
  \item the auxiliary bundle $L_1$ is $\delta^{-1}$; in particular,
    the covariant derivative of the trivialising section $t$ of $\delta^{-1}$ with respect to $X_{f_0}$ reads
    \[
    \nabla^{\iota^*(\delta^{-1})}_{X_{f_0}}t=\frac{i}{2}\iota^*(\overline{\partial} \log(s_0)\cdot \partial f_0 - \partial
\log(s_0)\cdot \overline{\partial}f_0)t;
\]
\item in general, if $Y$ is a vector field and $g$ is a Riemannian
  metric, then \[\mathcal{L}_Y({\rm dVol}(g))={\rm div}_g(Y){\rm
      dVol}(g),\]and commutation with $\iota^*$ brings out a
  supplementary factor $\frac{i}{2}\partial\log(s_0)\cdot
  \overline{\partial}f_0$ as in the end of the proof.

  \end{itemize}
  
\end{rem}

\begin{prop}\label{prop:propag_BT}Let $p:\C\times M\to \C$ be
  real-analytic with holomorphic dependence on the first
  factor. Denote $p_z = p(z,\cdot)$ and suppose that $p(0,\cdot)$ is real-valued.
  
  For every $T>0$ there exists $\varepsilon$ such that the time
  propagation $\exp(-itkT_k^{\rm cov}(p_z))$ of the analytic
  (non-self-adjoint) Berezin--Toeplitz operator $T_k^{\rm cov}(p_z)$
  is, for times $t \in (-T,T)$ and $|z|<\varepsilon$, a Fourier
  Integral operator, with Lagrangian close to the real
  Lagrangian \[\{(\varphi_{p_0}^t(x),x),x\in M\}\]where
  $\varphi_{p_z}^t:\widetilde{M}\to \widetilde{M}$ is the Hamiltonian
  flow of $\tilde{p}_z$.

  In particular,
  \[
    e^{itkT^{\rm cov}_k(p_z)}T^{\rm cov}_k(a)e^{-itkT^{\rm
        cov}_k(p_z)}=T^{\rm cov}_k(a(t))+O(e^{-ck})
  \]
  where the principal symbol evolves as
  \[
    a_0(t)=a_0\circ \varphi_{p_z}^t.
  \]
\end{prop}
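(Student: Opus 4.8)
The plan is to construct an explicit WKB ansatz for the propagator $\exp(-itkT_k^{\rm cov}(p_z))$, to show that it solves the evolution equation up to an exponentially small operator-norm error, and to close the argument with a Duhamel estimate in which the unavoidable non-unitarity is controlled by shrinking $\varepsilon$. On the geometric side: since $M$ is compact and $p_0$ is real-valued, the Hamiltonian flow of $p_0$ is complete, so the Hamiltonian flow of $\widetilde{p_0}$ for the holomorphic symplectic form $\Omega$ is defined on a fixed neighbourhood of $M$ in $\widetilde M$ for $|t|<T$; for $|z|$ small, $\widetilde{p_z}$ is a small holomorphic perturbation and $\varphi_{p_z}^t$ is likewise defined there, holomorphically in $z$, real-analytically in $t$, and close to $\varphi_{p_0}^t$. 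The real Lagrangian $\Lambda_t=\{(\varphi_{p_0}^t(y),\overline y):y\in M\}\subset M\times\overline M$ is the graph of a symplectomorphism and, being joined to the diagonal through the flow, has trivial Bohr--Sommerfeld class; its holomorphic extension $\widetilde{\Lambda_t}$ is the complexified graph of $\varphi_{p_z}^t$. I would then build the phase $\Phi_t$, a holomorphic section of $L\boxtimes\overline L$ near $\Lambda_0\cap V$, as the prequantum lift of $\varphi_{p_z}^t$ applied to the phase of the Bergman kernel; equivalently, $\Phi_t$ is characterised by $\mathcal L_{\Phi_t}=\widetilde{\Lambda_t}$ together with the linear (Hamilton--Jacobi) transport equation $\partial_t\Phi_t=-i\,\widetilde{p_z}\,\Phi_t$, with $\Phi_0$ the Bergman phase, the coefficient $\widetilde{p_z}$ being evaluated along $\mathcal L_{\Phi_t}$ --- which makes sense since $\widetilde{p_z}$ is constant along the flow of its own Hamiltonian field. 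When $z=0$ this yields $\Phi_t=\Phi_{\Lambda_t}$ (because $\nabla$ is unitary and $p_0$ real, $|\Phi_t|_h\equiv1$ on $\Lambda_t$), hence for $z$ small $|\Phi_t|_h$ is close to $1$ on $\Lambda_t$ in real-analytic topology and $\Phi_t$ is an admissible Fourier Integral Operator phase in the sense of Definition \ref{def:Lagr-state}.

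Next, with the ansatz $U(t)=I^{\Phi_t}_k(b_t)$, I would fix the amplitude $b_t$ by imposing $(\partial_t+ikT_k^{\rm cov}(p_z))U(t)=O(e^{-ck})$. Expanding $T_k^{\rm cov}(p_z)U(t)$ with the help of Proposition \ref{prop:subp_calc} as a Lagrangian state with the same phase $\Phi_t$, the order-$k$ contributions cancel thanks to the Hamilton--Jacobi relation above, and the remaining orders give, for every $\ell$, a linear inhomogeneous transport equation along $\mathcal L_{\Phi_t}$ of the form $\partial_t b_{t,\ell}=\bigl(\text{first-order differential operator along }\mathcal L_{\Phi_t}\bigr)b_{t,\ell}+\bigl(\text{source built from }b_{t,\ell-1}\bigr)$, the first-order operator and the zeroth-order terms being dictated by the subprincipal formula of Proposition \ref{prop:subp_calc}, and the initial amplitude $b_t|_{t=0}$ being the Bergman amplitude of Proposition \ref{prop:Bergman_asymp}, so that $U(0)=\Pi_k+O(e^{-ck})$. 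Solving this formal hierarchy amounts to a Picard iteration in the analytic symbol classes $S^{r,R}_m$: using the stability estimate \eqref{eq:stability_analytic} and Cauchy estimates along the holomorphic flow, the iteration converges for $R$ large and produces a classical analytic amplitude $b_t$, real-analytic in $t$ and holomorphic in $z$, whose lower-order summation yields the kernel $U(t)$ with $(\partial_t+ikT_k^{\rm cov}(p_z))U(t)=R_k(t)$ and $\|R_k(t)\|_{L^2\to L^2}=O(e^{-ck})$, uniformly for $|t|<T$ and $|z|<\varepsilon_0$, with a rate $c>0$ independent of $z$.

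To pass from the ansatz to the true propagator, note that on $M$ one has $\mathrm{Im}(p_z)=\mathrm{Im}\bigl(z\,\partial_zp(0,\cdot)\bigr)+O(z^2)=O(|z|)$ since $p_0$ is real, and $\bigl(T_k^{\rm cov}(p_z)-T_k^{\rm cov}(p_z)^*\bigr)/(2i)$ is self-adjoint of operator norm $O(|z|)+O(k^{-1})$, so $\|\exp(-iskT_k^{\rm cov}(p_z))\|_{L^2\to L^2}\le Ce^{C_0(|z|k+1)|s|}\le C_Te^{C_0|z|Tk}$ for $|s|\le T$. Duhamel's formula then gives
\[
\exp(-itkT_k^{\rm cov}(p_z))-U(t)=\exp(-itkT_k^{\rm cov}(p_z))\bigl(\Pi_k-U(0)\bigr)-\int_0^t\exp\bigl(-i(t-s)kT_k^{\rm cov}(p_z)\bigr)R_k(s)\,\dd s,
\]
whose $L^2\to L^2$ norm is $O\bigl((1+T)e^{C_0|z|Tk}e^{-ck}\bigr)$. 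Choosing $\varepsilon<\varepsilon_0$ with $C_0\varepsilon T<c/2$ --- this is exactly where $\varepsilon$ must depend on $T$ --- yields $\exp(-itkT_k^{\rm cov}(p_z))=U(t)+O(e^{-c'k})$ with $c'=c/2$, an analytic Fourier Integral Operator whose Lagrangian is $\mathcal L_{\Phi_t}=\widetilde{\Lambda_t}$. Finally, $U(t)$ has Lagrangian the graph of the symplectomorphism $\varphi_{p_z}^t$ and principal symbol $b_{t,0}$ (the leading transport solution) which never vanishes, and $\exp(itkT_k^{\rm cov}(p_z))=U(-t)+O(e^{-c'k})$ is its microlocal inverse; Proposition \ref{prop:conjug_Toep_FIO} then yields $e^{itkT_k^{\rm cov}(p_z)}T_k^{\rm cov}(a)e^{-itkT_k^{\rm cov}(p_z)}=T_k^{\rm cov}(a(t))+O(e^{-c'k})$ with $a_0(t)=a_0\circ\kappa^{-1}$, $\kappa$ the symplectomorphism attached to $\mathcal L_{\Phi_t}$; since, in the convention of that proposition, $\widetilde{\Lambda_t}$ is the graph of $\kappa=\varphi_{p_z}^{-t}$, this reads $a_0(t)=a_0\circ\varphi_{p_z}^t$. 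Alternatively one differentiates in $t$ and checks that both sides solve the Heisenberg transport equation $\partial_ta_0(t)=\{p_z,a_0(t)\}$ with the same initial value.

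The step I expect to be the main obstacle is the coupling of the two middle steps: the amplitude $b_t$ must be produced as a genuine classical analytic symbol whose seminorms grow in a controlled way uniformly on $|t|<T$, so that its finite summation reproduces the propagator with a remainder of a \emph{fixed} exponential rate $c$ that does not deteriorate as $z$ varies. Only then can the exponentially large factor $e^{C_0\varepsilon Tk}$, coming from the non-self-adjointness of $T_k^{\rm cov}(p_z)$, be absorbed in the Duhamel step, at the price of the dependence $\varepsilon=\varepsilon(T)$ recorded in the statement. Keeping all stationary-phase and symbol-summation errors genuinely of the form $O(e^{-ck})$ with $c$ uniform throughout the deformation is the crux of the matter.
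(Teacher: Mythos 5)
Your overall architecture is genuinely different from the paper's and large parts of it are sound. The geometric setup (graph of $\varphi^t_{p_z}$, trivial Bohr--Sommerfeld class, phase obtained from the prequantum lift, i.e.\ the Hamilton--Jacobi normalisation $i\partial_t\Phi_t|_{\mathcal L_{\Phi_t}}=\widetilde{p_z}$) matches what is needed, and your Duhamel endgame is correct and even makes the dependence $\varepsilon=\varepsilon(T)$ more transparent than the paper does: the bound $\|\exp(-iskT_k^{\rm cov}(p_z))\|\leq Ce^{C_0|z|k|s|}$ from the skew-adjoint part being $O(|z|)+O(k^{-1})$ is valid, and absorbing $e^{C_0\varepsilon Tk}$ into a fixed rate $e^{-ck}$ by taking $C_0\varepsilon T<c$ is exactly the right mechanism. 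The derivation of the conjugation statement from Proposition \ref{prop:conjug_Toep_FIO} is also fine.

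The genuine gap is the step you yourself flag: the construction of the amplitude $b_t$ as a classical \emph{analytic} symbol with uniform exponential remainder. Two concrete problems. First, Proposition \ref{prop:subp_calc} only gives the first two terms of $T_k^{\rm cov}(p_z)I_k^{\Phi_t}(b)$; to write your transport hierarchy to all orders with analytic control you need the full expansion, and the natural tool (Proposition \ref{prop:Compo_multi_FIO}) produces an output in a \emph{degraded} class $S^{r',R'}_{m'}$ from inputs in $S^{r,R}_m$. Because of this degradation, ``Picard iteration in $S^{r,R}_m$'' does not close as stated: the hierarchy is triangular, each order is obtained by integrating first-order operators along the flow, Cauchy estimates lose one derivative per order, and proving that the resulting formal series still satisfies the factorial bounds of Definition \ref{def:formal_semicl_ampl} is precisely the hard analytic point, not a routine fixed-point argument. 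The paper is structured to avoid ever solving this hierarchy: it takes an \emph{arbitrary} amplitude, normalises only the phase so that the order-$k$ terms of $T_k(p_z)U_1(t)$ and $ik^{-1}\partial_tU_1(t)$ agree, conjugates $ikT_k(p_z)-\partial_t$ by the FIO $U_1(t)$ to land on a covariant Toeplitz operator $T_k^{\rm cov}(r(t))$, and then solves the single equation $\partial_ta=a\star_{\rm cov}r(t)$ by Picard--Lindel\"of in the Banach algebra $BK(T)$ of Proposition \ref{prop:BK} (this is where \eqref{eq:stability_analytic} is actually usable, because $\star_{\rm cov}$ does not degrade the class), finally writing the propagator as $U_1(t)T_k^{\rm cov}(a(t))^{-1}$, an FIO composed with an elliptic Toeplitz operator. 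If you want to keep your WKB-plus-Duhamel scheme, the clean way to fill the gap is to perform the same reduction: determine $b_t$ not by a transport hierarchy but by $b_t = $ (fixed amplitude) $\star_{\rm cov}a(t)$ with $a(t)$ solving the Banach-algebra ODE; otherwise you must supply the Boutet--Kr\'ee/Sj\"ostrand-type nested-neighbourhood estimates showing the hierarchy's formal solution is an analytic symbol uniformly in $(t,z)$, which is a substantial piece of work rather than a remark.
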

\begin{proof}
The second part of the claim is a direct consequence of the first part
and of the principal symbol calculus of Proposition
\ref{prop:Compo_multi_FIO} -- here $s=1$, because if
$a=1$ one has to find the identity on the right-hand side.

Note first that for every $T > 0$ there exists $\varepsilon > 0$ such that for every $|z| < \varepsilon$ and for every $x$ in some neighborhood of $M$ in $\widetilde{M}$, $\varphi^t_{p_z}(x)$ makes sense for all $t \in (-T,T)$. Accordingly, we define the Lagrangian $\mathcal{L}(t) \subset \widetilde{M} \times \widetilde{\overline{M}}$ as the graph of $\varphi^t_{p_z}$.  

Let $\Phi_0(t)$ be a phase with Lagrangian $\mathcal{L}(t)$ and let $b(t)$ be any symbol -- real-analytic with respect to $t$ -- defined near the graph of $\varphi_{p_0}^t$. Define \[
  U_0(t)=I_k^{\Phi_0(t)}(b(t)).
\]
Domains here are irrelevant: $U_0$ is a global Fourier Integral
operator.

According to Proposition \ref{prop:subp_calc}, the principal symbol
of $T_k(p_z)U_0(t)$ is $p_zb(t)$. On the
other hand, the principal symbol of $ik^{-1}\partial_tU_0(t)$ is
$i\partial_t\Phi_0 b(t)$, but because
$\widetilde{\nabla}\widetilde{\Phi_0}=0$ precisely on $\mathcal{L}(t)$,
there holds
\[
  i\partial_t\Phi_0|_{\mathcal{L}}=p_z +C(t)
\]
where $C(t)$ is a constant.
Replacing now $\Phi_0$ with
\[
  \Phi_1(t)=\Phi_0(t)-\int_0^tC(s)\dd s,
\]
and letting $U_1(t)=I_k^{\Phi_1(t)}(b(t))$, one has now that the principal symbols of the Fourier Integral
Operators $T_k(p_z)U_1(t)$ and $ik^{-1}\partial_tU_1(t)$
coincide. Thus
\[
  U_1(t)^{-1}(ikT_k(p_z)-\tfrac{\partial}{\partial
    t})U_1(t)=T_k^{\rm cov}(r(t))+O(e^{-ck})
\]
where $r(t)$ is a classical analytic symbol. Letting now $a(t)$ be a
classical analytic symbol solving \[\partial_ta(t)=a(t)\star_{\rm cov} r(t)\]
with $a(0)=1$ 
(this equation satisfies the hypotheses of the Picard-Lindelöf theorem
in some analytic symbol class), one finds that
\[
  U_1(t)=e^{iktT_k(p_z)}T_k^{\rm cov}(a(t))+O(e^{-ck}).
\]
This implies that $U(t) = U_1(t) T_k^{\rm cov}(a(t))^{-1}$ is a Fourier Integral Operator and we conclude by applying Proposition
\ref{prop:Compo_multi_FIO}.
\end{proof}

\begin{rem}\label{rem:subprincipal_propag}
  Using the subprincipal symbol calculus of Proposition
  \ref{prop:subp_calc}, in principle it should be possible to compute the principal symbol of the
  propagator, as in \cite{borthwick_semiclassical_1998,zelditch_pointwise_2018,ioos_geometric_2021,charles_quantum_2020}. Presumably, one would obtain a meaningful generalisation to
  $\widetilde{M}$ of the geometric constructions in the aforementioned works.
\end{rem}

\section{Local model}
\label{sec:local}

In this section we study the quasimodes of $T_k^{\rm cov}(p)$ under an
hypothesis of small perturbation of a real symbol, near a regular
piece of trajectory. More precisely, we will work under the following
hypothesis.

\begin{hyp}\label{hyp:local}~
  \begin{enumerate}
  \item $(M,J,\omega)$ is a real-analytic, compact, quantizable Kähler manifold of complex dimension 1.
  \item $p:\C\times M\to \C$ is a real-analytic, complex-valued
    Hamiltonian with holomorphic dependence on the first
    coordinate. We write
    \[
      p_z=p(z,\cdot).\]
  \item $p_0$ is real-valued.
  \item $\mathcal{C}\subset M$ is a regular, contractible piece of
    level set of $p_0$.
  \end{enumerate}
\end{hyp}

We first give a normal form for $p_z$ near $\mathcal{C}$, conjugating
it to $T_k^{\rm cov}(\xi)$ acting on the Bargmann space $\mathcal{B}_k$. In
the real-valued case, this ``quantum flowbox'' theorem is well-known and already
mentioned, in the pseudodifferential case, in
\cite{sjostrand_singularites_1982}; in the $C^{\infty}$ category for
Berezin--Toeplitz quantization, see \cite{charles_quasimodes_2003}. Then, we
use this normal form to study the quasimodes; in particular, we prove
that exponentially accurate quasimodes always exist and are always
close to Lagrangian states.

\subsection{Normal forms}
\label{sec:normal-forms-1}

\begin{prop}\label{prop:flowbox_classical_real}Assume Hypothesis
  \ref{hyp:local} holds. Let $x_0\in \mathcal{C}$. There exist a neighbourhood $\mathcal{Z}$ of $0$ and a holomorphic symplectic change of variables $\kappa_z$ from a
  neighbourhood of $\mathcal{C}$ in $\widetilde{M}$ to a neighbourhood of
  $[0,T]_x\times \{0\}_\xi$ in $(\C^2,\dd \xi \wedge \dd x)$, with
  holomorphic dependence on $z\in \mathcal{Z}$, such
  that \[\widetilde{p}_z-\widetilde{p}_z(x_0)=\xi\circ \kappa.\]
\end{prop}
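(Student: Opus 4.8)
The plan is to handle $z=0$ separately, where the statement reduces to the classical real-analytic flowbox theorem followed by holomorphic extension, and then to obtain the general $z$ by a perturbation argument carried out in the model $(\C^2,\dd\xi\wedge\dd x)$; we shall write $\kappa_z=\nu_z\circ\kappa_0$ for a suitable family of symplectomorphisms $\nu_z$ of $\C^2$.

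\emph{The case $z=0$.} Set $\lambda_0:=p_0(x_0)$. By Hypothesis~\ref{hyp:local}, $X_{p_0}$ is nonvanishing along $\mathcal{C}$ and tangent to $\{p_0=\lambda_0\}$, so, being a contractible piece of this level set, $\mathcal{C}$ is a simple arc, i.e.\ an orbit segment of $X_{p_0}$. The classical real-analytic flowbox theorem (see e.g.\ \cite{sjostrand_singularites_1982}) then provides real-analytic symplectic coordinates $(x,\xi)$ on a neighbourhood of $\mathcal{C}$ in $M$ with $p_0-\lambda_0=\xi$, $\omega=\dd\xi\wedge\dd x$, and $\mathcal{C}=[0,T]_x\times\{0\}_\xi$ for some $T>0$. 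Since $\iota(M)$ is maximally totally real in $\widetilde{M}$ (Corollary~\ref{prop:extension_data_Kähler}), $x$ and $\xi$ have unique $I$-holomorphic extensions $\widetilde{x},\widetilde{\xi}$ to a neighbourhood of $\iota(\mathcal{C})$ in $\widetilde{M}$, with $\widetilde{\xi}=\widetilde{p}_0-\lambda_0$ by uniqueness. Put $\kappa_0:=(\widetilde{x},\widetilde{\xi})$. The isomorphism $(T^*\widetilde{M})^{(1,0)}|_{\iota(M)}\cong T^*M\otimes\C$ from the proof of Lemma~\ref{prop:extension_forms} sends $\dd\widetilde{x},\dd\widetilde{\xi}$ to $\dd x,\dd\xi$, which are a coframe because $(x,\xi)$ is a chart; hence $\kappa_0$ is a biholomorphism from a neighbourhood of $\iota(\mathcal{C})$ onto a neighbourhood of $[0,T]\times\{0\}$ in $\C^2$, carrying $\iota(\mathcal{C})$ to $[0,T]\times\{0\}$. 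Furthermore $\kappa_0^*(\dd\xi\wedge\dd x)$ and $\Omega$ are $I$-holomorphic $(2,0)$-forms whose $\iota$-pullbacks both equal $\omega$ — for $\Omega$ this is again Corollary~\ref{prop:extension_data_Kähler} — so they coincide by uniqueness of holomorphic extensions of forms, and $\widetilde{p}_0\circ\kappa_0^{-1}=\widetilde{\xi}\circ\kappa_0^{-1}+\lambda_0=\xi+\lambda_0$. This settles $z=0$.

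\emph{Perturbation near $z=0$.} After conjugating by $\kappa_0$, it remains to construct, for $z$ in a small neighbourhood $\mathcal{Z}$ of $0$, a symplectomorphism $\nu_z$ of a fixed neighbourhood of $[0,T]\times\{0\}$ in $(\C^2,\dd\xi\wedge\dd x)$, holomorphic in $z$ with $\nu_0=\mathrm{id}$, such that $\xi\circ\nu_z=q_z-\widetilde{p}_z(x_0)$, where $q_z:=\widetilde{p}_z\circ\kappa_0^{-1}$; then $\kappa_z:=\nu_z\circ\kappa_0$ works, since $q_z(\kappa_0(x_0))=\widetilde{p}_z(x_0)$. Note $q_0=\xi+\lambda_0$. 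Let $K_z$ be the Hamiltonian vector field of $q_z$ for $\dd\xi\wedge\dd x$, so $K_0=-\partial_x$; for $z$ small $K_z$ is nonvanishing near $[0,T]\times\{0\}$ and transverse to the disk $\{x=0\}$ near the origin. Flowing a small disk of $\{x=0\}$ about the origin by $K_z$, i.e.\ $(\sigma,t)\mapsto\varphi_{K_z}^{-t}(\sigma)$ with $t$ in a complex neighbourhood of $[0,T]$, gives, uniformly for $z$ small, a biholomorphism onto a neighbourhood of $[0,T]\times\{0\}$ — reducing for $z=0$ to $(\sigma,t)\mapsto(t,\xi(\sigma))$; let $\widehat{x}$ be the $t$-component of its inverse, so $K_z\widehat{x}=-1$ and $\widehat{x}$ vanishes on $\{x=0\}$. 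Set $\widehat{\xi}:=q_z-\widetilde{p}_z(x_0)$ and $\nu_z:=(\widehat{x},\widehat{\xi})$. Then $\dd\widehat{x}\wedge\dd\widehat{\xi}\neq 0$ (otherwise $-1=K_z\widehat{x}$ would be proportional to $K_z\widehat{\xi}=\dd q_z(K_z)=0$), so $\nu_z$ is a local biholomorphism, injective near $[0,T]\times\{0\}$ as $\nu_0=\mathrm{id}$ and $\nu_z$ is $C^1$-close to it; in these coordinates $K_z=-\partial_{\widehat{x}}$, i.e.\ $(\nu_z)_*K_z=-\partial_x$, and $q_z=\xi\circ\nu_z+\widetilde{p}_z(x_0)$. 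Finally, $-\partial_x$ is the Hamiltonian field of $\xi$ for $\dd\xi\wedge\dd x$, so $K_z=\nu_z^*(-\partial_x)$ is the Hamiltonian field of $q_z=\nu_z^*(\xi+\mathrm{const})$ for $\nu_z^*(\dd\xi\wedge\dd x)$ as well as, by definition, for $\dd\xi\wedge\dd x$; hence $\iota_{K_z}\bigl(\nu_z^*(\dd\xi\wedge\dd x)-\dd\xi\wedge\dd x\bigr)=\dd q_z-\dd q_z=0$, and since $K_z$ is nonvanishing on a complex surface this forces $\nu_z^*(\dd\xi\wedge\dd x)=\dd\xi\wedge\dd x$. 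Holomorphic dependence on $z$ and uniformity of the neighbourhoods over $z\in\mathcal{Z}$ follow from holomorphic dependence of solutions of holomorphic ODEs on parameters.

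\emph{Main difficulty.} The substantive part is the perturbative flowbox in $(\C^2,\dd\xi\wedge\dd x)$ with holomorphic parameter dependence, in particular keeping all domains uniform in $z$ while pinning down the symplectic normalization of $\nu_z$; the $z=0$ step is essentially bookkeeping with holomorphic extensions (Lemma~\ref{prop:extension_forms}, Corollary~\ref{prop:extension_data_Kähler}). One must also use that $\mathcal{C}$ is a simple arc — which needs both regularity and contractibility — so that the flowbox chart is genuinely injective, and keep track of the sign conventions relating $\dd\xi\wedge\dd x$ to Hamilton's equations.
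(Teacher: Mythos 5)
Your proof is correct, and it reaches the same goal by a slightly different route than the paper. The paper constructs the complexified flowbox directly on $\widetilde{M}$, uniformly in $z$: it foliates a neighbourhood of $\mathcal{C}$ by the level sets of $\widetilde{p}_z$, flows a transverse holomorphic curve $\Lambda_0$ to get coordinates (flow time $x$, transported transverse parameter $\zeta$), observes that invariance under the flow forces $\Omega=f(\zeta)\,\dd\zeta\wedge\dd x$, and then replaces $\zeta$ by an antiderivative $\xi=F(\zeta)$, so that $X_{\widetilde{p}_z}=\partial_x$ and $\widetilde{p}_z=\xi+C$; holomorphy in $z$ is then immediate since, once $\Lambda_0$ and $\zeta|_{\Lambda_0}$ are fixed, every step depends holomorphically on $p_z$. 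You instead split off $z=0$ (real flowbox plus holomorphic extension, with the identification $\kappa_0^*(\dd\xi\wedge\dd x)=\Omega$ by uniqueness of extensions — this part is fine) and then redo the flowbox perturbatively in the model, taking time and energy as the two coordinates and proving symplecticity via the contraction identity $\iota_{K_z}\bigl(\nu_z^*(\dd\xi\wedge\dd x)-\dd\xi\wedge\dd x\bigr)=0$, which kills the difference because a $(2,0)$-form on a complex surface is determined by its contraction with a nonvanishing field. That Darboux--Carath\'eodory-style normalization is a clean alternative to the paper's antiderivative correction (though it is specific to one degree of freedom, which suffices here), whereas your two-step structure is not really needed: the direct complexified construction is already holomorphic in $z$, so the reduction to $z=0$ and the $C^1$-perturbation bookkeeping only buy you the comfort of quoting the classical real flowbox theorem as a black box. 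Both arguments lean equally on the same unproved-but-standard point, namely global injectivity of the flow-out map along the (compact, proper) arc uniformly for $z$ in a small neighbourhood of $0$, which the paper also only asserts; your sign conventions for Hamiltonian vector fields differ from the paper's but are internally consistent and irrelevant to the statement.
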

\begin{proof}
  A neighbourhood of $\gamma$ in $\widetilde{M}$ is foliated by the
  level sets of $\widetilde{p}$, which are regular holomorphic
  curves. Let $\Lambda_0$ be an open piece of holomorphic Lagrangian
  transverse to $X_{\widetilde{p}}$ and containing $\gamma(0)$. A
  smaller neighbourhood $V$ of $\gamma$ consists of the disjoint union of
  the images of elements of $\Lambda_0$ by the flow of
  $X_{\widetilde{p}}$ for times in a complex neighbourhood $U_x$ of
  $[0,T]$.

  Let $\zeta:\Lambda_0\to \C$ be an arbitrary (holomorphic) parametrisation of
  $\Lambda_0$; extend this function to $V$ by transporting it by the
  flow of
  $X_{\widetilde{p}}$. Let also $x:V\to U_x$ denote the
  (complex-valued) time needed to connect $x$ to a point of
  $\Lambda_0$. Then $(x,\zeta)$ form holomorphic coordinates on $V$;
  since the flow of $X_{\widetilde{p}}$ preserves the original
  holomorphic symplectic form, the pulled-back symplectic form is
  invariant under $x$-translations, and is therefore of the form
  $f(\zeta)\dd \zeta\wedge \dd x$ where $f$ is holomorphic and
  non-vanishing.

  Letting now $\xi=F(\zeta)$ where $F$ is an anti-derivative of $f$,
  in the variables $(x,\xi)$, the symplectic form reads $\dd \xi\wedge
  \dd x$, and in these coordinates,
  $X_{\widetilde{p}}=\frac{\partial}{\partial x}$. Therefore, in these
  coordinates $\widetilde{p}=\xi+C$ for some $C\in \mathbb{C}$. This
  concludes the proof.

  For the parameter-dependent case, it suffices to remark that, once $\Lambda_0$ and $\zeta|_{\Lambda_0}$ are fixed, in the rest of the proof,
  all constructions depend holomorphically on $p$.
\end{proof}

Applying Proposition \ref{prop:invert_FIO}, the conjugation of
$T_k^{\rm cov}(p)$ with a Fourier Integral operator whose Lagrangian
is the graph of $\kappa$ and with arbitrary elliptic principal
symbol is of the form $T_k^{\rm cov}(\xi+k^{-1}q)$, microlocally near
$0$, for some analytic symbol $q$. We now get rid of this subprincipal symbol.

\begin{prop}\label{prop:flowbox_correction}
  Let $x_-<x_+,\xi_-<\xi_+$ be real numbers. Let $q$ be a real-analytic classical
  symbol in a neighbourhood of $[x_-,x_+]\times [\xi_-,\xi_+]$. Then there exists
  a real-analytic classical symbol $a$, with elliptic principal symbol in a neighbourhood of
  $[x_-,x_+]\times [\xi_-,\xi_+]$ such that, microlocally near
  $[x_-,x_+]\times [\xi_-,\xi_+]$, one has
  \[
    T_k^{\rm cov}(\xi+k^{-1} q)T_k^{\rm cov}(a)=T^{\rm cov}_k(a)T^{\rm
    cov}_k(\xi)+O(e^{-ck}).
  \]
\end{prop}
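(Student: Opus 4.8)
The plan is to reduce the operator identity to an identity of formal analytic symbols for the covariant product $\star_{\rm cov}$, to solve the resulting transport equation inside an analytic symbol class, and then to pass back to an exponentially accurate operator identity via the summation procedure and Proposition~\ref{prop:Bergman_asymp}. Recall that on the local model ($M=\C$, or $M=S^1\times\R$, with the quadratic Kähler potential) covariant Berezin--Toeplitz operators with classical analytic symbols form an algebra whose product, after summation, agrees with $T_k^{\rm cov}(\,\cdot\,)$ up to $O(e^{-ck})$, and that the associated formal product is the Wick product which, under the substitution $z\leftrightarrow x$, $\overline z\leftrightarrow\xi$, is the product of left quantization on $\R^2$. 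It therefore suffices to produce a classical analytic symbol $a$ with non-vanishing principal symbol such that, as formal analytic symbols in a neighbourhood of $[x_-,x_+]\times[\xi_-,\xi_+]$,
\[
  (\xi+k^{-1}q)\star_{\rm cov}a=a\star_{\rm cov}\xi ;
\]
summing $a$ and applying Proposition~\ref{prop:Bergman_asymp} then yields the claimed operator identity microlocally on the box.

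The next step exploits that $\xi$ is affine: in the Wick product $\partial^\ell\xi$ (the holomorphic, i.e. $x$-, derivative) vanishes for $\ell\geq1$ and $\overline\partial^\ell\xi$ vanishes for $\ell\geq2$, so one computes $\xi\star_{\rm cov}a-a\star_{\rm cov}\xi=-ik^{-1}\partial_x a$ \emph{exactly} (the constant $-i$ depends on the orientation normalisation, but is harmless). Hence the symbol identity above is equivalent to the transport equation
\[
  \partial_x a=-i\,q\star_{\rm cov}a,\qquad a|_{x=x_-}=1 .
\]
Writing $a=\sum_{\ell\geq0}k^{-\ell}a_\ell$ and collecting powers of $k^{-1}$, this becomes the triangular system $\partial_x a_0=-iq_0a_0$ and, for $\ell\geq1$, $\partial_x a_\ell=-iq_0a_\ell+F_\ell$, where $F_\ell$ is an explicit expression built from $q_0,\dots,q_\ell$ and from $a_0,\dots,a_{\ell-1}$ and their derivatives. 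Each equation is a linear first-order ODE in $x$ with $\xi$ as a holomorphic parameter, solved by variation of constants starting from $a_0=\exp\!\big(-i\int_{x_-}^x q_0(x',\xi)\,\dd x'\big)$ — which is holomorphic and non-vanishing, so $a$ has elliptic principal symbol — and $a_\ell|_{x=x_-}=0$ for $\ell\geq1$.

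The one point requiring care is that this order-by-order solution must assemble into a genuine formal analytic amplitude: the iterated integrations must not spoil the uniform Gevrey-$1$ bounds. The cleanest route is to solve the transport equation directly in the Banach algebra $BK(T)$ over a compact neighbourhood of $[x_-,x_+]\times[\xi_-,\xi_+]$: by Proposition~\ref{prop:BK} the map $b\mapsto -i\,q\star_{\rm cov}b$ is bounded on $BK(T)$, so the Picard iteration $a^{(0)}=1$, $a^{(n+1)}(x,\xi)=1-i\int_{x_-}^x(q\star_{\rm cov}a^{(n)})(x',\xi)\,\dd x'$ converges (the equation being linear, one shrinks the $x$-interval if necessary and glues, with no blow-up). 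This is exactly the Picard--Lindelöf argument used in the proof of Proposition~\ref{prop:propag_BT}, transposed from the time variable to the variable $x$. One thereby obtains a classical analytic symbol $a$ with non-vanishing principal symbol solving the transport equation. Summing $a$ and invoking Proposition~\ref{prop:Bergman_asymp} (algebra property, and agreement of operator composition with $\star_{\rm cov}$ up to $O(e^{-ck})$) gives, microlocally near $[x_-,x_+]\times[\xi_-,\xi_+]$,
\[
  T_k^{\rm cov}(\xi+k^{-1}q)\,T_k^{\rm cov}(a)=T_k^{\rm cov}\big((\xi+k^{-1}q)\star_{\rm cov}a\big)+O(e^{-ck})=T_k^{\rm cov}(a\star_{\rm cov}\xi)+O(e^{-ck})=T_k^{\rm cov}(a)\,T_k^{\rm cov}(\xi)+O(e^{-ck}),
\]
which is the assertion. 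The main obstacle is thus the analytic-symbol-class bookkeeping of this last step; everything else is a formal computation with the calculus developed above.
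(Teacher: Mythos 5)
Your reduction to the exact formal identity $(\xi+k^{-1}q)\star_{\rm cov}a=a\star_{\rm cov}\xi$, the observation that $[\xi,\cdot]_{\star_{\rm cov}}$ is exactly a constant times $k^{-1}\partial_x$ because $\xi$ is affine, and the final passage back to operators via summation and Proposition~\ref{prop:Bergman_asymp} are all sound, and the route is genuinely different from the paper's. The paper does not solve a transport equation in the internal variable $x$: it deforms the symbol through $\xi+tk^{-1}q$, sets $b=a^{-1}\star_{\rm cov}\partial_t a$, solves the cohomological equation $[\xi,b]=-k^{-1}p$ (with $p=a^{-1}\star_{\rm cov}q\star_{\rm cov}a$) by an antiderivation that is bounded on $BK(T)$ with $\|b\|_{BK(T)}\leq\|p\|_{BK(T)}$, and then applies Picard--Lindel\"of to the resulting ODEs in the \emph{external} homotopy parameter $t$, where the vector field is Lipschitz on the Banach algebra $BK(T)$ and convergence is the standard Banach-valued statement. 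Your version is more direct and gives the intertwiner explicitly (principal symbol $\exp(-i\int_{x_-}^x q_0)$, manifestly elliptic), at the price of having to control an evolution whose ``time'' is one of the variables of the symbols themselves.

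That price is exactly where your write-up is too quick. The equation $\partial_x a=-i\,q\star_{\rm cov}a$ is not an ODE with values in a Banach space: the right-hand side contains $\partial_x^{\ell}a$ for all $\ell$ (weighted by $k^{-\ell}$), so boundedness of $b\mapsto q\star_{\rm cov}b$ on $BK(T)$ does \emph{not} by itself make the Picard iteration $a^{(n+1)}=1-iA(q\star_{\rm cov}a^{(n)})$ converge: the usual $1/n!$ gain from iterated integration is destroyed whenever an $x$-derivative coming from the star product falls on the antiderivative $A$. What is true is that the linear part $A\circ(q\star_{\rm cov}\cdot)$ has $BK(T)$-norm of order $C\|q\|_{BK(T)}\bigl(c_1|x_+-x_-|+c_2T\bigr)$, the $T$-contribution coming precisely from those derivative-eats-antiderivative terms; so you must first shrink $T$ (not only the $x$-interval) to get a contraction, then subdivide $[x_-,x_+]$ into finitely many pieces, solve successively with matching data at the junctions, and invoke the order-by-order uniqueness of the triangular system of linear ODEs to see that the pieces glue into a single classical analytic symbol with finitely multiplied constants. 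Your ``shrink the interval and glue'' remark points in this direction, but the appeal to Proposition~\ref{prop:propag_BT} is not an exact parallel (there the Picard--Lindel\"of argument runs in an external time variable, as in the paper's own proof), so this quantitative step should be spelled out; once it is, your argument is complete.
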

\begin{proof}
  We proceed by deformation. We let $\star_{{\rm cov}}$ denote the formal symbol
  product for covariant Berezin--Toeplitz quantization on $\C$. We want to find $a(t)$, with
  $a(0)=1$, such that
  \[
    (\xi+t k^{-1} q)\star_{{\rm cov}} a = a \star_{{\rm cov}} \xi.
  \]
  With $b=a^{-1}\star_{{\rm cov}}\frac{\partial a}{\partial t} $, we obtain
  \[
    [\xi,b]+k^{-1} a^{-1}\star_{{\rm cov}} q \star_{{\rm cov}} a = 0
  \]
  and again, denoting $p=a^{-1}\star_{{\rm cov}} q\star_{{\rm cov}} a$,
  \begin{equation}\label{eq:evol_subp_flowbox}
    \frac{\dd p}{\dd t}=[b,p].
  \end{equation}
  The solution of the cohomological equation takes the
  following form in terms of Taylor coefficients at $0$: denoting
  \[
    p=\sum_{\ell=0}^{\varepsilon
      k}p_{\ell,i,j}\frac{x^i\xi^jk^{-\ell}}{i!j!}+O(e^{-ck}),\qquad
    \qquad b=\sum_{\ell=0}^{\varepsilon
      k}b_{\ell,i,j}\frac{x^i\xi^jk^{-\ell}}{i!j!}+O(e^{-ck})
  \]
  one must have
  \[
    b_{\ell,i,j}=\begin{cases}\frac{p_{\ell,i,(j-1)}}{j}&\text{ if }j\neq 0\\
      0&\text{ otherwise.}
    \end{cases}
  \]
  In particular, for every $T>0$, following Definition
  \ref{def:Boutet-Kree}, one has $\|b\|_{BK(T)}\leq \|p\|_{BK(T)}$. In
  particular, by Proposition \ref{prop:BK}, one can apply the
  Picard-Lindelöf theorem to the differential equation
  \eqref{eq:evol_subp_flowbox} and obtain that, for all times, $p$ and
  $b$ are well-defined analytic symbols.

  We then recover $a$ by applying the Picard-Lindelöf theorem
  to
  \[
    \frac{\partial a}{\partial t}=b\star_{{\rm cov}} a.
  \]
\end{proof}

By putting together Propositions \ref{prop:flowbox_classical_real},
\ref{prop:conjug_Toep_FIO}, and \ref{prop:flowbox_correction} while
keeping track of the parameter dependence, we
arrive at the following conclusion.

\begin{prop}\label{prop:flowbox_assemblage}Assume Hypothesis
  \ref{hyp:local} holds. There
  exist a small neighbourhood $U$ of $\mathcal{C}$, a set $V$ of the form $(x_-,x_+)\times
  (\xi_-,\xi_+)$ for some $x_-<x_+,\xi_-<\xi_+\in \R$, a neighbourhood $Z$ of $0$
  in $\C$, and for all $z\in Z$, Fourier integral operators
  \[ \mathfrak{U}_z:H^0(M,L^{\otimes k})\to \mathcal{B}_k, \qquad  \mathfrak{V}_z:\mathcal{B}_k\to H^0(M,L^{\otimes k})  \]
   with holomorphic dependence on $z$, which are microlocal inverses of
   each other on the domains $U$ and $V$, and such that, uniformly for $z\in
  \mathcal{Z}$, for every
  $u\in H^0(M,L^{\otimes k})$, 
  \[
    \mathfrak{U}_z T_k^{\rm cov}(p_z)u=T_k^{\rm
      cov}(\xi)\mathfrak{U}_z u+O(e^{-ck}\|u\|_{L^2}+e^{c^{-1}|z|k}\|u\|_{L^2(M\setminus
      U)})
  \]
  and for every $v\in \mathcal{B}_k$,
  \[
    \mathfrak{V}_z T_k^{\rm cov}(\xi)v=T_k^{\rm cov}(p_z)\mathfrak{V}_z v+O(e^{-ck}\|v\|_{L^2}+e^{c^{-1}|z|k}\|v\|_{L^2(\C\setminus
      V)}).
  \]
\end{prop}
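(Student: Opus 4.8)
The plan is to assemble the classical flowbox of Proposition~\ref{prop:flowbox_classical_real}, the quantization and inversion of symplectomorphisms of Propositions~\ref{prop:invert_FIO} and~\ref{prop:conjug_Toep_FIO}, and the removal of the subprincipal term of Proposition~\ref{prop:flowbox_correction}, into a single conjugation, keeping track both of the holomorphic dependence on $z$ and of the (exponentially small, resp.\ exponentially large) error contributions at each step. First I would invoke Proposition~\ref{prop:flowbox_classical_real} to get, for $z$ in a neighbourhood $\mathcal{Z}$ of $0$, a holomorphic symplectomorphism $\kappa_z$ from a neighbourhood of $\mathcal{C}$ in $\widetilde{M}$ onto a neighbourhood of a rectangle in $(\C^2,\dd\xi\wedge\dd x)$, holomorphic in $z$, with $\widetilde{p}_z=\xi\circ\kappa_z+\widetilde{p}_z(x_0)$; the additive constant $\widetilde{p}_z(x_0)$ is harmless and I carry it silently (say, by the substitution $p_z\rightsquigarrow p_z-p_z(x_0)$). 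Taking the holomorphic family of Lagrangians $\mathcal{L}_z:=\{(\kappa_z(y),\bar y)\}\subset\C\times\overline{M}$, which for $z=0$ is the graph of an invertible symplectic map, I fix a Fourier Integral Operator $\mathfrak{F}_z\colon H^0(M,L^{\otimes k})\to\mathcal{B}_k$ with Lagrangian $\mathcal{L}_z$ and elliptic principal symbol, and let $\mathfrak{G}_z\colon\mathcal{B}_k\to H^0(M,L^{\otimes k})$ be its microlocal inverse from Proposition~\ref{prop:invert_FIO}; since Proposition~\ref{prop:Compo_multi_FIO}, which underlies these constructions, carries a holomorphic parameter, one may take $\mathfrak{F}_z,\mathfrak{G}_z$ holomorphic in $z$ and microlocal inverses of each other on $U$ and $V$.

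Next, by Proposition~\ref{prop:conjug_Toep_FIO} (and $p_z\circ\kappa_z^{-1}=\xi$, which is exactly the content of the classical normal form), the conjugate $\mathfrak{F}_z\,T_k^{\rm cov}(p_z)\,\mathfrak{G}_z$ equals, microlocally near $V$ and modulo $O(e^{-ck})$, a covariant Berezin--Toeplitz operator $T_k^{\rm cov}(\xi+k^{-1}q_z)$ for some classical analytic symbol $q_z$ depending holomorphically on $z$. I would then apply Proposition~\ref{prop:flowbox_correction} to $q_z$ (its proof, which solves a cohomological equation and a Picard--Lindel\"of ODE in an analytic symbol class, goes through verbatim for the holomorphic family and produces $a_z$ holomorphic in $z$) to obtain an elliptic analytic symbol $a_z$ with
\[ T_k^{\rm cov}(\xi+k^{-1}q_z)\,T_k^{\rm cov}(a_z)=T_k^{\rm cov}(a_z)\,T_k^{\rm cov}(\xi)+O(e^{-ck}) \]
microlocally near $V$. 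As $a_z$ is elliptic, Proposition~\ref{prop:Bergman_asymp} provides its microlocal inverse $T_k^{\rm cov}(a_z)^{-1}$, and I set
\[ \mathfrak{U}_z:=T_k^{\rm cov}(a_z)^{-1}\,\mathfrak{F}_z,\qquad \mathfrak{V}_z:=\mathfrak{G}_z\,T_k^{\rm cov}(a_z). \]
These are again analytic Fourier Integral Operators (a covariant Toeplitz operator is an FIO with diagonal Lagrangian, and compositions of FIOs are FIOs by Proposition~\ref{prop:Compo_multi_FIO}), they depend holomorphically on $z$, and $\mathfrak{U}_z\mathfrak{V}_z$, $\mathfrak{V}_z\mathfrak{U}_z$ coincide with the identity microlocally near $V$, $U$ respectively, so $\mathfrak{U}_z,\mathfrak{V}_z$ are microlocal inverses on $U$ and $V$.

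For the intertwining, writing $\approx$ for equality microlocally (near $U$ or $V$) modulo $O(e^{-ck})$: using $\mathfrak{G}_z\mathfrak{F}_z\approx\mathrm{Id}$ and the conjugation above, $\mathfrak{F}_z T_k^{\rm cov}(p_z)\approx\mathfrak{F}_z T_k^{\rm cov}(p_z)\mathfrak{G}_z\mathfrak{F}_z\approx T_k^{\rm cov}(\xi+k^{-1}q_z)\mathfrak{F}_z$; multiplying on the left by $T_k^{\rm cov}(a_z)^{-1}$, inserting $T_k^{\rm cov}(a_z)T_k^{\rm cov}(a_z)^{-1}\approx\mathrm{Id}$, then invoking the displayed identity from Proposition~\ref{prop:flowbox_correction} and $T_k^{\rm cov}(a_z)^{-1}T_k^{\rm cov}(a_z)\approx\mathrm{Id}$, one gets $\mathfrak{U}_z T_k^{\rm cov}(p_z)\approx T_k^{\rm cov}(\xi)\,T_k^{\rm cov}(a_z)^{-1}\mathfrak{F}_z=T_k^{\rm cov}(\xi)\mathfrak{U}_z$. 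A symmetric (and slightly shorter) computation, using $\mathfrak{F}_z\mathfrak{G}_z\approx\mathrm{Id}$, gives $T_k^{\rm cov}(p_z)\mathfrak{V}_z\approx\mathfrak{V}_z T_k^{\rm cov}(\xi)$.

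The main difficulty will be converting these microlocal identities into the stated global estimates, because the operators involved are \emph{not} unitary: as $|\Phi_z|_h\le e^{C|z|}$ when $|\Phi_0|_h\le 1$, each Fourier Integral or Toeplitz factor can enlarge $L^2$ norms by a factor $e^{c^{-1}|z|k}$. The difference $\mathfrak{U}_z T_k^{\rm cov}(p_z)-T_k^{\rm cov}(\xi)\mathfrak{U}_z$ contains terms of the form $A(\mathrm{Id}-\mathfrak{V}_z\mathfrak{U}_z)$, and by Proposition~\ref{prop:invert_FIO} the defect $\mathrm{Id}-\mathfrak{V}_z\mathfrak{U}_z$ is $O(e^{-ck})$ only microlocally on $U$ and merely $O(e^{\varepsilon k})$ off $U$, with $\varepsilon>0$ choosable in advance. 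One must therefore split every composition according to whether the integration variable lies in the microlocalisation region, bound the interior contribution by $O(e^{-ck})$ and the exterior one by $(\text{norm growth})\times\|u\|_{L^2(M\setminus U)}$, and shrink $\mathcal{Z}$ and $\varepsilon$ so that $c^{-1}|z|+\varepsilon$ stays below the available exponential gap, recovering the error $O(e^{-ck}\|u\|_{L^2}+e^{c^{-1}|z|k}\|u\|_{L^2(M\setminus U)})$ (with a possibly smaller $c$). A subsidiary but real point, already flagged after Proposition~\ref{prop:composed_phase}, is that every multiple composition must be performed in one stroke via Proposition~\ref{prop:Compo_multi_FIO}, lest the exponentially small remainders be amplified by the exponentially large norms of the intermediate operators. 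The estimate for $\mathfrak{V}_z$ (with $\mathcal{B}_k$ and $\C\setminus V$ in place of $H^0(M,L^{\otimes k})$ and $M\setminus U$) is entirely symmetric.
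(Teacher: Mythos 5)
Your proposal is correct and follows essentially the same route as the paper, which itself only states that the result is obtained "by putting together Propositions \ref{prop:flowbox_classical_real}, \ref{prop:conjug_Toep_FIO}, and \ref{prop:flowbox_correction} while keeping track of the parameter dependence". Your additional care with the additive constant $\widetilde{p}_z(x_0)$, the non-unitary norm growth, and the one-stroke composition via Proposition \ref{prop:Compo_multi_FIO} makes explicit exactly the bookkeeping the paper leaves implicit.
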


\subsection{Microlocal solutions}
\label{sec:micr-sheaf-solut-1}

Inspired by Proposition \ref{prop:flowbox_assemblage} we begin with a
description of the microlocal quasimodes for the model operator.

\begin{prop}\label{prop:microlocal_sheaf_model}
  Let $x_-<x_+\in \R$ and $\xi_-<0<\xi_+\in \R$; let $U=(x_-,x_+)\times
  (\xi_-,\xi_+)$. Let $V\Subset U$. For every $c>0$, the solutions of
  \begin{equation}\label{eq:quasimode_normal}
    u\in \mathcal{B}_k,\;\|T_k^{\rm cov}(\xi)u\|_{L^2(U)}=O(e^{-ck}\|u\|_{L^2})
  \end{equation}
  are, uniformly on $V$, of the form
  \[
    (x,\xi)\mapsto u(\tfrac{x_-+x_+}{2},0)\exp(-k\tfrac{\xi^2}{2})+O(e^{-c'k}\|u\|_{L^2})
  \]
  for every $c'<c$.
\end{prop}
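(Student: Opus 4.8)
The plan is to turn the microlocal equation into a first–order holomorphic ODE and integrate it along a well–chosen path, taking advantage of the fact that the Bergman kernel on $\C$ is \emph{exactly} Gaussian (see \eqref{eq:Bargmann}).

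\emph{Step 1: explicit form of the operator.} First I would make the action of $T_k^{\rm cov}(\xi)$ explicit on holomorphic representatives. Write $u \in \mathcal{B}_k$ as $u = e^{-k\xi^2/2}\,v$ with $v$ entire in $z=\tfrac{x+i\xi}{\sqrt2}$ (so $\xi^2/2 = {\rm Im}(z)^2$ is the Kähler potential and $\|u\|_{L^2(\C)}^2 = \int_\C |v|^2 e^{-k\xi^2}\,\dd m$). A direct computation with the explicit kernel \eqref{eq:Bargmann} — equivalently, the standard Bargmann--Fock correspondence, under which $T_k^{\rm cov}(\xi)$ is conjugated to $\tfrac1{ik}\partial_x$ on $L^2(\R)$ — shows that $T_k^{\rm cov}(\xi)$ acts on $v$ as a nonzero constant multiple of $\tfrac1{ik}\partial_z$ (no zeroth–order term survives, in this holomorphic gauge, because $T_k^{\rm cov}(\xi)$ annihilates $e^{-k\xi^2/2}$, whose representative is the constant $1$). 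Consequently, if $u$ satisfies \eqref{eq:quasimode_normal} and we set $f := T_k^{\rm cov}(\xi)u = e^{-k\xi^2/2}g$ with $g$ holomorphic on $U$, then $v' = c_0\, k\, g$ on $U$ for a fixed constant $c_0 \neq 0$.

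\emph{Step 2: from $L^2$--smallness to pointwise smallness.} The hypothesis reads $\int_U |g|^2 e^{-k\xi^2}\,\dd m = O(e^{-2ck}\|u\|_{L^2}^2)$. Since $g$ is holomorphic, the mean–value property over a small disc $D(z_*,r)\subset U$, combined with Cauchy--Schwarz against the weight $e^{-k\xi^2}$ and the fact that $\xi\mapsto \xi^2/2$ is Lipschitz (constant $L$) on $\overline U$, gives
\[
  e^{-k\xi^2/2}|g(z_*)| \;\le\; \tfrac{1}{\sqrt\pi\,r}\,e^{-(c-Lr)k}\,\|u\|_{L^2}
\]
for $z_*$ in a fixed $U' \Subset U$ and $r<\dist(U',\partial U)$. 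Choosing $r$ small we obtain $e^{-k\xi^2/2}|g| = O(e^{-c_1 k}\|u\|_{L^2})$ uniformly on $U'$, for any prescribed $c_1 < c$. Fix such a $U'$, connected, with $V \Subset U' \Subset U$, containing the point $z_0$ corresponding to $(\tfrac{x_-+x_+}{2},0)$ and the coordinate segments used below (possible because $\tfrac{x_-+x_+}{2}\in(x_-,x_+)$ and $0\in(\xi_-,\xi_+)$).

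\emph{Step 3: integrate along a potential–monotone path.} For $w\in V$ with coordinates $(x_w,\xi_w)$, integrate Step 1: $v(w)-v(z_0) = c_0 k\int_{\gamma} g(s)\,\dd s$, where I would take $\gamma$ to go first horizontally from $(\tfrac{x_-+x_+}{2},0)$ to $(x_w,0)$, then vertically from $(x_w,0)$ to $(x_w,\xi_w)$, a path lying in $U'$. Along $\gamma$ the potential $\phi=\xi^2/2$ is nondecreasing towards the endpoint, so $\sup_\gamma\phi = \phi(w)$, whence
\[
  e^{-k\phi(w)}\,|v(w)-v(z_0)| \;\le\; |c_0|\,k\,\diam(U)\;e^{k(\sup_\gamma\phi-\phi(w))}\,\sup_\gamma\bigl(e^{-k\phi}|g|\bigr) \;=\; O\!\bigl(k\,e^{-c_1 k}\,\|u\|_{L^2}\bigr).
\]
Since the remainder is exactly $u(w) - u(\tfrac{x_-+x_+}{2},0)\,e^{-k\xi_w^2/2} = e^{-k\phi(w)}\bigl(v(w)-v(z_0)\bigr)$ and $k\,e^{-c_1 k} = O(e^{-c'k})$ for any $c'<c_1<c$, this is the claimed estimate, uniformly on $V$.

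\emph{Main obstacle.} The argument is short once Step 1 is in place; the only points needing care are the explicit identification in Step 1 — getting the operator into the pure form $c_0/(ik)\,\partial_z$, which relies on the holomorphic gauge in which $e^{-k\xi^2/2}$ has constant representative — and the direction–sensitive weighted $L^2\!\to\!L^\infty$ bound in Step 2. The device that prevents any exponential loss at the end is the choice of a path along which $\phi$ increases monotonically to its value at the endpoint. (Should a genuine $O(k^{-1})$ subprincipal term survive in Step 1, one absorbs it by an integrating factor in the ODE; the homogeneous solutions remain $k$–independent perturbations of a constant and the rest of the argument is unchanged.)
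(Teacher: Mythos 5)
Your proposal is correct and follows essentially the same route as the paper's proof: both rest on the exact identity $T_k^{\rm cov}(\xi)u = ik^{-1}\partial_x u$ on $\mathcal{B}_k$ (your constant multiple of $\tfrac{1}{ik}\partial_z$ on the holomorphic representative), an $L^2$-to-pointwise upgrade by holomorphy on a slightly smaller set, and integration from the reference point $(\tfrac{x_-+x_+}{2},0)$ with the Gaussian weight kept under control. The only difference is cosmetic: you integrate the representative $v$ along an L-shaped path on which the potential increases monotonically, whereas the paper applies the same estimate as two successive ``Duhamel'' integrations of $u$, first in $x$ along $\xi=0$ and then in $\xi$.
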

\begin{proof}  
  Without loss of generality, $\|u\|_{L^2(\C)}=1$. Since $u\in
  \mathcal{B}_k$, it satisfies
  \begin{equation}\label{eq:Bargmann_space}
    \frac{\partial}{\partial x}u+i\frac{\partial}{\partial
      \xi}u=-ik\xi u;
  \end{equation}
  and from the hypothesis, \[k^{-1}\frac{\partial}{\partial
    x}u=-iT_k^{\rm cov}(\xi)u\]is exponentially small (in $L^2$ norm) on
$U$. By holomorphy, we obtain directly that
\begin{equation}\label{eq:dxu}
  \|k^{-1}\partial_xu\|_{L^{\infty}(V)}<Ck^de^{-ck}<C_1e^{-c'k}.
\end{equation}
for every $c'<c$.

  Applying the Duhamel formula on \eqref{eq:dxu} we obtain,
  uniformly for $x\in
  (x_-,x_+)$,
  \[
    u(x,0)=u(\tfrac{x_-+x_+}{2},0)+O(e^{-c'k})
  \]
  and then, applying the Duhamel formula a second time in the variable
  $\xi$, we obtain the desired claim.
\end{proof}
Putting together Propositions \ref{prop:microlocal_sheaf_model},
\ref{prop:subp_calc}, and \ref{prop:flowbox_assemblage}, we obtain the following
two-way description for microlocal solutions of the eigenvalue equation near
regular pieces of trajectories: there always exist quasimodes in the
WKB form, and quasimodes are necessarily of this form. Moreover we
have some geometric information on the Lagrangian and principal symbol.

\begin{prop}\label{prop:exists_microlocal_solution_local}
  Assume Hypothesis \ref{hyp:local} holds. There
  exist a small neighbourhood $U$ of $\mathcal{C}$ in $M$, a small
  neighbourhood $\mathcal{Z}$ of $0$ in $\C$, a small neighbourhood
  $\mathcal{E}$ of $p_0(\mathcal{C})$ in $\C$, a constant $c_0>0$,
  such that for every $(z,\lambda)\in \mathcal{Z}\times \mathcal{E}$,
  there exists a Lagrangian state $u_k$, with Lagrangian
  $\{\widetilde{p_z}=\lambda\}$, with holomorphic dependence on $z$ and $\lambda$,
   such that, when $(z,\lambda)=(0,p_0(\mathcal{C}))$, one has $\|u_k\|_{L^2}=1$, and satisfying
  \[
    \|T_k^{\rm cov}(p_z-\lambda)u_k\|_{L^2(U)}=O(e^{-c_0k}\|u_k\|_{L^2(U)}).
  \]
\end{prop}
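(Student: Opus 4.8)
The plan is to push the problem onto the Bargmann model through the flowbox normal form, produce an exponentially accurate quasimode there (where it is essentially a complex Gaussian), and carry it back by the conjugating Fourier Integral operator.

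First I fix $x_0\in\mathcal C$ and apply Proposition~\ref{prop:flowbox_assemblage}: this provides a neighbourhood $U$ of $\mathcal C$, a rectangle $V=(x_-,x_+)\times(\xi_-,\xi_+)$ (which, after shrinking, I may assume has $0$ in the interior of $(\xi_-,\xi_+)$), a neighbourhood $\mathcal Z$ of $0$, and for $z\in\mathcal Z$ microlocal inverses $\mathfrak U_z,\mathfrak V_z$ that, thanks to the normal form $\widetilde{p_z}-\widetilde{p_z}(x_0)=\xi\circ\kappa_z$ of Proposition~\ref{prop:flowbox_classical_real}, conjugate $T_k^{\rm cov}(p_z-\lambda)$ microlocally over $U$ to $T_k^{\rm cov}(\xi-\mu)$ microlocally over $V$, with $\mu:=\lambda-\widetilde{p_z}(x_0)$. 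I then choose $\mathcal E$ around $p_0(\mathcal C)=\widetilde{p_0}(x_0)$ small enough that, for $(z,\lambda)\in\mathcal Z\times\mathcal E$, the number $\mu$ has $\mathrm{Re}\,\mu$ in the interior of $(\xi_-,\xi_+)$ and $|\mathrm{Im}\,\mu|$ as small as wanted; note that $\mu$ is holomorphic (affine) in $\lambda$, holomorphic in $z$, and vanishes at $(0,p_0(\mathcal C))$.

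Next, in the model I construct the quasimode $v_{z,\lambda}$ for $T_k^{\rm cov}(\xi-\mu)$. Under the FBI identification $\mathcal B_k\cong L^2(\R)$ this operator becomes, microlocally over $[x_-,x_+]$, the exact first-order operator $(ik)^{-1}\partial_x-\mu$, which is annihilated by $e^{ik\mu x}$; taking $v_{z,\lambda}$ to be the FBI transform of a function that equals $e^{ik\mu x}$ near $[x_-,x_+]$ and vanishes off a slightly larger set gives $\|T_k^{\rm cov}(\xi-\mu)v_{z,\lambda}\|_{L^2(\C)}=O(e^{-c_1k}\|v_{z,\lambda}\|)$ for some $c_1>0$. (Equivalently, $v_{z,\lambda}$ is obtained from the $\mu=0$ Gaussian of Proposition~\ref{prop:microlocal_sheaf_model} by the complex phase-space translation $(x,\xi)\mapsto(x,\xi-\mu)$, an analytic Fourier Integral operator in the sense of Definition~\ref{def:Lagr-state} for $|\mathrm{Im}\,\mu|$ small; or by the transport hierarchy of the proof of Proposition~\ref{prop:flowbox_correction}, using Proposition~\ref{prop:subp_calc} to see that all transport equations decouple over the flat model.) In every case $v_{z,\lambda}$ is a complex Lagrangian state whose Lagrangian is the holomorphic extension of the contractible piece $\{\xi=\mu\}$ — hence with trivial Bohr-Sommerfeld class and close to $\{\xi=0\}$ — it is exponentially small on $\C\setminus V$, it depends holomorphically on $(z,\lambda)$, and it equals the standard vacuum at $(0,p_0(\mathcal C))$.

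I then set $u_k:=\mathfrak V_z\,v_{z,\lambda}$. By Proposition~\ref{prop:Compo_multi_FIO} this is a Lagrangian state whose Lagrangian is $\mathcal L_{\mathfrak V_z}\circ\{\xi=\mu\}=\kappa_z^{-1}(\{\xi=\mu\})=\{\widetilde{p_z}=\widetilde{p_z}(x_0)+\mu\}=\{\widetilde{p_z}=\lambda\}$ near $\mathcal C$, and it depends holomorphically on $(z,\lambda)$; by Proposition~\ref{prop:propag_support} it is exponentially small on $M\setminus U$. The quasimode bound comes from the intertwining of Proposition~\ref{prop:flowbox_assemblage}:
\[T_k^{\rm cov}(p_z-\lambda)u_k=\mathfrak V_z\,T_k^{\rm cov}(\xi-\mu)v_{z,\lambda}+O\big(e^{-ck}\|v_{z,\lambda}\|+e^{c^{-1}|z|k}\|v_{z,\lambda}\|_{L^2(\C\setminus V)}\big),\]
where the error terms are exponentially small and the first term has norm at most $e^{c^{-1}|z|k}\|T_k^{\rm cov}(\xi-\mu)v_{z,\lambda}\|_{L^2(\C)}=O(e^{(c^{-1}|z|-c_1)k}\|v_{z,\lambda}\|)$, since $\mathfrak V_z$ inflates $L^2$ norms by at most $e^{c^{-1}|z|k}$. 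Because $\|v_{z,\lambda}\|$ and $\|u_k\|_{L^2(U)}$ are comparable up to factors $e^{\pm c^{-1}|z|k}$ (via $\mathfrak U_z\mathfrak V_z\approx\mathrm{Id}$ and the microlocalisation of $u_k$ in $U$), shrinking $\mathcal Z$ so that $2c^{-1}\sup_{\mathcal Z}|z|<c_1$ gives $\|T_k^{\rm cov}(p_z-\lambda)u_k\|_{L^2(U)}=O(e^{-c_0k}\|u_k\|_{L^2(U)})$ for a suitable $c_0>0$. Finally, at $(z,\lambda)=(0,p_0(\mathcal C))$ one has $\mu=0$ and $u_k$ is a genuine real Lagrangian state of finite nonzero norm $\kappa_0(k)$; replacing $u_k$ by $\kappa_0(k)^{-1}u_k$ preserves holomorphy in $(z,\lambda)$ and the quasimode bound while normalising $\|u_k\|_{L^2(M)}=1$ there.

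The step I expect to be the main obstacle is the bookkeeping of exponential rates and microlocal tails: since each conjugating operator may amplify $L^2$ norms by $e^{c^{-1}|z|k}$, the set $\mathcal Z$ has to be shrunk only \emph{after} the decay rate $c_1$ of the model quasimode is fixed, and one must verify that every error term produced by Propositions~\ref{prop:flowbox_assemblage},~\ref{prop:Compo_multi_FIO} and~\ref{prop:propag_support} stays exponentially small relative to $\|u_k\|$, whose lower bound is only polynomial in $k$. A secondary point requiring care is that for complex $\mu$ the state $v_{z,\lambda}$ must genuinely lie in the class of complex Lagrangian states of Definition~\ref{def:Lagr-state} — Lagrangian close to the real locus, trivial Bohr-Sommerfeld class — which is precisely why $\mathcal E$, hence $|\mathrm{Im}\,\mu|$, must be taken small, and where the complex-Lagrangian calculus of Section~\ref{sec:compl-lagr-stat} is doing the real work.
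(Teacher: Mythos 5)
Your proposal is correct and follows essentially the same route as the paper's proof: conjugate to the Bargmann model via Proposition~\ref{prop:flowbox_assemblage}, take the explicit Gaussian quasimode $\Pi_k\bigl(\1_{W}\exp(-k\tfrac{\xi^2}{2})\exp(ik\mu(x+i\xi))\bigr)$ for $T_k^{\rm cov}(\xi-\mu)$, and push it back with $\mathfrak V_z$, identifying the resulting Lagrangian $\{\widetilde{p_z}=\lambda\}$ through Proposition~\ref{prop:Compo_multi_FIO}. Your additional bookkeeping (fixing the model decay rate $c_1$ first and only then shrinking $\mathcal Z$ so that the $e^{c^{-1}|z|k}$ amplification of $\mathfrak V_z$ is dominated, plus the normalisation at $(0,p_0(\mathcal C))$) is consistent with, and slightly more explicit than, the argument in the paper.
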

\begin{proof}
   Let $\mathfrak{U}_z$ and $\mathfrak{V}_z$ be Fourier Integral Operators satisfying the
  conclusion of Proposition
  \ref{prop:flowbox_assemblage}. Recall that
  $\mathfrak{U}_z$ and $\mathfrak{V}_z$ depend holomorphically on $z$. By
  Proposition \ref{prop:flowbox_assemblage}, one has, for every $v\in
  \mathcal{B}_k$,
  \[
    T_k^{\rm cov}(p_z-\lambda)\mathfrak{V}_z v = \mathfrak{V}_z T_k^{\rm
      cov}(\xi-\lambda+p_0(\mathcal{C}))v+O(e^{-ck}\|v\|_{L^2}+e^{c^{-1}|z|k}\|v\|_{L^2(\C\setminus
      V_z)}).
  \]
    Letting $W_z\Subset V_z$, the proof
  consists in applying $\mathfrak{V}_z$ to the sequence
  \[
    v_k=\Pi_k\left(\1_{W_z}\exp(-k\tfrac{\xi^2}{2})\exp(ik(p_0(\mathcal{C})-\lambda)(x+i\xi))\right).
  \]
  $v_k$ is a quasimode for $T_k^{\rm
    cov}(\xi-\lambda+p_0(\mathcal{C}))$ on a neighbourhood of
  $\kappa_0(\mathcal{C})$ where $\kappa_0$ is the Hamiltonian
  diffeomorphism associated with $\mathfrak{U}_z$. Moreover, $v_k$ is a Lagrangian state with Lagrangian
  $\{\widetilde{\xi}=\lambda-p_0(\mathcal{C})\}$; it
  depends holomorphically on $\lambda$. By Proposition
  \ref{prop:Compo_multi_FIO}, $u_k:=\mathfrak{V}_zv_k$ is a Lagrangian
  state with Lagrangian $\{\widetilde{p_z}=\lambda\}$ and it satisfies
  all desired requirements.
\end{proof}

\begin{prop}\label{prop:local_WKB}
  Assume Hypothesis \ref{hyp:local} holds. Given
  $c>0$ and $U_1\Subset U$, there exist a small neighbourhood $\mathcal{E}$ of
  $p_0(\mathcal{C})$ in $\C$, a small neighbourhood $\mathcal{Z}$ of
  $0$ in $\C$, and
  $c'>0$, such that, uniformly for $(\lambda,z)\in \mathcal{E}\times \mathcal{Z}$, the solutions of
  \begin{equation}\label{eq:quasimode_local_general}
    \|T_k^{\rm
      cov}(p(z,\cdot)-\lambda)u\|_{L^2(U)}=O(e^{-ck}\|u\|_{L^2})
  \end{equation}
  are $O(e^{-c'k})$-close, on $U_1$, to Lagrangian states with
  Lagrangian $\Lambda=\{\widetilde{p(z,\cdot)}=\lambda\}$ (which is close to the
  real Lagrangian $\mathcal{C}$). Once $z$ and $\lambda$ are fixed, the
  total symbol of such a
  Lagrangian state is unique up to a multiplicative constant (possibly
  depending on $k$). In
  particular, the principal symbol of the Lagrangian
  states satisfies the following transport equation on
  $\Lambda$:
  \begin{equation}\label{eq:princ_symb_quasimode}
    X_{\widetilde{p}(z,\cdot)}a_0=-i\frac{a_0}{2}\left(
      \iota^*\Delta\widetilde{p}(z,\cdot)-\iota^*\overline{\partial}\log(s_0)\cdot\partial
    f_0 -i{\rm
        div}_{\Lambda}(X_{\widetilde{p}(z,\cdot)})\right).
  \end{equation}
These Lagrangian states depend holomorphically on $z$ and $\lambda$.
\end{prop}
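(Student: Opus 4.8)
The plan is to conjugate equation \eqref{eq:quasimode_local_general} to the model equation of Proposition~\ref{prop:microlocal_sheaf_model} by means of the Fourier Integral Operators $\mathfrak U_z,\mathfrak V_z$ of Proposition~\ref{prop:flowbox_assemblage}, solve it there, and transfer the answer back. Fix $U_1'$ with $U_1\Subset U_1'\Subset U$. By Proposition~\ref{prop:propag_support}, the restriction of $\mathfrak U_z u$ to a neighbourhood of $\kappa_z(U_1')$ depends, modulo $O(e^{-ck}\|u\|_{L^2})$, only on $u$ restricted to a relatively compact subset of $U$; so for the purpose of describing $u$ on $U_1$ we may assume $u$ is supported in $U$, whereupon the error terms proportional to $\|u\|_{L^2(M\setminus U)}$ in Proposition~\ref{prop:flowbox_assemblage} disappear. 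Setting $v:=\mathfrak U_z u$ and using $T_k^{\rm cov}(p_z-\lambda)=T_k^{\rm cov}(p_z)-\lambda$, Proposition~\ref{prop:flowbox_assemblage} shows $v$ solves $\|T_k^{\rm cov}(\xi-\mu)v\|_{L^2(V)}=O(e^{(c^{-1}|z|-c)k}\|u\|_{L^2})$, where $\mu=\mu(z,\lambda)$ is a constant depending holomorphically on $(z,\lambda)$ with $\mu(0,p_0(\mathcal C))=0$; after shrinking $\mathcal Z$ and $\mathcal E$, $\mu$ is small enough that $\{\xi=\mu\}$ still meets the interior of $V$.

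Proposition~\ref{prop:microlocal_sheaf_model} extends verbatim to the shifted operator $T_k^{\rm cov}(\xi-\mu)=T_k^{\rm cov}(\xi)-\mu$: the identity $k^{-1}\partial_x v=-iT_k^{\rm cov}(\xi)v$ on $\mathcal B_k$ still holds, so the hypothesis forces $\partial_xv-ik\mu v$ to be exponentially small in $L^\infty$ on a slightly smaller box, and two Duhamel steps --- first in $x$, then in $\xi$, using the Bargmann relation \eqref{eq:Bargmann_space} --- pin $v$ down, on a set $V_1\Subset V$ still containing $\kappa_z(U_1')$, to $v_0\,w_{\mu,k}+O(e^{-c'k}\|u\|_{L^2})$, where $w_{\mu,k}$ is the explicit model Lagrangian state with Lagrangian $\{\widetilde\xi=\mu\}$ and $v_0$ is a scalar; in particular the space of microlocal solutions on $V_1$ is a line. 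Applying Proposition~\ref{prop:Compo_multi_FIO} to $\mathfrak V_z$ and $w_{\mu,k}$, the state $\mathfrak V_z(v_0 w_{\mu,k})$ is a Lagrangian state whose Lagrangian is the composition $\mathcal L_{\mathfrak V_z}\circ\{\widetilde\xi=\mu\}=\{\widetilde{p}_z=\lambda\}$ and whose symbol is $v_0$ times a fixed symbol; since $\mathfrak V_z$ is a microlocal inverse of $\mathfrak U_z$ on $U$, this yields $\|u-\mathfrak V_z(v_0 w_{\mu,k})\|_{L^2(U_1)}=O(e^{-c'k}\|u\|_{L^2})$. This proves the two-way description and the uniqueness of the total symbol up to a (possibly $k$-dependent) multiplicative constant; holomorphic dependence on $(z,\lambda)$ is inherited from that of $\mathfrak U_z,\mathfrak V_z$ and of $w_{\mu,k}$ in $\mu$, together with a holomorphic normalisation of the phase $\Phi$ attached to the holomorphically varying Lagrangian $\{\widetilde p_z=\lambda\}$ (Proposition~\ref{prop:Lagrangians}).

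It remains to derive the transport equation. Write the fixed approximating Lagrangian state as $I^\Phi_k(a)$. Since the covariant Berezin--Toeplitz kernel is concentrated near the diagonal and $u=I^\Phi_k(a)+O(e^{-c'k}\|u\|)$ on $U_1$, the hypothesis gives $\|T_k^{\rm cov}(p_z-\lambda)I^\Phi_k(a)\|_{L^2(U_1'')}=O(e^{-c''k})$ on a slightly smaller $U_1''$. By Proposition~\ref{prop:subp_calc}, $T_k^{\rm cov}(p_z-\lambda)I^\Phi_k(a)=I^\Phi_k(b)+O(e^{-ck})$ with $b_0=(\iota^*\widetilde p_z-\lambda)a_0$; a Lagrangian state with non-vanishing principal symbol is not exponentially small near the real part of its Lagrangian (which, being close to $\mathcal C$, meets $U_1''$), so $b_0\equiv 0$, i.e.\ $\mathcal L_\Phi\subset\{\widetilde p_z=\lambda\}$, hence $\mathcal L_\Phi=\{\widetilde p_z=\lambda\}$ by dimension. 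In particular $\widetilde{X}_{(p_z)_0}$ is tangent to $\mathcal L_\Phi$ and $f_0=(p_z)_0-\lambda$ vanishes on it, so the ``specific case'' formula of Proposition~\ref{prop:subp_calc} applies; imposing $b_1\equiv0$ there, and using $f_1=0$, $\Delta\lambda=0$, $\partial\lambda=0$, gives exactly \eqref{eq:princ_symb_quasimode}. The main obstacle throughout is the bookkeeping of errors: the Fourier Integral Operators $\mathfrak U_z,\mathfrak V_z$ can inflate $L^2$ norms by $e^{c^{-1}|z|k}$, so one must shrink $\mathcal Z$ so that this growth is dominated by the gain $c$ in \eqref{eq:quasimode_local_general}, track which exponentially small remainders survive each composition, and use Proposition~\ref{prop:propag_support} precisely enough that the uncontrolled behaviour of $u$ away from $U$ never reaches $U_1$.
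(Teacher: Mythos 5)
Your proposal is correct and follows essentially the same route as the paper's own (much terser) proof: conjugate by the flowbox Fourier Integral Operators of Proposition~\ref{prop:flowbox_assemblage}, control the contribution from outside $U$ via Proposition~\ref{prop:propag_support}, solve the model problem as in Proposition~\ref{prop:microlocal_sheaf_model}, transfer back with $\mathfrak{V}_z$, and extract the transport equation from the subprincipal calculus of Proposition~\ref{prop:subp_calc}. The extra details you supply (the shift $\mu(z,\lambda)$ in the model equation, and the explicit argument that $b_0\equiv 0$ forces $\mathcal{L}_\Phi=\{\widetilde{p}_z=\lambda\}$ before imposing $b_1\equiv 0$) are consistent with, and merely flesh out, the paper's argument.
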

\begin{proof}
  Let $\mathfrak{U}_z$ and $\mathfrak{V}_z$ be Fourier Integral Operators satisfying the
  conclusion of Proposition
  \ref{prop:flowbox_assemblage}. By Proposition
  \ref{prop:propag_support}, if $|z|$ and $|\lambda-p_0(\mathcal{C})|$
  are small enough (depending on
  $c$), if $u$ satisfies
  \eqref{eq:quasimode_local_general}, then $\mathfrak{U}_z u$ satisfies
  \eqref{eq:quasimode_normal} (with a smaller constant
  $c>0$). In particular, by Proposition
  \ref{prop:microlocal_sheaf_model}, $\mathfrak{U}_z u$ is exponentially close to a
  Lagrangian state which is prescribed up to a multiplicative factor.

  Applying $\mathfrak{V}_z$ as in Proposition
  \ref{prop:invert_FIO}, we find, again up to restricting the domain
  of $|z|$, that $u$ is exponentially close
  to a Lagrangian state which is prescribed up to a
  multiplicative factor.

  From there, it only remains to apply Proposition \ref{prop:subp_calc} to obtain the
  equation on the principal symbol.
\end{proof}

\section{Semiglobal model}
\label{sec:semiglobal}

In this section, we improve the results above into a description near
regular energy curves. More precisely, we work under the following
list of hypotheses.

\begin{hyp}\label{hyp:semiglobal}~
  \begin{enumerate}
  \item $(M,J,\omega)$ is a real-analytic, compact quantizable Kähler manifold of complex dimension 1.
  \item $p:\C\times M\to \C$ is a real-analytic, complex-valued
    Hamiltonian with holomorphic dependence on the first
    coordinate. We write
    \[
      p_z=p(z,\cdot).\]
  \item $p_0$ is real-valued.
  \item $\mathcal{C}\subset M$ is a regular, complete, connected piece
    of energy level of $p_0$.
  \end{enumerate}
\end{hyp}

In spirit, this description involves gluing together the quasimodes of Section
\ref{hyp:local}; gluing conditions for quasimodes will yield
conditions on the eigenvalues. We prefer developing a more global
approach and we conjugate the problem to a (non-trivial) spectral
function of $T_k^{\rm cov}(\xi)$ acting on the relevant quantum space $\mathcal{B}_k^{S^1}$. This
construction is more geometric and makes apparent the role played by
the Bohr-Sommerfeld action.

Recall from Proposition \ref{prop:Lagrangians} that Lagrangians can be associated with Fourier Integral
Operators only if they have trivial Bohr-Sommerfeld class. In our
situation, the Bohr-Sommerfeld class is a single number, corresponding
to the integral of a well-chosen antiderivative of $\Omega$ (the
connection form for $\widetilde{\nabla}$) along a curve.

\begin{defn}\label{def:action}
  Let $M$ be a quantizable Kähler manifold and let $L\to M$ be a prequantum
  line bundle. Let $\Lambda$ be a holomorphic Lagrangian in $\widetilde{M}$ and
  suppose that $\Lambda$ is a neighbourhood of a real, oriented closed curve, so
  that $\pi_1(\Lambda)=\Z$. We denote by $I(\Lambda)\in \R/\Z$ the generator of
  the Bohr-Sommerfeld class of $\Lambda$ associated with the curve;
  that is, $\exp(2i\pi I(\lambda))$ is the ratio of values between
  points in $L$ above the same point of the curve, before and after
  parallel transport along the curve.
\end{defn}

As before, we first produce a normal form for $T_k^{\rm cov}(p_z)$
near $\mathcal{C}$, then use it to describe the quasimodes. An
additional feature of dealing with a semiglobal normal form is that we
obtain that there is no Jordan block phenomenon: approximate solutions
for $(T_k^{\rm cov}(p-\lambda))^2$ are also approximate solutions for
$T_k^{\rm cov}(p-\lambda)$.

The main additional difficulty is to produce normal forms. At the
level of principal symbols, we develop action-angle coordinates, the
theory of which does not seem well-developed in the complex
holomorphic case. The simplification of subprincipal terms, again,
calls for a careful proof in the context of analytic symbols.

\subsection{Normal forms}
\label{sec:regul-bohr-somm}

We first transform Proposition \ref{prop:flowbox_assemblage} into a
normal form on $T^*S^1$ -- with Berezin--Toeplitz
quantization. As before, we begin with the ``classical'' problem. Note that in the case of $M = T^*S^1$ such a classical normal form has been derived in \cite[Section 2.2.1]{rouby_bohrsommerfeld_2017}; we will follow a different method for the proof.

We work under the Hypotheses \ref{hyp:semiglobal}. Let $\widetilde{U}$ be a
neighbourhood of $\mathcal{C}$ in $\widetilde{M}$. For $z$ close to
$0$ and $\lambda$ close to $p_0(\mathcal{C})$, the map
\[
  (z,\lambda)\mapsto I(\{\widetilde{p}_z=\lambda\}\cap \widetilde{U})
\]
is well-defined (at $z=0$, the real level set inherits an orientation
from $X_{p_0}$) and holomorphic. We claim that
\[
  \frac{\partial I}{\partial \lambda}\neq 0.
\]
Indeed, by Stokes' theorem, at $z=0$ and for real variations of $\lambda$, $\frac{\partial
  I}{\partial \lambda}$ is the inverse period of the flow of $p_0$. In
particular, for every fixed $z$ close to $0$, the map 
\begin{equation} I_z: \lambda \mapsto I(\{\widetilde{p}_z=\lambda\}\cap \widetilde{U}) \label{eq:Iz_def} \end{equation}
admits a reciprocal denoted by $I_z^{-1}$.

\begin{prop}\label{prop:angle-action}
 Suppose Hypothesis \ref{hyp:semiglobal} holds.
  There exist an open neighbourhood $\mathcal{Z}$ of $0$ in $\C$, an open
  neighbourhood $\widetilde{U}$ of $\mathcal{C}$ in $\widetilde{M}$ and a real-analytic map
  $\kappa:\mathcal{Z}\times \widetilde{U}\to \widetilde{T^*S^1}$, with holomorphic
  dependence in the first variable, such that:
  \begin{itemize}
  \item for every $z\in \mathcal{Z}$, the map $\kappa_z:\widetilde{U}\to \widetilde{T^*S^1}$ is a symplectomorphism;
  \item the graph of $\kappa_z$ has trivial Bohr-Sommerfeld class;
  \item on $\widetilde{U}$ there holds
 \[ I_z\circ \widetilde{p}_z=\xi\circ \kappa_z  \]
 where $I_z$ is as in Equation \eqref{eq:Iz_def}.
 \end{itemize}
  Moreover $\kappa_0$ is the holomorphic extension of a real symplectic map which maps
  $\mathcal{C}$ to $\{\xi=I_0(p_0(\mathcal{C}))\}$.
\end{prop}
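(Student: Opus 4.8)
The plan is to deform from $z=0$, where the classical real action--angle theorem applies, and to run the holomorphic flowbox construction of Proposition \ref{prop:flowbox_classical_real} in the semiglobal situation; the only new feature is that, since $\mathcal{C}$ is a closed curve (a circle), the flow--time coordinate will be defined only modulo $2\pi$, and for this to be consistent the relevant Hamiltonian flow must be $2\pi$-periodic --- which is exactly what the rescaling by $I_z$ achieves. Concretely, since $\partial_\lambda I_z\neq 0$, the holomorphic function $H_z:=I_z\circ\widetilde{p}_z$ on a neighbourhood $\widetilde{U}$ of $\mathcal{C}$ in $\widetilde{M}$ has the same level sets as $\widetilde{p}_z$ (here $I_z$ is as in \eqref{eq:Iz_def}, and we fix its lift so that $H_0$ is real on $M$ with $H_0|_{\mathcal{C}}=I_0(p_0(\mathcal{C}))$; $H_z$ then depends holomorphically on $z$). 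The key claim is that the Hamiltonian flow $\varphi^{\bullet}_{H_z}$ of $H_z$ for $\Omega$ satisfies $\varphi^{2\pi}_{H_z}=\mathrm{id}$ on $\widetilde{U}$.

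To prove the claim, fix $(z,\lambda)$ near $(0,p_0(\mathcal{C}))$: the leaf $\{\widetilde{p}_z=\lambda\}\cap\widetilde{U}$ is an annulus, a perturbation of the one at $z=0$ (a neighbourhood of the circle $\mathcal{C}$), hence carries a distinguished generating loop $\gamma_{z,\lambda}$ oriented by continuation from the $X_{p_0}$-orientation at $z=0$, along which $X_{\widetilde{p}_z}$ is nonvanishing and holomorphic. Let $\alpha_{z,\lambda}$ be the holomorphic $1$-form dual to $X_{\widetilde{p}_z}$ on this leaf; then $T_z(\lambda):=\oint_{\gamma_{z,\lambda}}\alpha_{z,\lambda}$ is the complex time the flow of $X_{\widetilde{p}_z}$ takes to return to its starting point on the leaf. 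Differentiating in $\lambda$ the identity $2\pi I_z(\lambda)=\int_{\Sigma}\Omega+\mathrm{const}$ (Definition \ref{def:action} together with Stokes, as recalled before the statement), $\Sigma$ a surface swept by the $\gamma_{z,\cdot}$, and using $\iota_{X_{\widetilde{p}_z}}\Omega=-\dd\widetilde{p}_z$ together with the fact that on the one-dimensional leaf the restriction of $\iota_v\Omega$ (for a transverse sweep field $v$ with $\dd\widetilde{p}_z(v)=1$) equals $\alpha_{z,\lambda}$, one obtains $2\pi\,\partial_\lambda I_z(\lambda)=T_z(\lambda)$. Since $X_{H_z}=(\partial_\lambda I_z)(\widetilde{p}_z)\,X_{\widetilde{p}_z}$ and $\partial_\lambda I_z$ is constant along each leaf, the time-$2\pi$ flow of $X_{H_z}$ along $\{\widetilde{p}_z=\lambda\}$ is the time-$T_z(\lambda)$ flow of $X_{\widetilde{p}_z}$, which is the identity; this proves the claim. (At $z=0$ one may instead just observe that $\varphi^{2\pi}_{H_0}$ is the identity of the real annulus $U\subset M$ and extend holomorphically.)

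Granting periodicity, fix a holomorphic curve $\Sigma$ through $\mathcal{C}$ transverse to the level sets --- at $z=0$ the holomorphic extension of a real transversal to $X_{p_0}$, which stays transverse for $z$ small --- and define, on a possibly smaller $\widetilde{U}$, $\xi:=H_z$ and $\theta\in\C/2\pi\Z$ to be minus the $\varphi^{\bullet}_{H_z}$-time needed to reach the point from $\Sigma$; this is single-valued modulo $2\pi$ because the flow is $2\pi$-periodic and $\Sigma$ meets each thin leaf once. Then $(\theta,\xi)$ are holomorphic coordinates on $\widetilde{U}$ ($\dd\theta\wedge\dd\xi\neq 0$ since $X_{H_z}=-\partial_\theta$ and $\dd\xi=\dd H_z\neq 0$ on regular leaves), so writing $\Omega=f\,\dd\theta\wedge\dd\xi$ and contracting with $X_{H_z}=-\partial_\theta$ against $\iota_{X_{H_z}}\Omega=-\dd\xi$ forces $f\equiv 1$. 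Hence $\kappa_z:=(\theta,\xi):\widetilde{U}\to\widetilde{T^*S^1}$ is a symplectomorphism with $\xi\circ\kappa_z=H_z=I_z\circ\widetilde{p}_z$, holomorphic in $z$, and at $z=0$ it is, by construction, the holomorphic extension of the real action--angle symplectomorphism sending $\mathcal{C}=\{p_0=p_0(\mathcal{C})\}$ to $\{\xi=I_0(p_0(\mathcal{C}))\}$.

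For the triviality of the Bohr--Sommerfeld class of $\mathrm{graph}(\kappa_z)$: this graph is diffeomorphic to $\widetilde{U}$, with $\pi_1=\Z$, so it suffices that the holonomy around one generating loop of the flat restriction to the graph of the external tensor product of the prequantum bundles of $\widetilde{M}$ and $\widetilde{\overline{T^*S^1}}$ vanish. Over $\gamma_{z,p_0(\mathcal{C})}$ the $\widetilde{M}$-factor contributes $e^{2\pi i I_z(p_0(\mathcal{C}))}$ by Definition \ref{def:action}, while the image loop $\{\widetilde{\xi}=I_z(p_0(\mathcal{C}))\}$ contributes $e^{-2\pi i I_z(p_0(\mathcal{C}))}$ --- a direct computation in the Hermitian chart of Section \ref{sec:bergm-kern-covar}, using that $\{\widetilde{\xi}=s\}$ has Bohr--Sommerfeld action $s$ --- so the two cancel. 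I expect the periodicity statement $\varphi^{2\pi}_{H_z}=\mathrm{id}$ together with the identification $2\pi\,\partial_\lambda I_z=T_z$ of the action derivative with the complex period to be the only genuinely non-routine point; the rest is a transcription of the real action--angle theorem to the holomorphic category, made painless by the fact that in complex dimension $1$ every complex curve is automatically $\Omega$-Lagrangian and that all objects depend holomorphically on $z$ and extend holomorphically from the real locus at $z=0$.
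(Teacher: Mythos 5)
Your proof is correct and follows essentially the same route as the paper: rescale the Hamiltonian by the action $I_z$, establish $2\pi$-periodicity of the complexified flow through the identity $T_z(\lambda)=2\pi\,\partial_\lambda I_z(\lambda)$ obtained by Stokes, close the flowbox construction into holomorphic action--angle coordinates $(\theta,\xi)$ with $\xi=I_z\circ\widetilde{p}_z$, and obtain triviality of the Bohr--Sommerfeld class of the graph by cancellation of the two actions. The only point you compress relative to the paper is the assertion that the time-$T_z(\lambda)$ flow of $X_{\widetilde{p}_z}$ is the identity on the whole complexified leaf rather than just along the distinguished loop $\gamma_{z,\lambda}$ (the paper gets it on the loop and then invokes the identity principle, the loop being maximally totally real in the leaf), but filling this in is routine.
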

\begin{proof}
  Since $\partial_\lambda I_z\neq 0$, the Hamiltonian $q_z=\frac{1}{2\pi}I_z\circ
  p_z$ satisfies the conditions in Hypothesis \ref{hyp:semiglobal}. Moreover,
  $\widetilde{q}_z$ satisfies a crucial supplementary assumption: its Hamiltonian flow is
  $2\pi$-periodic. This occupies the first part of the proof. Let first $\lambda\in \R$ be close to
  $q_0(\mathcal{C})$. There exists a periodic trajectory for $q_0$ at
  energy $\lambda$, which goes along the circle
  $q_0^{-1}(\lambda)$. Let $T(0,\lambda)$ denote its period. Now let
  $x_{z,\lambda}\in \{\widetilde{q_z}^{-1}(\lambda)\}$ with holomorphic dependence on $z$
  and $x_{0,p_0(\mathcal{C})}\in M$. The map $t\mapsto \phi_{\widetilde{q_z}}^t(x_{z,\lambda})$ is
  a local biholomorphism from $\C$ to $\widetilde{M}$; therefore, by
  the inverse function theorem, for $z$ close to $0$ in $\C$ there exists a unique $T(z,\lambda)$
  close to $T(0,\lambda)$ such
  that \[\phi_{\widetilde{q_z}}^{T(z,\lambda)}(x_{z,\lambda})=x_{z,\lambda}.\]
  Moreover, $T(z,\lambda)$ has real-analytic dependence on $\lambda\in
  \R$,
  and therefore the property above holds for $(z,\lambda)$ close to
  $(0,q_0(\mathcal{C}))$ in $\C\times \C$.

  Define now, for fixed $z$ near $0$,
  \[
    s:(\lambda,\theta)\mapsto
    \phi^{\frac{\theta}{2\pi} T(z,\lambda)}_{\widetilde{q_z}}(x_{z,\lambda}).
  \]
  The trajectory $\mathcal{C}_{z,\lambda}=\{s(\lambda,\theta),\theta\in
  S^1\}$ forms a loop inside $\{\widetilde{q_z}=\lambda\}$, which is
  close to $\mathcal{C}$, and along which one can compute the action
  of $\{\widetilde{q_z}=\lambda\}$ as
  \begin{align*}
    I(\lambda)&=\oint_{\mathcal{C}_{z,\lambda}}\alpha(s(\lambda,\theta))\\
    &=\int_0^{2\pi}\alpha(\tfrac{\partial s}{\partial \theta})\dd
      \theta\\
    &=\frac{T(z,\lambda)}{2\pi}\int_0^{2\pi}\alpha(X_{\widetilde{q_z}}(s))\dd \theta
  \end{align*}
  where $\partial \alpha=\Omega$. Now, by assumption $I(\lambda)=2\pi\lambda$,
  and moreover, by Stokes' formula,
  \begin{align*}
    \frac{\partial}{\partial
    \lambda}I(\lambda)&=\int_0^{2\pi}\frac{\partial}{\partial
                        \lambda}\alpha(\tfrac{\partial s}{\partial
                        \theta})\dd \theta\\
    &=\int_0^{2\pi}\Omega(\tfrac{\partial s}{\partial
      \theta},\tfrac{\partial s}{\partial \lambda})\dd \theta\\
    &=\frac{T(z,\lambda)}{2\pi}\int_0^{2\pi}\Omega(X_{\widetilde{p_z}},\tfrac{\partial
      s}{\partial \lambda})\dd \theta\\
    &=\frac{T(z,\lambda)}{2\pi}\int_0^{2\pi}\dd \widetilde{p_z}(\tfrac{\partial
      s}{\partial \lambda})\dd \theta\\
    &=T(z,\lambda)
  \end{align*}
  since by definition
  $\widetilde{q_z}(s(\lambda,\theta))=\lambda$. Thus $T(z,\lambda)=2\pi$,
  and we find that $X_{\widetilde{q_z}}$ is $2\pi$-periodic on
  $\mathcal{C}_{z,\lambda}$.

  To conclude this part of the proof, since $\mathcal{C}_{z,\lambda}$
  is a maximally totally real submanifold of
  $\{\widetilde{q_z}=\lambda\}$, the holomorphic equation
  \[
    \phi_{\widetilde{q_z}}^{2\pi}(x)=x,
  \]
  valid on $\mathcal{C}_{z,\lambda}$, is therefore true on the whole
  of $\{\widetilde{q_z}=\lambda\}$.
  
  Since the flow of $\widetilde{q}_z$ is $2\pi$-periodic, the dynamical
  construction in the proof of Proposition
  \ref{prop:flowbox_classical_real} can be closed into a symplectic change of
  variables $\kappa_z$ from $\widetilde{U}$ to $\widetilde{T^*S^1}$, which maps
  $\widetilde{q}_z$ to $\xi$.

  It remains to show that the graph of $\kappa_z$ has trivial
  Bohr-Sommerfeld class. This graph contracts onto a curve whose
  projection on the first variable is a closed trajectory $\Lambda_0$ of $\widetilde{q}_z$ and
  whose projection on the second variable is $\{\theta\in \R,\xi=\widetilde{q}_z(\Lambda_0)\}$. In these
  circumstances, the Bohr-Sommerfeld class of the graph is generated
  by \[\frac{\exp(2i\pi I(\lambda))}{\exp(2i\pi I(\{\theta\in \R,\xi=\widetilde{q}_z(\Lambda_0)\}))}\] and
  by construction the two actions coincide. 
\end{proof}
Following Proposition \ref{prop:conjug_Toep_FIO}, the map $\kappa_z$ is
quantized by a Fourier Integral operator which conjugates $T_k^{\rm
  cov}( p_z)$ to $T_k^{\rm cov}(I_z^{-1}\circ \xi+k^{-1}r)$ for some
analytic symbol $r$. As before, it remains to correct this
subprincipal error.

\begin{prop}\label{prop:correction_sousprincipal_semiglobal}
  Let $f_0:\C\to \C$ be real-analytic with $f_0'\neq 0$. Let $r$ be a real-analytic symbol on a neighbourhood $U$ of a horizontal
  curve in $T^*S^1$. There exist a real-analytic amplitude $g:\C\to
  \C$ and a real-analytic symbol $a$ on $U$ such that
  \[
    (f_0\circ \xi+k^{-1}r)=a^{-1}\star_{\rm cov}(f_0+k^{-1}g)\circ \xi\star_{\rm cov}a
  \]
\end{prop}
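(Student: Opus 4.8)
The plan is to mimic the deformation argument of Proposition \ref{prop:flowbox_correction}, the only new feature being that the cohomological equation on $T^*S^1$ has a one‑dimensional cokernel at each order in $k^{-1}$ — the functions of $\xi$ alone — and that this cokernel is exactly what is recorded by the correction $g$. Concretely, introduce a parameter $t\in[0,1]$ and look for a real-analytic symbol $a(t)$ with elliptic principal symbol and $a(0)=1$, together with a classical analytic amplitude $g_t$ with $g_0=0$, such that, with $P(t):=f_0\circ\xi+tk^{-1}r$,
\[
P(t)=a(t)^{-1}\star_{\rm cov}\big((f_0+k^{-1}g_t)\circ\xi\big)\star_{\rm cov}a(t).
\]
Differentiating in $t$ and setting $b_t:=a(t)^{-1}\star_{\rm cov}\partial_t a(t)$ and $c_t:=\partial_t g_t$, one obtains the cohomological equation
\[
[b_t,P(t)]_{\star_{\rm cov}}=k^{-1}\,a(t)^{-1}\star_{\rm cov}\big(c_t\circ\xi\big)\star_{\rm cov}a(t)-k^{-1}r,
\]
to be solved for the pair $(b_t,c_t)$ given $a(t)$.

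For the solver, conjugating by $a(t)$ reduces the problem to $[\beta,(f_0+k^{-1}g_t)\circ\xi]_{\star_{\rm cov}}=k^{-1}c_t\circ\xi-k^{-1}\rho_t$, where $\rho_t=a(t)\star_{\rm cov}r\star_{\rm cov}a(t)^{-1}$ and $\beta=a(t)\star_{\rm cov}b_t\star_{\rm cov}a(t)^{-1}$. The leading-order operator is the adjoint action of $f_0\circ\xi$, whose Hamiltonian vector field is $f_0'(\xi)\,\partial_\theta$ and is nonvanishing near the curve since $f_0'\neq 0$; thus, after dividing by the nonvanishing factor $f_0'(\xi)$, one is inverting $\partial_\theta$ on a complex neighbourhood of the circle $\{\xi=\mathrm{const}\}$. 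Its image consists of the symbols with vanishing $\theta$-average, the average being absorbed into $c_t\circ\xi$ (hence into $g_t$), and on the complementary modes $\partial_\theta^{-1}$ divides the $\theta$-Fourier coefficients by $|n|\geq 1$, which preserves analyticity in a fixed complex strip and is bounded on the analytic symbol classes of Definition \ref{def:Boutet-Kree}. Proceeding order by order in $k^{-1}$ this produces, triangularly, an explicit bounded solver $(b_t,c_t)$ in terms of $r$ and $a(t)$ — the $T^*S^1$ analogue of the Taylor-coefficient formula in Proposition \ref{prop:flowbox_correction} — once the residual freedom of adding a function of $\xi$ to $b_t$ is fixed, for instance by demanding that $b_t$ have zero $\theta$-average.

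Finally, since $(BK(T),\star_{\rm cov})$ is a Banach algebra on $T^*S^1$ (Proposition \ref{prop:BK}, extended as in Section \ref{sec:bergm-kern-covar}) and the solver above is bounded, the coupled system $\partial_t a=a\star_{\rm cov}b_t$, $\partial_t g_t=c_t$ has a right-hand side that is locally Lipschitz in an analytic symbol norm; Picard–Lindelöf then gives a solution on all of $[0,1]$, possibly after shrinking $U$ to a smaller complex neighbourhood of the curve, and evaluating at $t=1$ yields genuine (summed) real-analytic $a=a(1)$ and $g=g_1$ with the required properties. The hard part is not the deformation scheme, which is routine once Proposition \ref{prop:flowbox_correction} is available, but the careful treatment of the cokernel of $\mathrm{ad}_{f_0\circ\xi}$ — this is precisely why $g\not\equiv 0$ must be allowed — together with the bookkeeping needed to keep the analytic symbol bounds and the complex domain of definition uniform over $t\in[0,1]$ despite the conjugations. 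In essence this is the analytic, exponentially accurate Birkhoff normal form near a periodic orbit, transported to the Berezin–Toeplitz setting on $T^*S^1$.
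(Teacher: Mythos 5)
Your proposal follows essentially the same route as the paper's proof: a deformation in $t$, absorbing the $\theta$-average of the conjugated perturbation into the correction $g$, inverting $\partial_\theta$ on mean-zero symbols using the nonvanishing of $f_0'$, and closing the argument with Picard--Lindel\"of in the $BK(T)$ classes. The only point where the paper is more careful than your sketch is the resolution of the cohomological equation: instead of an order-by-order triangular solve with an asserted bound, it rewrites the equation as the fixed-point problem $b=\tfrac{1}{f_0'(\xi)}A\bigl[p-\langle p\rangle_{\theta}-[g(\xi),b]+iR(b)\bigr]$, where $A$ is the antiderivative on mean-zero symbols (bounded on $BK(T)$) and the remainder terms are Lipschitz with constant $O(T)$, and a Banach fixed point for small $T$ is what actually yields the Lipschitz dependence $(p,g)\mapsto b$ required to apply Picard--Lindel\"of uniformly in $t$.
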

\begin{proof}
  The proof mostly follows the same lines as that of Proposition
  \ref{prop:flowbox_correction} but we have to take into account the
  non-trivial topology of the problem. As before, we proceed by
  deformation and try to solve
  \[
    (f_0\circ\xi+tk^{-1}r)=a_t^{-1}\star_{\rm cov}(f_0+k^{-1}g)\circ
    \xi\star_{\rm cov}a_t;
  \]
  at $t=0$ we set $a_t=1$ and $g_t=0$. Again we let
  $b=\partial_ta_t\star_{\rm cov}a_t^{-1}$, and differentiate with
  respect to $t$ to find
  \[
    k^{-1}a_t\star_{\rm cov}r\star_{\rm cov}a_t^{-1}=[(f_0+k^{-1}g_t)(\xi),b_t]+k^{-1}\partial_tg_t.
  \]
  Letting $p=a_t\star_{\rm cov}r\star_{\rm cov}a_t^{-1}$, the
  commutator $[(f_0+k^{-1}g_t)(\xi),b_t]$ has zero average over $\theta$ so it
  remains to solve the following system of ODEs in an appropriate
  analytic symbol space:
    \[ \begin{cases}
    \partial_tg_t(\xi)=-\langle
        p\rangle_{\theta}(\xi),\\
        \partial_t p=[b,p],\\
        k[f_0(\xi),b]+[g(\xi),b]=p-\langle p\rangle_{\theta}.
    \end{cases} \]
  It remains to show that one can apply the Picard-Lindelöf theorem to
  this system. The point is that there exists a unique $b$ with zero
  average over $\theta$ such that $k[(f_0+k^{-1} g)(\xi,b)]=p-\langle
  p\rangle_{\theta}$, and $(p,g)\mapsto b$ is Lipschitz on good analytic
  symbol spaces.

  Indeed, one has first that $(b,g)\mapsto [g,b]$ is Lipschitz-continuous on $BK(T)$,
  with a Lipschitz constant proportional to $T$ (since the first order
  vanishes).

  Moreover, let $A$ be the linear operator of antiderivation on the
  space of analytic symbols with vanishing $\theta$ average. Then $A$
  is automatically continuous on the spaces $BK(T)$.

  Next,
  \[
    -ik[f_0(\xi),b]=f'(\xi)\partial_{\theta}b+\underbrace{\sum_{k=1}^{+\infty}\frac{k^{-2j}}{(2j+1)!}f_0^{(2j+1)}(\xi)\partial_{\theta}^{2j+1}b}_{R(b)}
  \]
  where $b\mapsto AR(b)$ is Lipschitz-continuous on $BK(T)$ for $T$ small
  enough, with Lipschitz constant proportional to $T$. Indeed $f_0$ is
  a fixed real-analytic function, applying $A$ lowers by $1$ the
  order of differentiation in $\theta$, and the principal symbol
  vanishes. We obtain
  \[
    AR(b)=\sum_{k=1}^{+\infty}\frac{f_0^{(2j+1)}(\xi)}{(2j+1)!}{\rm
      ad}_{\xi}^{2j}b
  \]
  where $\|{\rm ad}_{\xi}\|_{BK(T)\to BK(T)}=O(T)$ and
  $\|f_0^{(2j+1)}\|_{BK(T)}\leq C_0^j$; for $T$ small enough the sum
  converges in $BK(T)$ and the result is $O(T^2)$.

  All in all, the last line of the system above reads
  \[
    b=\frac{1}{f'(\xi)}A\left[p-\langle
      p\rangle_0-[g(\xi),b]+iR(b)\right]\]
  and by the Banach fixed point theorem, for $T$ small enough, there
  exists a unique solution $b\in BK(T)$ to this problem and
  $(p,g)\mapsto b$ is Lipschitz-continuous in this topology.
\end{proof}

Putting together Propositions \ref{prop:angle-action} and
\ref{prop:correction_sousprincipal_semiglobal} we obtain the following
semiglobal normal form.

\begin{prop}\label{prop:forme_normale_semiglobale}Assume Hypothesis
  \ref{hyp:semiglobal} holds.
  There exist $c>0$, a neighbourhood $\mathcal{Z}$ of $0$ in $\C$, an open
  neighbourhood $U$ of $\mathcal{C}$ in $\widetilde{M}$,
  an open neighbourhood $V$ of $\{\xi=I(p_0(\mathcal{C}))\}$ in
  $T^*S^1$, an analytic amplitude $f:\mathcal{Z}_z\times \R_{\xi}\to
  \C$ with $\partial_{\xi}f\neq 0$, with holomorphic dependence on $z$
  and Fourier integral operators
  \[ \mathfrak{U}_z:H^0(M,L^{\otimes k})\to \mathcal{B}_k^{S^1}, \qquad \mathfrak{V}_z:\mathcal{B}_k^{S^1}\to H^0(M,L^{\otimes k}) \]
  with holomorphic dependence on $z\in \mathcal{Z}$, which are
  microlocal inverses of each other on the domains $U$ and $V$, and such that, uniformly for $z\in
  \mathcal{Z}$, for every
  $u\in H^0(M,L^{\otimes k})$, 
  \[
    \mathfrak{U}_z T_k^{\rm cov}(p_z)u=T_k^{\rm
      cov}(f_z(\xi;k^{-1}))\mathfrak{U}_z u+O(e^{-ck}\|u\|_{L^2}+e^{c^{-1}|z|k}\|u\|_{L^2(M\setminus
      U)})
  \]
  and for every $v\in \mathcal{B}_k^{S^1}$,
  \[
    \mathfrak{V}_z T_k^{\rm cov}(f_z(\xi;k^{-1}))v=T_k^{\rm cov}(p_z)\mathfrak{V}_z v+O(e^{-ck}\|v\|_{L^2}+e^{c^{-1}|z|k}\|v\|_{L^2(\C\setminus
      V)}).
  \]
  The principal symbol of $f$ is the reciprocal of the action map
  $\lambda\mapsto I(\{\widetilde{p}_z=\lambda\}\cap \widetilde{U})$ for some
  neighbourhood $\widetilde{U}$ of $\mathcal{C}$ in $\widetilde{M}$.
\end{prop}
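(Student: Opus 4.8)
The plan is to mirror the assembly already used for Proposition \ref{prop:flowbox_assemblage}, with the local flowbox ingredients replaced by their semiglobal counterparts. First I would invoke Proposition \ref{prop:angle-action}: for $z$ in a neighbourhood $\mathcal{Z}$ of $0$ it provides the symplectomorphism $\kappa_z:\widetilde{U}\to\widetilde{T^*S^1}$, holomorphic in $z$, whose graph has trivial Bohr--Sommerfeld class and which satisfies $I_z\circ\widetilde{p}_z=\xi\circ\kappa_z$. Since that graph is a holomorphic Lagrangian in $\widetilde{M}\times\overline{\widetilde{T^*S^1}}$ with trivial Bohr--Sommerfeld class and is the graph of an invertible symplectic map, Propositions \ref{prop:Lagrangians} and \ref{prop:invert_FIO} attach to it a phase and a pair of Fourier Integral operators between $H^0(M,L^{\otimes k})$ and $\mathcal{B}_k^{S^1}$, with an arbitrary elliptic principal symbol and holomorphic dependence on $z$, which are microlocal inverses on $U$ and on a neighbourhood of $\{\xi=I_0(p_0(\mathcal{C}))\}$ in $T^*S^1$. (As noted in Section \ref{sec:bergm-kern-covar}, the Bergman kernel on $T^*S^1$ is exponentially close to the Bargmann one, so the entire FIO calculus of Section \ref{sec:compl-lagr-stat} applies verbatim.)

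Next, by Proposition \ref{prop:conjug_Toep_FIO}, conjugating $T_k^{\rm cov}(p_z)$ by this Fourier Integral operator produces, microlocally near $\{\xi=I_0(p_0(\mathcal{C}))\}$, a covariant Berezin--Toeplitz operator on $\mathcal{B}_k^{S^1}$ whose principal symbol is the push-forward of $\widetilde{p}_z$ by $\kappa_z$; by the classical identity $\widetilde{p}_z\circ\kappa_z^{-1}=I_z^{-1}\circ\xi$ (which makes sense because $\partial_\lambda I_z\neq0$, so $I_z^{-1}$ is well-defined with $(I_z^{-1})'\neq0$), this conjugate has the form $T_k^{\rm cov}(I_z^{-1}\circ\xi+k^{-1}r_z)$ for some analytic symbol $r_z$ depending holomorphically on $z$. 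I would then apply Proposition \ref{prop:correction_sousprincipal_semiglobal} with $f_0=I_z^{-1}$ and $r=r_z$: since both depend holomorphically on $z$, so do the resulting analytic amplitude $g_z:\C\to\C$ and analytic symbol $a_z$ (with elliptic principal symbol near $V$), and one has $I_z^{-1}\circ\xi+k^{-1}r_z=a_z^{-1}\star_{\rm cov}(I_z^{-1}+k^{-1}g_z)\circ\xi\star_{\rm cov}a_z$. Setting $f_z(\xi;k^{-1})=I_z^{-1}(\xi)+k^{-1}g_z(\xi)$ (summed as a classical analytic symbol), and recalling that $T_k^{\rm cov}(a_z)$ is invertible by Proposition \ref{prop:Bergman_asymp}, conjugation by $T_k^{\rm cov}(a_z)$ turns $T_k^{\rm cov}(I_z^{-1}\circ\xi+k^{-1}r_z)$ into $T_k^{\rm cov}(f_z(\xi;k^{-1}))$ modulo $O(e^{-ck})$.

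It remains to collect everything. A covariant Berezin--Toeplitz operator is a Fourier Integral operator attached to the diagonal, so composing the FIO of the first step with $T_k^{\rm cov}(a_z)$, and its microlocal inverse with $T_k^{\rm cov}(a_z)^{-1}$, gives by Proposition \ref{prop:Compo_multi_FIO} a single pair $\mathfrak{U}_z,\mathfrak{V}_z$ of Fourier Integral operators between $H^0(M,L^{\otimes k})$ and $\mathcal{B}_k^{S^1}$, holomorphic in $z$, which are microlocal inverses on the domains $U$ and $V$ (after shrinking them slightly) and intertwine $T_k^{\rm cov}(p_z)$ with $T_k^{\rm cov}(f_z(\xi;k^{-1}))$. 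The error terms are precisely those of Proposition \ref{prop:invert_FIO}: a uniform $O(e^{-ck}\|\cdot\|_{L^2})$ together with a tail $O(e^{c^{-1}|z|k}\|\cdot\|_{L^2(M\setminus U)})$, respectively $O(e^{c^{-1}|z|k}\|\cdot\|_{L^2(\C\setminus V)})$, coming from the regions where the Lagrangian has been pushed off the real locus; shrinking $\mathcal{Z}$ makes the exponent $c^{-1}|z|$ as small as required. By construction the principal symbol of $f_z$ is $I_z^{-1}$, i.e.\ the reciprocal of $\lambda\mapsto I(\{\widetilde{p}_z=\lambda\}\cap\widetilde{U})$. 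As for Proposition \ref{prop:flowbox_assemblage}, the only delicate point is the bookkeeping: the neighbourhoods $\mathcal{Z},U,V$ and the constants must be fixed \emph{in this order}, so that the exponentially large factors $e^{c^{-1}|z|k}$ produced by each non-unitary Fourier Integral operator are dominated once $|z|$ is small, and so that the microlocal identities coming from Propositions \ref{prop:conjug_Toep_FIO}, \ref{prop:correction_sousprincipal_semiglobal} and \ref{prop:invert_FIO} can be chained without amplifying the exponentially small remainders.
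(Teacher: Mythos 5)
Your proposal is correct and follows essentially the same route as the paper, which obtains the semiglobal normal form precisely by combining Proposition \ref{prop:angle-action} (classical action-angle normal form with trivial Bohr--Sommerfeld class), its quantization via Propositions \ref{prop:invert_FIO} and \ref{prop:conjug_Toep_FIO} yielding $T_k^{\rm cov}(I_z^{-1}\circ\xi+k^{-1}r)$, and the subprincipal correction of Proposition \ref{prop:correction_sousprincipal_semiglobal}. Your write-up simply makes explicit the bookkeeping (holomorphic dependence on $z$, invertibility of $T_k^{\rm cov}(a_z)$, and the error terms inherited from Proposition \ref{prop:invert_FIO}) that the paper leaves implicit.
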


\subsection{Quasimodes}
\label{sec:quasimodes}

Thanks to Proposition \ref{prop:forme_normale_semiglobale} we can study the quasimodes for
$T_k^{\rm cov}(p_z)$ near $U$. As before, they are necessary
Lagrangian states, but contrary to Proposition
\ref{prop:exists_microlocal_solution_local}, the topology forces a
Bohr-Sommerfeld rule on the energies.

\begin{prop}\label{prop:study_quasimodes_semiglobal}Suppose Hypothesis
  \ref{hyp:semiglobal} holds.
  Given $c>0$ and $U_1\Subset U$ there exists $c'>0$ such that, if
  there exists $u\in H^0(M,L^{\otimes k})$ with $\|u\|_{L^2}=1$ and
  $\|u\|_{L^2(U)}>\frac 12$ and
  \[
    \|T_k^{\rm cov}(p_z-\lambda)u\|_{L^2(U_1)}=O(e^{-ck})
  \]
  then
  \[
    f_z^{-1}(\lambda;k^{-1})=\frac{j}{k}+O(e^{-c'k}),\qquad j\in \Z
  \]
  where $f_z^{-1}$ denotes the reciprocal map of $\xi\mapsto
  f_z(\xi)$.

  Moreover, if $u\in H^0(M,L^{\otimes k})$ is normalised with $\|u\|_{L^2(U)}>1$ and satisfies
  \[
    \|(T_k^{\rm cov}(p_z-\lambda))^2u\|_{L^2(U_1)}=O(e^{-ck})
  \]
  then one also has
  \[
    \|(T_k^{\rm cov}(p_z-\lambda))^2u\|_{L^2(U_1)}=O(e^{-c'k})
  \]

  Reciprocally, there exist $c_0>0$, $c_1>0$, and neighbourhoods
  $\mathcal{Z}$ of $0$ in $\C$ and $\mathcal{E}$ of
  $p_0(\mathcal{C})$ in $\C$, such that if $z\in \mathcal{Z}$ and
  $\lambda\in \mathcal{E}$ solve
  \[
    \exists j\in \Z\qquad f_z(\lambda;k^{-1})=\frac{j}{k}+O(e^{-c_0k}),
  \]
  then there exists $u\in H^0(M,L^{\otimes k})$ normalised with
  $\|u\|_{L^2(U)}>\frac 12$ and
  \[
    \|T_k^{\rm cov}(p_z-\lambda)u_k\|_{L^2(U_1)}=O(e^{-c_1k}).
  \]
  
\end{prop}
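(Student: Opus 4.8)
The plan is to reduce the whole question, via the semiglobal normal form of Proposition~\ref{prop:forme_normale_semiglobale}, to the explicit model operator $M_z:=T_k^{\rm cov}(f_z(\xi;k^{-1}))$ acting on $\mathcal{B}_k^{S^1}$, and to carry out the spectral analysis there by hand. First I would pin down the model. Since $f_z(\xi;k^{-1})$ is an analytic symbol depending only on $\xi$, it belongs to the commutative subalgebra of $\xi$-dependent symbols for the Wick star product on $T^*S^1$, so $M_z$ commutes with $T_k^{\rm cov}(\xi)$; the latter is diagonal in the orthonormal basis of theta modes $(e_j)_{j\in\Z}$ of $\mathcal{B}_k^{S^1}$, with eigenvalues $\mu_j$ explicitly of the form $j/k$ up to a fixed shift and with $e_j$ microlocalised on the circle $\{\xi=\mu_j\}$ --- the $\mathcal{B}_k^{S^1}$-analogue of Proposition~\ref{prop:microlocal_sheaf_model}, which I would establish along the way. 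Hence $M_z e_j=\nu_j e_j$ with $\nu_j$ computed explicitly from $f_z$ and $\mu_j$, characterised by the Bohr--Sommerfeld relation $f_z^{-1}(\nu_j;k^{-1})\in k^{-1}\Z$; since $\partial_\xi f_z$ never vanishes, the $\nu_j$ lying in a fixed neighbourhood of $p_0(\mathcal{C})$ are pairwise $\gtrsim k^{-1}$ apart and $j\mapsto\nu_j$ is injective there. In particular $M_z$ is diagonalisable with simple, well-separated spectrum on the spectral subspace attached to a neighbourhood of $\{\xi=I(p_0(\mathcal{C}))\}$: no Jordan blocks, no resonances. From this I would extract the model microlocal statement: for $V_1\Subset V$, if $v\in\mathcal{B}_k^{S^1}$ is normalised with $\|v\|_{L^2(V)}>\tfrac12$ and $\|(M_z-\lambda)v\|_{L^2(V_1)}=O(e^{-ck})$, then expanding $v$ in the $(e_j)$ and using the spectral gap (the modes with $\mu_j$ away from $I(p_0(\mathcal{C}))$ being already microlocally negligible on $V$) forces $\lambda=\nu_j+O(e^{-c'k})$ for a unique $j$ and $v=c_je_j+O(e^{-c'k})$ on $V_1$; the same expansion shows that $\|(M_z-\lambda)^2v\|_{L^2(V_1)}=O(e^{-ck})$ implies $\|(M_z-\lambda)v\|_{L^2(V_1)}=O(e^{-c'k})$.

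Next I would transfer, fixing $U_1\Subset U_1'\Subset U$ and choosing the normal-form domains so that $\kappa_z$ carries $U_1\Subset U_1'$ onto $V_1\Subset V_1'$, both relatively compact in $V$. Given $u$ as in the statement, set $v=\mathfrak{U}_z u$. Because the kernel of $\mathfrak{U}_z$ decays exponentially off the graph of $\kappa_z$ (Proposition~\ref{prop:propag_support}), the restriction $v|_{V_1}$ depends on $u$ only through $u|_{U_1'}$ up to $O(e^{-ck}\|u\|)$; combining the intertwining relation of Proposition~\ref{prop:forme_normale_semiglobale} with $\|T_k^{\rm cov}(p_z-\lambda)u\|_{L^2(U_1)}=O(e^{-ck})$ and shrinking $\mathcal{Z}$ so that the amplification factor $e^{c^{-1}|z|k}$ of the normal form stays below $e^{c''k}$, one gets $\|(M_z-\lambda)v\|_{L^2(V_1)}=O(e^{-c''k})$. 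Running the propagation estimate backwards through $\mathfrak{V}_z$ (Proposition~\ref{prop:propag_support}) and using the microlocal-inverse property (Proposition~\ref{prop:invert_FIO}) shows that if $v$ were $O(e^{-c''k})$ near the model circle then $u$ would be correspondingly small near $\mathcal{C}$, contradicting $\|u\|_{L^2(U)}>\tfrac12$; hence the model microlocal statement applies and gives $f_z^{-1}(\lambda;k^{-1})\in k^{-1}\Z+O(e^{-c'k})$, the first assertion, while the no-Jordan-block assertion follows the same way from the squared-operator statement of the model after pushing $(M_z-\lambda)v=O(e^{-c'k})$ back through $\mathfrak{V}_z$. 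For the converse: given $(z,\lambda)$ with $f_z^{-1}(\lambda;k^{-1})\in k^{-1}\Z+O(e^{-c_0k})$, pick $j$ with $f_z^{-1}(\nu_j;k^{-1})=f_z^{-1}(\lambda;k^{-1})+O(e^{-c_0k})$, so $|\nu_j-\lambda|=O(e^{-c_0k})$ by Lipschitz continuity, and set $u=\mathfrak{V}_z e_j$; since $e_j$ is microlocalised on a circle inside $V$, $u$ is microlocalised near $\mathcal{C}$, the normal-form error is genuinely $O(e^{-ck})$, so $\|u\|_{L^2}=1+O(e^{-ck})$, $\|u\|_{L^2(U)}>\tfrac12$, and $\|T_k^{\rm cov}(p_z-\lambda)u\|_{L^2(U_1)}=\|\mathfrak{V}_z(M_z-\lambda)e_j\|_{L^2(U_1)}+O(e^{-ck})=O(e^{-c_1k})$.

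The main obstacle is the bookkeeping in the transfer step: the conjugating operators $\mathfrak{U}_z,\mathfrak{V}_z$ can inflate norms by $e^{c^{-1}|z|k}$, so throughout one must check that this factor only ever multiplies quantities that are already either exponentially small or cut down by the exponential decay of the FIO kernels off their Lagrangians, and one must use the non-degeneracy hypothesis $\|u\|_{L^2(U)}>\tfrac12$ precisely to guarantee that $\mathfrak{U}_z u$ stays non-negligible near the model circle --- without it the conclusion is (correctly) vacuous. A secondary, not-quite-immediate point is the model lemma that the theta modes diagonalise $T_k^{\rm cov}(\xi)$, hence $M_z$, on $\mathcal{B}_k^{S^1}$ with the asserted $\gtrsim k^{-1}$ eigenvalue spacing.
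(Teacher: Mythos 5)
Your proposal follows essentially the same route as the paper: reduce via the semiglobal normal form of Proposition~\ref{prop:forme_normale_semiglobale} to the model operator on $\mathcal{B}_k^{S^1}$, exploit that it is (a function of) $T_k^{\rm cov}(\xi)=ik^{-1}\partial_\theta$, hence normal and diagonal on the theta modes with eigenvalues $f_z(j/k;k^{-1})$ spaced by $\gtrsim k^{-1}$, and transfer quasimodes back and forth through $\mathfrak{U}_z,\mathfrak{V}_z$, obtaining the converse by applying $\mathfrak{V}_z$ to a model eigenfunction and the no-Jordan-block statement from the model's normality. Your explicit mode expansion and microsupport bookkeeping merely spell out what the paper compresses into ``normality means the resolvent is bounded, from above and below, by the inverse distance to the spectrum,'' so the two arguments coincide in substance.
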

We recall from Proposition \ref{prop:local_WKB} that the quasimodes above
are necessary of WKB form.
\begin{proof}
  First, obtain $\mathfrak{U}_z$, $\mathfrak{V}_z$ and $f_z$ as in Proposition \ref{prop:forme_normale_semiglobale}. Note that
  \[
    T_k^{\rm cov}(f_z(\xi;k^{-1}))=f_z(T_k^{\rm cov}(\xi);k^{-1}).
  \]
  The operator $T_k^{\rm cov}(\xi)=ik^{-1}\frac{\partial}{\partial \theta}$ is self-adjoint on $T^*S^1$ and
  its eigenvalues are of the form $jk^{-1}$ for $j\in \Z$. Thus, the
  operator $f_z(T_k^{\rm cov}(\xi);k^{-1})$ is normal, and its
  eigenvalues are of the form $f_z(jk^{-1};k^{-1})$ for $j\in
  \Z$. Normality means that the resolvent is bounded, from above and
  below, by the inverse distance to the spectrum.

  Applying $\mathfrak{V}_z$ to an eigenfunction of $T_k^{\rm cov}(\xi)$
  will yield a quasimode of $T_k^{\rm cov}(p_z)$ at eigenvalue
  $f_z(jk^{-1};k^{-1})$; reciprocally, by normality of $f_z(T_k^{\rm
    cov}(\xi))$, applying $\mathfrak{U}_z$ to a quasimode of $T_k^{\rm
    cov}(p_z)$ yields a condition on the eigenvalue.

  It remains to prove that quasimodes of $(T_k^{\rm
    cov}(p_z-\lambda))^2$ are also quasimodes of $T_k^{\rm
    cov}(p_z-\lambda)$. To this end we use again the normality of the
  model operator: applying $\mathfrak{U}_z$ to a quasimode of $(T_k^{\rm
    cov}(p_z-\lambda))^2$ yields a zero quasimode for
  $(f_z(T_k^{\rm cov}(\xi);k^{-1})-\lambda)^2$; thus it is also a
  zero quasimode for $f_z(T_k^{\rm cov}(\xi);k^{-1})-\lambda$, and by
  application of $\mathfrak{V}_z$ we recover a zero quasimode for $T_k^{\rm cov}(p_z)-\lambda$.
\end{proof}

In practice, one can use Proposition
\ref{prop:study_quasimodes_semiglobal} to give explicit necessary
conditions on quasimodes; for instance, one can give Bohr-Sommerfeld
conditions up to $O(k^{-2})$ which involve the action and a geometric
subprincipal contribution.

\begin{prop}\label{prop:BS_mod_k-2}
  In the context of Proposition
  \ref{prop:study_quasimodes_semiglobal}, decompose
  \[
    p_z=p_{z;0}+k^{-1}p_{z;1}+O(k^{-2}).
  \]
  Let $\delta$ be a topologically trivial half-form bundle over $U$. As in Definition
  \ref{def:action}, denote by $I_{\rm sub}(\Lambda)$ the generator of the
  Bohr-Sommerfeld class of a Lagrangian $\Lambda$ close to
  $\widetilde{C}$, relative to the bundle $\delta$.

  Then one has
  \begin{equation}
    f_z^{-1}(\lambda;k^{-1})=I(\{\widetilde{p_{z;0}}=\lambda\})+k^{-1}\left[\oint_{\{\widetilde{p_z}=\lambda\}}(\widetilde{p_{z;1}}-\tfrac
      12 \widetilde{\Delta p_{z;0}})\kappa-I_{\rm sub}(\{\widetilde{p_z}=\lambda\})\right]+O(k^{-2})\label{eq:BS_mod_k-2}
  \end{equation}
  where $\kappa$ is the unique one-form on
  $\{\widetilde{p_z}=\lambda\}$ such that
  $\kappa(X_{\widetilde{p_{z;0}}})=1$.
\end{prop}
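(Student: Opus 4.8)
The plan is to reduce the statement to a monodromy computation for WKB quasimodes, using the semiglobal normal form of Proposition~\ref{prop:forme_normale_semiglobale} only as a black box. Write $G_z(\lambda;k^{-1})$ for the right-hand side of \eqref{eq:BS_mod_k-2}; it is a holomorphic order-$0$ analytic symbol in $\lambda$ whose principal part is the action map $\lambda\mapsto I(\{\widetilde{p_{z;0}}=\lambda\})$, which by Proposition~\ref{prop:forme_normale_semiglobale} is also the principal part of $f_z^{-1}$. It therefore suffices to prove $G_z=f_z^{-1}+O(k^{-2})$, equivalently $H_z:=G_z\circ f_z-\mathrm{id}=O(k^{-2})$ as an analytic symbol in $\xi$; since the principal parts cancel we may write $H_z=k^{-1}H_{z;1}+O(k^{-2})$ with $H_{z;1}$ a fixed ($k$-independent) real-analytic function, and because the points $j/k$, $j\in\Z$, are $O(k^{-1})$-dense in the relevant interval it is enough to show $H_{z;1}(j/k)=O(k^{-1})$ for all admissible $j$: this forces $H_{z;1}$ to be constant, the constant is an integer, and it is pinned to $0$ by specialising to $z=0$, where \eqref{eq:BS_mod_k-2} reduces to the known self-adjoint Bohr--Sommerfeld formula (cf.\ \cite{charles_symbolic_2006}) and then propagated by holomorphy in $z$.

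The key input is that $\mathfrak{V}_z$ from Proposition~\ref{prop:forme_normale_semiglobale} carries an eigenvector of $T_k^{\rm cov}(\xi)=ik^{-1}\tfrac{\partial}{\partial\theta}$ on $\mathcal{B}_k^{S^1}$ with eigenvalue $j/k$ to a \emph{global} section $u_j\in H^0(M,L^{\otimes k})$ which is a quasimode of $T_k^{\rm cov}(p_z)$ at the energy $\lambda_j:=f_z(j/k;k^{-1})$, concentrated on $U$ modulo $O(e^{-ck})$; by Proposition~\ref{prop:local_WKB}, on $U$ the section $u_j$ is exponentially close to a Lagrangian state over $\{\widetilde{p_z}=\lambda_j\}$, with symbol unique up to a multiplicative constant. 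Since $u_j$ is genuinely single-valued on the annular neighbourhood of $\mathcal{C}$, this WKB representation must "close up" around the loop $\mathcal{C}$ modulo $O(e^{-ck})$, i.e.\ the product of its monodromies around $\mathcal{C}$ equals $1+O(e^{-ck})$. I would compute this closing-up condition to order $k^{-1}$ and check it reads $k\,G_z(\lambda_j;k^{-1})\in\Z+O(k^{-1})$; as the principal part of $k\,G_z(\lambda_j;k^{-1})$ is $j$, this gives $G_z(\lambda_j;k^{-1})=j/k+O(k^{-2})$, i.e.\ $H_{z;1}(j/k)=O(k^{-1})$, completing the reduction above.

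For the closing-up computation I would take as reference the genuine level set $\Lambda_0:=\{\widetilde{p_{z;0}}=\lambda\}$ — a fixed, $k$-independent complex curve, totally real when $z=0$, and $O(k^{-1})$-close to $\{\widetilde{p_z}=\lambda\}$, so that switching between the two (as well as replacing $I_{\rm sub}(\{\widetilde{p_{z;0}}=\lambda\})$ by $I_{\rm sub}(\{\widetilde{p_z}=\lambda\})$) costs only $O(k^{-2})$ in the final formula — and write the WKB state as $u=\widetilde{\Phi_{\Lambda_0}}^{\otimes k}\otimes\tau\otimes(a_0+k^{-1}a_1+\cdots)$, with $\widetilde{\Phi_{\Lambda_0}}$ a local $\widetilde{\nabla}$-parallel section of $\widetilde{L}$ over $\Lambda_0$, $\tau$ a local parallel section of the holomorphic extension of $\delta$ over $\Lambda_0$, and $a_0+k^{-1}a_1+\cdots$ an analytic symbol. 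Applying Proposition~\ref{prop:subp_calc} to $T_k^{\rm cov}(p_z-\lambda)u$: the principal term vanishes automatically on $\Lambda_0$, and the vanishing of the subprincipal term is the transport equation \eqref{eq:princ_symb_quasimode} for $a_0$, into which $\iota^*\widetilde{p_{z;1}}$ enters linearly because the $a_1$-contribution drops out ($\iota^*(\widetilde{p_{z;0}}-\lambda)\equiv 0$ on $\Lambda_0$). Parametrising $\mathcal{C}$ by the time of the flow of $X_{\widetilde{p_{z;0}}}$, i.e.\ using the one-form $\kappa$ with $\kappa(X_{\widetilde{p_{z;0}}})=1$, and integrating the transport equation, the monodromy of $a_0$ around $\mathcal{C}$ equals $\exp\bigl(-i\oint_{\Lambda_0}(\widetilde{p_{z;1}}-\tfrac12\widetilde{\Delta p_{z;0}})\,\kappa\bigr)$ times a purely geometric factor built from $\mathrm{div}_{\Lambda_0}(X_{\widetilde{p_{z;0}}})$ and $\partial\log s_0,\overline{\partial}\log s_0$; by Remark~\ref{rem:subprincipal_Lagrangian} this geometric factor is precisely the inverse of the monodromy of $\tau$, so that the product of the two equals $\exp(2\pi i\,I_{\rm sub}(\Lambda_0))$ (here the hypothesis that $\delta$ is topologically trivial is used, to make $I_{\rm sub}(\Lambda_0)$ an unambiguous number). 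Since the monodromy of $\widetilde{\Phi_{\Lambda_0}}^{\otimes k}$ is $\exp(-2\pi i k\,I(\Lambda_0))$ by Definition~\ref{def:action}, and the contributions of $a_1,a_2,\dots$ affect the total monodromy only at order $O(k^{-1})$, the condition "total monodromy $=1+O(e^{-ck})$" becomes $-2\pi i k\,I(\Lambda_0)+2\pi i\,I_{\rm sub}(\Lambda_0)-i\oint_{\Lambda_0}(\widetilde{p_{z;1}}-\tfrac12\widetilde{\Delta p_{z;0}})\,\kappa\in 2\pi i\Z+O(k^{-1})$, which, with the normalisations of Definition~\ref{def:action}, is exactly $k\,G_z(\lambda;k^{-1})\in\Z+O(k^{-1})$ — as wanted.

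The hard part will be the step of the last paragraph that matches the geometric terms of \eqref{eq:princ_symb_quasimode} — the divergence of the Hamiltonian field and the $\log s_0$-derivatives — against the connection of the half-form bundle $\delta$, now \emph{around the loop} and with genuinely complex, non-unitary monodromies, so as to produce exactly $\exp(2\pi i\,I_{\rm sub}(\Lambda_0))$ with no residual holonomy; this is the global, non-self-adjoint counterpart of the pointwise identity recorded in Remark~\ref{rem:subprincipal_Lagrangian}, and it forces one to keep careful track of the orientation and sign conventions of Definition~\ref{def:action} (which also fix the additive integer once $z=0$ is treated). A subsidiary but necessary point is the bookkeeping that replacing the $k$-dependent quasimode Lagrangian $\{\widetilde{p_z}=\lambda_j\}$ by the genuine level set $\Lambda_0$, and the corresponding replacement inside $I_{\rm sub}$, each change the right-hand side of \eqref{eq:BS_mod_k-2} only by $O(k^{-2})$.
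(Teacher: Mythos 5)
Your plan is correct, and its analytic core coincides with the paper's proof: one computes the monodromy of the WKB quasimode over one period using the subprincipal calculus of Proposition \ref{prop:subp_calc}, the transport equation contributes $\oint(\widetilde{p_{z;1}}-\tfrac12\widetilde{\Delta p_{z;0}})\kappa$ (the $a_1$-term dropping out on the level set), the remaining geometric terms are identified with the holonomy of the half-form bundle exactly as in Remark \ref{rem:subprincipal_Lagrangian} globalised around the loop, and the topological triviality of $\delta$ is what kills the residual Lie-derivative contribution -- this is precisely what you isolate as ``the hard part'', and it is treated at the same level of detail in the paper. Where you genuinely diverge is in how the monodromy is tied to $f_z^{-1}$: the paper lifts the normal form of Proposition \ref{prop:forme_normale_semiglobale} to the universal covers of $U$ and of $T^*S^1$, and, using the explicit model eigenfunction together with the fact that $\mathfrak{V}_z$ commutes with translation by one period, identifies the phase shift over a period as exactly $\exp\bigl(2i\pi k f_z^{-1}(\lambda;k^{-1})\bigr)$ for \emph{every} $\lambda$ in the window, so the formula is read off for all energies at once; you instead invoke single-valuedness of genuine quasimodes only at the quantized energies $\lambda_j=f_z(j/k;k^{-1})$ and recover the symbol identity by density of the lattice $j/k$, integer-rigidity of the discrepancy, holomorphy in $z$, and anchoring at $z=0$ on the known self-adjoint formula of \cite{charles_symbolic_2006}. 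Your route avoids the covering-space step but pays with the external $z=0$ input (and the covariant-versus-normalised, with-versus-without half-form translation it requires, as in the paper's example section), while the paper's phase-shift identification is sharper and self-contained. Two small points to fix in the write-up: the closing-up condition only yields $H_{z;1}(j/k)\in\Z+O(k^{-1})$, not $H_{z;1}(j/k)=O(k^{-1})$ as you first announce (your own fallback -- constant integer, pinned at $z=0$ -- is the step that actually carries the argument, so state it as the goal); and the ansatz $\widetilde{\Phi_{\Lambda_0}}^{\otimes k}\otimes\tau\otimes a$ should be phrased so that the symbol factor is valued in $\delta^{-1}$, since the quasimode lives in $H^0(M,L^{\otimes k})$, matching the operator $\nabla^{\iota^*(\delta^{-1})}$ of Remark \ref{rem:subprincipal_Lagrangian}.
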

\begin{proof}We already know from the construction
  that \[f_z^{-1}(\lambda;k^{-1})=I(\{\widetilde{p_{z;0}}=\lambda\})+O(k^{-1})
  \]
  and it remains to compute the subprincipal term. To do so, we will
  use the subprincipal calculus of Proposition \ref{prop:subp_calc}.
  
  Let us lift Proposition \ref{prop:forme_normale_semiglobale} (and thus obtain Fourier Integral Operators $\mathfrak{U}_z$ and $\mathfrak{V}_z$) to the
  universal cover of $U$, on one side, and the universal cover of
  $T^*S^1$, on the other side. Now, by Proposition
  \ref{prop:exists_microlocal_solution_local}, there exists a local
  quasimode (which goes around $U$ at least once) for every $\lambda$
  near $p_0(\mathcal{C})$.

  We now claim that, in this picture,
  $\exp(2i\pi kf_z^{-1}(\lambda;k^{-1}))$ is the phase shift between two
  different points projecting down to the same point of $U$ and
  separated by one period. Indeed, after conjugation by $\mathfrak{U}_z$
  and $\mathfrak{V}_z$, $f_z^{-1}(\lambda;k^{-1})=\mu$ is the eigenvalue
  of $T_k^{\rm cov}(\xi)$ (acting on $\mathcal{B}_k$) for
  which we are considering a quasimode. We already know that this
  quasimode is of the form
  \[
    v:(x,\xi)\mapsto \exp(-k\tfrac{\xi^2}{2})\exp(ik\mu(x+i\xi))
  \]
  and therefore
  \[
    v(x+2\pi,\xi)=\exp(2i\pi k \mu)v(x,\xi).
  \]
  The phase shift is preserved by $\mathfrak{V}_z$, since this operator
  commutes with translation along one entire period.

  On the other hand, we can compute the subprincipal term in this
  phase shift by using Proposition \ref{prop:subp_calc}. Letting
  $u=\mathfrak{V}_z v$, then $u$ is of the form $I_k^{\Phi}(a)$ for some
  phase $\Phi$ and symbol $a$, and
  solves
  \[
    T_k^{\rm cov}(p_z-\lambda)I_k^{\Phi}(a)=O(e^{-ck})
  \]
  for some $c>0$. In the setting of Proposition \ref{prop:subp_calc},
  the fact that $b_0$ and $b_1$ vanish yields conditions on
  $\Phi$ and $a_0$. More precisely
  \[
    \text{if }\iota^*\widetilde{p_{z;0}}=0\text{ then }\{\widetilde{\nabla}
    \Phi=0\}=\{\widetilde{p_{z;0}}=\lambda\}
  \]
  so we know that $\Phi$ is a phase associated with the Lagrangian
  $\{\widetilde{p_{z;0}}=\lambda\}$. As such, the phase shift in
  $\Phi^{\otimes k}$ after one period is equal to the parallel transport
  along one period in $L$, which is equal to
  $\exp(2i\pi k I(\{\widetilde{p_{z;0}}=\lambda\}))$ by definition.

  Now, the vanishing of the subprincipal term yields a transport
  equation on $a_0$; we solve it using the decomposition of
  Proposition \ref{prop:subp_calc}. The first two terms in the
  expansion
  \[
    -\frac 12 \Delta p_{z;0}+\frac 12 \nabla^{\iota^*(\delta^{-1})}_{X_{\widetilde{p_{z;0}}}}
  \]
  respectively yield the integral of $\Delta f_0$ along the flow and
  the subprincipal action along $\delta^{-1}$, by definition. Now, since
  $\delta$ is topologically trivial, the integral along one period of
  $\mathcal{L}_{X_{\widetilde{p_{z;0}}}}$ applied to the trivialising
  section must vanish (since it is a closed form).

  We conclude the proof with the remark that the choice of a
  topologically non-trivial square-root $\delta$ yields an addition of
  $\pi$ in the subprincipal action, which is exactly compensated by
  the contribution of $\mathcal{L}_{X_{\widetilde{p_{z;0}}}}$.
\end{proof}
\section{Global results}
\label{sec:global-results}
Now we are ready to glue together the results of our analysis near
each connected component into a study of the actual spectrum, provided
the level set is regular.

\begin{hyp}\label{hyp:global}~
  \begin{enumerate}
  \item $(M,J,\omega)$ is a real-analytic, compact, quantizable Kähler manifold of complex dimension 1.
  \item $p:\C\times M\to \C$ is a real-analytic, complex-valued
    Hamiltonian with holomorphic dependence on the first
    coordinate. We write
    \[
      p_z=p(z,\cdot).\]
  \item $p_0$ is real-valued.
  \item $\lambda_0$ is a regular energy level for $p_0$.
  \item $\mathcal{C}_1,\cdots,\mathcal{C}_N$ are the connected
    components of $\{p_0=\lambda_0\}$.
  \end{enumerate}
\end{hyp}
We begin with a resolvent bound away from the Bohr-Sommerfeld solutions.
\begin{prop}\label{prop:resolvent_bound_away}
  Suppose Hypothesis \ref{hyp:global} holds. For every $c>0$, there
  exist $c'>0$, a neighbourhood $\mathcal{Z}$ of $0$ in $\C$ and a
  neighbourhood $\mathcal{E}$ of $\lambda_0$ in $\mathcal{C}$ such
  that for every $(z,\lambda)\in \mathcal{Z}\times \mathcal{E}$, the
  existence of $u\in H^0(M,L^{\otimes k})$ normalised such that
  \[
    \|(T_k^{\rm cov}(p_z)-\lambda)u\|_{L^2(M)}=O(e^{-ck})
  \]
  implies that there exist $1\leq n\leq N$ and $j\in \N$ such that
  \[
    f_n(z;k^{-1})=\frac{j}{k} + O(e^{-c'k}).
  \]
\end{prop}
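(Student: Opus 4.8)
The plan is to localise the global quasimode $u$ near the connected components $\mathcal{C}_1,\dots,\mathcal{C}_N$ of $\{p_0=\lambda_0\}$ and then to run the semiglobal analysis of Section \ref{sec:semiglobal} at each component. Since $\lambda_0$ is a regular value and $M$ is compact, the $\mathcal{C}_n$ are disjoint closed curves; each satisfies Hypothesis \ref{hyp:semiglobal}, so Propositions \ref{prop:forme_normale_semiglobale} and \ref{prop:study_quasimodes_semiglobal} apply near $\mathcal{C}_n$ and provide an amplitude $f_n$ together with a pair of neighbourhoods $U_n^{(1)}\Subset U_n$ of $\mathcal{C}_n$ in $M$. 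I would fix, at the outset, such pairwise disjoint neighbourhoods.

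The first step is an exponentially accurate elliptic estimate off the curves. For any fixed neighbourhood $W$ of $\bigcup_n\mathcal{C}_n$ the complement $M\setminus W$ is compact and disjoint from $\{p_0=\lambda_0\}$, so $|p_0-\lambda_0|\ge\delta>0$ there; hence, after shrinking $\mathcal{Z}$ and $\mathcal{E}$, $|p_z-\lambda|\ge\delta/2$ on $M\setminus W$ for every $(z,\lambda)\in\mathcal{Z}\times\mathcal{E}$, i.e.\ the principal symbol of $T_k^{\rm cov}(p_z-\lambda)$ is non-vanishing there. By Proposition \ref{prop:Bergman_asymp} (covariant Berezin--Toeplitz operators form an algebra, invertible exactly when their principal symbol does not vanish), applied microlocally on a neighbourhood of $M\setminus W$, one gets an analytic symbol $r$ with $T_k^{\rm cov}(r)T_k^{\rm cov}(p_z-\lambda)=\Pi_k+O(e^{-ck})$ microlocally over $M\setminus W$; combined with the off-diagonal decay of the Bergman kernel and the fact that the Toeplitz operator $T_k^{\rm cov}(p_z)$ does not displace microsupports, this produces $c_1>0$ (depending only on $c$, $W$ and the geometry) with $\|u\|_{L^2(M\setminus W)}=O(e^{-c_1k})$. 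Choosing $W\Subset\bigcup_n U_n^{(1)}$, the section $u$ is therefore exponentially concentrated inside $\bigcup_n U_n^{(1)}$.

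For the second step, since the $U_n^{(1)}$ are disjoint and $\sum_n\|u\|_{L^2(U_n^{(1)})}^2=1+O(e^{-c_1k})$, some index $n$ has $\|u\|_{L^2(U_n^{(1)})}\ge(2\sqrt N)^{-1}$ for $k$ large. Pick $\chi\in C_c^\infty(U_n)$ equal to $1$ on a neighbourhood of $\overline{U_n^{(1)}}$, and set $v=\Pi_k(\chi u)$. Off-diagonal decay of $\Pi_k$ together with the first step gives $v=u+O(e^{-c_1k})$ on $U_n^{(1)}$ and $v=O(e^{-c_1k})$ outside $U_n$, so $\|v\|_{L^2}\ge(3\sqrt N)^{-1}$ and $\|v\|_{L^2(U_n)}/\|v\|_{L^2}>\tfrac12$; moreover, because $T_k^{\rm cov}(p_z)$ has a diagonal-concentrated kernel (so it essentially commutes with the weighted cut-off $\Pi_k\chi$) and $(T_k^{\rm cov}(p_z)-\lambda)u=O(e^{-ck})$ globally, one obtains $\|(T_k^{\rm cov}(p_z)-\lambda)v\|_{L^2(U_n^{(1)})}=O(e^{-c_1k})$. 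Applying Proposition \ref{prop:study_quasimodes_semiglobal} to $v/\|v\|_{L^2}$ (with the constant $c_1$, on the pair $U_n^{(1)}\Subset U_n$) then yields $c'>0$ and $j\in\Z$ with
\[
f_{n,z}^{-1}(\lambda;k^{-1})=\frac jk+O(e^{-c'k}),
\]
which is the asserted Bohr--Sommerfeld condition (and $j\ge0$ for $k$ large since $f_{n,z}^{-1}(\lambda;k^{-1})$ is close to the positive action $I(\{\widetilde{p_{z;0}}=\lambda\})$). Taking the common refinement of the finitely many neighbourhoods $\mathcal{Z}_n,\mathcal{E}_n$ gives the $\mathcal{Z},\mathcal{E}$ of the statement.

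I expect the delicate step to be the first one: realising the inversion of Proposition \ref{prop:Bergman_asymp} over the open set $M\setminus W$ only — an exponentially accurate \emph{microlocal} elliptic parametrix — and the attendant bookkeeping of the nested neighbourhoods and of the commutator of $T_k^{\rm cov}(p_z)$ with the Bergman-weighted cut-off in the localisation step. The non-self-adjointness causes no trouble here: ellipticity away from $\{p_z=\lambda\}$ is insensitive to it, and the pseudospectral effect near $\bigcup_n\mathcal{C}_n$ has already been absorbed, in Proposition \ref{prop:study_quasimodes_semiglobal}, via the normality of the model operator $f_{n,z}(T_k^{\rm cov}(\xi);k^{-1})$ on $\mathcal{B}_k^{S^1}$.
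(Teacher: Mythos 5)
Your proposal is correct and follows essentially the same route as the paper's proof: microlocal inversion of $T_k^{\rm cov}(p_z-\lambda)$ away from $\{p_0=\lambda_0\}$ to get exponential concentration near the curves, localisation by applying $\Pi_k$ to a cut-off of $u$ on each component, a pigeonhole argument selecting a component carrying a definite fraction of the mass, and then the first part of Proposition \ref{prop:study_quasimodes_semiglobal}. The only differences are cosmetic (a smooth cut-off instead of $\1_{U_n}$, and more detail on the elliptic parametrix step, which the paper leaves implicit).
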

\begin{proof}
  Let $u$ be normalised and satisfying
  \[
    \|(T_k^{\rm cov}(p_z)-\lambda)u\|_{L^2(M)}=O(e^{-ck}).
  \]
  Away from a neighbourhood of $\{p_0=\lambda_0\}$, one can microlocally
  invert $T_k^{\rm cov}(p_z-\lambda)$; therefore $u$ is exponentially
  small away from $\{p_0=\lambda_0\}$. In particular, letting $U_n$ be
  a neighbourhood of $\mathcal{C}_n$, then
  \[
    u_n=\Pi_k(\1_{U_n}u)
  \]
  satisfies, for some $c_1>0$,
  \[
    u=u_1+\ldots+u_N+O(e^{-c_1k})
  \]
  as well as
  \[
    \|(T_k^{\rm cov}(p_z)-\lambda)u_n\|_{L^2(M)}=O(e^{-c_1k}).
  \]
  There exists $1\leq n\leq N$ such that $\|u_n\|_{L^2}\geq
  \frac{1}{2N}$, and therefore we can apply the first part of Proposition
  \ref{prop:study_quasimodes_semiglobal} to
  $\frac{u_n}{\|u_n\|}$. This concludes the proof.
\end{proof}
On these points where the resolvent norm is not too large, the
resolvent can be obtained by the local models. In particular, the
resolvent is ``local'' in the sense that, on those points, one does
not see any interaction between the different components $\mathcal{C}_1,\cdots,\mathcal{C}_N$.
\begin{prop}\label{prop:local_resolvent}
  Suppose Hypothesis \ref{hyp:global} holds. For every $n \in \{1, \cdots, N\}$, let $\mathfrak{U}_n$ and $\mathfrak{V}_n$ be Fourier Integral Operators as in Proposition \ref{prop:forme_normale_semiglobale} applied to $\mathcal{C}_n$ (here $z$ is silent to simplify notation). For every $c>0$, there
  exist $c'>0$, a neighbourhood $\mathcal{Z}$ of $0$ in $\C$, a
  neighbourhood $\mathcal{E}$ of $\lambda_0$ in $\mathcal{C}$, and
  neighbourhoods $ U_1,\cdots, U_N$ of
  $\mathcal{C}_1,\cdots,\mathcal{C}_N$, such that the following is
  true. Suppose
  that $(z,\lambda)\in \mathcal{Z}\times \mathcal{E}$
  does not satisfy
  \[
    f_n(z;k^{-1})=\frac{j}{k} +O(e^{-c'k}) 
  \]
  for any $j\in \N$ and $1\leq n\leq N$.
  Then for every $1\leq n\leq N$, \[(T_k^{\rm
    cov}(p_z)-\lambda)^{-1}\1_{U_n}=\mathfrak{V}_n(T_k^{\rm
    cov}(f_n(\xi;k^{-1}))-\lambda)^{-1}\mathfrak{U}_n\1_{U_n}+O(e^{-c'k}).\]
Moreover, letting $r_z(\lambda)$ be the inverse to $p_z-\lambda$ for the
product of covariant analytic symbols on $M\setminus (U_1\cup\cdots
\cup U_n)$, one has
\[
  (T_k^{\rm cov}(p_z)-\lambda)^{-1}\1_{M\setminus (U_1\cup\cdots
    \cup U_n)}=T_k^{\rm cov}(r_z(\lambda))+O(e^{-c'k}).
\]
\end{prop}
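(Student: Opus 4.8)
The plan is to construct an explicit approximate resolvent by gluing the semiglobal normal forms of Proposition \ref{prop:forme_normale_semiglobale} near each $\mathcal{C}_n$ to the elliptic parametrix away from the energy level, and then to promote it to the genuine resolvent using that $\dim_{\C} H^0(M,L^{\otimes k})<\infty$. First, for each $1\le n\le N$, apply Proposition \ref{prop:forme_normale_semiglobale} to $\mathcal{C}_n$: this yields a neighbourhood $U_n^{\sharp}$ of $\mathcal{C}_n$ in $M$, an open set $V_n^{\sharp}\subset T^*S^1$ wrapping around a horizontal circle, an analytic amplitude $f_n$ with $\partial_\xi f_n\neq0$, and Fourier Integral Operators $\mathfrak U_n,\mathfrak V_n$ which are microlocal inverses of each other on $U_n^{\sharp},V_n^{\sharp}$ and conjugate $T_k^{\rm cov}(p_z)$ to $T_k^{\rm cov}(f_n(\xi;k^{-1}))=f_n(T_k^{\rm cov}(\xi);k^{-1})$ modulo $O(e^{-ck}\|\cdot\|+e^{c^{-1}|z|k}\|\cdot\|_{L^2(\mathrm{outside})})$. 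Then choose, as the neighbourhoods in the statement, pairwise disjoint open sets $U_n\Subset U_n^{\sharp}$ with $\{p_0=\lambda_0\}\subset\bigcup_n U_n$, and set $W_0=M\setminus\bigcup_n U_n$, so that $\1_{W_0}+\sum_n\1_{U_n}=1$ almost everywhere. On a neighbourhood of $W_0$ the symbol $p_z-\lambda$ is elliptic for $(z,\lambda)$ in a small neighbourhood $\mathcal Z\times\mathcal E$ of $(0,\lambda_0)$, so by Proposition \ref{prop:Bergman_asymp} it admits a two-sided inverse $r_z(\lambda)$ in the algebra of covariant analytic symbols there, holomorphic in $(z,\lambda)$, and $T_k^{\rm cov}(r_z(\lambda))$ is then a two-sided microlocal inverse of $T_k^{\rm cov}(p_z-\lambda)$ on that region.

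Next, observe that the model operator $f_n(T_k^{\rm cov}(\xi);k^{-1})$ on $\mathcal B_k^{S^1}$ is normal, being a holomorphic function of the self-adjoint operator $T_k^{\rm cov}(\xi)=ik^{-1}\partial_\theta$ with eigenvalues $jk^{-1}$; its spectrum is $\{f_n(jk^{-1};k^{-1}):j\in\Z\}$, and its resolvent $\mathcal R_n(\lambda):=(f_n(T_k^{\rm cov}(\xi);k^{-1})-\lambda)^{-1}$ has norm exactly $\dist(\lambda,\Sp)^{-1}$. Crucially, $\mathcal R_n(\lambda)$ is diagonal in the eigenbasis $(e_j)$, hence microlocalization-preserving: it sends any state microlocalized in $V_n^{\sharp}$ to another such state, since for the relevant $j$'s the factors $(f_n(jk^{-1};k^{-1})-\lambda)^{-1}$ are bounded when $\mathcal E$ is small. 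Since $\partial_\xi f_n\neq0$, the map $f_n(\cdot;k^{-1})$ is uniformly bi-Lipschitz near $\xi=I_n(\lambda_0)$, so $\dist(\lambda,\Sp)$ is comparable to $\dist\!\big(f_n(z;k^{-1}),k^{-1}\Z\big)$, the Bohr–Sommerfeld defect in the hypothesis (here $f_n(z;k^{-1})$ denotes, consistently with Proposition \ref{prop:study_quasimodes_semiglobal}, the reciprocal in $\xi\leftrightarrow\lambda$ of the amplitude $f_n(\xi;k^{-1})$, evaluated at $\lambda$). One then sets, with the cutoffs understood as composed with $\Pi_k$ where needed so that everything maps $H^0(M,L^{\otimes k})$ to itself,
\[
R_k\;:=\;\sum_{n=1}^N\mathfrak V_n\,\mathcal R_n(\lambda)\,\mathfrak U_n\,\1_{U_n}\;+\;T_k^{\rm cov}(r_z(\lambda))\,\1_{W_0}.
\]

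To see that $R_k$ is an approximate right inverse, apply $T_k^{\rm cov}(p_z)-\lambda$ to each summand. For the $W_0$ term, $\1_{W_0}u$ is supported in the ellipticity region, so $(T_k^{\rm cov}(p_z)-\lambda)T_k^{\rm cov}(r_z(\lambda))(\1_{W_0}u)=\Pi_k(\1_{W_0}u)+O(e^{-ck}\|u\|)$. For the $n$-th term, $\mathfrak U_n(\1_{U_n}u)$ is microlocalized in $\kappa_n(\overline{U_n})\Subset V_n^{\sharp}$ because $U_n\Subset U_n^{\sharp}$; then $\mathcal R_n(\lambda)$ keeps it microlocalized in $V_n^{\sharp}$ while multiplying norms by at most $\dist(\lambda,\Sp)^{-1}$, so all the ``outside $V_n^{\sharp}$'' error terms in the intertwining identities of Proposition \ref{prop:forme_normale_semiglobale} and in the relation $\mathfrak V_n\mathfrak U_n=\Pi_k$ vanish; using $(f_n(T_k^{\rm cov}(\xi);k^{-1})-\lambda)\mathcal R_n(\lambda)=\mathrm{id}$ one gets $(T_k^{\rm cov}(p_z)-\lambda)\mathfrak V_n\mathcal R_n(\lambda)\mathfrak U_n\1_{U_n}u=\Pi_k(\1_{U_n}u)+O\!\big(e^{-c_1k}\dist(\lambda,\Sp)^{-1}\|u\|\big)$ for some $c_1>0$ depending on $c$ and on the radius of $\mathcal Z$ (through the factor $e^{c^{-1}|z|k}$). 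Summing over $n$, using $\1_{W_0}+\sum_n\1_{U_n}=1$ and $\Pi_k u=u$ for $u\in H^0$, one obtains $(T_k^{\rm cov}(p_z)-\lambda)R_k=\Pi_k+E_k$ with $\|E_k\|_{H^0\to H^0}\le Ce^{-c_1k}\dist(\lambda,\Sp)^{-1}$. Setting $c':=c_1/3$, under the hypothesis $\dist(\lambda,\Sp)\gtrsim e^{-c'k}$, hence $\|R_k\|\le Ce^{c'k}$ and $\|E_k\|\le Ce^{-2c'k}<\tfrac12$ for $k$ large; since $H^0(M,L^{\otimes k})$ is finite-dimensional, $T_k^{\rm cov}(p_z)-\lambda$ is then invertible with $(T_k^{\rm cov}(p_z)-\lambda)^{-1}=R_k(\Pi_k+E_k)^{-1}=R_k+O\!\big(\|R_k\|\,\|E_k\|\big)=R_k+O(e^{-c'k})$. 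Multiplying this on the right by $\1_{U_n}$ and using that the $U_m$ are pairwise disjoint with $\1_{W_0}\1_{U_n}=0$ gives the first formula; multiplying by $\1_{M\setminus\bigcup_m U_m}=\1_{W_0}$ (the two differ only on a null set) and using $\1_{U_m}\1_{W_0}=0$ gives the second.

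The main obstacle is the competition between the exponentially large norm of $\mathcal R_n(\lambda)$ and the exponentially small normal-form errors: one must (i) check that the cutoffs $\1_{U_n}$ genuinely annihilate the non-local error terms $e^{c^{-1}|z|k}\|\cdot\|_{L^2(\mathrm{outside})}$ of Proposition \ref{prop:forme_normale_semiglobale}, which relies on $\mathcal R_n(\lambda)$ being diagonal and hence microlocalization-preserving (together with $V_n^{\sharp}$ being saturated in the angle variable), and (ii) choose $c'$ small relative to the conjugation rate $c_1$ (itself fixed once $\mathcal Z$ is chosen) so that $\|R_k\|\,\|E_k\|$ stays exponentially small. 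Everything else is a routine assembly of Propositions \ref{prop:Bergman_asymp} and \ref{prop:forme_normale_semiglobale} together with finite-dimensionality of $H^0(M,L^{\otimes k})$.
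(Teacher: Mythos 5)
Your argument is correct in substance, but it follows a genuinely different route from the paper. The paper takes the exact resolvent $R=(T_k^{\rm cov}(p_z)-\lambda)^{-1}$ as given (its existence and an $O(e^{\varepsilon k})$ bound rest implicitly on the quasimode statement, Proposition \ref{prop:resolvent_bound_away}), introduces the sandwiched cutoffs $\chi=\Pi_k\1_{U_n}\Pi_k$, $\chi_1=\Pi_k\1_{W_n}\Pi_k$, and proves $(R-S)\chi=O(e^{-(c_1-2\varepsilon)k})$ by a short telescoping computation using $SB\chi\approx\chi$, $BS\chi\approx\chi$, $(A-B)\chi_1\approx 0$, $(1-\chi_1)S\chi\approx 0$; the second formula is handled the same way with the elliptic parametrix. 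You instead build a single global right parametrix $R_k$ by gluing the $N$ model resolvents to $T_k^{\rm cov}(r_z(\lambda))$, show $AR_k=\Pi_k+E_k$ with $\|E_k\|\lesssim e^{-c_1k}\dist(\lambda,\Sp)^{-1}$, and invert by a Neumann series on the finite-dimensional space. What this buys: you get the existence of the resolvent, the bound $\|R\|\lesssim e^{c'k}$, and both displayed formulas in one stroke, without any a priori input from Proposition \ref{prop:resolvent_bound_away}; the price is that you must control how the model resolvent acts on microsupports, which you correctly reduce to its diagonality in the $e_j$-basis together with the annular shape of $V_n^{\sharp}$, and you must beat the loss $\dist(\lambda,\Sp)^{-1}\sim e^{c'k}$ by the conjugation rate, which you handle by taking $c'\ll c_1$ (your ``bounded for the relevant $j$'' phrasing is off — near $\xi=I_n(\lambda)$ the factors are only $O(\dist^{-1})$ — but your subsequent estimate uses the correct bound). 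Two caveats, neither fatal and both shared with the statement itself: the well-definedness of $(f_n(T_k^{\rm cov}(\xi);k^{-1})-\lambda)^{-1}$ and of your tail estimates needs $|f_n(jk^{-1};k^{-1})-\lambda|$ bounded below for \emph{all} $j\in\Z$, not only for the local branch controlled by the hypothesis (one must modify or control $f_n$ away from a neighbourhood of $I_n(\lambda_0)$); and in the final localization the right multiplication by $\1_{U_n}$ really means $\Pi_k\1_{U_n}$, so the exact cancellations $\1_{U_m}\1_{U_n}=0$, $\1_{W_0}\1_{U_n}=0$ acquire cross terms such as $T_k^{\rm cov}(r_z(\lambda))\1_{W_0}\Pi_k\1_{U_n}$, which is only $O(1)$ rather than exponentially small — this is the same looseness in cutoff placement as in the proposition as stated (the paper itself only proves the $\chi$-sandwiched version), and it is harmless in the intended application since that term is holomorphic in $\lambda$ and disappears in the contour integral of Proposition \ref{prop:multiplicity}; still, you should state explicitly which placement of $\Pi_k$ your identity proves.
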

\begin{proof}
  To simplify notation, let
  \[ \begin{cases} A=T_k^{\rm
    cov}(p_z)-\lambda \\ R=(T_k^{\rm
    cov}(p_z)-\lambda)^{-1} \end {cases} \qquad \text{and} \qquad \begin{cases} B=\mathfrak{V}_n(T_k^{\rm
    cov}(f_n(\xi;k^{-1}))-\lambda)\mathfrak{U}_n\1_{U_n} \\ S=\mathfrak{V}_n(T_k^{\rm
    cov}(f_n(\xi;k^{-1}))-\lambda)^{-1}\mathfrak{U}_n\1_{U_n} \end{cases}  
   \]
Moreover, let $\chi=\Pi_k\1_{U_n}\Pi_k$ and $\chi_1=\Pi_k\1_{W_n}\Pi_k$ where
  $U_n\Subset W_n$. One has of course $RA=AR=\Pi_k$ and
\[
  SB\chi=\chi+O(e^{-c_1k}), \qquad BS\chi=\chi+O(e^{-c_1k}), \qquad
  (A-B)\chi_1=O(e^{-c_1k}),\qquad (1-\chi_1)S\chi=O(e^{-c_1k});
\]
moreover $R,S,A,B$ are all bounded in operator norm by $O(e^{\varepsilon k})$ for some
$\varepsilon>0$ much smaller than $c_1$ (up to restricting $\mathcal{Z}$
and $\mathcal{E}$). Now
\begin{align*}
  (R-S)\chi&=R(AR-AS)\chi\\
           &=R(\Pi_k-AS)\chi\\
           &=R(\Pi_k-A\chi_1S)\chi+O(e^{-(c_1-\varepsilon)k})\\
           &=R(\Pi_k-B\chi_1S)\chi+O(e^{-(c_1-2\varepsilon)k})\\
           &=R(\Pi_k-BS)\chi+O(e^{-(c_1-2\varepsilon)k})\\
           &=O(e^{-(c_1-2\varepsilon)k}).
\end{align*}
This concludes the proof.

Similarly, away from $U_1\cup\cdots\cup U_n$, $(T_k^{\rm
  cov}(p_z)-\lambda)T_k^{\rm cov}(r_z(\lambda))$ is close to $1$, and
we can multiply by $R$ on the right to obtain the desired result.
\end{proof}
The structure of the resolvent allows us to study the spectral problem
inside of the regions where
$f_n(z;k^{-1})+O(e^{-c'k})=\frac{j}{k}$. For fixed $c'>0$, for every
$n$, said region is a union of open neighbourhoods of size
$O(e^{-c'k})$ of points separated by at least $ck^{-1}$ for $c>0$. The
union over $1\leq n\leq N$ forms a discrete family of open sets of
size $O(e^{-c'k})$, but now any of those open sets may overlap at most
$N-1$ others (each corresponding to one curve $\mathcal{C}_j$). Thus,
\[
  \Omega_{c'}=\bigcup_{n=1}^N\{f_n(z;k^{-1})=\tfrac{j}{k}+O(e^{-c'k})\}
\]
is a discrete union of connected sets of diameter $O(e^{-c'k})$. Each
of the connected components of $\Omega_{c'}$ has a
\emph{Bohr-Sommerfeld multiplicity}
which corresponds to the amount of $n$ in $[1,N]$ such that there
exists a solution for the corresponding $n$.

\begin{prop}\label{prop:multiplicity}
  Suppose Hypothesis \ref{hyp:global} holds. For every $c'>0$, there exist a neighbourhood $\mathcal{Z}$ of $0$
  in $\C$ and a neighbourhood $\mathcal{E}$ of $\lambda_0$ in $\C$
  such that, for every $z\in \mathcal{Z}$, the number of eigenvalues
  (counted with geometric multiplicity) of $T_k^{\rm cov}(p_z)$ within any connected
  component of $\Omega_{c'}$ contained in $\mathcal{E}$ is equal to
  the Bohr-Sommerfeld multiplicity of this connected
  component.
\end{prop}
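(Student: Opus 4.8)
The plan is to realise the number of eigenvalues in a connected component $\mathcal O$ of $\Omega_{c'}$ as the rank — equivalently, the trace — of a Riesz spectral projector, and to evaluate this trace using the local resolvent formula of Proposition~\ref{prop:local_resolvent}. Fix $z\in\mathcal Z$ and a component $\mathcal O\subset\mathcal E$ of $\Omega_{c'}$. For each $n$ the model eigenvalues $f_n(jk^{-1};k^{-1})$, $j\in\Z$, are separated by $\gtrsim k^{-1}$, which dominates $\diam(\mathcal O)=O(e^{-c'k})$, so at most one index $j=j_n$ contributes for each $n$ and the Bohr--Sommerfeld multiplicity $m$ of $\mathcal O$ is the number of $n\in\{1,\dots,N\}$ for which $j_n$ exists. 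Applying Propositions~\ref{prop:resolvent_bound_away} and~\ref{prop:local_resolvent} produces a constant $c_0>0$ and neighbourhoods $\mathcal Z,\mathcal E$ (fixed henceforth) such that for $(z,\lambda)\notin\Omega_{c_0}$ the resolvent $(T_k^{\rm cov}(p_z)-\lambda)^{-1}$ exists and the formula of Proposition~\ref{prop:local_resolvent} holds, and such that every eigenvalue of $T_k^{\rm cov}(p_z)$ in $\mathcal E$ lies in $\Omega_{c_0}$ (an eigenvalue is a quasimode with zero error). After relabelling we may take $c_0\le c'$; then $\overline{\mathcal O\cap\Omega_{c_0}}\Subset\mathcal O$ and $\mathcal O$ is separated from the remaining components of $\Omega_{c_0}$, so we pick a smooth closed contour $\gamma\subset\mathcal O\setminus\Omega_{c_0}$ encircling $\mathcal O\cap\Omega_{c_0}$ once. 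Then $\Pi_{\mathcal O}:=\frac1{2\pi i}\oint_\gamma(T_k^{\rm cov}(p_z)-\lambda)^{-1}\,\dd\lambda$ is well defined and its rank is the number of eigenvalues of $T_k^{\rm cov}(p_z)$ in $\mathcal O$, counted with algebraic multiplicity.

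Next I would compute $\Pi_{\mathcal O}$. On $\gamma$, insert $\1=\sum_n\1_{U_n}+\1_{M\setminus(U_1\cup\cdots\cup U_N)}$ and apply Proposition~\ref{prop:local_resolvent}:
\[
  (T_k^{\rm cov}(p_z)-\lambda)^{-1}=\sum_{n=1}^N\mathfrak V_n\,(T_k^{\rm cov}(f_n(\xi;k^{-1}))-\lambda)^{-1}\,\mathfrak U_n\1_{U_n}+T_k^{\rm cov}(r_z(\lambda))+O(e^{-c_0k})
\]
in operator norm, uniformly on $\gamma$. Since $p_z-\lambda$ is elliptic on $M\setminus(U_1\cup\cdots\cup U_N)$ for $\lambda\in\mathcal E$, the symbol $r_z(\lambda)$ is holomorphic in $\lambda$ there, hence $\oint_\gamma T_k^{\rm cov}(r_z(\lambda))\,\dd\lambda=0$. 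For each $n$, $T_k^{\rm cov}(f_n(\xi;k^{-1}))=f_n(ik^{-1}\partial_\theta;k^{-1})$ is normal on $\mathcal B_k^{S^1}$ with eigenvalues $f_n(jk^{-1};k^{-1})$, so $\frac1{2\pi i}\oint_\gamma(f_n(ik^{-1}\partial_\theta;k^{-1})-\lambda)^{-1}\,\dd\lambda=P_n$ is the orthogonal spectral projector onto the enclosed eigenvalues, of rank $r_n\in\{0,1\}$, with $r_n=1$ exactly when $n$ contributes to $\mathcal O$, and $\sum_n r_n=m$. Therefore $\Pi_{\mathcal O}=\sum_n\mathfrak V_nP_n\mathfrak U_n\1_{U_n}+O(e^{-c_0k})$ in operator norm.

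Then I would pass to the trace. Using cyclicity, $\tr(\mathfrak V_nP_n\mathfrak U_n\1_{U_n})=\tr(\mathfrak U_n\1_{U_n}\mathfrak V_nP_n)$; by Proposition~\ref{prop:forme_normale_semiglobale} the operators $\mathfrak U_n$ and $\mathfrak V_n$ are microlocal inverses of each other on $U_n$ and $V_n$, and the range of $P_n$ is microsupported in $V_n$ near $\{\xi=I(p_0(\mathcal C_n))\}$, so $\mathfrak U_n\1_{U_n}\mathfrak V_n$ acts as the identity on $\operatorname{ran}P_n$ modulo $O(e^{-c_0k})$, whence $\tr(\mathfrak U_n\1_{U_n}\mathfrak V_nP_n)=\tr P_n+O(e^{-c_0k})=r_n+O(e^{-c_0k})$. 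Since $\dim H^0(M,L^{\otimes k})$ grows polynomially in $k$, summing over $n$ gives $\tr\Pi_{\mathcal O}=\sum_n r_n+O(e^{-c_1k})=m+O(e^{-c_1k})$ for some $c_1>0$; as $\tr\Pi_{\mathcal O}=\operatorname{rank}\Pi_{\mathcal O}\in\Z_{\ge0}$, for $k$ large we conclude $\operatorname{rank}\Pi_{\mathcal O}=m$. This is the count with algebraic multiplicity.

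It remains to upgrade this to geometric multiplicity, i.e.\ to rule out Jordan blocks at eigenvalues in $\mathcal O$. Any $u\in\ker(T_k^{\rm cov}(p_z)-\lambda)^2$ is exponentially small off $U_1\cup\cdots\cup U_N$ by microlocal ellipticity, hence $u=\sum_n\Pi_k(\1_{U_n}u)+O(e^{-c_0k})$, and each summand is an approximate solution of $\|(T_k^{\rm cov}(p_z)-\lambda)^2v\|=O(e^{-c_0k})\|v\|$ microlocalised near the single component $\mathcal C_n$; by Proposition~\ref{prop:study_quasimodes_semiglobal} it is then an approximate solution of $\|(T_k^{\rm cov}(p_z)-\lambda)v\|=O(e^{-c_0k})\|v\|$, and reassembling shows $\|(T_k^{\rm cov}(p_z)-\lambda)u\|=O(e^{-c_0k})\|u\|$. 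Together with the fact that, after conjugation by the blockwise normal form of Proposition~\ref{prop:forme_normale_semiglobale}, $T_k^{\rm cov}(p_z)$ restricted to $\operatorname{ran}\Pi_{\mathcal O}$ is, modulo exponentially small errors, a normal operator, the same perturbative argument as in Proposition~\ref{prop:study_quasimodes_semiglobal} yields that it has no Jordan block, so geometric multiplicity $=$ algebraic multiplicity $=m$. I expect the genuinely delicate point to be exactly this bookkeeping of constants: the non-unitary operators $\mathfrak U_n,\mathfrak V_n$ may amplify norms by $e^{\varepsilon k}$, so $\varepsilon$ (equivalently the size of $\mathcal Z,\mathcal E$) must be fixed small relative to $c_0$ before the estimates close, and — for the last step — all remainders must be kept exponentially small against the merely exponentially small eigenvalue separations inside $\mathcal O$, which is what the componentwise reduction to Proposition~\ref{prop:study_quasimodes_semiglobal} is designed to guarantee.
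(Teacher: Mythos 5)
Your main argument is essentially the paper's: both proofs express the count through the Riesz projector $\Pi_W=\frac{1}{2i\pi}\oint_\gamma(T_k^{\rm cov}(p_z)-\lambda)^{-1}\dd\lambda$, decompose $\1=\sum_n\1_{U_n}+\1_{M\setminus(U_1\cup\cdots\cup U_N)}$, use Proposition \ref{prop:local_resolvent} to replace the resolvent on $\gamma$ by the model resolvents $\mathfrak V_n(T_k^{\rm cov}(f_n(\xi;k^{-1}))-\lambda)^{-1}\mathfrak U_n\1_{U_n}$ plus the holomorphic parametrix $T_k^{\rm cov}(r_z(\lambda))$, and note that only the contributing $n$ produce a rank-one residue. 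Your final counting step (trace, cyclicity, the microlocal inverse property of $\mathfrak U_n,\mathfrak V_n$, polynomial growth of $\dim H^0(M,L^{\otimes k})$, and integrality of the trace of an idempotent) is a cosmetic variant of the paper's (operator-norm closeness of $\Pi_W$ to the rank-$m$ projector $\sum_{n\in\mathcal N(W)}\Pi_{\C u_n}$, plus integrality of the rank); both are fine. A small slip in your contour construction: with your conventions $c_0\le c'$ gives $\Omega_{c_0}\supseteq\Omega_{c'}$, so $\mathcal O\cap\Omega_{c_0}=\mathcal O$ and the claimed compact containment fails as written; what is actually needed (and what the $\gtrsim k^{-1}$ separation between clusters provides, as in the paper) is a contour lying in the region where Proposition \ref{prop:local_resolvent} applies and enclosing the whole cluster containing $\mathcal O$ and no other.

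The genuine gap is your last paragraph. The Riesz projector counts algebraic multiplicity, and your attempt to upgrade to geometric multiplicity by ``ruling out Jordan blocks'' is not valid: the second part of Proposition \ref{prop:study_quasimodes_semiglobal}, and your reassembly of the pieces $\Pi_k(\1_{U_n}u)$, only yield that $T_k^{\rm cov}(p_z)-\lambda$ is \emph{exponentially small} on the generalised eigenspace, and an exponentially small nilpotent part is perfectly compatible with every estimate you invoke, so no conclusion about the absence of Jordan blocks follows; likewise ``exponentially close to normal'' does not imply diagonalisable. The paper itself does not make this claim: its proof stops at ${\rm Rank}(\Pi_W)=|\mathcal N(W)|$, and Remark \ref{rem:Jordan} explicitly allows Jordan blocks inside $W$, asserting only that their effect (the off-diagonal part in a suitable orthogonal Jordan basis) is exponentially small. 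So you should delete the assertion ``no Jordan block, hence geometric $=$ algebraic $=m$''; as it stands, that sentence claims strictly more than the paper's argument (or yours) establishes.
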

\begin{proof}
  Consider a loop $\gamma\subset (\C\setminus \Omega_{c'})$ around a
  connected component $W$ of $\Omega_{c'}$. The space spanned by the generalised
  eigenvectors (in the Jordan sense) in the
  spectral decomposition of $T_k^{\rm cov}(p_z)$ with eigenvalues in
  $W$ has for spectral projector
  $\Pi_W=\frac{1}{2i\pi}\oint_{\gamma}(T_k^{\rm cov}(p_z)-\lambda)^{-1}\dd
  \lambda$.

  Now we use Proposition \ref{prop:local_resolvent} and its notation to study this
  integral. First, away from the curves $\mathcal{C}_1,\cdots,\mathcal{C}_N$, we can replace $(T_k^{\rm
    cov}(p_z)-\lambda)^{-1}$ with $T_k^{\rm cov}(r_z(\lambda))$ up to
  an exponentially small error. $\lambda\mapsto r_z(\lambda)$ is
  holomorphic on $\mathcal{E}$ and therefore
  \[
    \Pi_W\1_{M\setminus U_1\cup \cdots \cup U_N}=O(e^{-c''k})
  \]
  for some $c''>0$, up to reducing $\mathcal{Z}$.

  Let now $1\leq n\leq N$, and suppose that
  $f_n(z;k^{-1})=\frac{j}{k}+O(e^{-c'k})$ has no solution in $W$. Then
  \[
    \mathfrak{V}_n(T_k^{\rm cov}(f_n(\xi;k^{-1}))-\lambda)^{-1}\mathfrak{U}_n
  \]
  is holomorphic on $W$; therefore again
  \[
    \Pi_W\1_{U_n}=O(e^{-c''k}).
  \]
  Now, if $1\leq n\leq N$ is such that
  $f_n(z;k^{-1})=\frac{j}{k}+O(e^{-c'k})$ admits a solution in $W$,
  then $j$ is fixed, and
  \[
    \mathfrak{V}_n(T_k^{\rm cov}(f_n(\xi;k^{-1}))-\lambda)^{-1}\mathfrak{U}_n
  \]
  has exactly one pole in $W$; denoting
  \[
    v_n:(\theta,\xi)\mapsto
    =\exp(-k\tfrac{\xi^2}{2})\exp(ij(\theta+i\xi))\qquad u_n=\mathfrak{V}_nv_n,
  \]
  we obtain, by Proposition \ref{prop:local_resolvent}, that
  \[
    \Pi_W\1_{U_n}=\Pi_{\C u_n}+O(e^{-c''k}).
  \]
  All in all, denoting by $\mathcal{N}(W)$ the set of $1\leq n\leq N$
  such that $f_n(z;k^{-1})=\frac{j}{k}+O(e^{-c'k})$ admits a solution
  in $W$, we find
  \[
    \Pi_W=\sum_{n\in \mathcal{N}(W)}\Pi_{\C u_n}+O(e^{-c''k}).
  \]
  In particular,\[{\rm Rank}(\Pi_W)=|\mathcal{N}(W)|+O(e^{-c'k})
  \]
  and the rank must be an integer.
\end{proof}

\begin{rem}\label{rem:Jordan}
  In Proposition \ref{prop:multiplicity}, there may exist Jordan blocks in $W$, but
  their effect is small. Indeed, importing the notation from the end
  of the proof, in an orthogonal basis constructed from $(\Pi_Wu_n)_{n\in
    \mathcal{N}(W)}$ by the Gram-Schmidt process, the matrix of $T_k^{\rm
    cov}(p_z)$ will be exponentially close to a diagonal matrix
  $\lambda I_n$ where $\lambda$ is any element of $W$. Thus, in
  \emph{any} orthogonal basis for ${\rm Ran}(\Pi_W)$, the matrix of
  $T_k^{\rm cov}(p_z)$ will be exponentially close to $\lambda
  I_n$. In particular, this is true of orthogonal Jordan bases, where
  one picks an orthogonal basis for the eigenspaces, then completes it
  into an orthogonal basis for the second level generalised
  eigenvectors, and so on. The obtained matrix has eigenvalues
  exponentially close to $\lambda$ on the diagonal, upper-diagonal
  coefficients which are all exponentially small, and all other
  coefficients are zero.
\end{rem}

\section{An example on the sphere}

To conclude, we illustrate our results by investigating an example on $S^2$; more precisely, we consider the operator 
\begin{equation} T_k(\varepsilon) = T_k^{\text{cov}}(x_3) + i \varepsilon T_k^{\text{cov}}(x_1^2) \label{eq:Tk_S2} \end{equation}
with $(x_1, x_2, x_3)$ the usual Cartesian coordinates of the
embedding $S^2\to \R^3$. Here $\varepsilon$ is a parameter which will be chosen small enough (but independent of $k$).

Let us explain how to obtain $T_k(\varepsilon)$. In fact we start from
$(M,\omega) = (\C\P^1,\omega_{\text{FS}})$, the complex projective
line endowed with the Fubini-Study form (normalised to give a volume
of $2\pi$), which we identify with $S^2$ by means of the stereographic
projection $\pi_N$ from the north pole to the equatorial plane. It is
standard that the hyperplane bundle $L = \mathcal{O}(1)$ is a
prequantum line bundle, and that the quantum space $H^0(M,L^{\otimes
  k})$ identifies with the space of homogeneous polynomials of degree
$k$ in two complex variables. In fact it is more convenient to work in
the chart $U_{\infty} = \{ [z_1:z_2] \in \C\P^1; \ z_2 \neq 0 \}$ with
holomorphic coordinate $z = \frac{z_1}{z_2}$, so that
$H^0(M,L^{\otimes k})$ identifies with the space of polynomials of
degree at most $k$ in one complex variable. In this identification, in
a Hermitian chart for $L$, the Hermitian product reads
\[ \langle P, Q \rangle_k = \int_{\C} \frac{P(z) \overline{Q(z)}}{(1 + |z|^2)^{k+2}} | \dd z \wedge \dd \bar{z} |.  \]
One readily checks that an orthonormal basis is given by
\[ e_{\ell} = \sqrt{\frac{(k+1) \binom{k}{\ell}}{2\pi}} \ z^l, \qquad 0 \leq \ell \leq k,  \]
so that the Bergman kernel reads
\[ \Pi_k(z,w) = \frac{k+1}{2\pi} \frac{(1 + z \bar{w})^k}{(1 + |w|^2)^{k+2}}.  \]
Before computing $T_k(\varepsilon)$, we will need a slightly technical lemma.

\begin{lem}
\label{lem:int_S2}
Let $\alpha, \beta, \gamma, \delta \in \N$ be such that $\alpha + \beta + \gamma < 2(\delta-1)$ and let $z \in \C$. If $\beta \leq \alpha \leq \beta + \gamma$, then
\[ I(\alpha,\beta,\gamma,\delta; z) := \int_{\C} \frac{w^{\alpha} \bar{w}^{\beta} (1+z\bar{w})^{\gamma}}{(1 + |w|^2)^{\delta}} | \dd w \wedge \dd \bar{w} | = 2 \pi \binom{\gamma}{\alpha - \beta}  \frac{\alpha! (\delta - \alpha - 2)!}{(\delta-1)!} z^{\alpha - \beta}. \]
Otherwise $I(\alpha,\beta,\gamma,\delta; z) = 0$.
\end{lem}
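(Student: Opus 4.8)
To prove Lemma~\ref{lem:int_S2} the plan is to compute $I(\alpha,\beta,\gamma,\delta;z)$ by a direct calculation: expand the binomial $(1+z\bar w)^{\gamma}=\sum_{j=0}^{\gamma}\binom{\gamma}{j}z^{j}\bar w^{j}$, pass to polar coordinates $w=re^{i\theta}$ (recall $|\dd w\wedge\dd\bar w|=2\,\dd x\,\dd y=2r\,\dd r\,\dd\theta$), and perform the angular integration first. The hypothesis $\alpha+\beta+\gamma<2(\delta-1)$ is exactly what makes the integrand $O\!\left(|w|^{\alpha+\beta+\gamma-2\delta}\right)$ with $\alpha+\beta+\gamma-2\delta<-2$ as $|w|\to\infty$, hence absolutely integrable over $\C$; this legitimates Fubini's theorem and the interchange of the finite sum with the integral.

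With these preliminaries, I would write
\[
  I(\alpha,\beta,\gamma,\delta;z)=\sum_{j=0}^{\gamma}\binom{\gamma}{j}z^{j}\int_{0}^{\infty}\int_{0}^{2\pi}\frac{r^{\alpha+\beta+j}e^{i(\alpha-\beta-j)\theta}}{(1+r^{2})^{\delta}}\,2r\,\dd\theta\,\dd r .
\]
The inner integral in $\theta$ equals $2\pi$ when $j=\alpha-\beta$ and vanishes otherwise. Hence, if $\alpha-\beta\notin\{0,1,\dots,\gamma\}$ — that is, if the condition $\beta\le\alpha\le\beta+\gamma$ fails — every term is zero and $I(\alpha,\beta,\gamma,\delta;z)=0$. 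Otherwise only the term $j=\alpha-\beta$ survives, leaving
\[
  I(\alpha,\beta,\gamma,\delta;z)=2\pi\binom{\gamma}{\alpha-\beta}z^{\alpha-\beta}\int_{0}^{\infty}\frac{2r^{2\alpha+1}}{(1+r^{2})^{\delta}}\,\dd r .
\]

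Finally I would evaluate the radial integral via the substitution $u=r^{2}$, which turns it into an Euler Beta integral:
\[
  \int_{0}^{\infty}\frac{2r^{2\alpha+1}}{(1+r^{2})^{\delta}}\,\dd r=\int_{0}^{\infty}\frac{u^{\alpha}}{(1+u)^{\delta}}\,\dd u=B(\alpha+1,\delta-\alpha-1)=\frac{\alpha!\,(\delta-\alpha-2)!}{(\delta-1)!},
\]
the last equality being valid precisely because $\alpha<\delta-1$ — which follows from $2\alpha\le\alpha+\beta+\gamma<2(\delta-1)$ in the surviving case — so that in particular $\delta-\alpha-2\ge 0$ and the factorials are meaningful. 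Substituting this back yields the stated formula. There is no serious obstacle here; the only points requiring a little care are the convergence/Fubini bookkeeping just mentioned and keeping track of the factor $2$ coming from $|\dd w\wedge\dd\bar w|=2\,\dd x\,\dd y$, with everything else being a routine computation.
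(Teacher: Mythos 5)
Your proposal is correct and follows essentially the same route as the paper's proof: expand $(1+z\bar w)^{\gamma}$, pass to polar coordinates so the angular integral selects $j=\alpha-\beta$, and evaluate the radial integral as a Beta function $B(\alpha+1,\delta-\alpha-1)$. The extra care you take with absolute convergence (via $\alpha+\beta+\gamma<2(\delta-1)$), the factor $2$ in $|\dd w\wedge \dd\bar w|$, and the bound $\alpha<\delta-1$ in the surviving case is consistent with, and slightly more explicit than, the paper's computation.
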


\begin{proof}
By expanding the numerator and passing to polar coordinates, we compute 
\[ I(\alpha,\beta,\gamma,\delta; z) = 2 \sum_{p=0}^{\gamma} \binom{\gamma}{p} z^p \left( \int_0^{+\infty} \frac{\rho^{\alpha+\beta+p+1}}{(1 + \rho^2)^{\delta}} \dd \rho \right) \underbrace{\left( \int_0^{2\pi} e^{i(\alpha-\beta-p)} \dd \theta \right)}_{=2\pi \delta_{p,\alpha-\beta}}. \]
So if $\alpha - \beta \notin \{0, \cdots, \gamma\}$, then $I(\alpha,\beta,\gamma,\delta; z) = 0$. Otherwise 
\[ \begin{split} I(\alpha,\beta,\gamma,\delta; z) & = 4 \pi \binom{\gamma}{\alpha - \beta} z^{\alpha - \beta}  \int_0^{+\infty} \frac{\rho^{2\alpha+1}}{(1 + \rho^2)^{\delta}} \dd \rho \\
& = 2 \pi \binom{\gamma}{\alpha - \beta} z^{\alpha - \beta} \int_0^{+\infty} \frac{t^{\alpha}}{(1 + t)^{\delta}} \dd t \\
& = 2 \pi \binom{\gamma}{\alpha - \beta} z^{\alpha - \beta} B(\alpha + 1, \delta - \alpha - 1) \\
& = 2 \pi \binom{\gamma}{\alpha - \beta} z^{\alpha - \beta} \frac{\alpha! (\delta - \alpha - 2)!}{(\delta-1)!}.  \end{split}  \]
Here $B$ is the beta function.
\end{proof}

This allows us to quickly compute $T_k(\varepsilon)$.

\begin{prop}
For every $\ell \in \{0, \cdots, k\}$,
\[ T_k^{\mathrm{cov}}(x_3) e_{\ell} = \frac{2 \ell - k}{k} e_{\ell}. \]
Moreover, for every $\ell \in \{0, \cdots, k\}$ (with a slight abuse of notation for the extreme cases) 
\[ \begin{split} T_k^{\mathrm{cov}}(x_1^2) e_{\ell} & = \frac{1}{k(k-1)} \left( \sqrt{\ell (\ell-1) (k-\ell+2)(k-\ell+1)} e_{\ell-2} + 2 \ell (k-\ell) e_{\ell} \right. \\
& \left. + \sqrt{(\ell+1) (\ell+2) (k-\ell)(k-\ell-1)} e_{\ell+2} \right). \end{split} \]
\end{prop}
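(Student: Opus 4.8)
The plan is to compute both operators directly from the explicit integral kernel of covariant Berezin--Toeplitz quantization and then evaluate the resulting integrals with Lemma \ref{lem:int_S2}. First I would record the coordinate functions in the chart $U_\infty$: stereographic projection from the north pole gives $x_3 = \tfrac{|z|^2-1}{|z|^2+1}$ and $x_1 = \tfrac{z+\bar z}{1+|z|^2}$, hence $x_1^2 = \tfrac{(z+\bar z)^2}{(1+|z|^2)^2}$; these are real-analytic on all of $\C\P^1$ (including at $z=\infty$), so by Proposition \ref{prop:Bergman_asymp} they have covariant quantizations whose kernels, in the Hermitian chart above, are the Bergman kernel times the holomorphic extension of the symbol, obtained by the substitution $\bar z \rightsquigarrow \bar w$: $\widetilde{x_3}(z,\bar w)=\tfrac{z\bar w-1}{z\bar w+1}$ and $\widetilde{x_1^2}(z,\bar w)=\tfrac{(z+\bar w)^2}{(1+z\bar w)^2}$. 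Concretely, for a holomorphic section $u$,
\[
  \big(T_k^{\mathrm{cov}}(f)u\big)(z)=\int_{\C}\Pi_k(z,w)\,\widetilde{f}(z,\bar w)\,u(w)\,|\dd w\wedge\dd\bar w|,\qquad \Pi_k(z,w)=\frac{k+1}{2\pi}\frac{(1+z\bar w)^k}{(1+|w|^2)^{k+2}},
\]
which reduces to the reproducing identity when $f\equiv 1$ (this is the special case of Lemma \ref{lem:int_S2} with $\alpha=\ell,\beta=0,\gamma=k,\delta=k+2$).

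The key observation is that multiplying $\Pi_k$ by the extended symbol clears the denominators: $(1+z\bar w)^k\cdot\tfrac{z\bar w-1}{z\bar w+1}=(1+z\bar w)^{k-1}(z\bar w-1)=(1+z\bar w)^k-2(1+z\bar w)^{k-1}$, and $(1+z\bar w)^k\cdot\tfrac{(z+\bar w)^2}{(1+z\bar w)^2}=(1+z\bar w)^{k-2}(z^2+2z\bar w+\bar w^2)$. Applying the operator to $e_\ell=c_{k,\ell}z^\ell$, with $c_{k,\ell}=\sqrt{(k+1)\binom{k}{\ell}/(2\pi)}$, then turns everything into finite linear combinations of the integrals of Lemma \ref{lem:int_S2}: for $x_3$ one needs $I(\ell,0,k,k+2;z)$ and $I(\ell,0,k-1,k+2;z)$, and for $x_1^2$ one needs (after pulling the powers of $z$ out front) $I(\ell,0,k-2,k+2;z)$, $I(\ell,1,k-2,k+2;z)$ and $I(\ell,2,k-2,k+2;z)$. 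In every case the admissibility bound $\alpha+\beta+\gamma<2(\delta-1)=2k+2$ holds since $\ell\le k$.

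Finally I would substitute the values given by the lemma and simplify the factorial and binomial quotients. For $x_3$ the two contributions combine to $c_{k,\ell}z^\ell\big(1-\tfrac{2(k-\ell)}{k}\big)=\tfrac{2\ell-k}{k}e_\ell$. For $x_1^2$, the three contributions are proportional to $z^{\ell+2},z^\ell,z^{\ell-2}$ with coefficients $\tfrac{(k-\ell)(k-\ell-1)}{k(k-1)}$, $\tfrac{2\ell(k-\ell)}{k(k-1)}$, $\tfrac{\ell(\ell-1)}{k(k-1)}$ times $c_{k,\ell}$; rewriting $c_{k,\ell}z^{\ell\pm 2}$ in terms of $e_{\ell\pm2}$ via $c_{k,\ell}/c_{k,\ell\pm2}=\sqrt{\binom{k}{\ell}/\binom{k}{\ell\pm2}}$ produces exactly the square roots in the statement. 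The one genuinely delicate point — which I would spell out rather than gloss over — is the bookkeeping at the extreme indices $\ell\in\{0,1,k-1,k\}$: there the constraint $\beta\le\alpha\le\beta+\gamma$ in Lemma \ref{lem:int_S2} fails for the "missing" term, so the lemma returns $0$, and this is consistent with the "slight abuse of notation for the extreme cases" because the prefactors $\sqrt{\ell(\ell-1)\cdots}$ and $\sqrt{(k-\ell)(k-\ell-1)\cdots}$ vanish in precisely those cases. Aside from this edge-case verification, the proof is a direct, if slightly lengthy, computation, so the only real work is organizing the algebra cleanly; a representation-theoretic cross-check (the $e_\ell$ being weight vectors for the $\mathfrak{su}(2)$-action, with $x_1^2$ lying in $\mathrm{Sym}^2$ and hence producing the $e_{\ell-2},e_\ell,e_{\ell+2}$ pattern) can serve as an independent sanity check.
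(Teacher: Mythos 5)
Your proposal is correct and follows essentially the same route as the paper: writing the kernel of $T_k^{\mathrm{cov}}(f)$ as the Bergman kernel times the holomorphically extended symbol, expanding the numerators, and evaluating the resulting integrals $I(\alpha,\beta,\gamma,\delta;z)$ with Lemma \ref{lem:int_S2} before tracking the normalisation constants $c_{k,\ell}$. Your explicit treatment of the extreme indices $\ell\in\{0,1,k-1,k\}$, where the lemma's constraint $\beta\le\alpha\le\beta+\gamma$ fails precisely when the corresponding square-root prefactor vanishes, is a welcome elaboration of what the paper dismisses as ``the extreme cases are similar''.
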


\begin{proof}
Let us prove the claim for $T_k^{\text{cov}}(x_1^2)$, since the case of $T_k^{\text{cov}}(x_3)$ follows from a similar (but easier) computation. We compute (for $\ell \in \{2, \cdots, k-2\}$, but the extreme cases are similar), using Lemma \ref{lem:int_S2} and its notation,
\[ \begin{split} T_k^{\text{cov}}(x_1^2) z^{\ell} & = \frac{k+1}{2\pi}  \int_{\C} \frac{(1 + z \bar{w})^k}{(1 + |w|^2)^{k+2}} \frac{(z + \bar{w})^2}{(1 + z \bar{w})^2} w^{\ell} |\dd w \wedge \dd\bar{w}| \\
& = \frac{k+1}{2\pi}  \int_{\C} \frac{(1 + z \bar{w})^{k-2}}{(1 + |w|^2)^{k+2}} (z^2 + 2 z \bar{w} + \bar{w}^2) w^{\ell} |\dd w \wedge \dd\bar{w}| \\
& = \frac{k+1}{2\pi} \left( z^2 I(\ell,0,k-2,k+2; z) + 2 z I(\ell,1,k-2,k+2; z) + I(\ell,2,k-2,k+2; z) \right) \\
& = (k+1) \left(  \binom{k-2}{\ell}  \frac{\ell! (k-\ell)!}{(k+1)!} z^{\ell +2} + \binom{k-2}{\ell -1}  \frac{\ell! (k-\ell)!}{(k+1)!} z^{\ell} + \binom{k-2}{\ell-2} \frac{\ell! (k-\ell)!}{(k+1)!} z^{\ell - 2} \right) \\
& = \frac{1}{\binom{k}{\ell}}  \left( \binom{k-2}{\ell} z^{\ell +2} + \binom{k-2}{\ell -1}  \frac{\ell! (k-\ell)!}{(k+1)!} z^{\ell} + \binom{k-2}{\ell-2} \frac{\ell! (k-\ell)!}{(k+1)!} z^{\ell - 2} \right). \end{split} \]
To conclude, it only remains to carefully keep track of the normalisation constants when passing from $z^{\ell}$, $z^{\ell-2}$, $z^{\ell+2}$ to $e_{\ell}$, $e_{\ell-2}$, $e_{\ell+2}$.
\end{proof}

Using these formulas, we can compute the spectrum of
$T_k(\varepsilon)$ numerically. To compare it with the approximate
eigenvalues given by the Bohr-Sommerfeld conditions in Theorem
\ref{thr:1} and Proposition \ref{prop:BS_mod_k-2}, we also need to compute numerically the complex action. In order to do so, we first come up with a parametrisation for a good cycle $\mathcal{C}_{z,\varepsilon}$ inside $\widetilde{p_{\varepsilon}}^{-1}(\lambda)$ with $p_{\varepsilon} = x_3 + i \varepsilon x_1^2$. Recall that in our complex coordinate $z$ on $U_{\infty}$,
\[  x_1 = \frac{\Re(z)}{1 + |z|^2}, \quad x_3 = \frac{|z|^2 - 1}{1 + |z|^2}  \]
hence 
\[ \widetilde{p_{\varepsilon}}(z,w) = \frac{z \bar{w} - 1}{1 + z \bar{w}} + i \varepsilon \frac{(z + \bar{w})^2}{(1 + z \bar{w})^2} = \frac{z^2 \bar{w}^2 - 1 + i \varepsilon (z + \bar{w})^2}{(1 + z \bar{w})^2}.  \]
Therefore, a straightforward computation shows that $(z, \bar{w})$ belongs to $\widetilde{p_{\varepsilon}}^{-1}(\lambda)$ if and only if 
\begin{equation} \left( (1-\lambda) z^2 + i \varepsilon \right) \bar{w}^2 + 2 (i \varepsilon - \lambda) \bar{w} + i \varepsilon z^2 - 1 - \lambda = 0. \label{eq:level_ex} \end{equation}
For fixed $z$, this equation gives two possibilities for $\bar{w}$, and we need the one which coincides with $\bar{z}$ when $\varepsilon = 0$ and $\lambda \in \R$, call it $\bar{w}_+(z)$. We choose the cycle $\mathcal{C}_{z,\varepsilon}$ as the image of 
\[ \gamma_{z,\varepsilon}: \R \to \CP^1 \times \CP^1, \quad \theta \mapsto \left(\rho_0 e^{i\theta}, w_+(\rho_0 e^{i\theta}) \right), \qquad \rho_0 = \sqrt{\frac{1 + \Re(\lambda)}{1 - \Re(\lambda)}} \]
and write the principal action as
\[ I_{z,\varepsilon}(\lambda) = \int_{\mathcal{C}_{z,\varepsilon}} \tilde{\alpha} \]
with 
\[ \alpha = \frac{i(z \dd \bar{z} - \bar{z} \dd z)}{2(1 + |z|^2)} \quad \text{so} \quad \tilde{\alpha} = \frac{i(z \dd \bar{w} - \bar{w} \dd z)}{2(1 + z\bar{w})}.  \]
Our different choices ensure that when $\lambda \in \R$ and $\varepsilon = 0$, this recovers the usual action. By differentiating Equation \eqref{eq:level_ex}, one can compute the restriction of $\tilde{\alpha}$ to $\mathcal{C}_{z,\varepsilon}$, so $I_{z,\varepsilon}$ can easily be computed numerically using some integration routine. Then we solve numerically the implicit equation
\begin{equation} I_{z,\varepsilon}(\lambda) \in 2 \pi k^{-1} \Z \label{eq:BS_princ_S2} \end{equation}
corresponding to Equation \eqref{eq:BS_mod_k-2} where we only keep the principal term, to obtain the approximate spectrum of $T_k^{\text{cov}}(\varepsilon)$. This is illustrated in Figure \ref{fig:spectre_zix2_eps02}.

\begin{figure}
\begin{center}
  \includegraphics[width=0.9\linewidth]{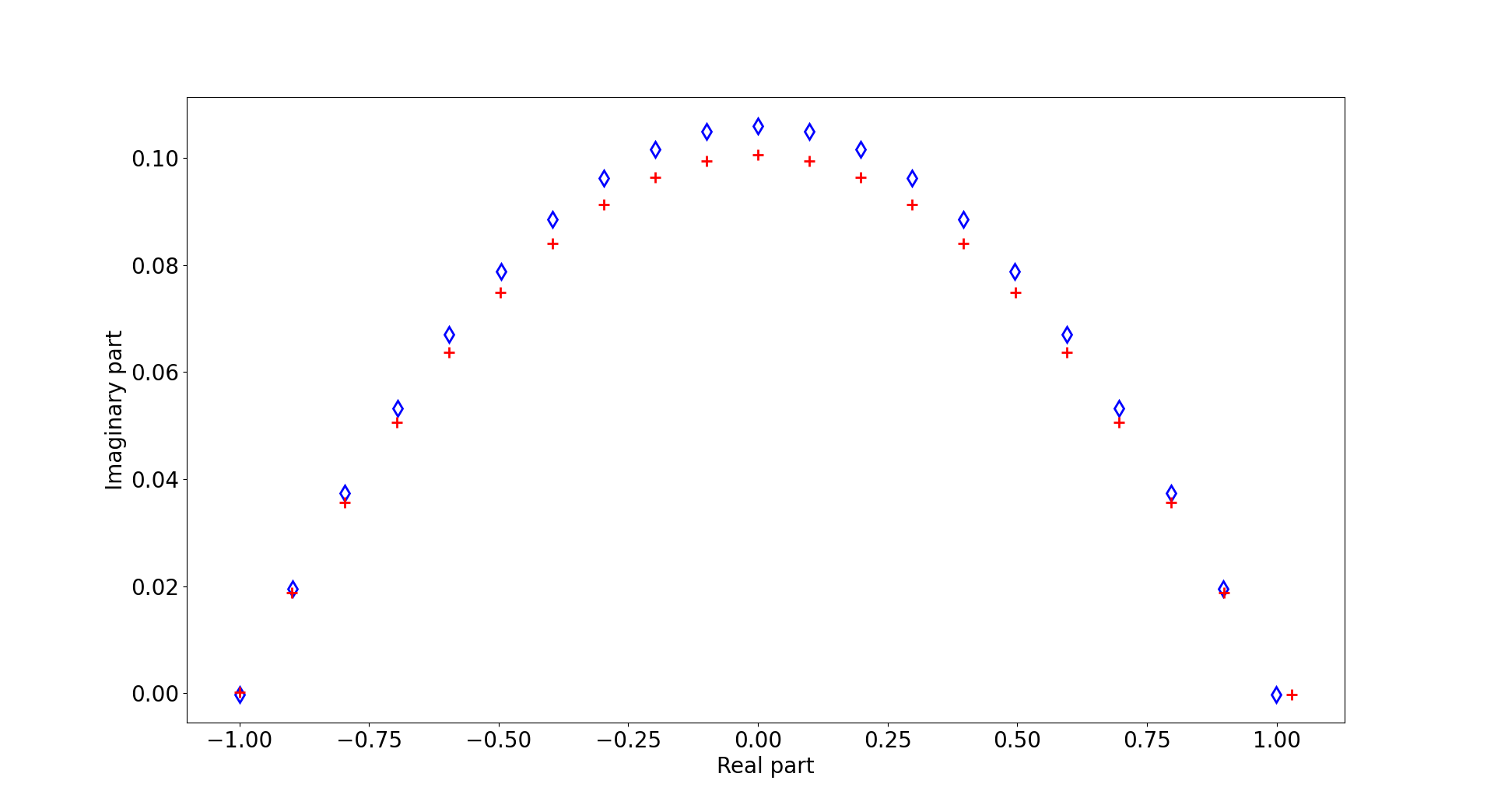}
  \includegraphics[width=0.9\linewidth]{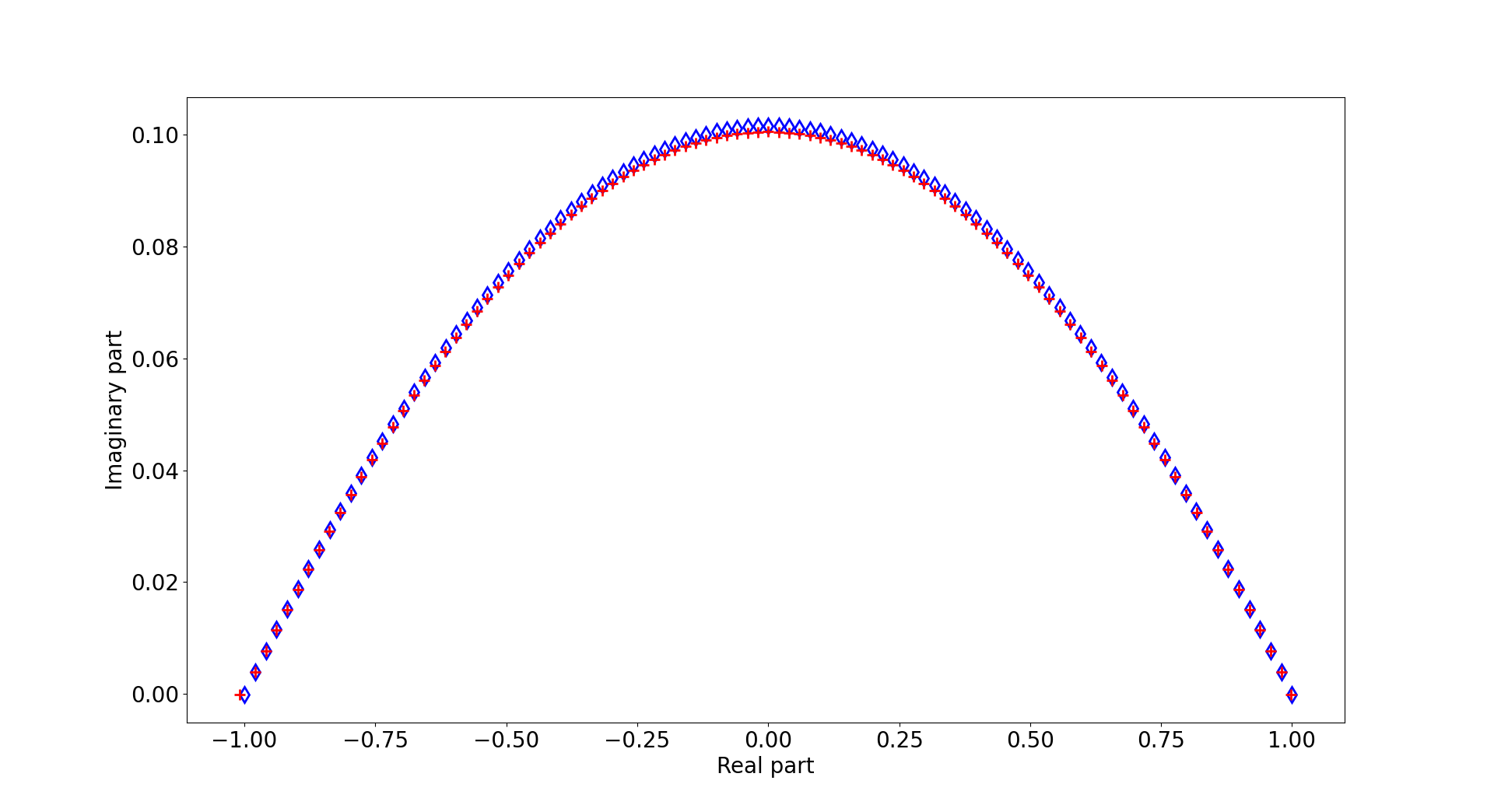}
\end{center}
\caption{Zeroth order approximation: the spectrum of the operator
  $T_k^{\text{cov}}(\varepsilon)$ from Equation \eqref{eq:Tk_S2} (blue
  diamonds) and the approximate eigenvalues given by the solutions of
  Equation \eqref{eq:BS_princ_S2} (red crosses)  for $\varepsilon =
  0.2$ at $k=20$ (above) and $k=100$ (below).}
\label{fig:spectre_zix2_eps02}
\end{figure}

We can also take into account the subprincipal term in Equation \eqref{eq:BS_mod_k-2}. In fact, using the precise subprincipal term given in Proposition \ref{prop:BS_mod_k-2} would be too cumbersome, but thankfully this can be circumvented as follows. As explained in Remark \ref{rem:subprincipal_Lagrangian}, when $\varepsilon = 0$ (or more generally $z=0$ in the above notation) and when we consider a Berezin-Toeplitz operator $T_k$ acting $H^0(M,L^{\otimes k} \otimes \delta)$ with $\delta$ a half-form bundle, we recover the usual Bohr-Sommerfeld conditions stated in \cite{charles_symbolic_2006}. In that setting, when the so-called normalised subprincipal symbol of $T_k$ vanishes, the subprincipal term in the Bohr-Sommerfeld conditions simply equals $\epsilon \pi$ where $\epsilon \in \{0,1\}$ is an index associated with the connected component of $p_0^{-1}(\lambda)$ that we are interested in and coming from $\delta$. This will not change with small changes in the parameter; so in the rest of this section, we replace $T_k(\varepsilon)$ with $S_k(\varepsilon)$ acting on $H^0(M,L^{\otimes k} \otimes \delta)$ with vanishing normalised subprincipal symbol. Coming back to our precise example, the tautological bundle $\delta = \mathcal{O}(-1)$ is a half-form bundle, so acting on $H^0(M,L^{\otimes k} \otimes \delta)$ only consists in shifting $k$ by $1$. Moreover, starting from $T_k(\varepsilon)$, we can obtain such a $S_k(\varepsilon)$ as follows. Recall from \cite{charles_symbolic_2006} that the normalised and covariant subprincipal symbols of a Berezin-Toeplitz operator $T_k$ are related by
\[ \sigma_1^{\text{norm}}(T_k) =  \sigma_1^{\text{cov}}(T_k) - \frac{1}{2} \Delta \sigma_0^{\text{cov}}(T_k). \]
Taking all these remarks into account, the operator
\[ S_k(\varepsilon) = T_{k-1}^{\text{cov}}\left(x_3 + i \varepsilon x_1^2\right) + \frac{1}{2k} T_{k-1}^{\text{cov}}\left(\Delta(x_3 + i \varepsilon x_1^2)\right) \] 
acts on $H^0(M,L^{\otimes k} \otimes \delta)$ with principal symbol $x_3 + i \varepsilon x_1^2$  and vanishing normalised subprincipal symbol. Using that 
\[ \Delta x_3 = -2 x_3, \qquad \Delta x_1^2 = 2 - 6 x_1^2, \]
we finally obtain that 
\begin{equation} S_k(\varepsilon) = \left(1 - \frac{1}{k}\right) T_{k-1}^{\text{cov}}(x_3) + i \varepsilon \left( 1 - \frac{3}{k} \right) T_{k-1}^{\text{cov}}(x_1^2) + \frac{i \varepsilon}{k}. \label{eq:Sk_S2} \end{equation}
In our situation, $\epsilon = 1$ so the approximate eigenvalues of $S_k(\varepsilon)$ are the solutions of the implicit equation
\begin{equation} I_{z,\varepsilon}(\lambda) + k^{-1} \pi  \in 2 \pi k^{-1} \Z. \label{eq:BS_half_S2} \end{equation}
The comparison between these solutions and the actual spectrum of $S_k^{\varepsilon}$ is performed in Figure \ref{fig:spectre_zix2_eps02_half}. In Figure \ref{fig:spectre_zix2_eps02_zoom}, we zoom on a region containing a few eigenvalues to illustrate the difference in the precision of the approximation with or without the subprincipal correction. Note that in this example, the Bohr-Sommerfeld rules accurately describe the whole spectrum; this is natural since the only singularities are encountered at the minimum and maximum of $p_0 = x_3$.

\begin{figure}
\begin{center}
  \includegraphics[width=0.9\linewidth]{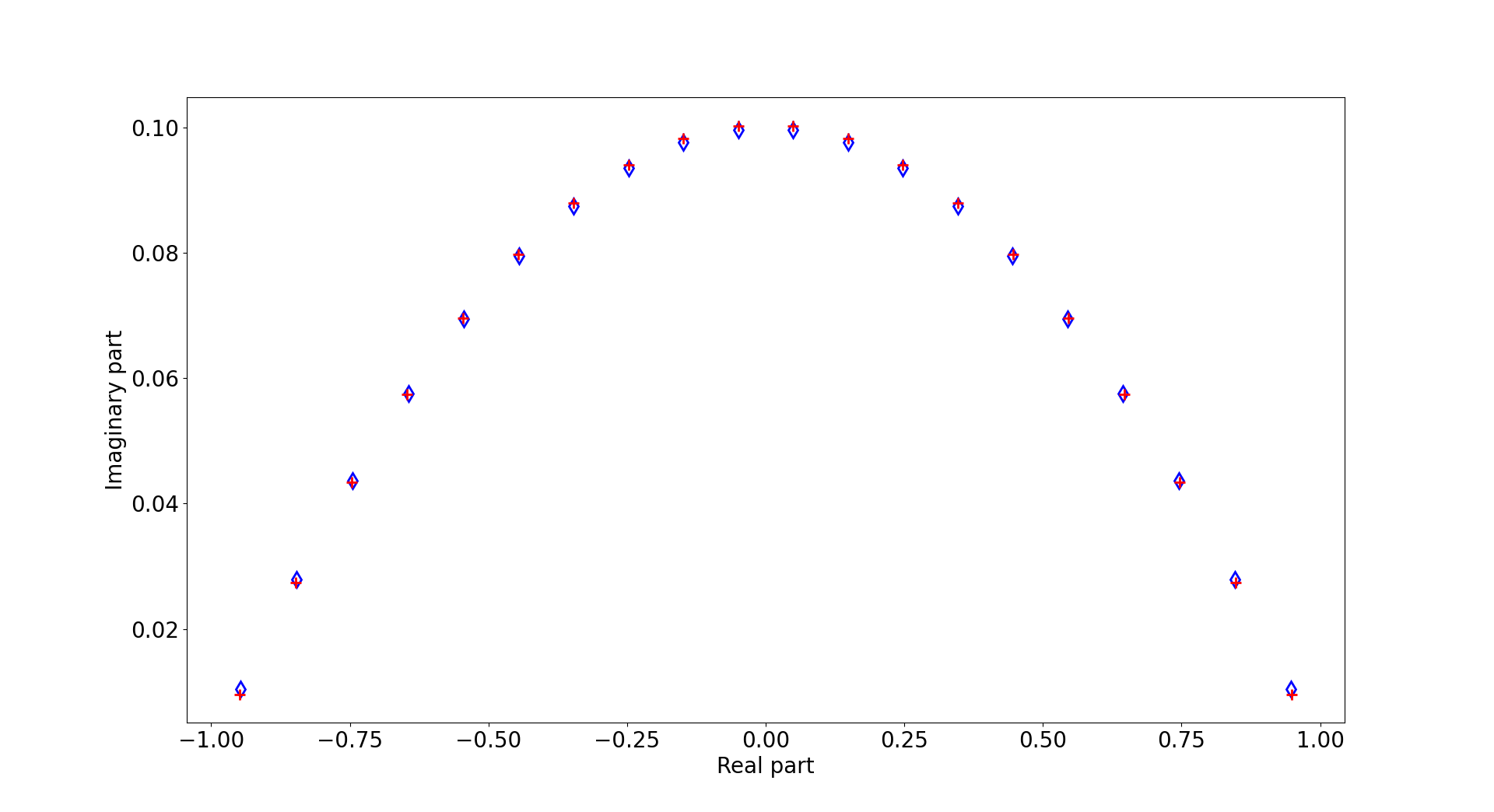}
  
\includegraphics[width=0.9\linewidth]{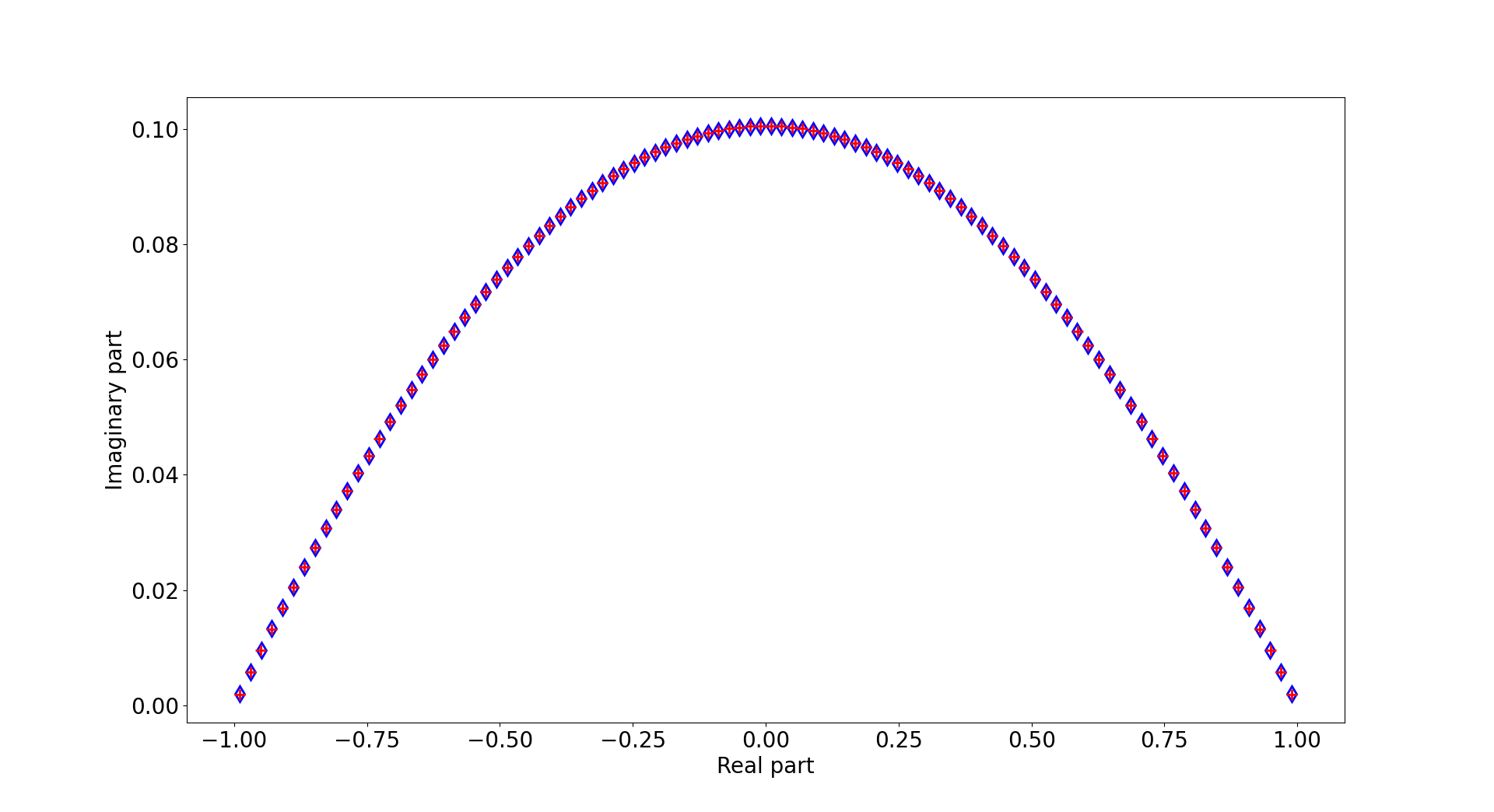}
\end{center}
\caption{Comparison between the spectrum of the operator
  $S_k(\varepsilon)$ from Equation \eqref{eq:Sk_S2} (blue diamonds)
  and the approximate eigenvalues given by the solutions of Equation
  \eqref{eq:BS_half_S2} (red crosses) for $\varepsilon = 0.2$ at $k=20$
(above) and $k=100$ (below).}
\label{fig:spectre_zix2_eps02_half}
\end{figure}

\begin{figure}
\begin{center}
  \includegraphics[width=0.9\linewidth]{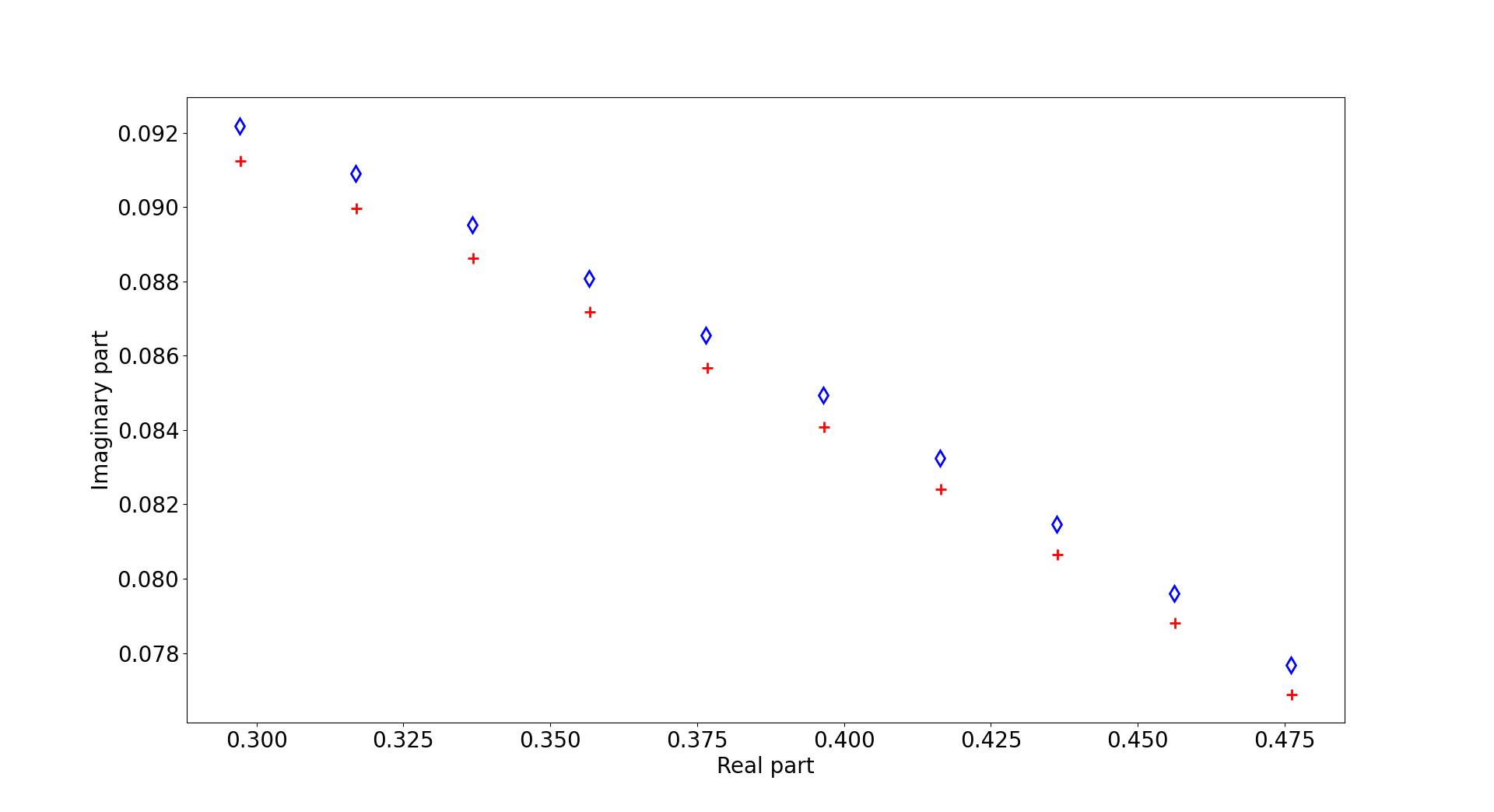}
  
\includegraphics[width=0.9\linewidth]{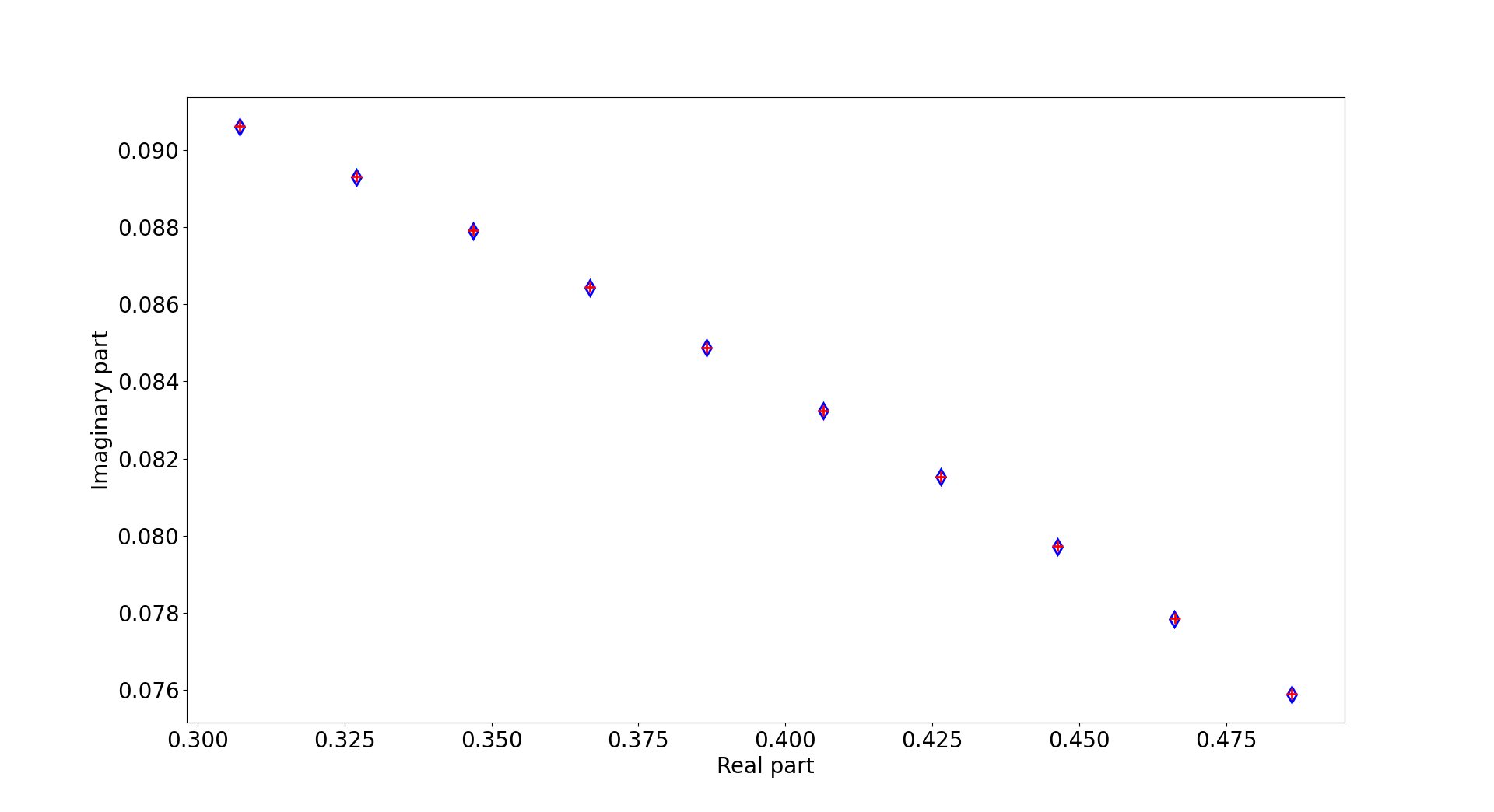}
\end{center}
\caption{Zoom on a few eigenvalues in the $k=100$ plots displayed in Figures \ref{fig:spectre_zix2_eps02} (top) and \ref{fig:spectre_zix2_eps02_half} (bottom). Recall that for exposition reasons, the top figure displays eigenvalues of $T_k(\varepsilon)$ while the bottom one displays eigenvalues of $S_k(\varepsilon)$, but the important information here is the difference in the precision of the approximation thanks to the subprincipal correction.}
\label{fig:spectre_zix2_eps02_zoom}
\end{figure}

\bibliographystyle{abbrv}
\bibliography{main}
\end{document}